\newtheorem{theorem}{Theorem}
\newtheorem{lemma}[theorem]{Lemma}
\newtheorem{corollary}[theorem]{Corollary}
\newtheorem{proposition}[theorem]{Proposition}
\newtheorem{lettertheorem}{Theorem}
\theoremstyle{definition}
\theoremstyle{remark}
\numberwithin{equation}{section}
\newcommand{\B}{\mathcal{B}}
\newcommand{\D}{\mathbb{D}}
\newcommand{\DD}{\widehat{\mathcal{D}}}
\newcommand{\Dd}{\widecheck{\mathcal{D}}}
\newcommand{\M}{\mathcal{M}}
\newcommand{\DDD}{\mathcal{D}}
\newcommand{\N}{\mathbb{N}}
\newcommand{\RR}{\mathbb{R}}
\newcommand{\C}{\mathbb{C}}
\newcommand{\e}{\varepsilon}
\renewcommand{\phi}{\varphi}
\def\BMOA{\mathord{\rm BMOA}}
\def\a{\alpha}       \def\b{\beta}        \def\g{\gamma}
           \def\e{\varepsilon}
     \def\om{\omega}      
       \def\t{\theta}       
         \def\r{\rho}         \def\z{\zeta}
                  \def\vp{\varphi}
\def\omg{\widehat{\omega}}
\renewcommand{\H}{\mathcal{H}}
\newenvironment{Prf}{\noindent{\emph{Proof of}}}
{\hfill$\Box$ }
\begin{document}

\title[Bergman projection induced by radial weight]{Bergman projection induced by radial weight}

\keywords{Bergman space, Bergman projection, Bloch space, bounded mean oscillation, doubling weight, Littlewood-Paley formula}

\author{Jos\'e \'Angel Pel\'aez}
\address{Departamento de An\'alisis Matem\'atico, Universidad de M\'alaga, Campus de
Teatinos, 29071 M\'alaga, Spain} \email{japelaez@uma.es}

\author{Jouni R\"atty\"a}
\address{University of Eastern Finland, P.O.Box 111, 80101 Joensuu, Finland}
\email{jouni.rattya@uef.fi}

\thanks{This research was supported in part by Ministerio de Econom\'{\i}a y Competitividad, Spain, projects
PGC2018-096166-B-100; La Junta de Andaluc{\'i}a,
project FQM210  and UMA18-FEDERJA-002; Academy of Finland project no. 268009; Vilho, Yrjö ja Kalle Foundation}

\subjclass[2010]{30H20, 47B34, 42A99}

\maketitle

\tableofcontents \thispagestyle{empty}

\begin{abstract}

We establish characterizations of the radial weights $\omega$ on the unit disc such that the Bergman projection $P_\omega$, induced by $\omega$, is bounded and/or acts surjectively from $L^\infty$ to the Bloch space $\mathcal{B}$, or the dual of the weighted Bergman space $A^1_\omega$ is isomorphic to the Bloch space under the $A^2_\omega$-pairing. We also solve the problem posed by Dostani\'c in 2004 of describing the radial weights~$\omega$ such that~$P_\omega$ is bounded on the Lebesgue space~$L^p_\omega$, under a weak regularity hypothesis on the weight involved. With regard to Littlewood-Paley estimates, we characterize the radial weights~$\omega$ such that the norm of any function in $A^p_\omega$ is comparable to the norm in $L^p_\omega$ of its derivative times the distance from the boundary. This last-mentioned result solves another well-known problem on the area. All characterizations can be given in terms of doubling conditions on moments and/or tail integrals $\int_r^1\omega(t)\,dt$ of $\omega$, and are therefore easy to interpret.

\end{abstract}

\section{Introduction and main results}

Projections are essential building blocks of the concrete operator theory on spaces of analytic functions. One cornerstone is to characterize their boundedness which together with its multiple consequences makes it a pivotal topic in the theory~\cite{AlPoRe,B1981,BB,Nazarov}. Indeed, bounded analytic projections can be used to establish duality relations and to obtain useful equivalent norms in spaces of analytic functions~\cite{AlCo,DurSchus,HKZ,LueckingIndiana,Pabook2,PelRathg,PelRatproj,Zhu}.

The question of when the Bergman projection~$P_\om$ induced by a radial weight $\om$ on the unit disc is a bounded operator from one space into another is of primordial importance in the theory of Bergman spaces. The long-standing problem of describing the radial weights~$\om$ such that~$P_\om$ is bounded on the Lebesgue space~$L^p_\om$ had been known to experts since decades before it was formally posed by Dostani\'c in 2004~\cite{Dostanic}. A natural limit case of this setting is when $P_\om$ acts from $L^\infty$ to the Bloch space. The surjectivity of the operator becomes another relevant question in this limit case.

The main findings of this study are shortly listed as follows. We establish characterizations of the radial weights $\om$ on the unit disc such that $P_\om:L^\infty\to\B$ is bounded and/or acts surjectively, or the dual of the weighted Bergman space $A^1_\om$ is isomorphic to the Bloch space $\B$ under the $A^2_\om$-pairing. We also solve the problem posed by Dostani\'c under a weak regularity hypothesis on the weight involved. With regard to Littlewood-Paley estimates, we describe the radial weights~$\om$ such that the norm of any function in the weighted Bergman space $A^p_\om$ is comparable to the norm in $L^p_\om$ of its derivative times the distance from the boundary. This last-mentioned result solves another well-known problem on the area. All characterizations can be given in terms of doubling conditions on moments and/or tail integrals $\int_r^1\om(t)\,dt$ of $\om$, and are therefore easy to interpret.

Let $\H(\D)$ denote the space of analytic functions in the unit disc $\D=\{z\in\C:|z|<1\}$. For a nonnegative function $\om\in L^1([0,1))$, the extension to $\D$, defined by $\om(z)=\om(|z|)$ for all $z\in\D$, is called a radial weight. For $0<p<\infty$ and such an $\omega$, the Lebesgue space $L^p_\om$ consists of measurable functions such that
    $$
    \|f\|_{L^p_\omega}^p=\int_\D|f(z)|^p\omega(z)\,dA(z)<\infty,
    $$
where $dA(z)=\frac{dx\,dy}{\pi}$ is the normalized area measure on $\D$. The corresponding weighted Bergman space is $A^p_\om=L^p_\omega\cap\H(\D)$. Throughout this paper $\widehat{\om}(z)=\int_{|z|}^1\om(s)\,ds>0$ for all $z\in\D$, for otherwise $A^p_\om=\H(\D)$.

For a radial weight $\om$, the orthogonal Bergman projection $P_\om$ from $L^2_\om$ to $A^2_\om$ is
		\begin{equation*}
    P_\om(f)(z)=\int_{\D}f(\z) \overline{B^\om_{z}(\z)}\,\om(\z)dA(\z),
    \end{equation*}
where $B^\om_{z}$ are the reproducing kernels of $A^2_\om$. As usual,~$A^p_\alpha$ stands for the classical weighted
Bergman space induced by the standard radial weight $\omega(z)=(\alpha+1)(1-|z|^2)^\alpha$, $B^\alpha_z$ are the kernels of $A^2_\alpha$, and $P_\alpha$ denotes the corresponding Bergman projection.

It is well-known that the standard weighted Bergman projection $P_\alpha$ is bounded and onto from the space $L^\infty$ of bounded measurable functions to the Bloch space~$\B$, which consists of $f\in\H(\mathbb D)$ such that $\|f\|_\B=\sup_{z\in\D}|f'(z)|(1-|z|^2)+|f(0)|<\infty$. This raises the question of which properties of a radial weight $\om$ are determinative for $P_\om:L^\infty\to\B$ to be bounded? And what makes the embedding $\B\subset P_\om(L^\infty)$ continuous? These questions maybe paraphrased in terms of the dual space of $A^1_\om$ because the dual $(A^1_\om)^\star$ can be identified with $P_\om(L^\infty)$ under the $A^2_\om$-pairing
    $$
    \langle f,g\rangle_{A^2_\om}=\lim_{r\to 1^-}\int_{\D}f_r(z)\overline{g(z)}\om(z)\,dA(z).
    $$
It is known that $(A^1_\alpha)^\star\simeq\B$ and $\B_0^\star\simeq A^1_\alpha$ under the $A^2_\alpha$-pairing. Here $\B_0$ stands for the little Bloch space which consists of $f\in\H(\mathbb D)$ such that $f'(z)(1-|z|^2)\to0$, as $|z|\to1^-$.

The first main result of this paper characterizes the class of radial weights such that $P_\om:L^\infty\to\B$ is bounded, and establishes the corresponding natural duality relations, under our initial hypothesis on the positivity of $\widehat{\om}$. To state the result, some more notation is needed. A radial weight $\om$ belongs to the class~$\DD$ if the tail integral $\widehat{\om}$ satisfies the doubling property $\widehat{\om}(r)\le C\widehat{\om}(\frac{1+r}{2})$ for some constant $C=C(\om)\ge1$ and for all $0\le r<1$. We give a glimpse of how weights in the class $\DD$ behave. If $\om$ tends to zero too fast, for example exponentially, when approaching the boundary, then obviously $\om$ cannot belong to $\DD$. However, the growth of $\om\in\DD$ to infinity is not restricted because each increasing weight belongs to $\DD$, and hence, in particular, $\widehat{\om}$ may tend to zero much slower than any positive power of the distance to the boundary. The containment in $\DD$ also restricts the oscillation of $\om$, yet permits it to vanish identically in a relatively large part of each outer annulus of $\D$. Concrete examples illustrating some characteristic properties and/or strange behavior of weights in $\DD$ will be represented in the forthcoming sections, see also~\cite[Chapter~1]{PelRat}.

\begin{theorem}\label{ELTEOREMA}
Let $\om$ be a radial weight. Then the following statements are equivalent:
\begin{itemize}
\item[\rm(i)] $P_\om:L^\infty\to\B$ is bounded;
\item[\rm(ii)] There exists $C=C(\om)>0$ such that for each $L\in(A^1_\om)^\star$ there is a unique $g\in\B$ such that $L(f)=\langle f,g\rangle_{A^2_\om}$ for all $f\in A^1_\om$ with $\|g\|_{\B}\le C\|L\|$, that is, $(A^1_\om)^\star\subset\B$ via the
$A^2_\om$-pairing;
\item[\rm(iii)] There exists $C=C(\om)>0$ such that for each $L\in\mathcal{B}_0^\star$ there is a unique $g\in A^1_\om$ such that $L(f)=\langle f,g\rangle_{A^2_\om}$ for all $f\in\B_0$ with $\|g\|_{A^1_\om}\le C\|L\|$, that is, $\mathcal{B}_0^\star\subset A^1_\om$ via the
$A^2_\om$-pairing;
\item[\rm(iv)]$\om\in\DD$.
\end{itemize}
\end{theorem}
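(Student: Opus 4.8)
I would prove the cycle $\mathrm{(iv)}\Rightarrow\mathrm{(i)}\Rightarrow\mathrm{(ii)}\Rightarrow\mathrm{(iv)}$ together with $\mathrm{(iv)}\Rightarrow\mathrm{(iii)}\Rightarrow\mathrm{(iv)}$; the analytic heart is $\mathrm{(iv)}\Rightarrow\mathrm{(i)}$, while the remaining arrows are either soft duality or a two-line test computation. Write $\om_x=\int_0^1 r^x\om(r)\,dr$ for the moments of $\om$, and use the known characterization $\om\in\DD\iff\sup_{x\ge1}\om_x/\om_{2x}<\infty$; since $\widehat{\om}(z)>0$ for every $z\in\D$, one also has for \emph{every} radial weight that consecutive moments are comparable, $\om_n\asymp\om_{n+1}$ (this follows from the fact that the probability measures $r^n\om(r)\,dr/\om_n$ concentrate near $r=1$). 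For reference, $\langle\z^n,g\rangle_{A^2_\om}=2\om_{2n+1}\overline{c_n}$ when $g=\sum_k c_k\z^k$ (read through the defining limit, integrating first in the argument), and $P_\om h=\sum_n\frac1{2\om_{2n+1}}\bigl(\int_\D h\,\overline{\z}^{\,n}\om\,dA\bigr)\z^n$.

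\textbf{The core: $\mathrm{(iv)}\Rightarrow\mathrm{(i)}$.} Using $\overline{B^\om_z(\z)}=B^\om_\z(z)$ one gets $(P_\om h)'(z)=\int_\D h(\z)\,(B^\om_\z)'(z)\,\om(\z)\,dA(\z)$, so it suffices to show
$$
\sup_{z\in\D}(1-|z|)\int_\D\bigl|(B^\om_\z)'(z)\bigr|\,\om(\z)\,dA(\z)<\infty .
$$
A purely pointwise bound on the kernel is too lossy here: already for $\om\equiv1$, replacing $|(B^\om_\z)'(z)|$ by anything of the shape $\widehat{\om}(z)^{-1}|1-z\bar\z|^{-2}$ produces a spurious logarithm, so one must keep the cancellation. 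Setting $F_\om(w)=\sum_n\frac{w^n}{2\om_{2n+1}}$ (so $\overline{B^\om_z(\z)}=F_\om(z\bar\z)$) and carrying out the angular integration reduces the left-hand integral, up to constants, to $\int_0^1\rho^2\om(\rho)\,M_1(|z|\rho,F_\om')\,d\rho$, where $M_1(s,\psi)=\frac1{2\pi}\int_0^{2\pi}|\psi(se^{i\t})|\,d\t$. The crux is then a Littlewood--Paley-type estimate $M_1(s,F_\om')\lesssim F_\om(s)$, valid precisely because $\om\in\DD$; granted it, replacing $M_1(|z|\rho,F_\om')$ by $F_\om(|z|\rho)$ and integrating term by term turns the expression into $\sum_n|z|^n\,\om_{n+2}/\om_{2n+1}\asymp\sum_n|z|^n\asymp(1-|z|)^{-1}$, using the moment doubling. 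Proving this Littlewood--Paley estimate---equivalently, obtaining the sharp, $\widehat{\om}$-adapted kernel bounds for $A^2_\om$ with $\om\in\DD$, most naturally via a smooth dyadic decomposition of $F_\om$ in the Taylor index (the $\om_{2n+1}$ being mutually comparable over each dyadic block exactly because $\om\in\DD$)---is the main obstacle of the whole argument.

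\textbf{Duality: $\mathrm{(i)}\Rightarrow\mathrm{(ii)}$ and $\mathrm{(iv)}\Rightarrow\mathrm{(iii)}$.} For $\mathrm{(ii)}$: given $L\in(A^1_\om)^\star$, extend by Hahn--Banach to $L^1_\om$ and use $(L^1_\om)^\star\cong L^\infty$ to get $h\in L^\infty$ with $\|h\|_{L^\infty}=\|L\|$ and $L(f)=\int_\D f\,\overline{h}\,\om\,dA$ on $A^1_\om$; self-adjointness of $P_\om$ on $L^2_\om$ gives $\int_\D f_r\overline{P_\om h}\,\om\,dA=\int_\D f_r\overline{h}\,\om\,dA$ for each $r<1$, and since $f_r\to f$ in $A^1_\om$ we obtain $L(f)=\langle f,P_\om h\rangle_{A^2_\om}$, so $g:=P_\om h$ works with $\|g\|_\B\le\|P_\om\|\,\|L\|$ by $\mathrm{(i)}$; uniqueness holds because two representing Bloch functions have the same Taylor coefficients (pair each against $\z^n$). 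For $\mathrm{(iii)}$: the map $g\mapsto\bigl(g(0),(1-|z|^2)g'(z)\bigr)$ embeds $\B_0$ isometrically into $\C\oplus C_0(\D)$, so via $C_0(\D)^\star=M(\D)$ any $L\in\B_0^\star$ has the form $L(g)=c\,g(0)+\int_\D(1-|w|^2)g'(w)\,d\nu(w)$ with $|c|+\|\nu\|\asymp\|L\|$; matching Taylor coefficients produces the candidate $h(z)=\tfrac{\overline c}{2\om_1}+z\int_\D(1-|w|^2)\,F_\om'(z\bar w)\,d\overline\nu(w)$, and the same Littlewood--Paley estimate and moment doubling as above give $\int_\D|F_\om'(z\bar w)|\om(z)\,dA(z)\lesssim(1-|w|)^{-1}$, whence $\|h\|_{A^1_\om}\lesssim|c|+\|\nu\|\asymp\|L\|$; that $L(g)=\langle g,h\rangle_{A^2_\om}$ on $\B_0$ then follows from the coefficient identity for polynomials, their density in $\B_0$, and the pairing estimate $|\langle g,h\rangle_{A^2_\om}|\lesssim\|g\|_\B\|h\|_{A^1_\om}$ available for $\om\in\DD$. (Working on the open disc is what keeps $\nu$ free of boundary mass and makes these manipulations legitimate even when $\widehat{\om}$ decays so slowly that $\int_0^1\log\frac1{1-r}\,\om(r)\,dr=\infty$.)

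\textbf{Converses: $\mathrm{(ii)}\Rightarrow\mathrm{(iv)}$ and $\mathrm{(iii)}\Rightarrow\mathrm{(iv)}$.} For $\mathrm{(ii)}$: the functional $L_n(f)=\int_\D f(\z)\,\overline{(\z/|\z|)^n}\,\om(\z)\,dA(\z)=2\om_{n+1}\widehat f(n)$ has $\|L_n\|_{(A^1_\om)^\star}\le1$ and, by the coefficient identity, is represented under the $A^2_\om$-pairing by the polynomial $g_n=\frac{\om_{n+1}}{\om_{2n+1}}\z^n\in\B$; since $\|g_n\|_\B\asymp\om_{n+1}/\om_{2n+1}$, statement $\mathrm{(ii)}$ (norm control plus uniqueness) forces $\om_{n+1}/\om_{2n+1}\le C$ for all $n$, and with $\om_n\asymp\om_{n+1}$ this gives $\sup_m\om_m/\om_{2m}<\infty$, i.e. $\om\in\DD$. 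For $\mathrm{(iii)}$: the coefficient functional $f\mapsto\widehat f(n)$ has $\B_0^\star$-norm $\asymp1$ and is represented by $\frac{\z^n}{2\om_{2n+1}}\in A^1_\om$, so statement $\mathrm{(iii)}$ forces $\bigl\|\tfrac{\z^n}{2\om_{2n+1}}\bigr\|_{A^1_\om}=\om_{n+1}/\om_{2n+1}\le C$ for all $n$, again yielding $\om\in\DD$. This closes both loops, and all four statements are equivalent.
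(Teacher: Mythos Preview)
Your overall architecture is sound and close to the paper's, and the converses $\mathrm{(ii)}\Rightarrow\mathrm{(iv)}$, $\mathrm{(iii)}\Rightarrow\mathrm{(iv)}$ by direct testing with monomials are correct (the paper's $\mathrm{(iii)}\Rightarrow\mathrm{(iv)}$ is the same; for $\mathrm{(ii)}\Rightarrow\mathrm{(iv)}$ the paper goes through $\mathrm{(i)}$ and Hardy's inequality, so your direct route is a pleasant shortcut). For $\mathrm{(iv)}\Rightarrow\mathrm{(i)}$ you correctly identify the kernel estimate as the crux; your formulation $M_1(s,F_\om')\lesssim F_\om(s)$ is exactly what a smooth dyadic block argument yields, and the paper simply quotes the equivalent estimate $\|(B^\om_z)'\|_{A^1_\om}\asymp(1-|z|)^{-1}$ from \cite{PelRatproj}.

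There is, however, a genuine gap in your $\mathrm{(iv)}\Rightarrow\mathrm{(iii)}$. You invoke ``the pairing estimate $|\langle g,h\rangle_{A^2_\om}|\lesssim\|g\|_\B\|h\|_{A^1_\om}$ available for $\om\in\DD$'' to pass from polynomials to all of $\B_0$. That estimate is \emph{not} available for general $\om\in\DD$: by Theorem~\ref{th:otromundo} of the paper it is equivalent to $\om\in\M$, and since $\DD\cap\M=\DDD\subsetneq\DD$ (Theorem~\ref{th:projectionboundedonto}), there are weights in $\DD$ for which the uniform pairing bound fails. Your density argument therefore breaks down as stated.

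The fix is to avoid the general pairing estimate altogether and exploit the specific link between $h$ and $L$, as the paper does. Your $h$ satisfies $2\om_{2n+1}\overline{\widehat h(n)}=L(z^n)$; hence for any $g\in\B_0$ and $0<r<1$ the partial sums of $g_r$ converge in $\B_0$, so
\[
\int_\D g_r\overline{h}\,\om\,dA=\sum_n \widehat g(n)r^n\cdot 2\om_{2n+1}\overline{\widehat h(n)}=\sum_n \widehat g(n)r^n L(z^n)=L(g_r),
\]
and since $g_r\to g$ in $\B_0$ one gets $\langle g,h\rangle_{A^2_\om}=\lim_{r\to1^-}L(g_r)=L(g)$. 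No continuity of the pairing on $\B_0$ is needed. Incidentally, the paper obtains the norm bound $\|h\|_{A^1_\om}\lesssim\|L\|$ by a Hahn--Banach duality argument using the already-established $\mathrm{(i)}$, whereas you get it from the kernel estimate $\int_\D|F_\om'(z\bar w)|\om(z)\,dA(z)\lesssim(1-|w|)^{-1}$; both routes are valid and yours is more self-contained once the kernel bound is in hand.
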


One of the main obstacles in the proof of Theorem~\ref{ELTEOREMA} and throughout this work is the lack of explicit expressions for
the Bergman reproducing kernel $B^\om_z$. For a radial weight $\om$, the kernel has the representation $B^\om_z(\z)=\sum \overline{e_n(z)}e_n(\z)$ for each orthonormal basis $\{e_n\}$ of $A^2_\om$, and therefore we are basically forced to work with the formula $B^\om_z(\z)=\sum_{n=0}^\infty\frac{\left(\overline{z}\z\right)^n}{2\om_{2n+1}}$ induced by the normalized monomials. Here $\om_{2n+1}$ are the odd moments of $\om$, and in general from now on we write $\om_x=\int_0^1r^x\om(r)\,dr$ for all $x\ge0$. Therefore the influence of the weight to the kernel is transmitted by its moments through this infinite sum, and nothing more than that can be said in general. This is in stark contrast with the neat expression $(1-\overline{z}\z)^{-(2+\alpha)}$ of the standard Bergman kernel $B^\alpha_z$ which is easy to work with as it is zero-free and its modulus is essentially constant in hyperbolically bounded regions. In general, the Bergman reproducing kernel induced by a radial weight may have a wild behavior in the sense that for a given radial weight $\om$ there exists another radial weight $\nu$ such that $A^2_\om=A^2_\nu$, but $B^\nu_z$ have zeros, see~\cite[Proof of Theorem~2]{BoudreauxJGA19} and also \cite{PeralaJGA17}. The proof of Theorem~\ref{ELTEOREMA} draws strongly on \cite[Theorem~1]{PelRatproj}, which says, in particular, that
	\begin{equation}\label{12345}
	\left\|(B^\om_z)^{(N)}\right\|_{A^p_\nu}^p\asymp\int_0^{|z|}\frac{\widehat{\nu}(t)}{\widehat{\om}(t)^p(1-t)^{p(N+1)}}dt+1
	\end{equation}
for $\om,\nu\in\DD$. It is of course the special case $N=1=p$ that plays a role in the proof of Theorem~\ref{ELTEOREMA}. For $\om\in\DD$ we have $\om_x\asymp\widehat\om\left(1-\frac1{x+1}\right)$, and therefore, as the moments are involved in the kernel, the appearance of the tail integrals on the right hand side of \eqref{12345} is natural. This also implies $\om\in\DD$ if and only if there exists a constant $C=C(\om)>1$ such that $\om_x\le C\om_{2x}$ for all $0\le x<\infty$. Thus the containment in $\DD$ can be equally well stated in terms of the tail integrals as the moments.

As for characterizing the continuous embedding of $\B$ into $P_\om(L^\infty)$, we write $\om\in\M$ if there exist constants $C=C(\om)>1$ and $K=K(\om)>1$ such that $\om_{x}\ge C\om_{Kx}$ for all $x\ge1$.

\begin{theorem}\label{th:otromundo}
Let $\om$ be radial weight. Then the following statements are equivalent:
\begin{itemize}
\item[\rm(i)] There exists $C=C(\om)>0$ such that for each $f\in\B$ there is $g\in L^\infty$ such that $P_\om(g)=f$ and $\|g\|_{L^\infty}\le C\|f\|_{\B}$, that is, $\B$ is continuously embedded into $P_\om(L^\infty)$;
\item[\rm(ii)] There exists $C=C(\om)>0$ such that $\left|\langle f,g\rangle_{A^2_\om}\right|\le C\|g\|_{\B}\|f\|_{A^1_\om}$ for all $f\in A^1_\om$ and $g\in\B$, that is, each $g\in\B$ induces an element of $(A^1_\om)^\star$ via the $A^2_\om$-pairing;
\item[\rm(iii)] There exists $C=C(\om)>0$ such that $\left|\langle g,f\rangle_{A^2_\om}\right|\le C\|g\|_{\B}\|f\|_{A^1_\om}$ for all $f\in A^1_\om$ and $g\in\B_0$, that is, each $f\in A^1_\om$ induces an element of $\B_0^\star$ via the $A^2_\om$-pairing;
\item[\rm(iv)]$\om\in\M$.
\end{itemize}
\end{theorem}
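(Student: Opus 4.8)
The plan is to dispose first of the three duality statements, whose equivalence is soft and weight‑independent, and then to locate the genuine content in the equivalence of any of them with the moment condition~(iv).

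\emph{The duality statements.} For (i)$\Rightarrow$(ii), given $g\in\B$ write $g=P_\om(h)$ with $h\in L^\infty$ and $\|h\|_{L^\infty}\le C\|g\|_{\B}$; a direct monomial computation gives $\langle f,P_\om(h)\rangle_{A^2_\om}=\langle f,h\rangle_{A^2_\om}$ for polynomials $f$, hence for all $f\in A^1_\om$ by density, and then $\left|\langle f,g\rangle_{A^2_\om}\right|=\left|\langle f,h\rangle_{A^2_\om}\right|\le\|h\|_{L^\infty}\|f\|_{A^1_\om}\le C\|g\|_{\B}\|f\|_{A^1_\om}$. For (ii)$\Rightarrow$(i), the functional $f\mapsto\langle f,g\rangle_{A^2_\om}$ on $A^1_\om$ extends by Hahn--Banach to $L^1_\om$, is represented by some $h\in L^\infty=(L^1_\om)^\star$ with $\|h\|_{L^\infty}\le C\|g\|_{\B}$, and testing against monomials gives $P_\om(h)=g$. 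Finally (ii)$\Leftrightarrow$(iii): since $\langle g,f\rangle_{A^2_\om}=\overline{\langle f,g\rangle_{A^2_\om}}$ and $\B_0\subset\B$, (ii) yields (iii) with the same constant, while for the converse one passes from $g\in\B$ to its dilations $g_r\in\B_0$, using $\|g_r\|_{\B}\le\|g\|_{\B}$ and $\langle f,g_r\rangle_{A^2_\om}\to\langle f,g\rangle_{A^2_\om}$ for each $f\in A^1_\om$ (Taylor series).

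\emph{The role of $\M$.} I would make both directions of the equivalence with~(iv) hinge on a single estimate: for a radial weight with $\widehat{\om}>0$ on $\D$,
$$\|B^\om_z\|_{A^1_\om}\asymp\log\frac{e}{1-|z|}\qquad\Longleftrightarrow\qquad\om\in\M .$$
This is the $A^1_\om$‑analogue of the case $N=0$, $p=1$, $\nu=\om$ of~\eqref{12345}, but it must be established without assuming $\om\in\DD$, so neither~\eqref{12345} nor the comparison $\om_x\asymp\widehat{\om}(1-\frac1x)$ is available; one has to argue directly from $B^\om_z(\zeta)=\sum_{n}(\overline z\zeta)^n/(2\om_{2n+1})$, splitting the kernel into the frequencies $n\lesssim 1/(1-|z|)$ and the tail and reading off the size of the partial sums from that of the odd moments $\om_{2n+1}$. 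Granting it, $\om\in\M\Rightarrow$(ii) follows by decomposing $f\in A^1_\om$ into normalised kernels, $f=\sum_k\lambda_k B^\om_{z_k}/\|B^\om_{z_k}\|_{A^1_\om}$ with $\sum_k|\lambda_k|\lesssim\|f\|_{A^1_\om}$, and combining $\langle B^\om_{z_k},g\rangle_{A^2_\om}=\overline{g(z_k)}$ with $|g(z_k)|\lesssim\|g\|_{\B}\log\frac{e}{1-|z_k|}$; conversely, if $\om\notin\M$ then along a suitable sequence $z_k\to\partial\D$ one has $\|B^\om_{z_k}\|_{A^1_\om}=o\!\left(\log\frac1{1-|z_k|}\right)$, and since $\sup_{\|g\|_{\B}\le1}|g(z_k)|\asymp\log\frac{e}{1-|z_k|}$, pairing $B^\om_{z_k}$ against the extremal Bloch function contradicts~(ii).

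\emph{The main obstacle.} The hard part is exactly the displayed kernel estimate for weights outside $\DD$: the absence of any closed form or regularity for $B^\om_z$ forces the comparison with $\log\frac1{1-|z|}$ to be read off from the interplay between $\om\in\M$ and the arithmetic of $\{\om_x\}$, and, in the failure case, one must locate the scales $1-|z_k|\sim 1/x_k$ at which the relation $\om_{x_k}\asymp\om_{Kx_k}$ (the negation of $\M$) makes the moments, and hence the kernel, too small. Separating the contribution of the bulk of $\om$ from that of its outermost annulus, and justifying the decomposition of $A^1_\om$ into kernels in this generality, is where most of the effort will go. As a by‑product, when $\om\in\DD\cap\M$ one recovers from Theorem~\ref{ELTEOREMA} together with the present result that $P_\om\colon L^\infty\to\B$ is bounded and onto.
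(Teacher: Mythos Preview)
Your treatment of the soft equivalences (i)$\Leftrightarrow$(ii)$\Leftrightarrow$(iii) is fine and close to what the paper does. The hard part is where your proposal breaks down.

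The central claim on which you hang everything, $\|B^\om_z\|_{A^1_\om}\asymp\log\frac{e}{1-|z|}\Longleftrightarrow\om\in\M$, is false in the direction you actually use. Hardy's inequality applied to the Taylor series $B^\om_z(\zeta)=\sum_n(\overline z\zeta)^n/(2\om_{2n+1})$ gives
\[
\|B^\om_z\|_{A^1_\om}\gtrsim\sum_{n=0}^\infty\frac{|z|^n\om_{n+1}}{(n+1)\om_{2n+1}}\ge\sum_{n=0}^\infty\frac{|z|^n}{n+1}\asymp\log\frac{e}{1-|z|}
\]
for \emph{every} radial weight, simply because $\om_{n+1}\ge\om_{2n+1}$. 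So $\|B^\om_{z_k}\|_{A^1_\om}=o(\log\frac1{1-|z_k|})$ never happens, and your contrapositive for (ii)$\Rightarrow$(iv) collapses. In fact testing (ii) on $f=B^\om_z$ gives $|g(z)|\lesssim\|g\|_\B\|B^\om_z\|_{A^1_\om}$, which is automatic for every radial weight and every $g\in\B$; reproducing kernels carry no information here. The paper instead tests (iii) on two families of lacunary \emph{polynomials} (built from the blocks $z^{2^k}$) and extracts the moment inequality $\om_{2^{N+j+1}}\le C(N+1)^{-1/3}\om_{2^j}$ directly.

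For (iv)$\Rightarrow$(ii) your atomic decomposition $f=\sum_k\lambda_k B^\om_{z_k}/\|B^\om_{z_k}\|_{A^1_\om}$ is a second genuine gap: such decompositions for $A^1_\om$ are known only under much stronger regularity (essentially $\om\in\DDD$), and $\M$ alone does not even force $\om\in\DD$. The paper avoids this entirely: it first shows that $\om\in\M$ is equivalent to the moment estimate $\om_x\le C x^\beta(\om_{[\beta]})_x$ for each $\beta>0$, and then, for a given $f\in\B$, \emph{constructs} an explicit preimage $g(z)=(1-|z|)^\beta\sum_n\widehat f(n)\frac{\om_{2n+1}}{(\om_{[\beta]})_{2n+1}}z^n$, checking $P_\om(g)=f$ by orthogonality and $\|g\|_{L^\infty}\lesssim\|f\|_\B$ via the Ces\`aro--polynomial machinery (Theorem~A) to control a coefficient multiplier of $\B$. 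That construction, not any kernel or atomic argument, is where the condition $\om\in\M$ enters.
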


In view of Theorems~\ref{ELTEOREMA} and \ref{th:otromundo}, it is tempting to think that a kind of reverse doubling condition for tail integrals would characterize the continuous embedding $\B\subset P_\om(L^\infty)$, and thus the class~$\M$ as well. But unfortunately this is not the case. Namely, by writing $\om\in\Dd$ if there exist constants $K=K(\om)>1$ and $C=C(\om)>1$ such that $\widehat{\om}(r)\ge C\widehat{\om}\left(1-\frac{1-r}{K}\right)$ for all $0\le r<1$, we have $\Dd\subsetneq\M$ by Proposition~\ref{pr:counterxample} below. Therefore it is not enough to consider the tail integrals but we must indeed work with a true moment condition, and that is of course more difficult. The reason why $\M$ contains in a sense much worse weights than~$\Dd$ is that the global moment integral may easily eat up irregularities of the weight that make the tail condition of $\Dd$ to fail. That is how a family of counter examples is constructed to get the proposition.

To prove Theorem~\ref{th:otromundo} we first show that (iii) implies (iv) by testing with two appropriate families of analytic (lacunary) polynomials. Then, assuming $\om\in\M$, for each $f\in\B$ we construct an explicit preimage $g\in L^\infty$ such that $\|g\|_{L^\infty}\le C \|P_\om(g)\|_{\B}= C\|f\|_{\B}$, and thus obtain (i). In order to do this, we use properties of smooth universal Ces\'aro basis of polynomials~\cite[Section 5.2]{Pabook2}, a description of the coefficient multipliers of the Bloch space and several characterizations of the class $\M$. In particular,  we will show that $\om\in\M$ if and only if for some (equivalently for each) $\b>0$, there exists a constant $C=C(\om,\b)>0$ such that
    \begin{equation}\label{Intro:M}
    \om_x \le Cx^\b(\om_{[\b]})_x,\quad 1\le x<\infty,
    \end{equation}
a description of $\M$ which will be useful for our purposes also in other instances. The proof is then completed by passing from (i) to (ii), which is easy, and then observing that (ii) trivially implies (iii).

With Theorems~\ref{ELTEOREMA} and~\ref{th:otromundo} in hand, we characterize the radial weights $\om$ such that $P_\om: L^\infty\to \B$ is bounded and onto, or equivalently $(A^1_\om)^\star$ can be identified with the Bloch space via the $A^2_\om$-pairing. To this end we write $\DDD=\DD\cap\Dd$.

\begin{theorem}\label{th:projectionboundedonto}
Let $\om$ be a radial weight. Then the following statements are equivalent:
    \begin{enumerate}
    \item[(i)] $P_\om:L^\infty\to\B$ is bounded and onto;
    \item[(ii)] $(A^1_\om)^\star\simeq\B$ via the $A^2_\om$-pairing with equivalence of norms;
    \item[(iii)] $\B_0^\star\simeq A^1_\om$ via the $A^2_\om$-pairing with equivalence of norms;
    \item[(iv)] $\om\in\DDD$;
		\item[(v)] $\om\in\DD\cap\M$.
\end{enumerate}
\end{theorem}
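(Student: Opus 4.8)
The plan is to deduce Theorem~\ref{th:projectionboundedonto} by combining Theorems~\ref{ELTEOREMA} and~\ref{th:otromundo}, so that the bulk of the work is bookkeeping about equivalences rather than new hard analysis. First I would establish the chain (i)$\Leftrightarrow$(iv)$\Leftrightarrow$(v) at the level of the weight classes. The operator $P_\om:L^\infty\to\B$ is bounded and onto precisely when it is bounded (which is $\om\in\DD$ by Theorem~\ref{ELTEOREMA}) \emph{and} the inclusion $\B\subset P_\om(L^\infty)$ is continuous; by the open mapping theorem, once $P_\om:L^\infty\to\B$ is bounded, surjectivity is equivalent to the existence of a bounded linear right inverse, hence to the continuous embedding $\B\subset P_\om(L^\infty)$, which is $\om\in\M$ by Theorem~\ref{th:otromundo}. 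This gives (i)$\Leftrightarrow\om\in\DD\cap\M$, i.e.\ (i)$\Leftrightarrow$(v). To pass to (iv), namely $\om\in\DDD=\DD\cap\Dd$, it suffices to show that under the standing assumption $\om\in\DD$ one has $\om\in\M$ if and only if $\om\in\Dd$. The inclusion $\Dd\subset\M$ is Proposition~\ref{pr:counterxample} (and holds with no extra hypothesis); the point is the converse under $\om\in\DD$. Here I would use the characterization $\om_x\asymp\widehat\om(1-\frac1{x+1})$ valid for $\om\in\DD$: for such weights the moment condition defining $\M$, $\om_x\ge C\om_{Kx}$, translates directly into $\widehat\om(1-\frac1{x+1})\ge C'\widehat\om(1-\frac1{Kx+1})$, and a change of variables $r=1-\frac1{x+1}$ turns this into exactly the reverse-doubling tail condition defining $\Dd$ (with possibly adjusted constants $K,C$). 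Thus $\DD\cap\M=\DD\cap\Dd=\DDD$, completing (i)$\Leftrightarrow$(iv)$\Leftrightarrow$(v).

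It remains to bring in the duality statements (ii) and (iii). For (i)$\Rightarrow$(ii): if $P_\om:L^\infty\to\B$ is bounded and onto then on one hand Theorem~\ref{ELTEOREMA}(ii) gives $(A^1_\om)^\star\subset\B$ via the $A^2_\om$-pairing with $\|g\|_\B\lesssim\|L\|$, and on the other hand $\om\in\M$ so Theorem~\ref{th:otromundo}(ii) gives that every $g\in\B$ induces an element of $(A^1_\om)^\star$ with $\|L_g\|\lesssim\|g\|_\B$; together these say the correspondence $L\leftrightarrow g$ is a norm-isomorphism, i.e.\ $(A^1_\om)^\star\simeq\B$ with equivalence of norms, which is (ii). Conversely (ii)$\Rightarrow$(i): the isomorphism $(A^1_\om)^\star\simeq\B$ contains the inclusion $(A^1_\om)^\star\subset\B$, hence $\om\in\DD$ by Theorem~\ref{ELTEOREMA}, and it also contains $\B\subset(A^1_\om)^\star$ with norm control, which is Theorem~\ref{th:otromundo}(ii), hence $\om\in\M$; by the already-proved equivalence this is (i). The treatment of (iii) is entirely parallel, using condition (iii) of each of the two theorems: (i)$\Rightarrow$(iii) combines Theorem~\ref{ELTEOREMA}(iii) ($\B_0^\star\subset A^1_\om$ with norm control) with Theorem~\ref{th:otromundo}(iii) (each $f\in A^1_\om$ induces an element of $\B_0^\star$ with norm control) to get the norm-isomorphism $\B_0^\star\simeq A^1_\om$, and (iii)$\Rightarrow$(i) extracts the two one-sided inclusions again.

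I would organize the written proof as a cycle, say (i)$\Rightarrow$(ii)$\Rightarrow$(i) and (i)$\Rightarrow$(iii)$\Rightarrow$(i), together with the separate weight-class equivalences (i)$\Leftrightarrow$(iv)$\Leftrightarrow$(v), citing Theorems~\ref{ELTEOREMA}, \ref{th:otromundo} and Proposition~\ref{pr:counterxample} as black boxes. The only genuinely analytic step is the claim that $\om\in\DD$ forces $\M$ and $\Dd$ to coincide; everything else is a formal manipulation of biconditionals. I expect the main obstacle to be making the translation between the moment formulation of $\M$ and the tail formulation of $\Dd$ fully rigorous: one must verify that the asymptotic equivalence $\om_x\asymp\widehat\om(1-\frac1{x+1})$ (which holds for $\om\in\DD$, as noted in the text after \eqref{12345}) is two-sided with constants independent of $x$, and that the constants $K$ and $C$ in the definitions of $\M$ and $\Dd$ can be matched up after the substitution $r=1-\frac1{x+1}$, paying attention to the ranges $x\ge1$ versus $0\le r<1$ and to the fact that $K$ in $\M$ need not equal $K$ in $\Dd$ — only their existence matters. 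A secondary, purely soft, point is to confirm that surjectivity of a bounded operator between Banach spaces is equivalent to the target embedding into the image being bounded; this is the open mapping theorem applied to $P_\om$ restricted to a complemented or quotient subspace of $L^\infty$, or more simply the statement that a bounded surjection admits a bounded right inverse on the level of the quotient $L^\infty/\ker P_\om$.
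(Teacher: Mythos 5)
Your reduction of the theorem to Theorems~\ref{ELTEOREMA} and~\ref{th:otromundo} plus the set identity $\DDD=\DD\cap\M$ is exactly the paper's strategy, and your handling of the duality statements (ii) and (iii) and of the open-mapping step is correct. The one substantive problem is your treatment of the inclusion $\DDD\subset\DD\cap\M$: you dispose of it by asserting that the inclusion $\Dd\subset\M$ \emph{is} Proposition~\ref{pr:counterxample}. That proposition does not prove the inclusion; it only proves that the inclusion is \emph{proper}, by constructing a weight in $\M\setminus\Dd$. In the paper the unconditional containment $\Dd\subset\M$ is the hardest part of the proof of this very theorem: it is established by verifying the bilinear estimate of Theorem~\ref{th:otromundo}(ii) directly for $\om\in\Dd$, via the characterization \eqref{eq:Dd-condition}, Green's formula, the Cauchy--Schwarz inequality and an equivalent $A^1_\om$-norm in terms of $\Delta|f|$. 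A purely definitional translation between the moment condition of $\M$ and the tail condition of $\Dd$ cannot settle that containment, because for a general radial weight only the one-sided bound $\om_x\gtrsim\widehat{\om}\left(1-\frac1x\right)$ is available; the reverse bound is not valid without a doubling hypothesis on $\widehat{\om}$.

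That said, your argument is repairable within your own framework, and the repaired version is a genuinely different (and, for this theorem, lighter) route than the paper's. What you actually need is not $\Dd\subset\M$ but only $\DD\cap\Dd\subset\M$, and there the two-sided comparison $\om_x\asymp\widehat{\om}\left(1-\frac1{x+1}\right)$ is available, so the same moment--tail dictionary you use for $\DD\cap\M\subset\Dd$ runs in the opposite direction: iterate the $\Dd$-condition until its constant dominates the implicit constants of the $\asymp$, and then match the arguments $1-\frac1{Kx+1}$ and $1-\frac{1-r}{K'}$ as you indicate. In writing this up you should treat the range $0\le r<\frac12$ separately when converting back to the tail condition, and you must use the fact (recorded in the paper after \eqref{eq:11}) that the constant $C>1$ in the definition of $\M$, and likewise the constant in $\Dd$, can be made arbitrarily large by enlarging $K$; without that normalization the constant surviving the $\asymp$ could drop below $1$ and the resulting inequality would be vacuous. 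The trade-off between the two approaches is clear: yours stays entirely at the level of moments and tails and avoids the Green's-formula machinery, but it proves only the intersection statement $\DD\cap\Dd=\DD\cap\M$ and not the stronger standalone inclusion $\Dd\subset\M$, which the paper needs in order to make sense of Proposition~\ref{pr:counterxample}.
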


The proof of Theorem~\ref{th:projectionboundedonto} boils down to showing that $\DDD=\DD\cap\M$. While $\DD\cap\M\subset\DDD$ is proved with the aid of characterizations of classes of weights obtained en route to Theorem~\ref{th:otromundo}, the proof of the opposite inclusion is more involved and will be actually established by showing the inequality in Theorem~\ref{th:otromundo}(ii) under the hypothesis $\om\in\Dd$.

It is worth underlining that $\DD\cap\M=\DD\cap\Dd$, despite the fact that $\Dd$ is a proper subclass of~$\M$. Roughly speaking the phenomenon behind this identity is that the severe oscillation that is possible for weights in $\M$ is ruled out by intersecting with $\DD$, and that makes the sets of weights equal.

We will show in Theorem~\ref{theorem:non-regular weights close to D} below that for each $\om\in\DDD$ there exists a weight $W$ which induces a weighted Bergman space only slightly larger than $A^p_\om$ but the weight itself lies outside of $\DD\cup\M$. Therefore $P_W:L^\infty\to\B$ is neither bounded nor onto by Theorems~\ref{ELTEOREMA} and~\ref{th:otromundo}. In particular, for given $-1<\a,\b<\infty$ one may pick up a weight $W$ such that $A^p_\alpha\subset A^p_W\subset A^p_\b$, but $P_W$ is neither bounded nor onto meanwhile of course $P_\alpha$ and $P_\b$ both have these properties. This shows in very concrete terms that $P_\om:L^\infty\to\B$ being bounded and/or onto depends equally well on the growth/decay of the inducing weight as on its regularity.

Theorem~\ref{th:projectionboundedonto} raises the problem of finding a useful description of the dual of $A^1_\om$ when $\om\in\DD\setminus\Dd$. To deal with this question we will need some more notation. For a radial weight $\om$ and $f,g\in\H(\D)$ with Maclaurin series $f(z)=\sum_{k=0}^\infty\widehat{f}(k)z^k$ and $g(z)=\sum_{k=0}^\infty\widehat{g}(k)z^k$, denote
    \begin{equation*}
    \langle f,g\rangle_{\om\circ\om}=\lim_{r\to 1^-}\sum_{k=0}^{\infty} \widehat{f}(k)\overline{\widehat{g}(k)}r^{2k+1}(\om_{2k+1})^2,
    \end{equation*}
whenever the limit exists. For $f\in\H(\D)$ and $0<r<1$, set
    \begin{equation*}
    \begin{split}
    M_p(r,f)&=\left(\frac{1}{2\pi}\int_{0}^{2\pi} |f(re^{it})|^p\,dt\right)^{\frac1p},\quad
    0<p<\infty,
    \end{split}
    \end{equation*}
and $M_\infty(r,f)=\max_{|z|=r}|f(z)|$. For $0<p\le\infty$, the Hardy space $H^p$ consists of $f\in \H(\mathbb D)$ such that $\|f\|_{H^p}=\sup_{0<r<1}M_p(r,f)<\infty$, and $\BMOA$ is the space of functions in $H^1$ that have bounded mean oscillation on the boundary. We define
    $$
    \BMOA(\infty,\om)
		=\left\{f\in\H(\D):\|f\|_{\BMOA(\infty,\om)}
		=\sup_{0<r<1}\|f_r\|_{\BMOA}\widehat{\om}(r)<\infty\right\},
    $$
where $f_r(z)=f(rz)$. Our next result describes the dual of $A^1_\om$ when $\om\in\DD$. We will discuss the case $1<p<\infty$ when $P_\om$ on $L^p$ is considered. We also note that it is not hard to identify the dual of $A^p_\om$ with the Bloch space under an appropriate paring when $0<p<1$ and $\om\in\DD$, see \cite[Theorem~1]{PeralaRattya2017} for details.

\begin{theorem}\label{th:A1dual}
Let $\om\in\DD$. Then $(A^1_\om)^\star$ can be identified with $\BMOA(\infty,\om)$ via the
$\langle\cdot,\cdot\rangle_{\om\circ\om}$-pairing with equivalence of norms.
\end{theorem}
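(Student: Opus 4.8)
The plan is to realize the claimed identification through the map $\Psi\colon g\mapsto L_g$, $L_g(f)=\langle f,g\rangle_{\om\circ\om}$, and to show that $\Psi$ is a bounded bijection of $\BMOA(\infty,\om)$ onto $(A^1_\om)^\star$ with $\|L_g\|\asymp\|g\|_{\BMOA(\infty,\om)}$. Injectivity of $\Psi$ is free: testing against monomials gives $\langle z^k,g\rangle_{\om\circ\om}=\overline{\widehat g(k)}\,(\om_{2k+1})^2$, so $L_g\equiv0$ forces $g\equiv0$. Thus two things remain: (a) the bilinear estimate $|\langle f,g\rangle_{\om\circ\om}|\lesssim\|f\|_{A^1_\om}\|g\|_{\BMOA(\infty,\om)}$, which makes $\Psi$ well defined and bounded, and (b) surjectivity of $\Psi$ together with the reverse bound $\|g\|_{\BMOA(\infty,\om)}\lesssim\|L_g\|$. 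Throughout I use the radial weight $\om\circ\om$ determined by $(\om\circ\om)_{2k+1}=(\om_{2k+1})^2$ (explicitly $(\om\circ\om)(s)=\int_s^1\om(t)\om(s/t)\,\tfrac{dt}{t}$); using the moment reformulation of $\DD$ and $\om_x\asymp\widehat\om(1-\tfrac1{x+1})$ one checks $\om\circ\om\in\DD$ and $\widehat{\om\circ\om}(r)\asymp\widehat\om(r)^2$, and directly from the definitions $\langle f,g\rangle_{\om\circ\om}=\tfrac12\langle f,g\rangle_{A^2_{\om\circ\om}}$.

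For (a) I would use the exact ``polarized Littlewood--Paley identity'' $\langle f,g\rangle_{A^2_\nu}=2\nu_1 f(0)\overline{g(0)}+4\int_\D f'(z)\overline{g'(z)}\,\nu^\star(z)\,dA(z)$, valid for every radial weight $\nu$, where $\nu^\star(z)=\int_{|z|}^1\log\tfrac{s}{|z|}\,s\,\nu(s)\,ds$ (this is Parseval on each side, using $\int_0^1 u^{m-1}\log\tfrac1u\,du=m^{-2}$), a routine limiting argument extending it to $f\in A^1_\om$, $g\in\BMOA(\infty,\om)$. Applied with $\nu=\om\circ\om$ it reduces (a) to estimating $\int_\D|f'(z)||g'(z)|(\om\circ\om)^\star(z)\,dA(z)$, and three facts close this: for $\nu\in\DD$ one has $\nu^\star(z)\asymp(1-|z|)\widehat\nu(z)$ (the lower bound by restricting the defining integral to $s\in[\tfrac{1+|z|}2,1]$ and invoking $\DD$), so $(\om\circ\om)^\star(z)\asymp(1-|z|)\widehat\om(z)^2$; since $\BMOA\subset\B$ and $g\in\BMOA(\infty,\om)$ we get $\|g_r\|_{\B}\lesssim\|g\|_{\BMOA(\infty,\om)}/\widehat\om(r)$, which (letting $r\downarrow|z|$) yields the pointwise bound $|g'(z)|(1-|z|)\widehat\om(|z|)\lesssim\|g\|_{\BMOA(\infty,\om)}$; and, for $\om\in\DD$, $\int_\D|f'(z)|\widehat\om(z)\,dA(z)\lesssim\|f\|_{A^1_\om}$, which follows from the $\widehat\om$-adapted dyadic decomposition $\widehat\om(\rho_n)=2^{-n}\widehat\om(0)$ of $[0,1)$ together with the Cauchy-type bound $(1-\rho_n)M_1(\rho_n,f')\lesssim M_1(\rho_{n+1},f)$ and $\|f\|_{A^1_\om}\asymp\sum_n\widehat\om(\rho_n)M_1(\rho_n,f)$. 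Multiplying the first two pointwise inequalities gives $|g'(z)|\,(\om\circ\om)^\star(z)\lesssim\|g\|_{\BMOA(\infty,\om)}\widehat\om(|z|)$, and the third inequality then delivers (a); it also shows the defining limit of $\langle f,g\rangle_{\om\circ\om}$ exists.

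For (b), extend $L$ to $L^1_\om$ by Hahn--Banach and write $L(f)=\int_\D f\,\overline\psi\,\om\,dA$ with $\|\psi\|_{L^\infty}\asymp\|L\|$. Computing Taylor coefficients with the help of $f_r\to f$ in $A^1_\om$ (valid for $\om\in\DD$) shows $L=L_g$ for the analytic function with $\widehat g(k)=(\om_{2k+1})^{-2}\int_\D\overline\zeta^k\psi\,\om\,dA$, that is, $g(z)=2\int_\D\psi(\zeta)\,\overline{B^{\om\circ\om}_z(\zeta)}\,\om(\zeta)\,dA(\zeta)$. The real work is to prove $g\in\BMOA(\infty,\om)$ with $\|g\|_{\BMOA(\infty,\om)}\lesssim\|\psi\|_{L^\infty}$, i.e. $\|g_r\|_{\BMOA}\lesssim\|\psi\|_{L^\infty}/\widehat\om(r)$ uniformly in $r$. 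I would work with the Garsia/Carleson-measure characterization $\|h\|_{\BMOA}^2\asymp|h(0)|^2+\sup_{a\in\D}\int_\D|h'(z)|^2\bigl(1-|\varphi_a(z)|^2\bigr)\,dA(z)$ and, writing $g_r'(z)=\int_\D\psi(\zeta)K_r(z,\zeta)\,dA(\zeta)$ with $K_r(z,\zeta)=\om(\zeta)\sum_k\frac{kr^kz^{k-1}\overline\zeta^k}{(\om_{2k+1})^2}$, estimate the quadratic form by Fubini: bound the kernel $\int_\D K_r(z,\zeta)\overline{K_r(z,\eta)}\bigl(1-|\varphi_a(z)|^2\bigr)\,dA(z)$ of the composed operator and then integrate in $\zeta,\eta$ by a Schur-type argument. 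The decisive analytic input is \eqref{12345} with $N=1$, $p=1$, kernel weight $\om\circ\om$ and norm weight $\om$, which together with $\widehat{\om\circ\om}\asymp\widehat\om^2$ gives $\|(B^{\om\circ\om}_w)^{(1)}\|_{A^1_\om}\lesssim\int_0^{|w|}\frac{dt}{\widehat\om(t)(1-t)^2}+1\lesssim\bigl((1-|w|)\widehat\om(|w|)\bigr)^{-1}$; this controls the pieces in the kernel estimate and supplies the exact power of $\widehat\om(r)$. Once $\|g\|_{\BMOA(\infty,\om)}\lesssim\|\psi\|_{L^\infty}\asymp\|L\|$ is in hand, (a), (b) and injectivity of $\Psi$ give the identification with equivalence of norms.

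I expect this last step to be the main obstacle. The naive estimates are insufficient: a pointwise bound on $|g'|$ only yields the weighted-Bloch bound $|g'(z)|\lesssim\|\psi\|_{L^\infty}/\bigl((1-|z|)\widehat\om(|z|)\bigr)$ and hence, through $\B\supset\BMOA$, merely $\|g_r\|_{\BMOA}\lesssim\|\psi\|_{L^\infty}\widehat\om(r)^{-1}\log\tfrac1{1-r}$, and bounding $\|g_r\|_{\BMOA}$ by $\|g_r\|_{H^\infty}$ loses the same logarithm; the cancellation coming both from the oscillation of $\psi$ over $\D$ and from the $\varphi_a$-weight must be exploited quantitatively. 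This is where the machinery for radial weights in $\DD$ and for the associated reproducing kernels from \cite{PelRatproj} carries the weight of the argument, and the bookkeeping involving the two weights $\om$ and $\om\circ\om$ and the precise exponent of $\widehat\om$ is the technical core.
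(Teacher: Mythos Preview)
Your argument for the bilinear estimate (a) contains a genuine error: the inequality
$\int_\D|f'(z)|\widehat\om(z)\,dA(z)\lesssim\|f\|_{A^1_\om}$ is \emph{false} for $\om\in\DD\setminus\Dd$. Take $\widehat\om(r)=\bigl(\log\tfrac{e}{1-r}\bigr)^{-1}$ and the lacunary $f(z)=\sum_{k}z^{2^k}$. Then $M_1(r,f)\asymp M_2(r,f)\asymp(\log\tfrac{1}{1-r})^{1/2}$ gives $\|f\|_{A^1_\om}\asymp\int_1^\infty t^{-3/2}\,dt<\infty$, while $M_1(r,f')\asymp M_2(r,f')\asymp(1-r)^{-1}$ yields $\int_0^1 M_1(r,f')\widehat\om(r)\,dr\asymp\int_0^1\tfrac{dr}{(1-r)\log\frac{e}{1-r}}=\infty$. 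Your ``dyadic'' justification fails because for $\om\in\DD$ one only has $(1-\rho_n)/(1-\rho_{n+1})\gtrsim1$, not $\lesssim1$; the latter is precisely the condition $\om\in\Dd$.

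The failure is structural, not merely technical. Your proof of (a) uses of $g$ only the pointwise bound $|g'(z)|(1-|z|)\widehat\om(|z|)\lesssim\|g\|_{\BMOA(\infty,\om)}$, i.e.\ only the containment $\BMOA\subset\B$. Hence, if it worked, it would prove $\B(\infty,\om)\hookrightarrow(A^1_\om)^\star$, forcing $\B(\infty,\om)=\BMOA(\infty,\om)$. But for the same $\om$ above and $g(z)=\sum_k k\,z^{2^k}$ one has $\|g_r\|_\B\asymp\log\tfrac{1}{1-r}$ (so $g\in\B(\infty,\om)$) yet $\|g_r\|_{\BMOA}\asymp(\log\tfrac{1}{1-r})^{3/2}$ (so $g\notin\BMOA(\infty,\om)$); and testing $L_g$ against lacunary $f$ shows directly that $L_g$ is unbounded on $A^1_\om$. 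So any valid proof of (a) must exploit the $\BMOA$ structure of $g_r$, not just the Bloch bound. Your part (b) is, as you say, only a sketch; the proposed Schur/kernel scheme would face the same obstruction, since estimating $\int |K_r(z,\zeta)|\,|K_r(z,\eta)|(1-|\varphi_a|^2)\,dA$ via modulus again discards the cancellation needed.

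The paper's proof is organized quite differently. It first shows that for $\om\in\DD$ there is a universal block decomposition (Theorem~\ref{th:Xdecomposition}) giving $A^1_\om\simeq\ell^1_{\log_2K}(H^1,\{P_n\})$ and $\BMOA(\infty,\om)\simeq\ell^\infty_{\log_2K}(\BMOA,\{P_n\})$, then invokes Fefferman's duality $(H^1)^\star\simeq\BMOA$ blockwise to identify $(A^1_\om)^\star\simeq\ell^\infty_{-\log_2K}(\BMOA,\{P_n\})$ via the $H^2$-pairing, and finally proves (Theorem~\ref{th:IomPn}, using the uniform boundedness of Ces\`aro means on $\BMOA$ and the moment characterization \eqref{Intro:D-hat} of $\DD$) that $I^\om\circ I^\om$ is an isomorphism between $\BMOA(\infty,\om)$ and $\ell^\infty_{-\log_2K}(\BMOA,\{P_n\})$, which converts the $H^2$-pairing into $\langle\cdot,\cdot\rangle_{\om\circ\om}$. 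The $H^1$--$\BMOA$ duality is thus used in an essential way at the block level, which is exactly what your approach lacks.
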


The proof of Theorem~\ref{th:A1dual} is quite involved, and requires a large number of technical lemmas and                    known results from the existing literature. The first observation is that the dual of~$A^1_\om$ can be identified with the space of coefficient multipliers from $A^1_\om$ to $H^\infty$. Second, by using \cite[Theorem~2.1]{PavMixnormI}, it is shown that there exists $K>1$ and a sequence of polynomials $\{P_n\}_{n=0}^\infty$ such that $\|f\|_{\BMOA(\infty,\om)}\asymp\sup_{n\in\N\cup\{0\}}K^{-n}\|P_n\ast f\|_{\BMOA}$ and $\|f\|_{A^1_\om}\asymp\sum_{n=0}^\infty K^{-n}\|P_n\ast f\|_{H^1}$, where $\ast$ denotes the convolution. These are the new norms that we will work with. Then, it is shown that the operator $I^\om(g)(z)=\sum_{k=0}^\infty\widehat{g}(k)\om_{2k+1}z^{k}$ satisfies $\|I^\om(P_n\ast f)\|_{\BMOA}\asymp K^{-n}\|P_n\ast f\|_{\BMOA}$. In this step we strongly use the hypothesis $\om\in\DD$ and the fact that the Ces\'aro means are uniformly bounded on $\BMOA$~\cite[Theorem~2.2]{PavMixnormI}. Now that Fefferman's duality relation $(H^1)^\star\simeq\BMOA$ allows us to identify the coefficient multipliers from~$H^1$ to~$H^\infty$ with~$\BMOA$, the operator $I^\om\circ I^\om$ finally gives the isomorphism we are after. One of the crucial steps in the argument is the norm estimate of $I^\om(P_n\ast f)$ on $\BMOA$ stated above, and it deserves special attention for a reason. Namely, a part of the proof of this relies on the technical fact that $\om\in\DD$ if and only if for some (equivalently for each) $\b>0$, there exists a constant $C=C(\om,\b)>0$ such that
    \begin{equation}\label{Intro:D-hat}
    x^\b(\om_{[\b]})_x\le C\om_x,\quad 0\le x<\infty.
    \end{equation}
This analogue of the characterization \eqref{Intro:M} of the class $\M$ for $\DD$ is obtained as a consequence of the Littlewood-Paley estimates to be discussed next.

It is well known that bounded Bergman projections on weighted $L^p$-spaces can be used to establish equivalent norms on weighted Bergman spaces in terms of derivatives~\cite{AlCo,Zhu}. Before we proceed to consider the weighted inequalities for the Bergman projection $P_\om$, which is one of the main topics of the paper, we discuss such norm estimates, usually called the Littlewood-Paley formulas. The arguments we employ do not appeal to bounded Bergman projections, but when we will return to $P_\om$, we will also explain how projections can be used to obtain certain norm estimates in our setting.

Probably the most known Littlewood-Paley formula states that for each standard radial weight $\om(z)=(\alpha+1)(1-|z|^2)^\alpha$ we have
	$$
	\|f\|_{A^p_\om}\asymp\sum_{k=0}^{n-1}|f^{(k)}(0)|+\left\|f^{(n)}\right\|_{A^p_{\om_{[np]}}},\quad f\in\H(\D),
	$$
where $\om_{[\b]}(z)=\om(z)(1-|z|)^{\b}$. Different extensions of this equivalence as well as related partial results can be found in
\cite{AlCo,AlPoRe19,BaoWulanZhu18,PavP}. The question for which radial weights the above equivalence is valid has been a known open problem for decades, and gets now completely answered by our next result.

\begin{theorem}\label{th:L-P-D}
Let $\om$ be a radial weight, $0<p<\infty$ and $k\in\N$. Then
    \begin{equation}\label{Eq:L-P-D}
    \|f\|_{A^p_\om}^p\asymp\int_\D|f^{(k)}(z)|^p(1-|z|)^{kp}\om(z)\,dA(z)+\sum_{j=0}^{k-1}|f^{(j)}(0)|^p,\quad f\in\H(\D),
    \end{equation}
if and only if $\om\in\DDD$.
\end{theorem}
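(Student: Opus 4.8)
The plan is to prove Theorem~\ref{th:L-P-D} by establishing the two implications separately, and reducing to the case $k=1$ in the sufficiency direction by iteration.

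\textbf{Sufficiency ($\om\in\DDD\Rightarrow$ \eqref{Eq:L-P-D}).} First I would treat $k=1$. The upper bound $\int_\D|f'(z)|^p(1-|z|)^p\om(z)\,dA(z)\lesssim\|f\|_{A^p_\om}^p$ holds for every radial weight $\om\in\DD$: one writes $f'$ in terms of $f$ via a Cauchy-type integral over hyperbolic discs, uses that $\om\in\DD$ forces $\widehat\om(r)\asymp\widehat\om(\frac{1+r}{2})\asymp(1-r)\om$ on a suitable averaged sense (more precisely the standard subordination estimate $M_p^p(r,f')(1-r)^p\lesssim\frac{1}{(1-r)}\int_r^{(1+r)/2}M_p^p(s,f)\,ds$ followed by integration against $\om$ and a Fubini argument), invoking only the doubling of $\widehat\om$. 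For the reverse inequality $\|f\|_{A^p_\om}^p\lesssim\int_\D|f'(z)|^p(1-|z|)^p\om(z)\,dA(z)+|f(0)|^p$ I would use the hypothesis $\om\in\Dd$: here one represents $f$ by integrating $f'$ from $0$ along the radius, $f(z)-f(0)=\int_0^{|z|}f'(t z/|z|)\,dt\,\frac{z}{|z|}$, applies Minkowski's inequality in $L^p$ of the circle, and then the reverse-doubling of $\widehat\om$ is exactly what is needed to sum the resulting telescoping dyadic estimate. This is the step where $\Dd$ (equivalently $\M$, when combined with $\DD$) genuinely enters, and it mirrors the remark in the introduction that the opposite inclusion in Theorem~\ref{th:projectionboundedonto} is ``established by showing the inequality in Theorem~\ref{th:otromundo}(ii) under the hypothesis $\om\in\Dd$.'' Once $k=1$ is done, I would iterate: $\om\in\DDD$ implies $\om_{[p]}\in\DDD$ (the weight $\om(z)(1-|z|)^p$ inherits both doubling conditions on its tail integral, by an elementary computation since multiplying by $(1-|z|)^p$ only shifts the tail-integral asymptotics by a fixed power), so applying the $k=1$ case to $f'$ with weight $\om_{[p]}$ and induction gives \eqref{Eq:L-P-D} for general $k$.

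\textbf{Necessity (\eqref{Eq:L-P-D}$\Rightarrow\om\in\DDD$).} Here I would test \eqref{Eq:L-P-D} with carefully chosen families of functions. To force $\om\in\DD$: one feeds in lacunary-type test functions or, more efficiently, the normalized reproducing-type functions $f_a(z)=(1-\overline a z)^{-\gamma}$ (or their finite analogues) and compares the two sides as $|a|\to1$; the $A^p_\om$-norm of such a function is comparable to a negative power of $\widehat\om(a)$ while the derivative side brings in $\widehat\om$ evaluated at a dilated radius, and the only way the two can stay comparable uniformly is if $\widehat\om$ is doubling. Alternatively, and perhaps more cleanly, one uses monomials $f(z)=z^n$: the left side is $\om_{np}$-type, the right side is $n^p(\om_{[p]})_{np}$-type plus lower order, so \eqref{Eq:L-P-D} for monomials is precisely the equivalence $\om_x\asymp x^p(\om_{[p]})_x$ of \eqref{Intro:D-hat} and \eqref{Intro:M} combined --- but \eqref{Intro:D-hat} alone characterizes $\DD$ and \eqref{Intro:M} alone characterizes $\M$, so getting both the upper and lower bounds in \eqref{Eq:L-P-D} tested on monomials yields $\om\in\DD\cap\M=\DDD$ by Theorem~\ref{th:projectionboundedonto}. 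I would phrase the argument this way since it is short and leverages the already-stated characterizations; the only care needed is that monomials are a single-parameter family and one must check they suffice to pin down \emph{both} doubling directions (they do, because the sharp constants in $\om_x\lesssim x^p(\om_{[p]})_x$ and the reverse are each attained in the limit along monomials up to constants --- this is a computation with the moments $\om_x=\int_0^1 r^x\om(r)\,dr$ using the standard estimate $\om_x\asymp\widehat\om(1-1/x)$ available under the relevant class assumption, bootstrapped from the general lower bound $\om_x\gtrsim\widehat\om(1-1/x)$ which holds for all weights).

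\textbf{Main obstacle.} The genuinely delicate part is the reverse Littlewood-Paley inequality in the $k=1$, $\om\in\Dd$ case --- controlling $\|f\|_{A^p_\om}$ by $\|f'(1-|z|)\|_{L^p_\om}$ --- because the radial integration of $f'$ loses information near the origin and, more seriously, reverse-doubling of $\widehat\om$ is a \emph{one-sided} decay condition that must be combined with an $L^p$ (rather than $L^1$ or $L^2$) circular average; getting the dyadic sum to telescope requires the reverse-doubling constant to beat the $p$-dependent constants from Minkowski's inequality, and one has to iterate the reverse-doubling a bounded number of times depending on $p$ and $K$. I expect the bulk of the technical lemmas alluded to in the paper to be deployed precisely here. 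The necessity direction and the $k=1\Rightarrow k$ iteration I expect to be routine given the characterizations \eqref{Intro:M}, \eqref{Intro:D-hat} and Theorem~\ref{th:projectionboundedonto}.
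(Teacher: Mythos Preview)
Your overall architecture matches the paper's: sufficiency via Theorem~\ref{theorem:L-P-D-hat} for the upper bound plus a separate argument from $\om\in\Dd$ for the lower bound, necessity by testing monomials to get $\om\in\DD\cap\M=\DDD$ via \eqref{Intro:D-hat}, \eqref{Intro:M} and Theorem~\ref{th:projectionboundedonto}. The necessity part is exactly what the paper does and is correct as you describe it.

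There is, however, a genuine gap in your plan for the reverse inequality $\|f\|_{A^p_\om}^p\lesssim\int_\D|f'|^p(1-|z|)^p\om\,dA+|f(0)|^p$. You propose to write $f(z)-f(0)=\int_0^{|z|}f'(tz/|z|)\,dt$ and apply Minkowski's inequality on circles to get $M_p(r,f-f(0))\le\int_0^r M_p(t,f')\,dt$, then sum dyadically using $\om\in\Dd$. This works only for $p\ge1$; Minkowski's integral inequality fails for $0<p<1$, and no amount of iterating the reverse-doubling constant will repair it. The paper circumvents this with the inequality \eqref{dinkeli},
\[
M_p^s(\rho,f)-M_p^s(r,f)\le C_p(\rho-r)^s M_p^s(\rho,f'),\quad s=\min\{p,1\},
\]
which is the correct substitute for Minkowski in the subadditive range. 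Combined with the partition $\{\rho_n\}$ defined by $\widehat\om(\rho_n)=K^{-n}\widehat\om(0)$ (for which $\om\in\Dd$ gives $1-\rho_n\lesssim1-\rho_{n+1}$) and a summation lemma of Pavlovi\'c--Pel\'aez, this yields the lower bound for all $0<p<\infty$. Your identification of this step as the main obstacle is correct, but the tool you name is not the one that works.

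A minor point on the iteration: your claim that $\om\in\DDD$ implies $\om_{[p]}\in\DDD$ is correct, but not for the reason you give --- the class $\DD$ alone is \emph{not} closed under multiplication by $(1-s)^\gamma$. What saves you is that under $\om\in\Dd$ one has $\widehat{\om_{[\beta]}}(r)\asymp(1-r)^\beta\widehat\om(r)$ (this is \eqref{7} in the paper), from which both doubling conditions for $\om_{[\beta]}$ follow. The paper instead proves the upper bound directly for general $k$ in Theorem~\ref{theorem:L-P-D-hat} (using $k$ successive applications of a pointwise derivative estimate and only $\om\in\DD$), and iterates only the lower bound, where closure of $\Dd$ alone suffices.
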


We can actually do a bit better than what is stated in Theorem~\ref{th:L-P-D}. Namely, on the way to the proof we will establish the following result.

\begin{theorem}\label{theorem:L-P-D-hat}
Let $\om$ be a radial weight, $0<p<\infty$ and $k\in\N$. Then there exists a constant $C=C(\om,p,k)>0$ such that
    \begin{equation}\label{eq:desLP}
    \int_\D|f^{(k)}(z)|^p(1-|z|)^{kp}\om(z)\,dA(z)+\sum_{j=0}^{k-1}|f^{(j)}(0)|^p\le C\|f\|_{A^p_\om}^p,\quad f\in\H(\D),
    \end{equation}
     if and only if $\om\in\DD$.
\end{theorem}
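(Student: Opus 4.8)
The plan is to prove the two implications separately. Throughout, constants may depend on $\om,p,k$, I write $A\lesssim B$ for $A\le CB$ with such a constant, and I use from the outset that the point evaluations $f\mapsto f^{(j)}(0)$, $0\le j\le k-1$, are bounded on $A^p_\om$ (a consequence of the subharmonicity of $|f|^p$ together with $\widehat\om>0$); this lets me ignore the finite sum in \eqref{eq:desLP} and concentrate on its first term.

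\emph{Necessity.} Assume \eqref{eq:desLP}. I would test it on the monomials $f(z)=z^n$, $n\ge k$: writing both sides in polar coordinates and using $\frac{n!}{(n-k)!}\asymp n^{k}$ leaves $n^{kp}(\om_{[kp]})_{(n-k)p+1}\lesssim\om_{np+1}$ for all $n\ge k$, and then, filling in the arithmetic progression by the monotonicity of $x\mapsto\om_x$ and $x\mapsto(\om_{[kp]})_x$, one gets $x^{kp}(\om_{[kp]})_x\lesssim\om_x$ for every $x\ge1$ (which one recognizes as condition \eqref{Intro:D-hat} for $\beta=kp$). It remains to deduce the doubling of the moments $\om_x\le C\om_{2x}$, i.e.\ $\om\in\DD$. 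When $kp\ge1$ this is short: on $0\le r\le1-\tfrac1x$ one has $(1-r)^{kp}\ge x^{1-kp}(1-r)$, so
$$
(\om_{[kp]})_x\ \ge\ x^{1-kp}\!\int_0^{1-1/x}\!\!r^x(1-r)\om(r)\,dr\ =\ x^{1-kp}\Big(\om_x-\om_{x+1}-\!\!\int_{1-1/x}^{1}\!\!r^x(1-r)\om(r)\,dr\Big),
$$
and since the last integral is $\le\tfrac1x\widehat\om(1-\tfrac1x)\lesssim\tfrac1x\om_x$ (because $\om_x\ge\int_{1-1/x}^1 r^x\om(r)\,dr\gtrsim\widehat\om(1-\tfrac1x)$), we get $x(\om_x-\om_{x+1})\lesssim\om_x$, i.e.\ $\om_{x+1}\ge(1-\tfrac{C_0}{x})\om_x$; telescoping over $[n,2n]$ gives $\om_n\lesssim\om_{2n}$. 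For $0<kp<1$ the same conclusion follows from the more careful moment analysis underlying the description of the class $\DD$.

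\emph{Sufficiency.} Assume $\om\in\DD$. The idea is a Littlewood--Paley argument: decompose $f=\sum_{n\ge0}P_n\ast f$ into smooth dyadic polynomial blocks (the universal Ces\`aro/de la Vall\'ee-Poussin construction used elsewhere in this paper), each with spectrum in a fixed dilate of $[2^{n},2^{n+1}]$, and first prove the uniform per-block bound
\begin{equation}\label{perblock}
\int_\D|(P_n\ast f)^{(k)}(z)|^p(1-|z|)^{kp}\om(z)\,dA(z)\ \lesssim\ \|P_n\ast f\|_{A^p_\om}^p .
\end{equation}
To get \eqref{perblock}, write the left side as $\asymp\int_0^1(1-r)^{kp}\om(r)M_p^p\big(r,(P_n\ast f)^{(k)}\big)\,dr$, apply Bernstein's inequality on the circles $k$ times to obtain $M_p(r,(P_n\ast f)^{(k)})\lesssim(2^{n}/r)^{k}M_p(r,P_n\ast f)$, and reduce (the range $r<\tfrac12$ being negligible, since there $M_p(r,P_n\ast f)$ is exponentially small in $2^n$) to $2^{nkp}\int_{1/2}^1(1-r)^{kp}\om(r)M_p^p(r,P_n\ast f)\,dr$. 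Using $M_p(r,P_n\ast f)\le r^{c2^{n}}\|P_n\ast f\|_{H^p}$ and splitting $[\tfrac12,1]$ into the dyadic annuli $1-|z|\asymp2^{-m}$, the integrand $(1-r)^{kp}M_p^p(r,P_n\ast f)$ is a bump of height $\asymp2^{-nkp}\|P_n\ast f\|_{H^p}^p$ at $1-|z|\asymp2^{-n}$ with super-exponentially small tails; the doubling of $\widehat\om$ in the form $\widehat\om(1-2^{\ell}s)\le C^{\ell}\widehat\om(1-s)$ makes the sum over the annuli converge to $\lesssim2^{-nkp}\widehat\om(1-2^{-n})\|P_n\ast f\|_{H^p}^p$. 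As $\om\in\DD$ also yields $\|P_n\ast f\|_{A^p_\om}^p\asymp\widehat\om(1-2^{-n})\|P_n\ast f\|_{H^p}^p$, multiplying back by $2^{nkp}$ gives \eqref{perblock}. The final step is to reassemble: since $(P_n\ast f)^{(k)}(1-|z|)^k$ has essentially the same spectral localization as $P_n\ast f$, I would invoke the square-function description of $L^p_\om$ valid for $\om\in\DD$ and a vector-valued form of \eqref{perblock} to conclude $\int_\D|f^{(k)}(z)|^p(1-|z|)^{kp}\om(z)\,dA(z)\lesssim\|f\|_{A^p_\om}^p$; for $p=2$ this last step is immediate from almost-orthogonality.

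\emph{Where the difficulty lies.} The naive route --- bounding $|f^{(k)}(z)|^p(1-|z|)^{kp}$ by the area mean of $|f|^p$ over a hyperbolic disc and using Fubini --- produces on the right the weight $\widehat\om(z)/(1-|z|)$, which for weights with long vanishing stretches is strictly larger than $\om$ and need not even be integrable, so that $\int_\D|f|^p\widehat\om(z)(1-|z|)^{-1}\,dA\lesssim\|f\|_{A^p_\om}^p$ is simply false in general; the loss is intrinsic to any pointwise bound that forgets the cancellation in a derivative. Hence the essential content is to exploit that $f^{(k)}$ is a $k$-th derivative --- which forces the block decomposition --- and the real work is the reassembly: \eqref{perblock} itself is soft (Bernstein plus a geometric sum over annuli), whereas the vector-valued reassembly for general $0<p<\infty$ is where the hypothesis $\om\in\DD$ is genuinely consumed. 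I would also note that, unlike in the classical case, the statement for general $k$ cannot be reduced to $k=1$ by passing to the weight $\om_{[(k-1)p]}$: the class $\DD$ is not preserved under $\om\mapsto\om_{[\gamma]}$ (a weight built from rapidly thinning spikes separated by long null intervals lies in $\DD$ while $\om_{[\gamma]}$ does not), so the dyadic argument must be run for general $k$ directly.
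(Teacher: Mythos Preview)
Your sufficiency argument has a real gap in the reassembly step, and the paper's route avoids the difficulty entirely by a much more elementary argument. The paper does not decompose $f$ into blocks at all: it iterates the circle-mean estimate $M_p(r,g')\lesssim(\rho-r)^{-1}M_p(\rho,g)$ (a Cauchy-formula consequence, \cite[Lemma~3.1]{PavP}) $k$ times with $\rho=r_j=1-(1-r)2^{-j}$ to obtain
\[
\int_0^1 M_p^p(r,f^{(k)})(1-r)^{kp}\om(r)\,dr\ \lesssim\ \int_0^1 M_p^p\!\left(1-\tfrac{1-r}{2^k},f\right)\om(r)\,dr,
\]
then changes variable, integrates by parts, and uses only $\widehat\om(1-2^k(1-s))\le C^k\widehat\om(s)$ --- the definition of $\DD$ applied $k$ times --- to land on $\|f\|_{A^p_\om}^p$. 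No spectral decomposition, no square function, no case distinction on $p$. By contrast, in your scheme the per-block bound is fine but the reassembly is not: the assertion that $(P_n\ast f)^{(k)}(1-|z|)^k$ ``has the same spectral localization'' is meaningless (multiplication by $(1-|z|)^k$ destroys analyticity), and the ``square-function description of $L^p_\om$'' you invoke would have to hold for $f^{(k)}$ in $L^p_{\om_{[kp]}}$, which is unavailable since --- as you yourself note --- $\om_{[kp]}$ need not lie in $\DD$. For $0<p\le1$ subadditivity would let you sum the blocks, but then you need $\sum_n\widehat\om(1-2^{-n})\|P_n\ast f\|_{H^p}^p\lesssim\|f\|_{A^p_\om}^p$, which with \emph{standard} dyadic blocks is false for general $\om\in\DD$ (the correct norm equivalence uses the $\om$-adapted blocks $\Delta_n^\om$, whose spectral intervals need not be dyadic); for $p>1$ even that route is closed.

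On necessity, your telescoping shortcut $\om_{x+1}\ge(1-C_0/x)\om_x\Rightarrow\om_n\lesssim\om_{2n}$ is a pleasant alternative to the paper's argument, but it only covers $kp\ge1$; the inequality $(1-r)^{kp}\ge x^{1-kp}(1-r)$ on $[0,1-1/x]$ fails when $kp<1$, and your sentence for that case is a hand-wave. The paper instead shows directly that $x^q(\om_{[q]})_x\lesssim\om_x$ (for any $q>0$) forces $\widehat\om(1-C/x)\gtrsim\int_0^{1-C'/x}r^{qx}\om(r)\,dr$, and then unwinds this via Fubini to $\widehat\om(r)\lesssim\widehat\om((1+r)/2)$.
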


The proof of \eqref{eq:desLP} for $\om\in\DD$ is based on standard techniques, but the other implication is much more involved. In particular, the proof reveals that \eqref{eq:desLP} holds if and only if the inequality there holds for all monomials only, and that \eqref{Intro:D-hat} is a characterization of $\DD$. Thus, in our approach, this latter consequence of the proof of Theorem~\ref{theorem:L-P-D-hat} is used to achieve Theorem~\ref{th:A1dual}.

As for the proof of Theorem~\ref{th:L-P-D}, we will show that
	$$
	\|f\|_{A^p_\om}^p\lesssim \int_\D|f^{(k)}(z)|^p(1-|z|)^{kp}\om(z)\,dA(z)+\sum_{j=0}^{k-1}|f^{(j)}(0)|^p,\quad f\in\H(\D),
	$$
is satisfied whenever $\om\in\Dd$. Since $\om\in\Dd$ implies that $(1-s)^\g\om(s)\in\Dd$, $\g\ge 0$, this is obtained by reducing the question to the case $k=1$,  
using an appropriate exponential partition of $[0,1)$ depending on the weight $\om$ and the inequality \cite[Lemma~2]{Pacific} which states that
	\begin{equation}\label{dinkeli}
	M_p^s(\rho,f)-M_p^s(r,f)\le C_p(\rho-r)^sM^s_p(\rho,f'),\quad 0<r<\rho<1,\quad s=\min\{p,1\}.
	\end{equation}
The last step in the proof of Theorem~\ref{th:L-P-D} consists of testing with monomials in \eqref{Eq:L-P-D}, and then using \eqref{Intro:M}, \eqref{Intro:D-hat} and Theorem~\ref{th:projectionboundedonto} to deduce $\om\in\DDD$. It is also worth mentioning that unlike $\Dd$, the class~$\DD$ is not closed under multiplication by $(1-s)^\g$ for any $\g>0$~\cite[Theorem~3]{PRHarmonic}.

A description of the radial weights such that	
	$$
	\|f\|_{A^p_\om}^p\lesssim\int_\D|f^{(k)}(z)|^p(1-|z|)^{kp}\om(z)\,dA(z)+\sum_{j=0}^{k-1}|f^{(j)}(0)|^p,\quad f\in\H(\D),
	$$
remains an open problem for $p\ne2n$ with $n\in\N$. This matter will be briefly discussed at the end of the paper. For Littlewood-Paley estimates in the case of non-radial weight under additional regularity hypotheses, see \cite{AlCo,AlPoRe19,BaoWulanZhu18}.

The question of describing the radial weights such that
	\begin{equation}\label{IntroPomega}
	\|P_\om(f)\|_{L^p_\om}\le C \| f\|_{L^p_\om},\quad f\in L^p_\om, \quad 1<p<\infty,
	\end{equation}
was formally posed by Dostani\'c~\cite[p.~116]{Dostanic}. In the recent years there has been an increasing activity concerning this norm inequality~\cite{CP2,Dostanic,D09,PelRatproj,ZeyTams2012}. Our next result provides a sufficient condition, much weaker than known conditions so far and of correct order of magnitude, for \eqref{IntroPomega} to hold.

\begin{theorem}\label{th:dualityAp}
Let $\om\in\DD$ and $1<p<\infty$. Then $P_\om:L^p_\om\to L^p_\om$ is bounded.
\end{theorem}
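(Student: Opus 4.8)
The plan is to use the Schur test together with the kernel estimates available for $\om\in\DD$, reducing the whole matter to a single integral estimate on the modulus of the reproducing kernel. First I would record that, by the symmetry of the situation ($P_\om$ is self-adjoint with respect to the $L^2_\om$-pairing) and a standard duality argument, it suffices to produce a positive test function. Concretely, I would verify the hypotheses of the Schur test for the positive operator with kernel $|B^\om_z(\z)|\,\om(\z)$ acting on $L^p_\om$: one needs a strictly positive measurable $h$ on $\D$ and a constant $C>0$ such that
\begin{equation*}
\int_\D |B^\om_z(\z)|\,h(\z)^{p'}\om(\z)\,dA(\z)\le C\,h(z)^{p'},\qquad
\int_\D |B^\om_z(\z)|\,h(z)^{p}\om(z)\,dA(z)\le C\,h(\z)^{p},
\end{equation*}
where $p'=p/(p-1)$. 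Guided by the standard weight case, the natural choice is $h(z)=\left(\widehat{\om}(z)\right)^{-\gamma}$ for a small $\gamma>0$ to be fixed, or a slight variant thereof involving $(1-|z|)$; the two inequalities above then become, after integrating in the angular variable, two one-dimensional estimates for $\int_0^1 M_1(r\rho,B^\om_{\,\cdot})\,(\text{weight})\,d\rho$.

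The engine driving the estimate is the growth of the integral means of the kernel. From \eqref{12345} with $N=0$ (applied with $\nu$ a standard weight, or directly from the companion pointwise/integral-means bounds for $B^\om_z$ proved for $\om\in\DD$ in \cite{PelRatproj}), one has for $\om\in\DD$ a two-sided control of the form
\begin{equation*}
M_1(\rho,B^\om_z)\asymp \int_0^{\rho|z|}\frac{dt}{\widehat{\om}(t)\,(1-t)}\asymp
\frac{1}{\widehat{\om}(\rho|z|)},
\end{equation*}
the last comparison using the doubling property of $\widehat\om$ (which gives $\int_0^s \widehat\om(t)^{-1}(1-t)^{-1}\,dt\asymp \widehat\om(s)^{-1}$ for $\om\in\DD$). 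With this in hand, the first Schur inequality reduces to showing
\begin{equation*}
\int_0^1 \frac{1}{\widehat{\om}(\rho|z|)}\,\frac{\om(\rho)}{\widehat{\om}(\rho)^{\gamma p'}}\,\rho\,d\rho
\lesssim \frac{1}{\widehat{\om}(|z|)^{\gamma p'}},
\end{equation*}
and symmetrically for the second one with $\gamma p$ in place of $\gamma p'$; splitting the integral at $\rho=|z|$ and using $-\frac{d}{d\rho}\widehat\om(\rho)=\om(\rho)$ together with the doubling of $\widehat\om$, each piece is an elementary integration against $\om(\rho)\,d\rho/\widehat\om(\rho)^{s}$, which is finite and of the right size precisely when the exponents are chosen in the admissible range. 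Here one needs $\gamma p'>0$ and $\gamma p<1$ (equivalently $0<\gamma<1/p$), plus room to absorb the extra $1/\widehat\om$ from the kernel, so a choice such as $\gamma=\frac{1}{2pp'}$ will do; the doubling constant of $\widehat\om$ is what makes all the implied constants depend only on $\om$ and $p$.

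The main obstacle I anticipate is not any single estimate but making the kernel bound \eqref{12345} interact cleanly with the Schur weights: one must be careful that the comparison $M_1(\rho,B^\om_z)\asymp \widehat\om(\rho|z|)^{-1}$ is uniform down to $\rho|z|$ near $0$ (where the additive $+1$ in \eqref{12345} matters) and near $1$, and that the resulting one-dimensional integrals genuinely close with a constant independent of $z$. A secondary technical point is that \eqref{12345} is stated for the $A^p_\nu$-norm of $B^\om_z$ rather than directly for $M_1(\rho,B^\om_z)$; I would extract the needed integral-means statement either by choosing $\nu=(1-|z|)^{\gamma}\,\om$-type weights and differentiating in $\rho$, or simply by invoking the pointwise kernel estimates for $\om\in\DD$ already established in \cite{PelRatproj} and integrating. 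Once the Schur test is verified for the majorant operator, boundedness of $P_\om$ on $L^p_\om$ for $1<p<\infty$ follows at once, completing the proof.
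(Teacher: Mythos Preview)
Your approach has a genuine gap that cannot be repaired: the Schur test, applied as you do to the kernel $|B^\om_z(\z)|\om(\z)$, bounds the \emph{maximal} Bergman projection $P^+_\om$, not $P_\om$. But Theorem~\ref{th:P+1peso} shows that $P^+_\om:L^p_\om\to L^p_\om$ is bounded if and only if $\om\in\DDD$, and Corollary~\ref{co:P+P} makes explicit that for $\om\in\DD\setminus\Dd$ the operator $P_\om$ is bounded while $P^+_\om$ is not. So your argument, at best, recovers the result only on the strictly smaller class $\DDD$.

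The specific step that fails is your claimed comparison
\[
\int_0^s\frac{dt}{\widehat\om(t)(1-t)}\asymp\frac{1}{\widehat\om(s)},
\]
which you attribute to ``the doubling property of $\widehat\om$.'' In fact this estimate characterizes $\Dd$, not $\DD$; see \eqref{eq:JDbelowdescription} and \eqref{A} in the proof of Theorem~\ref{th:P+1peso} and the discussion immediately after. For $\om\in\DD\setminus\Dd$ the integral $J_\om(s)=\int_0^s\widehat\om(t)^{-1}(1-t)^{-1}\,dt$ can be much larger than $\widehat\om(s)^{-1}$ (for instance when $\widehat\om(r)\asymp(\log\frac{e}{1-r})^{-1}$), and the Schur inequalities you write down simply diverge. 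The paper's proof avoids $|B^\om_z|$ entirely: it exploits the cancellation in the kernel by passing to the duality $(A^p_\om)^\star\simeq A^{p'}_\om$, which is established through the block decomposition $\|f\|_{A^p_\om}^p\asymp\sum_n 2^{-n}\|\Delta^\om_n f\|_{H^p}^p$ of \eqref{18}, the identification $(A^p_\om)^\star\simeq[A^p_\om,H^\infty]$, and the isomorphism $I^\om$ that carries the $H^2$-pairing to the $A^2_\om$-pairing. This route uses only the moment asymptotics $\om_x\asymp\widehat\om(1-1/x)$ available for $\om\in\DD$ and never needs the $\Dd$-type integral control that your Schur argument requires.
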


The proof of Theorem~\ref{th:dualityAp} draws on the fact that for any radial weight $\om$ and $1<p<\infty$, the Bergman projection $P_\om$ is bounded on $L^p_\om$ if and only if $(A^{p}_\om)^\star\simeq A^{p'}_\om$ with equivalence norms under the $A^2_\om$-pairing. We will prove this duality relation by following a similar scheme to that in the proof of Theorem~\ref{th:A1dual}. However, in this case the preparations are easier because of our previous result concerning equivalent norms on $A^p_\om$ in terms of $\ell^p$-type norm of the Hardy norms of blocks of the Maclaurin series, whose size depend on the weight $\om$, see~\cite[Theorem~3.4]{PRAntequera} and also \cite[Theorem~4]{PelRathg}.

In the opposite direction to Theorem~\ref{th:dualityAp}, Dostani\'c~\cite{Dostanic} showed that for a radial weight $\om$ and $1<p<\infty$, the reverse H\"older's inequality
    \begin{equation}\label{Intro:Dostanic-condition}
    \om_{np+1}^\frac1p\om_{np'+1}^\frac1{p'}\lesssim \om_{2n+1},\quad n\in\N,
    \end{equation}
is a necessary condition for $P_\om:L^p_\om\to L^p_\om$ to be bounded. We will offer two simple proofs of this fact as well as two equivalent statements in Proposition~\ref{proposition:dostanic} below. The natural question now is that does the Dostani\'c' condition \eqref{Intro:Dostanic-condition} imply $\om\in\DD$? Unfortunately, we do not know an answer to this, but we can certainly say that these conditions are the same if sufficient regularity on $\om$ is required. To state the result, write $L_\om(x)=-\log\om_x$ for all $0\le x<\infty$. The proof of the following result and more can be found in Section~\ref{sec:conjetures}.

\begin{corollary}\label{Cor-BergmanProjection}
Let $\om$ be a radial weight and $1<p<2$ such that either
	\begin{equation}\label{Eq:extraconditionEpsilon}
	\om_{2x}\lesssim\left(\om_{(p+\e)x}\right)^\frac1p\left(\om_{p'x}\right)^\frac1{p'},\quad 0\le x<\infty,
	\end{equation}
for some $\e=\e(\om)>0$, or for each $0\le y<\infty$ there exists $x=x(y)\in[0,\infty)$ such that
    \begin{equation}\label{oiuy}
    -L_\om''(y)y^2\le Cx\int_{\frac{2+p}{2}x}^{2x}\left(-L_\om''(t)\right)\,dt
    \end{equation}
for some constant $C=C(\om,p)>0$. Then $P_\om:L^p_\om\to L^p_\om$ is bounded if and only if $\om\in\DD$.
\end{corollary}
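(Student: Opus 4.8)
The plan is to deduce Corollary~\ref{Cor-BergmanProjection} from the combination of Theorem~\ref{th:dualityAp}, which already gives the sufficiency of $\om\in\DD$, and the necessity of the Dostani\'c condition \eqref{Intro:Dostanic-condition} (to be recorded in Proposition~\ref{proposition:dostanic}). Thus the only thing that remains is to show that, under either of the regularity hypotheses \eqref{Eq:extraconditionEpsilon} or \eqref{oiuy}, the Dostani\'c condition forces $\om\in\DD$; equivalently, by the moment reformulation noted after Theorem~\ref{ELTEOREMA}, that $\om_x\lesssim\om_{2x}$ for all $x\ge0$. So the corollary reduces to the implication: \emph{Dostani\'c's condition plus regularity $\Rightarrow$ $\DD$.} I would state this as the core lemma and prove it for the two hypotheses separately.

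For the first case, I would start from \eqref{Intro:Dostanic-condition} rewritten with the continuous variable $x$ in place of $n$ (legitimate since for $\om\in\DD$, and in fact in general up to harmless constants, the moments $\om_x$ are comparable along integers and reals by monotonicity of $r^x$), namely $\om_{px}^{1/p}\om_{p'x}^{1/p'}\lesssim\om_{2x}$. Since $p<2<p'$ and $\om_x$ is decreasing in $x$, we have $\om_{px}\ge\om_{2x}\ge\om_{p'x}$, so this reverse H\"older inequality already pins $\om_{2x}$ between $\om_{px}$ and $\om_{p'x}$ up to constants and says the three are all comparable to each other; in particular $\om_{px}\lesssim\om_{2x}$ is not quite what we want but $\om_{p'x}\gtrsim\om_{2x}^{?}$-type control emerges. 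The point of hypothesis \eqref{Eq:extraconditionEpsilon} is to buy a little room: combining $\om_{2x}\lesssim\om_{(p+\e)x}^{1/p}\om_{p'x}^{1/p'}$ with \eqref{Intro:Dostanic-condition} (used at the scaled point $\frac{p+\e}{2}x$ in place of $x$, so that its first factor becomes $\om_{(p+\e)x}$) and iterating should let one replace the exponent $2$ by something strictly bigger than $2$ on the right, i.e. derive $\om_{x}\lesssim\om_{(2+\d)x}\le\om_{2x}$ for some $\d>0$ after a geometric bootstrap — that inequality is exactly $\om\in\DD$. I expect the bookkeeping of the iteration (choosing how the scaled indices compound and checking the constants stay bounded over the geometric series) to be the technical heart of this case, though it is routine in spirit.

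For the second case the mechanism is different: here one works with the convexity-type function $L_\om(x)=-\log\om_x$. It is a standard fact (and surely recorded among the technical lemmas of the paper) that $L_\om$ is convex and increasing, so $-L_\om''\le 0$; writing the Dostani\'c inequality in logarithmic form turns $\om_{px}^{1/p}\om_{p'x}^{1/p'}\lesssim\om_{2x}$ into $\tfrac1p L_\om(px)+\tfrac1{p'}L_\om(p'x)\ge L_\om(2x)-C$, and by Taylor expansion the left minus right side is an integral of $-L_\om''$ over an interval around $2x$ weighted suitably — Jensen's inequality along the segment from $px$ to $p'x$ through $2x$. The condition $\om\in\DD$, via the characterization $\om_x\asymp\widehat\om(1-1/(x+1))$ for $\DD$ weights together with the doubling of $\widehat\om$, translates into a bound of the form $L_\om(2x)-L_\om(x)\le C$, which after expansion is a pointwise/integrated bound on $-L_\om''$ of exactly the shape displayed in \eqref{oiuy}: $-L_\om''(y)y^2$ is controlled by $x\int_{\frac{2+p}{2}x}^{2x}(-L_\om'')$. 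So the strategy is: hypothesis \eqref{oiuy} says the local curvature at any point $y$ is dominated by the curvature accumulated over a fixed-ratio interval near some companion point $x$; feeding the Dostani\'c inequality (which controls precisely such accumulated curvature) into \eqref{oiuy} yields the curvature bound that is equivalent to $\DD$. The main obstacle I anticipate is getting the interval endpoints and the power of $x$ to match cleanly between the Dostani\'c remainder integral, the Taylor expansions of $L_\om$, and the $\DD$-characterization — i.e. verifying that the somewhat ad hoc-looking interval $[\frac{2+p}2 x,2x]$ and the factor $x$ in \eqref{oiuy} are exactly what the other two pieces produce. Once those three integral expressions are normalized to the same form, the implication is immediate, and combined with Theorem~\ref{th:dualityAp} and the necessity of \eqref{Intro:Dostanic-condition} it closes the proof; I would defer the full details to Section~\ref{sec:conjetures} as the statement already indicates.
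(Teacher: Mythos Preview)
Your overall plan matches the paper's: sufficiency is Theorem~\ref{th:dualityAp}, and for necessity one shows that the Dostani\'c condition together with either regularity hypothesis forces $\om\in\DD$. For the second case your strategy is exactly what the paper does---rewrite Dostani\'c as a uniform bound on the double integral $\frac1p\int_{px}^{2x}\int_t^{\frac{p'}{p}t+x(1-\frac{p'}{p})}(-L_\om'')\,ds\,dt$, estimate it below by restricting to the rectangle $[px,\frac{2+p}2x]\times[\frac{2+p}2x,2x]$ to obtain $x\int_{\frac{2+p}2x}^{2x}(-L_\om'')\,dt\lesssim1$, plug this into \eqref{oiuy} to deduce $-L_\om''(y)y^2\lesssim1$, and conclude $\om\in\DD$ via the characterization in Lemma~\ref{lemma:L-function-characterization-D-hat}. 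One slip to correct: $L_\om$ is \emph{concave}, not convex, so $-L_\om''\ge0$; this sign is exactly what makes the lower bound on the double integral go through.

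For the first case you are working much harder than necessary. No bootstrap or iteration is needed: simply juxtapose \eqref{Eq:extraconditionEpsilon} with the Dostani\'c inequality $(\om_{px})^{1/p}(\om_{p'x})^{1/p'}\lesssim\om_{2x}$ and cancel the common factor $(\om_{p'x})^{1/p'}$ to read off $\om_{px}\lesssim\om_{(p+\e)x}$ directly. With $K=(p+\e)/p>1$ this says $\om_y\lesssim\om_{Ky}$, which is $\om\in\DD$ at once. The scaled-Dostani\'c iteration you sketch is an unnecessary detour, and as written it is not clear it would close without further work; the one-line cancellation is both simpler and what the paper actually does.
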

		
The innocent looking condition \eqref{Eq:extraconditionEpsilon} is not easy to work with in the case of irregularly behaving weights because it requires precise control over the moments. We have failed to construct a weight which does not satisfy this condition. With regard to \eqref{oiuy}, we mention here that the auxiliary function $L_\om$ is an increasing unbounded concave function, and it is easy to see that if $L_\om''$ is essentially increasing, then \eqref{oiuy} is trivially satisfied.

Concerning the maximal Bergman projection
	\begin{equation*}
	P^+_\om(f)(z)=\int_{\D}f(\z)|B^\om_{z}(\z)|\,\om(\z)dA(\z)
  \end{equation*}
we will show the following result.

\begin{theorem}\label{th:P+1peso}
Let $\om\in\DD$ and $1<p<\infty$. Then $P^+_\om:L^p_\om\to L^p_\om$ is bounded if and only if $\om\in\DDD$.
\end{theorem}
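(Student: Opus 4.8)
The plan is to reduce the boundedness of $P^+_\om$ on $L^p_\om$ to a testing-type estimate on the reproducing kernel, exactly as in the classical Forelli--Rudin / Schur test scheme, and then to read off the class of weights from the kernel norm estimate \eqref{12345}. First I would record that, since $P^+_\om$ is a positive operator, its $L^p_\om\to L^p_\om$ boundedness is governed by a Schur test with a radial test function: one seeks $h(z)=h(|z|)>0$ and constants so that $\int_\D|B^\om_z(\z)|h(\z)^{p'}\om(\z)\,dA(\z)\lesssim h(z)^{p'}$ and the symmetric inequality with $p$ and $p'$ interchanged. The natural candidate is a power of $\widehat\om$ against a power of $1-|z|$, i.e. $h(z)=\bigl(\widehat\om(z)/(1-|z|)\bigr)^{-1/(pp')}$ or a close variant; the point of assuming $\om\in\DDD=\DD\cap\Dd$ is precisely that both a doubling and a reverse-doubling estimate for $\widehat\om$ are then available, which is what makes such a two-sided Schur test close. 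The sufficiency direction should then follow by combining this Schur estimate with the integral evaluation of $\int_\D|B^\om_z(\z)|(1-|\z|)^a\widehat\om(\z)^b\,dA(\z)$, which in turn reduces (integrating in the angular variable and using that the Taylor coefficients of $B^\om_z$ are positive) to a one-dimensional moment computation controlled through $\om_x\asymp\widehat\om(1-1/(x+1))$ for $\om\in\DD$ and the estimate \eqref{12345} with $N=0$.

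For the necessity direction, assume $P^+_\om$ is bounded on $L^p_\om$; since trivially $\|P_\om f\|\le P^+_\om(|f|)$ pointwise, $P_\om$ is bounded too, but $\om\in\DD$ is already in force, so the content is to extract $\om\in\Dd$. Here I would test $P^+_\om$ (or its adjoint, which because of the absolute values is again an integral against $|B^\om_z|$ and hence essentially of the same type) on a concrete family of functions supported near the boundary — characteristic functions of annuli $\{r<|\z|<\frac{1+r}{2}\}$, or monomials $\z\mapsto\z^n$ after noting that $|B^\om_z(\z)|\ge\Real B^\om_z(\z)$ gives a lower bound by a single positive sum — and compare the resulting two-sided estimate against what \eqref{12345} predicts. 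The failure of reverse doubling for $\widehat\om$ would force the kernel integral to be too large on such test functions, violating the assumed $L^p_\om$ bound; this is the step where one genuinely uses that $P^+_\om$, unlike $P_\om$, cannot benefit from cancellation, so the relevant lower bounds are clean. Alternatively, one can invoke Theorem~\ref{th:P+1peso}'s expected companion fact that $P^+_\om$ bounded on $L^p_\om$ is equivalent to $P^+_\om$ bounded on $L^{p'}_\om$ together with a self-improvement argument, mirroring how $\DD\cap\M=\DDD$ was used in Theorem~\ref{th:projectionboundedonto}.

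I expect the main obstacle to be the sufficiency half, specifically choosing the Schur weight $h$ and verifying the two Schur inequalities simultaneously in the absence of any explicit formula for $B^\om_z$: one only has the two-sided size estimate coming from \eqref{12345} and the positivity of the monomial coefficients, so the delicate point is to show that $\int_\D|B^\om_z(\z)|h(\z)^{p'}\om(\z)\,dA(\z)$ is comparable (not merely bounded) to the right power of $h(z)$, which requires the full strength of $\om\in\DDD$ — the $\DD$ part to sum the moment series from above and the $\Dd$ part to sum it from below with the correct exponent. A secondary technical nuisance is handling the ``$+1$'' terms and the behavior near $z=0$ in \eqref{12345}, and making sure the Schur test is applied on a space where $h^{p'}\om$ and $h^p\om$ are genuinely finite-mass-at-the-origin weights; these are routine once the boundary estimates are in place. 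Throughout, the guiding principle is that every occurrence of $1-|z|$ in the standard-weight proofs gets replaced by $\widehat\om(z)$, and $\DDD$ is exactly the hypothesis that licenses that substitution in both directions.
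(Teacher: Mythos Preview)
Your sufficiency plan is essentially the paper's: a Schur test with a radial power of $\widehat\om$. The paper actually takes $h=\widehat\om^{-1/(pp')}$ (no $(1-|z|)$ factor); the verification goes through the kernel estimate \eqref{12345} with $N=0$, i.e.\ $\|B^\om_z\|_{A^1_\nu}\asymp\int_0^{|z|}\frac{\widehat\nu(t)}{\widehat\om(t)(1-t)}\,dt+1$, and the single place where $\Dd$ (not just $\DD$) is needed is the bound $\int_0^{|z|}\frac{dt}{\widehat\om(t)(1-t)}\lesssim\widehat\om(z)^{-1}$. Once that is in hand both Schur inequalities close with the same $h$. So that half is fine.

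The gap is in your necessity argument. The pointwise inequality $|B^\om_z(\z)|\ge\Real B^\om_z(\z)$ does not produce a usable lower bound: if you integrate a radial test function $\phi(|\z|)$ against $\Real B^\om_z(\z)$ over the circle, every term with $k\ge1$ vanishes and you are left with a constant; if you test with monomials $\z^n$, the integrand is no longer nonnegative and the inequality does not apply. The paper gets the required lower bound on $\int_\D|B^\om_z(\z)|\phi(|\z|)\om(\z)\,dA(\z)$ via Hardy's inequality, which gives
\[
M_1(r,B^\om_z)\gtrsim\sum_{k=0}^\infty\frac{|z|^k r^k}{(k+1)\,\om_{2k+1}},
\]
and then tests with radial powers $\phi(s)=s^N$; this is exactly the mechanism in Proposition~\ref{pr:P+otromundo}, which the paper flags as an alternate route to necessity. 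The paper's primary route is different still: it invokes \cite[Proposition~3]{KorhonenPelaezRattya2018} to obtain the Muckenhoupt-type condition
\[
\sup_{0<t<1}\left(\int_0^t J_\om(r)^p\om(r)\,dr+1\right)\widehat\om(t)^{p-1}<\infty,\qquad J_\om(r)=\int_0^r\frac{dt}{\widehat\om(t)(1-t)},
\]
and from this derives $1-\rho_n\lesssim 1-\rho_{n+1}$ for the dyadic sequence $\widehat\om(\rho_n)=K^{-n}\widehat\om(0)$, which is then shown to be equivalent to $\om\in\Dd$. Either way, the missing ingredient in your outline is a genuine lower bound for the $L^1$-means of $|B^\om_z|$; ``$\Real$'' alone does not supply it.
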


Theorem~\ref{th:P+1peso} is of its own interest, but we underline that it combined with Theorem~\ref{th:dualityAp} shows that the cancellation of the kernel plays an essential role in the boundedness of $P_\om:L^p_\om\to L^p_\om$ when $\om\in\DD\setminus\Dd$. In particular, we have the following immediate consequence of the previous two theorems.

\begin{corollary}\label{co:P+P}
Let $1<p<\infty$ and $\om\in\DD\setminus\Dd$. Then $P_\om:L^p_\om\to L^p_\om$ is bounded but
$P^+_\om:L^p_\om\to L^p_\om$ is not.
\end{corollary}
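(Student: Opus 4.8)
The plan is to combine the two preceding theorems directly. By hypothesis $\om\in\DD\setminus\Dd$. Since $\DDD=\DD\cap\Dd$, the weight $\om$ does not belong to $\DDD$. Apply Theorem~\ref{th:dualityAp}: as $\om\in\DD$ and $1<p<\infty$, the Bergman projection $P_\om:L^p_\om\to L^p_\om$ is bounded. This gives the positive half of the statement with no further work.

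For the negative half, apply Theorem~\ref{th:P+1peso}: again $\om\in\DD$ and $1<p<\infty$, so that result asserts that $P^+_\om:L^p_\om\to L^p_\om$ is bounded \emph{if and only if} $\om\in\DDD$. Since we have just observed $\om\notin\DDD$, the ``if and only if'' forces $P^+_\om:L^p_\om\to L^p_\om$ to be unbounded. Combining the two conclusions yields exactly the assertion of Corollary~\ref{co:P+P}.

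There is essentially no obstacle here: the corollary is a formal consequence of Theorems~\ref{th:dualityAp} and~\ref{th:P+1peso}, together with the definitional identity $\DDD=\DD\cap\Dd$ recorded before Theorem~\ref{th:projectionboundedonto}. The only point worth stating explicitly in the write-up is that $\om\in\DD\setminus\Dd$ indeed means $\om\notin\DDD$, which is immediate from that identity. One could add the one-line interpretive remark already present in the surrounding text --- that this shows the cancellation in the kernel $B^\om_z$ is what rescues the boundedness of $P_\om$ on $L^p_\om$ for weights in $\DD\setminus\Dd$ --- but the proof itself is just the two-step deduction above.
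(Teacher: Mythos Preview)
Your proof is correct and matches the paper's approach exactly: the paper states the corollary as an immediate consequence of Theorems~\ref{th:dualityAp} and~\ref{th:P+1peso} without writing out a separate proof, and your two-step deduction is precisely that immediate consequence.
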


We also prove that the boundedness of $P_\om$ is equally much related to the regularity of the weight $\om$ as to its growth, see the note after Proposition~\ref{proposition:dostanic fails} below.

The inequality \eqref{IntroPomega} can of course also be interpreted as $\|P_\om(f)\|_{A^p_\om}\le C\|f\|_{L^p_\om}$. In view of the Littlewood-Paley formulas, it is natural to ask for which radial weights $\om$, the Bergman projection $P_\om$ is bounded from $L^p_\om$ to the Dirichlet-type space~$D^p_{\om,k}$. Here and from now on, for $0<p<\infty$ and $k\in\N$, $D^p_{\om,k}$ denotes the space of $f\in\H(\D)$ such that
    $$
    \|f\|_{D^p_{\om,k}}=\left\|f^{(k)}\right\|_{A^p_{\om_{[kp]}}}+\sum_{j=0}^{k}|f^{(j)}(0)|<\infty.
    $$
The following result gives an answer this question. Its proof uses Theorem~\ref{theorem:L-P-D-hat} and the norm estimate \eqref{12345}.

\begin{theorem}\label{th:PomegaDp}
Let $\om$ be a radial weight, $1<p<\infty$ and $k\in\N$. Then the following statements are equivalent:
\begin{enumerate}
\item[(i)] $P_\om:L^p_\om\to D^p_{\om,k}$ is bounded;
\item[(ii)] There exists $C=C(\om)>0$ such that for each $L\in (A^{p'}_\om)^\star$ there is a unique $g\in D^p_{\om,k}$ such that
$L(f)=\langle f,g\rangle_{A^2_\om}$ and $\| g\|_{D^p_{\om,k}}\le C\|L\|$. That is, $ (A^{p'}_\om)^\star$ is continuously embedded into
$ D^p_{\om,k}$;
\item[(iii)] $I_d:A^p_\om\to D^p_{\om,k}$ is bounded;
\item[(iv)] $\om\in\DD$.
\end{enumerate}
\end{theorem}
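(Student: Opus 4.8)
The plan is to establish the cycle of implications $(iv)\Rightarrow(i)\Rightarrow(ii)\Rightarrow(iii)\Rightarrow(iv)$, exploiting the Littlewood--Paley machinery already available. First I would record the obvious reductions. The implication $(i)\Rightarrow(iii)$ is essentially free: if $f\in A^p_\om$ then $P_\om(f)=f$ (the projection reproduces analytic functions), so boundedness of $P_\om:L^p_\om\to D^p_{\om,k}$ forces $\|f\|_{D^p_{\om,k}}=\|P_\om(f)\|_{D^p_{\om,k}}\lesssim\|f\|_{L^p_\om}=\|f\|_{A^p_\om}$, which is precisely boundedness of the inclusion $I_d:A^p_\om\to D^p_{\om,k}$. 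Next, $(iii)\Leftrightarrow(iv)$ is nothing but Theorem~\ref{theorem:L-P-D-hat}: unwinding the definition of $\|f\|_{D^p_{\om,k}}$, boundedness of $I_d$ says exactly that $\|f^{(k)}\|_{A^p_{\om_{[kp]}}}^p+\sum_{j=0}^{k-1}|f^{(j)}(0)|^p\lesssim\|f\|_{A^p_\om}^p$, i.e. inequality \eqref{eq:desLP}, which holds if and only if $\om\in\DD$. (The extra $|f^{(k)}(0)|$ term in the $D^p_{\om,k}$-norm is harmless since it is dominated by $\|f^{(k)}\|_{A^p_{\om_{[kp]}}}$ up to a constant, by subharmonicity and the fact that $\om_{[kp]}$ has finite total mass.) So the real content is the pair $(iv)\Rightarrow(i)$ and $(i)\Leftrightarrow(ii)$.

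For $(i)\Leftrightarrow(ii)$ I would invoke the standard duality bridge for Bergman projections already used repeatedly in the paper: for $1<p<\infty$ and any radial weight, the reproducing formula gives $\langle P_\om(f),g\rangle_{A^2_\om}=\langle f,g\rangle_{A^2_\om}$ for suitable $f,g$, and $(A^{p'}_\om)^\star$ is realized via the $A^2_\om$-pairing with $A^p_\om$ (which, since $\om\in\DD$, follows from Theorem~\ref{th:dualityAp} together with the fact that boundedness of $P_\om$ on $L^p_\om$ yields $(A^{p'}_\om)^\star\simeq A^p_\om$). Concretely: if $P_\om:L^p_\om\to D^p_{\om,k}$ is bounded and $L\in(A^{p'}_\om)^\star$, extend $L$ by Hahn--Banach to $\widetilde L\in(L^{p'}_\om)^\star$, represent $\widetilde L$ by some $h\in L^p_\om$ via the $\om$-pairing, and set $g=P_\om(h)\in D^p_{\om,k}$ with $\|g\|_{D^p_{\om,k}}\lesssim\|h\|_{L^p_\om}\asymp\|L\|$; self-adjointness of $P_\om$ on the pairing gives $L(f)=\langle f,g\rangle_{A^2_\om}$ for $f\in A^{p'}_\om$, and uniqueness of $g$ follows because $A^p_\om\subset D^p_{\om,k}$ separates points of $A^{p'}_\om$. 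Conversely, from $(ii)$ one recovers $(i)$: given $f\in L^p_\om$, the functional $g\mapsto\langle P_\om(f),g\rangle$ (equivalently $\langle f,g\rangle_{A^2_\om}$) lies in $(A^{p'}_\om)^\star$ with norm $\lesssim\|f\|_{L^p_\om}$ since $P_\om$ is bounded on $L^p_\om$ (Theorem~\ref{th:dualityAp}), so by $(ii)$ its representing element — which must be $P_\om(f)$ itself — has $\|P_\om(f)\|_{D^p_{\om,k}}\lesssim\|f\|_{L^p_\om}$.

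The main step, and the one I expect to be the genuine obstacle, is $(iv)\Rightarrow(i)$. The estimate to prove is $\|(P_\om f)^{(k)}\|_{A^p_{\om_{[kp]}}}\lesssim\|f\|_{L^p_\om}$. Write $P_\om(f)(z)=\int_\D f(\z)\overline{B^\om_z(\z)}\,\om(\z)\,dA(\z)$ and differentiate $k$ times in $z$ under the integral sign; since $B^\om_z(\z)=\sum_n(\overline z\z)^n/(2\om_{2n+1})$, we get $(P_\om f)^{(k)}(z)=\int_\D f(\z)\,\overline{\partial_\z^{\,k}\!\big(\z^{\,?}\big)}\cdots$ — more cleanly, $(P_\om f)^{(k)}(z)=\int_\D f(\z)\,\overline{(B^\om_z)^{[k]}(\z)}\,\om(\z)\,dA(\z)$ where $(B^\om_z)^{[k]}$ denotes the kernel differentiated $k$ times in the $z$-variable, which is again a radial-type kernel with moments shifted. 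The strategy is then the classical Schur/Forelli--Rudin test applied to this differentiated kernel: one needs two-sided control on $\int_\D |(B^\om_z)^{[k]}(\z)|\,\om(\z)\,dA(\z)$ and on its transpose integral, weighted by $\om_{[kp]}$ on the $z$-side. This is exactly where \eqref{12345} enters with $N=k$: for $\om\in\DD$ it gives $\|(B^\om_z)^{(k)}\|_{A^1_\om}\asymp\int_0^{|z|}\widehat\om(t)/(\widehat\om(t)(1-t)^{k+1})\,dt+1\asymp(1-|z|)^{-k}$, which is precisely the right size to match the weight factor $(1-|z|)^{kp}$ in $\om_{[kp]}$. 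The technical heart is to upgrade these $A^1_\om$-norm estimates of the kernel into a full Schur-test argument on $L^p_\om$, handling the two ranges $1<p\le 2$ and $2\le p<\infty$ (or using duality to reduce one to the other), and to control the finitely many initial coefficients $f^{(j)}(0)$, $0\le j\le k$, separately by the trivial pointwise bound $|f^{(j)}(0)|\lesssim\|f\|_{L^p_\om}$ coming from boundedness of point-and-derivative evaluation on $A^p_\om$ (which holds whenever $\widehat\om>0$). Assembling the Schur estimates with the kernel bounds from \eqref{12345} and the doubling properties of $\om\in\DD$ then yields $(i)$, closing the loop.
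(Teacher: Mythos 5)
Your overall architecture coincides with the paper's: (i)$\Rightarrow$(iii) via the reproducing formula, (iii)$\Rightarrow$(iv) via Theorem~\ref{theorem:L-P-D-hat}, a Hahn--Banach duality bridge for (i)$\Leftrightarrow$(ii), and a Schur test on the differentiated kernel for (iv)$\Rightarrow$(i). Two points need repair. First, in (i)$\Leftrightarrow$(ii) you justify the bound on the functional norm by appealing to Theorem~\ref{th:dualityAp}; that theorem requires $\om\in\DD$, which is not available in either direction of this equivalence (the theorem is stated for arbitrary radial weights, and (ii)$\Rightarrow$(i) in particular cannot presuppose (iv)). The correct and elementary justification is \eqref{eq:fubinisubs} together with H\"older's inequality, $\left|\langle f_r,P_\om(h)\rangle_{L^2_\om}\right|=\left|\langle f_r,h\rangle_{L^2_\om}\right|\le\|h\|_{L^p_\om}\|f\|_{A^{p'}_\om}$, which identifies $(A^{p'}_\om)^\star$ with $P_\om(L^p_\om)$ for \emph{every} radial weight and makes both directions of (i)$\Leftrightarrow$(ii) formal, exactly as in the paper.

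Second, and more substantially, the implication (iv)$\Rightarrow$(i) is only a plan. You correctly identify the operator $(1-|z|)^k(P_\om f)^{(k)}(z)=(1-|z|)^k\int_\D f(\z)(B^\om_\z)^{(k)}(z)\om(\z)\,dA(\z)$ and the relevance of \eqref{12345} with $N=k$, but the Schur test does not run on the plain estimate $\|(B^\om_z)^{(k)}\|_{A^1_\om}\asymp(1-|z|)^{-k}$: one must test against a concrete auxiliary function, and the integrals that then arise are the \emph{weighted} kernel integrals of \cite[Theorem~1]{PelRatproj}. The paper's Theorem~\ref{th:proyderivLP} takes $h=\widehat{\om}^{-\frac{1}{pp'}}$ and verifies the two Schur inequalities \eqref{t1}--\eqref{t2} in one stroke for all $1<p<\infty$; in particular no case split between $1<p\le2$ and $2\le p<\infty$ is needed, contrary to what you anticipate, and the hypothesis $\om\in\DD$ enters precisely through \eqref{12345} and the doubling of $\widehat{\om}$ used to evaluate $\int_0^{r}\widehat{\om}(t)^{-1}(1-t)^{-(k+1)}\,dt$. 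Until the test function is exhibited and these two inequalities are checked, the key step remains unproved. Your treatment of the point-evaluation terms $|f^{(j)}(0)|$ is fine.
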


In the proof we show that $P_\om:L^p_\om\to D^p_{\om,k}$ is bounded whenever $\om\in\DD$. Since this trivially implies (iii), we deduce that \eqref{eq:desLP} is satisfied when $\om\in\DD$. This gives an example of how bounded projections can be used to get Littlewood-Paley estimates in our context.

Concerning Theorem~\ref{th:PomegaDp}, we can actually take a step further and characterize the radial weights $\om$ such that the dual of $A^{p'}_\om$ can be identified with $D^p_{\om,k}$ under the $A^2_\om$-pairing.

\begin{theorem}\label{th:PwDpwonto}
Let $\om$ be a radial weight, $1<p<\infty$ and $k\in\N$. Then the following statements are equivalent:
\begin{enumerate}
\item[\rm(i)]  $P_\om: L^p_\om \to D^p_{\om,k}$ is bounded and onto;
\item[\rm(ii)] $(A^{p'}_\om)^\star \simeq D^p_{\om,k}$ via the $A^2_\om$-pairing with equivalence of norms;
\item[\rm(iii)] $A^p_\om=D^p_{\om,k}$ with equivalence of norms;
\item[\rm(iv)] $\om\in\DDD$.
\end{enumerate}
\end{theorem}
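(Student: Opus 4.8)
The plan is to prove Theorem~\ref{th:PwDpwonto} by combining the boundedness result of Theorem~\ref{th:PomegaDp} with the Littlewood-Paley characterization of Theorem~\ref{th:L-P-D}, and then closing the loop through duality. The logical skeleton I would aim for is the cycle
(iv)$\Rightarrow$(iii)$\Rightarrow$(ii)$\Rightarrow$(i)$\Rightarrow$(iv),
since each individual arrow is either essentially known from an earlier theorem or follows from a soft functional-analytic argument.

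First I would establish (iv)$\Rightarrow$(iii). If $\om\in\DDD=\DD\cap\Dd$, then Theorem~\ref{th:L-P-D} with $k$ in place of $k$ gives exactly
\[
\|f\|_{A^p_\om}^p\asymp\int_\D|f^{(k)}(z)|^p(1-|z|)^{kp}\om(z)\,dA(z)+\sum_{j=0}^{k-1}|f^{(j)}(0)|^p,
\]
which says $A^p_\om=D^p_{\om,k}$ with equivalent norms (the extra term $|f^{(k)}(0)|$ in the definition of $\|\cdot\|_{D^p_{\om,k}}$ is absorbed into $\|f^{(k)}\|_{A^p_{\om_{[kp]}}}$ plus the derivatives at $0$ by a standard subharmonicity/Cauchy-estimate argument, since $\om_{[kp]}$ is again a radial weight with $\widehat{\om_{[kp]}}>0$; one should just remark this). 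Next, (iii)$\Rightarrow$(ii): assuming $A^p_\om=D^p_{\om,k}$ as Banach spaces with equivalent norms, their duals coincide isometrically up to constants, so it suffices to recall that $\om\in\DD$ (which is forced by (iii) via Theorem~\ref{th:PomegaDp}, since (iii) there is exactly $I_d:A^p_\om\to D^p_{\om,k}$ bounded, and one also needs the reverse inclusion which is Theorem~\ref{theorem:L-P-D-hat} read backwards — actually (iii) here is a two-sided estimate, so $\om\in\DD$ is immediate) and then invoke the duality $(A^{p'}_\om)^\star\simeq A^p_\om$ via the $A^2_\om$-pairing. This last duality is precisely the statement that underlies Theorem~\ref{th:dualityAp}: for $\om\in\DD$ and $1<p<\infty$, $P_\om:L^p_\om\to L^p_\om$ is bounded, hence $(A^{p'}_\om)^\star\simeq A^p_\om$ under the $A^2_\om$-pairing with equivalence of norms. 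Composing the two identifications $A^p_\om=D^p_{\om,k}$ and $(A^{p'}_\om)^\star\simeq A^p_\om$ yields (ii).

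For (ii)$\Rightarrow$(i): the boundedness half of $P_\om:L^p_\om\to D^p_{\om,k}$ is already furnished by Theorem~\ref{th:PomegaDp}, since (ii) here implies in particular that $(A^{p'}_\om)^\star$ embeds continuously into $D^p_{\om,k}$, which is condition (ii) of Theorem~\ref{th:PomegaDp}, hence $\om\in\DD$ and $P_\om:L^p_\om\to D^p_{\om,k}$ is bounded. For surjectivity: given $h\in D^p_{\om,k}$, the functional $f\mapsto\langle f,h\rangle_{A^2_\om}$ on $A^{p'}_\om$ is, by the isomorphism in (ii), a generic element of $(A^{p'}_\om)^\star$; by Hahn--Banach extend it to $L^{p'}_\om$, represent it by some $\phi\in L^p_\om$ via the standard $L^{p'}_\om$--$L^p_\om$ duality, and then check that $P_\om\phi$ and $h$ induce the same functional on $A^{p'}_\om$ through the $A^2_\om$-pairing; since (ii) asserts uniqueness of the representing element in $D^p_{\om,k}$ and $P_\om\phi\in D^p_{\om,k}$ (by the boundedness just invoked), one concludes $P_\om\phi=h$, so $P_\om$ is onto. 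Finally (i)$\Rightarrow$(iv): if $P_\om:L^p_\om\to D^p_{\om,k}$ is bounded then $\om\in\DD$ by Theorem~\ref{th:PomegaDp}; it remains to upgrade this to $\om\in\DDD$, i.e. to show $\om\in\Dd$. Here I would argue that boundedness \emph{and} ontoness together give $A^p_\om=D^p_{\om,k}$ with equivalent norms (boundedness of $P_\om$ into $D^p_{\om,k}$ composed with the bounded inclusion $D^p_{\om,k}\hookrightarrow A^p_\om$, which holds for $\om\in\DD$ by the already-known easy estimate \eqref{eq:desLP} read as $\|f\|_{D^p_{\om,k}}\gtrsim\|f\|_{A^p_\om}$ — wait, one needs the inclusion $D^p_{\om,k}\subset A^p_\om$, i.e. the reverse of \eqref{eq:desLP}; this direction, namely $\|f\|_{A^p_\om}^p\lesssim\int_\D|f^{(k)}|^p(1-|z|)^{kp}\om\,dA+\sum|f^{(j)}(0)|^p$, is exactly the content that holds for $\om\in\Dd$, so this is circular). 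The clean way: ontoness of $P_\om$ onto $D^p_{\om,k}$ plus $I^p_\om$ bounded gives that every $h\in D^p_{\om,k}$ is $P_\om\phi$ for some $\phi\in L^p_\om$, and since $P_\om\phi\in A^p_\om$ always (the range of $P_\om$ is in $\H(\D)$ and one checks integrability), we get $D^p_{\om,k}\subset A^p_\om$ as sets; combined with $I_d:A^p_\om\to D^p_{\om,k}$ bounded (Theorem~\ref{th:PomegaDp}(iii), which holds since $\om\in\DD$) and the closed graph theorem we obtain $A^p_\om=D^p_{\om,k}$ with equivalent norms, i.e. \eqref{Eq:L-P-D} holds, whence $\om\in\DDD$ by Theorem~\ref{th:L-P-D}.

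The main obstacle I anticipate is the (i)$\Rightarrow$(iv) step, specifically the passage from ``$P_\om$ onto $D^p_{\om,k}$'' to the two-sided norm equivalence $A^p_\om=D^p_{\om,k}$: one must be careful that surjectivity of the projection is genuinely used and not just its boundedness (which only gives $\om\in\DD$), and that the argument does not secretly presuppose the $\Dd$-side Littlewood-Paley inequality one is trying to derive. The resolution is to exploit that $P_\om$ acts as the identity on $A^p_\om\subset L^p_\om$ (a property of the Bergman projection for radial weights, valid for any $f$ for which the pairing converges — in particular for $f\in A^p_\om$ with $\om\in\DD$, by density of polynomials and the norm estimates behind Theorem~\ref{th:dualityAp}), so that surjectivity onto $D^p_{\om,k}$ forces $D^p_{\om,k}\subset P_\om(L^p_\om)=A^p_\om$ with control of norms via the open mapping theorem, and then Theorem~\ref{th:L-P-D} converts this into $\om\in\DDD$. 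A secondary technical point to handle with care is the role of the extra endpoint term $|f^{(k)}(0)|$ in the definition of $D^p_{\om,k}$ versus the $k-1$ terms appearing in \eqref{Eq:L-P-D}; this is a harmless finite-dimensional correction, but it should be explicitly noted when invoking Theorem~\ref{th:L-P-D} so the norm equivalences line up.
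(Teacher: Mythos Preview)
Your argument is correct, but it follows a different cycle from the paper's. The paper proves (iii)$\Leftrightarrow$(iv) by Theorem~\ref{th:L-P-D}, then (iv)$\Rightarrow$(i)$\Rightarrow$(ii)$\Rightarrow$(iv), and the substantive step is (ii)$\Rightarrow$(iv): from the pairing inequality $|\langle f,g\rangle_{A^2_\om}|\lesssim\|f\|_{A^{p'}_\om}\|g\|_{D^p_{\om,k}}$ the authors test with explicit lacunary families $g_t(z)=\sum_k\frac{(tz)^{2^k}}{\om_{2^{k+1}}}$ and $f_{t,\beta}(z)=\sum_k\frac{(tz)^{2^k}}{\om_{2^{k+1}}^\beta}$, compute their norms via a dyadic-to-integral comparison (the paper's \eqref{eq:onto2}), and extract $\int_0^t\frac{dx}{\widehat\om(x)^\beta(1-x)}\lesssim\widehat\om(t)^{-\beta}$, which is a known characterization of $\Dd$.

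Your route instead closes the loop at (i)$\Rightarrow$(iv) by a soft argument: once $\om\in\DD$ (from Theorem~\ref{th:PomegaDp}), Theorem~\ref{th:dualityAp} gives $P_\om(L^p_\om)=A^p_\om$, so ontoness onto $D^p_{\om,k}$ forces $D^p_{\om,k}\subset A^p_\om$ with norm control via the open mapping theorem, and combined with Theorem~\ref{th:PomegaDp}(iii) you get (iii), hence (iv) by Theorem~\ref{th:L-P-D}. This is cleaner and avoids the lacunary computation entirely, at the cost of invoking Theorem~\ref{th:dualityAp} (which the paper proves independently but does not use here). Your (iii)$\Rightarrow$(ii) likewise leans on Theorem~\ref{th:dualityAp} to identify $(A^{p'}_\om)^\star$ with $A^p_\om$; the paper's corresponding step (i)$\Rightarrow$(ii) is more elementary, using only \eqref{eq:fubinisubs} and the ontoness hypothesis directly. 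So your argument trades the hands-on test-function estimate for an appeal to the $L^p_\om$-boundedness of $P_\om$; both are legitimate, and yours is arguably the more economical use of the machinery already assembled in the paper.
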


The proof of Theorem~\ref{th:PwDpwonto} is strongly based on Theorems~~\ref{th:L-P-D},~\ref{theorem:L-P-D-hat} and~\ref{th:PomegaDp}. In addition, to show that (ii) implies (iv), two appropriate families of lacunary series are constructed to deduce $\om\in\Dd$.

Before we move on to norm inequalities involving different weights, we point out two things. First, all the results presented so far show that each of the classes of weights $\DD$, $\M$ and $\DDD$ appear in a very natural manner in the operator theory of Bergman spaces induced by radial weights. Second, the conditions describing the radial weights such that either \eqref{Eq:L-P-D} or \eqref{eq:desLP} is satisfied, or $P_\om:L^p_\om \to D^p_{\om,k}$ is bounded and/or onto do not depend on $p$.

Our next goal is to study the question of when for given radial weights $\om$ and $\nu$ we have
	\begin{equation}\label{IntroPomeganu}
	\|P_\om(f)\|_{L^p_\nu}\le C\|f\|_{L^p_\nu},\quad f\in L^p_\nu, \quad 1<p<\infty.
	\end{equation}
The most commonly known result concerning the one weight inequality \eqref{IntroPomeganu} for the Bergman projection $P_\om$ is undoubtedly due to Bekoll\'e and Bonami~\cite{B1981,BB}, and concerns the case when~$\nu$ is an arbitrary weight and the inducing weight $\om$ is standard. See \cite{ACJFA12,PRW,PottRegueraJFA13} for recent extensions of this result. Moreover, the main result in \cite{PelRatproj} characterizes \eqref{IntroPomeganu} under the hypothesis that both $\om$ and $\nu$ are regular weights i.e. they satisfy $\eta(r)\asymp\frac{\widehat{\eta}(r)}{1-r}$ for all $0\le r<1$. Regular weights form a subclass of $\DDD$ whose elements do not have zeros and are pointwise comparable to radial $C^n$-weights for $n\in\N$. Therefore these weights are really smooth and not that hard to work with.

Our next result substantially improves \cite[Theorem~3]{PelRatproj}. In particular, it characterizes the pairs $(\om,\nu)$ such that \eqref{IntroPomeganu} holds under the assumption $\om\in\DD$ and $\nu\in\M$. To state the result, some more notation is needed. For $1<p<\infty$ and radial weights $\om$ and $\nu$, let $\sigma(r)=\sigma_{p,\om,\nu}(r)=r\left(\frac{\om(r)}{\nu(r)^{\frac1p}}\right)^{p'}$ for all $0\le r<1$, and define
    \begin{equation}\label{eq:apcondition}
    A_p(\om,\nu)=\sup_{0\le r<1}\frac{\widehat{\nu}(r)^{\frac{1}{p}}\widehat{\sigma}(r)^{\frac{1}{p'}}}{\widehat{\om}(r)}
    \end{equation}
and
    \begin{equation}\label{eq:Mpcondition}
    M_p(\om,\nu)=\sup_{0<r<1}
    \left(\int_0^r\frac{\nu(s)}{\widehat{\om}(s)^p}\,sds+1\right)^\frac1p\widehat{\sigma}(r)^{\frac{1}{p'}}.
    \end{equation}

\begin{theorem}\label{Theorem:P_w-L^p_v}
Let $1<p<\infty$, $\om\in\DD$ and $\nu$ a radial weight. Then the following statements are equivalent:
\begin{enumerate}
    \item[\rm(i)] $P^+_\om:L^p_\nu\to L^p_\nu$ is bounded;
    \item[\rm(ii)] $P_\om:L^p_\nu\to L^p_\nu$ is bounded and $\nu\in\M$;
    \item[\rm(iii)] $A_p(\om,\nu)<\infty$ and $\om,\nu\in\DDD$;
    \item[\rm(iv)] $M_p(\om,\nu)<\infty$ and $\om,\nu\in\DDD$.
\end{enumerate}
Moreover,
\begin{equation}\label{eq:quantativePdospesos}
M^{1-\frac{1}{p}}_p(\om,\nu)\lesssim A_p(\om,\nu)\lesssim \|P_\om\|_{L^p_\nu\to L^p_\nu}\le \|P^+_\om\|_{L^p_\nu\to L^p_\nu}
\lesssim M^{2-\frac{1}{p}}_p(\om,\nu).
\end{equation}
\end{theorem}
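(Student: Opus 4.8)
The plan is to prove the four statements equivalent by running around a cycle, placing the two implications that involve the maximal operator $P^+_\om$ — the sufficiency $(iv)\Rightarrow(i)$ with its quantitative bound, and the necessity direction — on opposite sides, and treating $(iii)\Leftrightarrow(iv)$ as a self‑contained comparison of the quantities $A_p(\om,\nu)$ and $M_p(\om,\nu)$; the chain \eqref{eq:quantativePdospesos} will then be assembled from the bounds produced along the way. The device that substitutes for the missing pointwise control on $B^\om_z$ is that the normalised moments $\tfrac1{2\om_{2n+1}}$ are positive, so that $|B^\om_z(\zeta)|\le\mathcal K_\om(|z|,|\zeta|):=\sum_{n\ge0}\frac{(|z||\zeta|)^n}{2\om_{2n+1}}$, a kernel radial in each variable; conversely, any lower bound one needs for $|B^\om_z|$ can be read off after averaging over arguments. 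Since $\om_{2n+1}\asymp\widehat{\om}\bigl(1-\tfrac1{2n+2}\bigr)$ for $\om\in\DD$, summing this series gives $\mathcal K_\om(r,s)\asymp\bigl(\widehat{\om}(rs)(1-rs)\bigr)^{-1}$, which plays the role the explicit formula for $B^\alpha_z$ plays in the classical theory, and which on $\{s<r\}$ resp.\ $\{s>r\}$ separates into $\bigl(\widehat{\om}(s)(1-s)\bigr)^{-1}$ resp.\ $\bigl(\widehat{\om}(r)(1-r)\bigr)^{-1}$.

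For the sufficiency, assume $\om,\nu\in\DDD$ and $A_p(\om,\nu)<\infty$. From $P^+_\om f(z)\le\int_\D\mathcal K_\om(|z|,|\zeta|)|f(\zeta)|\om(\zeta)\,dA(\zeta)$ and the fact that the majorant has a radial kernel, the estimate $\|P^+_\om f\|_{L^p_\nu}\lesssim\|f\|_{L^p_\nu}$ reduces, after integrating out the arguments, to a one‑variable weighted inequality for $\phi\mapsto\int_0^1\mathcal K_\om(r,s)\phi(s)\om(s)s\,ds$ on $L^p(\nu(r)r\,dr)$. Splitting this into the pieces over $\{s<r\}$ and $\{s>r\}$ turns it into a pair of one‑variable weighted Hardy inequalities (one for $\int_0^r$, one for $\int_r^1$) with kernels of the indicated form; by the known characterisations of such inequalities — valid here because, for $\om\in\DDD$, $\mathcal K_\om$ obeys the structural ($\Delta$‑type) condition that reduces a kernel‑Hardy inequality to a Muckenhoupt‑type one — each piece is equivalent to the finiteness of a Muckenhoupt‑type quantity, which after rewriting the conjugate factor in terms of $\widehat{\sigma}$ is exactly $A_p(\om,\nu)$, respectively $M_p(\om,\nu)$. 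Carrying the constants through Schur's test, applied to the reduced one‑variable problem with a radial test function built as a power product of $\widehat{\sigma}$ and $\widehat{\om}$ (the auxiliary radial weights it produces staying in $\DD$ because $\om,\nu\in\DD$, and the passage between $\widehat{\eta}(s)/(1-s)$ and $\eta(s)$ required by the second Schur inequality being licensed by $\om,\nu\in\Dd$) and using \eqref{12345} with $N=0$, $p=1$ for the auxiliary weights, yields $\|P^+_\om\|_{L^p_\nu\to L^p_\nu}\lesssim M_p(\om,\nu)^{2-1/p}$. Finally, Hölder applied to $\widehat{\om}(r)=\int_r^1(\om/\nu^{1/p})\nu^{1/p}$ gives $A_p(\om,\nu)\gtrsim1$, and comparing $\int_0^r\nu\,\widehat{\om}^{-p}s\,ds$ with $\widehat{\nu}(r)\widehat{\om}(r)^{-p}$ via the doubling and reverse‑doubling of $\om,\nu$ gives $M_p(\om,\nu)^{1-1/p}\lesssim A_p(\om,\nu)\lesssim M_p(\om,\nu)$; this is $(iii)\Leftrightarrow(iv)$ together with the first inequality of \eqref{eq:quantativePdospesos}.

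For the necessity, suppose $P_\om:L^p_\nu\to L^p_\nu$ is bounded with $\om\in\DD$, and test on the non‑analytic functions $f_{E,n}(\zeta)=\chi_E(\zeta)\zeta^n$ with $E=\{s<|\zeta|<t\}$. A direct computation against the kernel series gives $P_\om(f_{E,n})(z)=\bigl(\om_{2n+1}(s,t)/\om_{2n+1}\bigr)z^n$, where $\om_{2n+1}(s,t)=\int_s^t\rho^{2n+1}\om(\rho)\,d\rho$, so $\|P_\om f_{E,n}\|_{L^p_\nu}\le C\|f_{E,n}\|_{L^p_\nu}$ becomes the partial‑moment inequality $\bigl(\om_{2n+1}(s,t)/\om_{2n+1}\bigr)^p\,\nu_{np+1}\le C^p\,\nu_{np+1}(s,t)$. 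With $t=1$, $n\asymp(1-s)^{-1}$ and $\om\in\DD$ making the left factor bounded below, this forces $\nu\in\DD$; suitable choices of $E$ and $n$, together with $\|z^n\|_{L^p_\nu}^p=2\nu_{np+1}$ and the moment asymptotics for $\DD$‑weights, deliver $A_p(\om,\nu)\lesssim\|P_\om\|_{L^p_\nu\to L^p_\nu}$. Adjoining the hypothesis $\nu\in\M$ of (ii) and using $\nu\in\DD$ gives $\nu\in\DD\cap\M=\DDD$, and then $A_p(\om,\nu)<\infty$ together with $\om\in\DD$ and $\nu\in\DDD$ forces $\om\in\Dd$, hence $\om\in\DDD$; this is (iv). The equivalence $(i)\Leftrightarrow(ii)$ is now routine: from (i), $|P_\om f|\le P^+_\om|f|$ gives $P_\om$ bounded on $L^p_\nu$ with $\|P_\om\|_{L^p_\nu\to L^p_\nu}\le\|P^+_\om\|_{L^p_\nu\to L^p_\nu}$, while $(i)\Rightarrow(iv)$ gives $\nu\in\DDD\subset\M$; conversely (ii) yields, by the previous step, $\om,\nu\in\DDD$ and $A_p(\om,\nu)<\infty$, whence $P^+_\om$ is bounded by the sufficiency direction. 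Assembling the quantitative inequalities collected above gives \eqref{eq:quantativePdospesos}.

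The main difficulty I expect lies in the necessity half. The test functions $f_{E,n}$ only produce inequalities between partial moments, and converting these — in the presence of the auxiliary hypothesis $\nu\in\M$ — into the full memberships $\om,\nu\in\DDD$ uses the calculus of the classes $\DD$, $\Dd$ and $\M$ developed earlier in an essential way; in particular, the rigidity statement that $\om\in\DD$, $\nu\in\DDD$ and $A_p(\om,\nu)<\infty$ together imply $\om\in\Dd$ is not evident, and matching the exact exponents $1-\tfrac1p$ and $2-\tfrac1p$ in \eqref{eq:quantativePdospesos} demands care. A secondary, more technical, obstacle is the verification — at each use of Schur's test and in the reduction to the weighted Hardy inequalities — that the auxiliary radial weights assembled from $\om$, $\nu$ and $\sigma$ remain in $\DD$, indeed in $\Dd$, and that $\mathcal K_\om$ obeys the structural condition making a kernel‑Hardy inequality equivalent to a Muckenhoupt‑type one; this is exactly where $\DDD$, rather than $\DD$ alone, is genuinely needed, and it also explains the asymmetry with $P_\om$, whose boundedness, thanks to the cancellation in $B^\om_z$, survives under the weaker hypotheses $\om\in\DD$, $\nu\in\M$ appearing in (ii).
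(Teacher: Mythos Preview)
Your cycle has a genuine gap at the step $(i)\Rightarrow(ii)$, and it is precisely the step you declare ``routine''. To get $\nu\in\M$ from the boundedness of $P^+_\om$ you invoke ``$(i)\Rightarrow(iv)$'', but the only route you have built to $(iv)$ passes through $(ii)$ itself; the argument is circular. Your testing with $f_{E,n}=\chi_E\zeta^n$ is a test on $P_\om$, not on $P^+_\om$: it can detect that $\nu\in\DD$ and that $A_p(\om,\nu)\lesssim\|P_\om\|$, but it produces no information of $\M$-type, because $P_\om$ annihilates the oscillation that distinguishes $\M$ from $\DD$. What the paper does here --- and what your outline is missing --- is to exploit the positivity of $P^+_\om$ on \emph{radial nonnegative} functions: by Hardy's inequality one has the coefficient lower bound
\[
P^+_\om(\phi)(z)\gtrsim\sum_{n=0}^\infty\frac{|z|^n}{(n+1)\,\om_{2n+1}}\int_0^1 r^{n+1}\phi(r)\om(r)\,dr,\qquad \phi\ge 0\ \text{radial},
\]
and testing with $\phi(s)=s^N$ and $\phi(s)=s^{KN}$ then forces $\nu_{Np+1}\gtrsim(\log K)^p\,\nu_{KNp+1}$ and $\om_{2N+1}\gtrsim(\log K)\,\om_{2KN+1}$, i.e.\ $\om,\nu\in\M$. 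This is an independent argument that breaks the circle and is the real content of $(i)\Rightarrow(ii)$.

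A second, smaller point: your ``rigidity'' claim that $A_p(\om,\nu)<\infty$, $\om\in\DD$, $\nu\in\DDD$ forces $\om\in\Dd$ is in fact true (combine H\"older in the form $\widehat\om(r)\lesssim\widehat\nu(r)^{1/p}\widehat\sigma(r)^{1/p'}$ with $A_p<\infty$ to get $\widehat\om\asymp\widehat\nu^{1/p}\widehat\sigma^{1/p'}$, then iterate the $\Dd$-inequality for $\nu$ until the constant beats the comparability constants), but you do not prove it, and it is not the route the paper takes. Instead, the paper gets $\om\in\M$ directly from the boundedness of $P_\om$ and $\nu\in\M$ via the test functions $f_{m,n}(\zeta)=\zeta^{m-n}|\zeta|^{2n}$ (which yield $(\om_{2m+1}/\om_{2(m-n)+1})^p\nu_{p(m-n)+1}\lesssim\nu_{p(m+n)+1}$), and then uses $\DD\cap\M=\DDD$. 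Your $\chi_E\zeta^n$ testing, with the choices you indicate, gives back only the trivial monotonicity $\widehat\nu(1-\tfrac1{np})\lesssim\widehat\nu(1-\tfrac1n)$; it does not yield $\nu\in\DD$ without the two-parameter family $f_{m,n}$. Apart from these two points, your plan for the sufficiency $(iv)\Rightarrow(i)$ via Schur and the radial majorant, and for $(iii)\Leftrightarrow(iv)$, matches the paper's approach.
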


The proof of Theorem~\ref{Theorem:P_w-L^p_v} is divided into several steps, and many auxiliary results interesting in themselves are used. We first establish by a simple testing a necessary condition for $P_\om:L^p_\nu\to L^p_\eta$ to be bounded, provided all the three weights involved are radial. This is a natural generalization of the one weight case given by the Dostani\'c condition \eqref{Intro:Dostanic-condition}, and is stated as Proposition~\ref{proposition:P-necessary} below. In the case $\eta=\nu$ it shows that $\om$ and $\nu$ cannot be so different from each other neither in growth/decay nor in regularity if $P_\om:L^p_\nu\to L^p_\nu$ is bounded. The second auxiliary result is Proposition~\ref{pr:P+otromundo} below and it states that $\om,\nu\in\M$ if $P^+_\om:L^p_\nu\to L^p_\nu$ is bounded. This says that $\om$ and $\nu$ cannot induce very small Bergman spaces when $P^+_\om:L^p_\nu\to L^p_\nu$ is bounded. This is one of the significant differences between $P^+_\om$ and $P_\om$. Proposition~\ref{pr:P+otromundo} is achieved with the aid of Hardy's inequality and appropriate testing. The most involved and technical part of the proof of Theorem~\ref{Theorem:P_w-L^p_v} consists of showing that (iv) implies (i) via an instance of Shur's test. This step relies on a suitable choice of auxiliary function and the norm estimate \eqref{12345}. Actually more is true than what is stated in the theorem because we will show in Proposition~\ref{theorem:P+wLpnu} that the condition $M_p(\om,\nu)<\infty$ describes the pairs $(\om,\nu)$ such that $P^+_\om:L^p_\nu\to L^p_\nu$ is bounded provided one of the weights involved belongs to $\DD$. Finally, we deduce (iii) from (ii) via standard methods, and we observe that (iii) yields (iv) for any radial weight.

The two weight inequality $\|P_\om(f)\|_{L^p_\eta}\lesssim\|f\|_{L^p_\nu}$ with arbitrary weights $\nu,\eta$ for the Bergman projection remains an open problem even in the case $\om\equiv1$. However, it was recently characterized in \cite{KorhonenPelaezRattya2018} under the hypotheses that all the three weights involved are radial and satisfy certain regularity hypotheses. We note that the method of proof used there does not help us to prove the most difficult part of Theorem~\ref{Theorem:P_w-L^p_v} which consists of showing that (iv) implies (i).

The rest of the paper is organized as follows. Theorems~\ref{ELTEOREMA},~\ref{th:otromundo} and~\ref{th:projectionboundedonto} are proved in
Section~\ref{sec:bloch}, but the proof of Theorem~\ref{th:A1dual} is postponed to Section~\ref{Sec:duality-A^p}. In Section~\ref{Sec:L-P-estimates} we prove Theorems~\ref{th:L-P-D} and~\ref{theorem:L-P-D-hat}, while Section~\ref{sec:projDp} contains the proofs of Theorems~\ref{th:PomegaDp} and~\ref{th:PwDpwonto}. Section~\ref{Sec:P_w} is devoted to the study of the inequality \eqref{IntroPomega}. It contains the proofs of Theorems~\ref{th:dualityAp} and~\ref{th:P+1peso} and Corollary~\ref{co:P+P}. Theorem~\ref{Theorem:P_w-L^p_v} is proved in Section~\ref{sec:Pwnu}. Finally, in Section~\ref{sec:conjetures} we discuss two open problems cited in the introduction and pose conjectures for them.

For clarity, a word about the notation already used in this section and to be used throughout the paper. The letter $C=C(\cdot)$ will denote an absolute constant whose value depends on the parameters indicated in the parenthesis, and may change from one occurrence to another.
We will use the notation $a\lesssim b$ if there exists a constant
$C=C(\cdot)>0$ such that $a\le Cb$, and $a\gtrsim b$ is understood
in an analogous manner. In particular, if $a\lesssim b$ and
$a\gtrsim b$, then we write $a\asymp b$ and say that $a$ and $b$ are comparable.

\section{$P_\om:L^\infty\to\B$ plus dualities $(A^1_\om)^\star\simeq\B$ and $\B_0^\star\simeq A^1_\om$}\label{sec:bloch}

In this section we will prove Theorems~\ref{ELTEOREMA},~\ref{th:otromundo} and~\ref{th:projectionboundedonto} in the order of appearance.

\medskip\par
\begin{Prf}{\em{Theorem~\ref{ELTEOREMA}.}}
First observe that if $\om$ is a weight such that $A^2_\om$ is continuously embedded
into $\H(\D)$, then
    \begin{equation}\label{Eq:L^infty-Bloch-condition-proposition}
    \|P_\om\|_{L^\infty\to\B}\asymp\|B^\om_0\|_{A^1_\om}+\sup_{z\in\D}(1-|z|^2)\int_\D\left|(B_\z^\om)'(z)\right|\om(\z)\,dA(\z).
    \end{equation}
To prove that (i) and (iv) are equivalent, assume first $\om\in\DD$. By using $z(B_\z^\om)'(z)=\overline{\z(B_z^\om)'(\z)}$ and \cite[Theorem~1]{PelRatproj} we deduce
    \begin{equation*}
    \begin{split}
    \int_\D|(B_\z^\om)'(z)|\om(\z)\,dA(\z)
    &\le\frac{1}{|z|}\|(B_z^\om)'\|_{A^1_\om}\asymp\frac{1}{1-|z|^2},\quad |z|\to 1^-,
    \end{split}
    \end{equation*}
and hence $P_\om:L^\infty\to\B$ is bounded by \eqref{Eq:L^infty-Bloch-condition-proposition}.
Conversely, if $P_\om:L^\infty\to\B$ is bounded, then \eqref{Eq:L^infty-Bloch-condition-proposition} and the Hardy's inequality \cite[Chapter~3]{Duren} yield
    \begin{equation*}
    \begin{split}
    \infty
    &>\sup_{z\in\D}(1-|z|^2)\int_\D\left|(B_\z^\om)'(z)\right|\om(\z)\,dA(\z)
    \gtrsim\sup_{z\in\D}(1-|z|^2)\sum_{n=1}^\infty\frac{|z|^{n-1}n}{\om_{2n+1}}\frac1{n}\int_0^1r^{n+1}\om(r)\,dr\\
    &\ge\sup_{z\in\D}(1-|z|)\left|\sum_{n=0}^\infty\frac{\om_{n+2}}{\om_{2n+3}}z^n\right|
    \ge\sup_{N\in\N}\frac1N\sum_{n=0}^N\frac{\om_{n+2}}{\om_{2n+3}}\left(1-\frac1N\right)^n
    \gtrsim\sup_{N\in\N}\frac1N\sum_{n=0}^N\frac{\om_{n+2}}{\om_{2n+3}}.
    \end{split}
    \end{equation*}
For $x\in \mathbb{R}$, let $E[x]\in \mathbb{Z}$ be defined by $E[x]\le x<E[x]+1$. Since
    \begin{equation*}
    \begin{split}
    1\gtrsim &\frac1N\sum_{n=0}^N\frac{\om_{n+2}}{\om_{2n+3}}\ge
    \frac{\om_{N+2}}{N}\sum_{n=E[\frac{3N}{4}]}^N\frac{1}{\om_{2n+3}}
    \ge \frac{\om_{N+2}}{4\om_{2E[\frac{3N}{4}]+3}},\quad N\in\N,
    \end{split}
    \end{equation*}
there exists a constant $C=C(\om)>0$ such that
    \begin{equation*}
    \begin{split}
    \om_{N+2}
    &\le C\om_{2E[\frac{3N}{4}]+3}
    \le C^2\om_{2E[\frac34(2E[\frac{3N}{4}]+1)]+3}\\
    &\le C^2\om_{2E[\frac{9N}{8}-\frac34]+3}
    \le C^2\om_{\frac{9N}{4}-\frac{1}{2}}\le C^2\om_{2N+4},\quad N\ge 18.
    \end{split}
    \end{equation*}
Hence $\om\in\DD$ by \cite[Lemma~2.1(ix)]{PelSum14}.

Next observe that for each radial weight $\om$ the dual space $(A^1_\om)^\star$ can be identified with $P_\om(L^\infty)$
under the $A^2_\om$-pairing,                                       and in particular, for each $L\in(A^1_\om)^\star$ there exists $h\in L^\infty$ such that
    \begin{equation}\label{eq:k1}
    L(f)=L_{P_\om(h)}(f)=\langle f,P_\om(h)\rangle_{A^2_\om},\quad f\in A^1_\om,\quad\|L\|=\|h\|_{L^\infty}.
    \end{equation}
To see this, note that		
		\begin{equation}\label{eq:fubinisubs}
		\langle f_r, P_\om(h)\rangle_{L^2_\om}=\langle f_r, h\rangle_{L^2_\om},\quad h\in L^1_\om,\quad f\in\H(\D),\quad 0<r<1.
		\end{equation}
If now $h\in L^\infty$ and $g=P_\om(h)$, then
		\begin{equation}
    \begin{split}\label{eq:dual1}
    |L_g(f)|
		&=\lim_{r\to 1^-}\left|\langle f_r, P_\om(h) \rangle_{L^2_\om}\right|
		=\lim_{r\to 1^-} \left| \langle f_r, h\rangle_{L^2_\om}\right|
		\le\|h\|_{L^\infty}\| f\|_{A^1_\om},\quad f\in A^1_\om,
    \end{split}
    \end{equation}
and hence $L_g\in(A^1_\om)^\star$ with $\|L_g\|\le\|h\|_{L^\infty}$. Conversely, if $L\in (A^1_\om)^\star$, then by the Hahn-Banach theorem $L$ can be extended to a bounded linear functional on $L^1_\om$ with the same norm. Since $(L^1_\om)^\star$
can be identified with $L^\infty$ under the $L^2_\om$-pairing, there
exists $h\in L^\infty$ such that $L(f)=\langle f,h\rangle_{L^2_\om}$ for
all $f\in L^1_\om$. In particular, for each $f\in A^1_\om$ we have $L(f_r)=\langle f_r,h\rangle_{L^2_\om}
    =\langle f_r,P_\om(h)\rangle_{A^2_\om}
    $
by \eqref{eq:fubinisubs}. Therefore
    \begin{equation}\label{eq:dual2}
    L(f)=\lim_{r\to 1^-}L(f_r)=\lim_{r\to 1^-}\langle
    f_r,P_\om(h)\rangle_{A^2_\om}=L_{P_\om(h)}(f),\quad f\in A^1_\om.
    \end{equation}

It is an immediate consequence of \eqref{eq:k1} that (i) implies (ii). For the converse, let $h\in L^\infty$, and consider the functional defined by $L_{P_\om(h)}(f)=\langle f,P_\om(h)\rangle_{A^2_\om}$ for all $f\in A^1_\om$. Then $L_{P_\om(h)}\in(A^1_\om)^\star$ with $\|L\|\le\|h\|_{L^\infty}$ by what we just proved. But by the hypothesis (ii), there exists $g\in\B$ such that $L_{P_\om(h)}(f)=L_g(f)$  for all $f\in A^1_\om$, and $\|g\|_{\B}\le C\|L\|$. Hence $L_{P_\om(h)-g}$ represents a zero functional, and thus $g=P_\om(h)$ with $\|P_\om(h)\|_{\B}=\|g\|_{\B}\le C\|L\|=C\|h\|_{L^\infty}$. Therefore (i) is satisfied.

We now show that (iii) and (iv) are equivalent. Assume first $\om\in\DD$ and let $L\in \mathcal{B}_0^\star$. We aim for constructing a function $g\in A^1_\om$ such that $L(f)=L_g(f)=\langle f,g\rangle_{A^2_\om}$ for all $f\in\B_0$, and
$\|g\|_{A^1_\om}\lesssim\|L\|$. To do this, let $g(z)=\sum_{n=0}^\infty\widehat{g}(n)z^n$, where $\widehat{g}(n)=\overline{ \frac{L(z^n)}{\om_{2n+1}}}$ for all $n\in\N\cup\{0\}$. Then
    $$
    |\widehat{g}(n)|\le\frac{\|L\|\|z^n\|_{\mathcal{B}}}{\om_{2n+1}}\lesssim\frac{1}{\om_{2n+1}},\quad n\in\N\cup\{0\},
    $$
and therefore $g\in\H(\D)$. If
$f\in\mathcal{B}$ with $f(z)=\sum_{n=0}^\infty\widehat{f}(n)z^n$, then
$f_r\in\mathcal{B}_0$ for each $r\in(0,1)$, and
    $$
    \sum_{n=0}^m\widehat{f}(n)(rz)^n\to f_r(z),\quad m\to\infty,
    $$
in $\left(\mathcal{B}_0,\|\cdot\|_\mathcal{B}\right)$. Therefore
    \begin{equation}\label{eq:d0n}
    L\left(\sum_{n=0}^m\widehat{f}(n)(rz)^n\right)
    =\sum_{n=0}^m\widehat{f}(n)\overline{\widehat{g}(n)}\om_{2n+1}r^n\to L(f_r),\quad m\to\infty,
    \end{equation}
which implies
    \begin{equation}\label{eq:d00n}
    L(f_r)= \sum_{n=0}^\infty \widehat{f}(n)\overline{\widehat{g}(n)}\om_{2n+1}r^n
    =\langle f,g_r\rangle_{A^2_\om},
    \end{equation}
and hence
    \begin{equation}\label{eq:d1n}
    \left|\langle f,g_r\rangle_{A^2_\om}\right|
    =|L(f_r)|
    \le\|L\|\|f_r\|_{\mathcal{B}}
    \le\|L\|\|f\|_{\mathcal{B}},\quad f\in \mathcal{B}.
    \end{equation}

Moreover, we just showed that $(A^1_\om)^\star\simeq P_\om(L^\infty)$,
and hence each $\Psi\in (A^1_\om)^\star$ is of the form $\Psi(G)=\langle G,P_\om(h)\rangle_{A^2_\om}$ for some $h\in L^\infty$ with $\|\Psi\|=\|h\|_{L^\infty}$. Therefore, as $P_\om:L^\infty\to\B$ is bounded by the first part of the proof, for each $\Psi\in (A^1_\om)^\star$ there exists $f\in\B$ such that $\Psi(G)=\langle G,f\rangle_{A^2_\om}$ with $\|f\|_{\B}\le\|P_\om\|_{L^\infty\to\B}\|\Psi\|$,
and hence a known byproduct of the Hahn-Banach theorem implies
    \begin{equation}\label{eq:d2n}
    \|g_r\|_{A^1_\om}
    =\sup_{\Psi\in (A^1_\om)^\star}\frac{|\Psi(g_r)|}{\|\Psi\|}
    \le\|P_\om\|_{L^\infty\to\B}\sup_{f\in\mathcal{B}\setminus\{0\}}\frac{|\langle f,g_r\rangle_{A^2_\om}|}{\|f\|_{\mathcal{B}}}.
    \end{equation}
By combining \eqref{eq:d1n} and \eqref{eq:d2n}, and then letting $r\to1^-$, we deduce  
$\|g\|_{A^1_\om}\le\|P_\om\|_{L^\infty\to\B}\|L\|$. Finally, by using that $\lim_{r\to 1^-}\|f_r-f\|_{\mathcal{B}}=0$ for all $f\in\mathcal{B}_0$,
and \eqref{eq:d00n} we deduce
    $$
    L(f)=\lim_{r\to 1}L(f_r)=\lim_{r\to 1}\sum_{n=0}^\infty \widehat{f}(n)\overline{\widehat{g}(n)}\om_{2n+1}r^n
    =\langle f,g\rangle_{A^2_\om},\quad f\in\mathcal{B}_0.
    $$

Conversely, assume (iii). For each $n\in\N\cup\{0\}$, consider $L_n\in \B_0^\star$ defined by $L_n(f)=\widehat{f}(n)\om_{2n+1}$, where $f(z)=\sum_{n=0}^\infty \widehat{f}(n)z^n$. There exists an absolute constant $C>0$ such that
    $$
    |L_n(f)|\le C\| f\|_{\B} \om_{2n+1},\quad n\in\N\cup\{0\},
    $$
and hence $\|L_n\|=\sup_{f\in \B_0, f\neq 0}\frac{|L_n(f)|}{\|f\|_{\B}}\le C \om_{2n+1}$. Moreover, $2L_n(f)=\langle f, e_n\rangle_{A^2_\om}$, where $e_n(z)=z^n$. Therefore the hypothesis (iii) yields
    $$
    \om_{n+1}=\|e_n\|_{A^1_\om}\lesssim\|L\|\lesssim\om_{2n+1},\quad n\in\N\cup\{0\},
    $$
and thus $\om\in\DD$ by \cite[Lemma~2.1(ix)]{PelSum14}.
\end{Prf}

\medskip

Before providing a proof of Theorem~\ref{th:otromundo}
some notation and auxiliary results are needed. For a given compactly supported $C^\infty$-function $\Phi:\mathbb{R}\to\C$, set
    $$
    A_{\Phi,m}=\max_{x\in\mathbb{R}}|\Phi(x)|+m\max_{x\in\mathbb{R}}|\Phi^{(m)}(x)|,\quad m\in\N\cup\{0\},
    $$
and define
    \begin{equation}\label{eq:polynomials}
    W_n^\Phi(z)=\sum_{k\in\mathbb
    Z}\Phi\left(\frac{k}{n}\right)z^{k},\quad n\in\N.
    \end{equation}
The Hadamard product of $f(z)=\sum_{k=0}^\infty \widehat{f}(k)z^k\in\H(\D)$ and $g(z)=\sum_{k=0}^\infty \widehat{g}(k)z^k\in\H(\D)$ is
    $$
    (f\ast g)(z)=\sum_{k=0}^\infty\widehat{f}(k)\widehat{g}(k)z^k,\quad z\in\D.
    $$
A direct calculation shows that
    \begin{equation}\label{eq:hadprod}
    (f\ast g)(r^2e^{it})
    =\frac{1}{2\pi}\int_{-\pi}^\pi f(re^{i(t+\theta)})g(re^{-i\t})\,d\t.
    \end{equation}
The next result follows from the considerations on \cite[pp.~111--113]{Pabook}, see also \cite[Section 5.2]{Pabook2}.

\begin{lettertheorem}\label{th:cesaro}
Let $\Phi:\mathbb{R}\to\C$ be a compactly supported $C^\infty$-function. Then the following statements hold:
\begin{itemize}
\item[\rm(i)] There exists a constant $C>0$ such that
    $$
    \left|W_n^\Phi(e^{i\theta})\right|\le C\min\left\{
    n\max_{s\in\mathbb{R}}|\Phi(s)|,
    n^{1-m}|\theta|^{-m}\max_{s\in\mathbb{R}}|\Phi^{(m)}(s)|
    \right\},
    $$
for all $m\in\N\cup\{0\}$, $n\in\N$ and $0<|\theta|<\pi$.
\item[\rm(ii)] If $0<p\le 1$ and $m\in\N$ with $mp>1$, there exists a constant $C(p)>0$ such that
    $$
    \left(\sup_{n}\left|(W_n^\Phi\ast f)(e^{i\theta})\right|\right)^p\le CA^p_{\Phi,m}
    M(|f|^p)(e^{i\theta})
    $$
for all $f\in H^p$. Here $M$ denotes the Hardy-Littlewood
maximal-operator
    $$
    M(f)(e^{i\theta})=\sup_{0<h<\pi}\frac{1}{2h}\int_{\theta-h}^{\theta+h}|f(e^{it})|\,dt.
    $$
\item[\rm(iii)]
Then, for each $p\in(0,\infty)$ and $m\in\N$ with $mp>1$, there exists a constant
$C=C(p)>0$ such that
    $$
    \|W_n^\Phi\ast f\|_{H^p}\le C A_{\Phi,m}\|f\|_{H^p}
    $$
for all $f\in H^p$ and $n\in\N$.
\end{itemize}
\end{lettertheorem}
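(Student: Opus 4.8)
I would prove the three statements in turn. Part (i) is elementary: write $\supp\Phi\subset[-R,R]$, so that $W_n^\Phi(e^{i\theta})=\sum_{|k|\le Rn}\Phi(k/n)e^{ik\theta}$ is a sum of at most $2Rn+1$ terms of modulus $\le\max|\Phi|$, giving $|W_n^\Phi(e^{i\theta})|\le Cn\max|\Phi|$. For the other half of the minimum I would apply Abel summation $m$ times: with $(\Delta a)_k=a_{k-1}-a_k$ one has $(e^{i\theta}-1)^m\sum_k a_ke^{ik\theta}=\sum_k(\Delta^m a)_ke^{ik\theta}$, and for $a_k=\Phi(k/n)$ Taylor's theorem gives $|(\Delta^m a)_k|\le Cn^{-m}\max|\Phi^{(m)}|$ with still $O(n)$ nonzero terms; since $|e^{i\theta}-1|=2|\sin(\theta/2)|\asymp|\theta|$ on $(-\pi,\pi)$, dividing through yields $|W_n^\Phi(e^{i\theta})|\le Cn^{1-m}|\theta|^{-m}\max|\Phi^{(m)}|$, and the minimum of the two estimates is (i). I would isolate two consequences: because $\int_{-\pi}^\pi\min\{n,n^{1-m}|\theta|^{-m}\}\,d\theta\asymp1$ once $m\ge2$, the kernel satisfies $\int_{-\pi}^\pi|W_n^\Phi(e^{i\theta})|\,d\theta\lesssim A_{\Phi,m}$ uniformly in $n$; and since differentiating $W_n^\Phi$ in $\theta$ amounts to replacing $\Phi$ by $\Psi(x)=x\Phi(x)$, the boundary kernel $W_n^\Phi(e^{i\cdot})$ obeys Calder\'on--Zygmund-type size and smoothness estimates with constants controlled by $A_{\Phi,m}$, uniformly in $n$.

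\emph{Part (ii).} Fix $e^{i\theta}$. For $p=1$ and $f\in H^1$ the boundary function $f^*$ lies in $L^1(\T)$ and $(W_n^\Phi\ast f)(e^{i\theta})$ is the circle convolution of $W_n^\Phi$ with $f^*$ (the negative frequencies of $W_n^\Phi$ play no role against the analytic $f^*$). By (i) the kernel is dominated by $CA_{\Phi,m}$ times the even, decreasing, integrable profile $\eta_n(t)=\min\{n,n^{1-m}|t|^{-m}\}$, and $\{\eta_n\}$ is an approximate identity (of total mass $\lesssim1$ because $m\ge2$), so the standard pointwise bound $\sup_n|\eta_n\ast g|\lesssim M(g)$ gives $\sup_n|(W_n^\Phi\ast f)(e^{i\theta})|\lesssim A_{\Phi,m}M(|f^*|)(e^{i\theta})$. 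For $0<p<1$ this elementary splitting is obstructed by the concavity of $t\mapsto t^p$, and one must instead use the analytic structure of $f\in H^p$ through the harmonic majorization $|f(z)|^p\le P[|f^*|^p](z)$, working at the level of $|f|^p$ rather than $|f|$ while decomposing the kernel dyadically over the rings $\{2^{j-1}/n<|t|\le2^j/n\}$; this is where the hypothesis $mp>1$ is used, being exactly the kernel-decay order needed for $\sup_n|W_n^\Phi\ast f|$ to be dominated pointwise by $\bigl(M(|f^*|^p)\bigr)^{1/p}$, equivalently by the grand maximal function of $f$.

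\emph{Part (iii).} For $1\le p<\infty$ this is Young's convolution inequality on $\T$: since $W_n^\Phi\ast f$ is a polynomial, $\|W_n^\Phi\ast f\|_{H^p}=\|W_n^\Phi\ast f\|_{L^p(\T)}$, and the latter is the $L^p$-norm of the circle convolution of $W_n^\Phi$ with $f^*$, hence $\le\|W_n^\Phi\|_{L^1(\T)}\|f^*\|_{L^p(\T)}\lesssim A_{\Phi,m}\|f\|_{H^p}$ by the first consequence of (i). For $0<p<1$ the pointwise estimate (ii) cannot simply be integrated, since $M(|f^*|^p)$ need not belong to $L^1(\T)$; instead I would pass to the atomic decomposition of $H^p$ and reduce to a uniform bound $\|W_n^\Phi\ast a\|_{H^p}\lesssim A_{\Phi,m}$ over $p$-atoms $a$, which is the familiar Calder\'on--Zygmund argument: on a fixed dilate of the arc supporting $a$ one uses H\"older's inequality and the already established case $p=2$ of (iii), while off that arc one exploits the vanishing moments of $a$ against the kernel-smoothness estimates from (i), the smoothness order needed being $>1/p-1$, which is exactly what $mp>1$ provides.

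I expect the real obstacle to be the range $0<p<1$ in parts (ii) and (iii): the concavity of $t\mapsto t^p$ defeats the naive kernel estimates, so one is forced to bring in the real-variable structure of $H^p$ — the harmonic majorant $|f|^p\le P[|f^*|^p]$ and the atomic decomposition — which carries essentially all the technical weight, the hypothesis $mp>1$ being precisely the decay and smoothness the kernel needs to fit that framework.
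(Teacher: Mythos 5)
First, a point of reference: the paper does not prove Theorem~A at all --- it is quoted from Pavlovi\'c \cite[pp.~111--113]{Pabook} (see also \cite[Section~5.2]{Pabook2}) --- so your proposal has to be measured against the standard argument there. Your part (i) is correct and is exactly that argument ($m$-fold Abel summation plus $|e^{i\theta}-1|\asymp|\theta|$), and the cases $p\ge1$ of (ii) and (iii) (domination by an even, decreasing, integrable profile, then Young's inequality) are fine. The genuine gap is in (ii) for $0<p<1$, which is where the whole content of the theorem lies. The two tools you name --- the harmonic majorant $|f|^p\le P[|f^*|^p]$ and a dyadic decomposition of the kernel --- do not assemble into a proof: the majorant only yields the pointwise bound $|f(z)|\le C\bigl(M(|f^*|^p)(e^{i\arg z})\bigr)^{1/p}$, and inserting this into the convolution integral leaves you with $\int|W_n^\Phi|\,\bigl(M(|f^*|^p)\bigr)^{1/p}$, which the concavity of $t\mapsto t^p$ (the very obstruction you correctly identify) prevents you from converting into $M(|f^*|^p)$. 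The missing ingredient is a Nikolskii/Hardy--Littlewood type inequality for polynomials: if $g$ is a polynomial of degree at most $n$ and $f\in H^p$ with $0<p\le1$, then
$$
\left|(f\ast g)(e^{i\theta})\right|^p\le C_p\,n^{1-p}\int_{-\pi}^{\pi}|f(e^{it})|^p\,|g(e^{i(\theta-t)})|^p\,dt.
$$
This is where the analytic (polynomial) structure genuinely enters; combined with your part (i) it shows that $n^{1-p}|W_n^\Phi(e^{it})|^p\lesssim A_{\Phi,m}^p\,n^{1-p}\min\{n^{p},n^{(1-m)p}|t|^{-mp}\}$ is an even decreasing kernel of uniformly bounded mass precisely when $mp>1$, and the maximal-function bound then follows from the same approximate-identity lemma you use for $p=1$. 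Without this lemma (or an equivalent localization device for polynomials of degree $n$) your sketch does not close.

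A secondary remark on (iii). Your observation that (ii) ``cannot simply be integrated'' is correct as stated, but the standard repair is far lighter than an atomic decomposition: apply (ii) with an auxiliary exponent $q$ satisfying $1/m<q<p$ and $q\le1$ (such $q$ exists whenever $0<p\le1$ and $mp>1$), raise the resulting pointwise bound to the power $p/q>1$, and integrate using the boundedness of $M$ on $L^{p/q}(\T)$; this gives $\|\sup_n|W_n^\Phi\ast f|\|_{L^p}\lesssim A_{\Phi,m}\|f\|_{H^p}$, which is stronger than (iii). Your atomic-decomposition route could in principle be made to work, but it is left entirely unexecuted and imports machinery (real-variable atoms, moment conditions, the transfer between analytic and real $H^p$) that the problem does not require. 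Also note that your Young-inequality argument for $p\ge1$ silently assumes $m\ge2$ (so that $\|W_n^\Phi\|_{L^1(\T)}$ is bounded uniformly in $n$); for $m=1$ and $p>1$ one must instead invoke a Marcinkiewicz-type multiplier theorem, although only $m=2$ is ever used in this paper.
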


The property (iii) shows that the polynomials $\{W_n^\Phi\}_{n\in\N}$ can be seen as a universal C\'esaro basis for $H^p$ for any $0<p<\infty$. A particular case of the previous construction is useful for our purposes. By following \cite[Section~2]{JevPac98}, let $\Psi:\mathbb{R}\to\mathbb{R}$ be a $C^\infty$-function such that
    \begin{enumerate}
    \item $\Psi\equiv1$ on $(-\infty,1]$,
    \item $\Psi\equiv0$ on $[2,\infty)$,
    \item $\Psi$ is decreasing and positive on $(1,2)$,
    \end{enumerate}
and set $\psi(t)=\Psi\left(\frac{t}{2}\right)-\Psi(t)$ for all $t\in\mathbb{R}$. Let $V_0(z)=1+z$
and
    \begin{equation}\label{vn}
    V_n(z)=W^\psi_{2^{n-1}}(z)=\sum_{k=0}^\infty
    \psi\left(\frac{k}{2^{n-1}}\right)z^k=\sum_{k=2^{n-1}}^{2^{n+1}-1}
    \psi\left(\frac{k}{2^{n-1}}\right)z^k,\quad n\in\N.
    \end{equation}
These polynomials have the following properties with regard to
smooth partial sums, see \cite[p. 175--177]{JevPac98} or \cite[p. 143--144]{Pabook2} for details:
    \begin{equation}
    \begin{split}\label{propervn}
    &f(z)=\sum_{n=0}^\infty (V_n\ast f)(z),\quad f\in\H(\D),\\
    &\|V_n\ast f\|_{H^p}\le C\|f\|_{H^p},\quad f\in H^p,\quad 0<p<\infty,\\
    &\|V_n\|_{H^p}\asymp 2^{n(1-1/p)}, \quad 0< p<\infty.
    \end{split}
    \end{equation}

\medskip

\begin{Prf}{\em{Theorem~\ref{th:otromundo}.}}
Obviously (ii) implies (iii). To see that (iii) implies $\om\in\M$, let $g(z)=\sum_{k=0}^\infty z^{2^k}$. Then $g\in\B$ because $g$ is a lacunary series with bounded Maclaurin coefficients. For each $j,N\in\N\cup\{0\}$, consider the polynomials  $f_{j,N}(z)=\sum_{k=j}^{N+j}\frac{z^{2^k}}{\om^2_{2^{k+1}}}$ and $g_{j,N}(z)=\sum_{k=j}^{N+j} z^{2^k}$. By applying (iii) to $f=f_{j,N}$ and $g=g_{j,N}$ we deduce
    \begin{equation}\label{eq:otromundo11}
    \sum_{k=j}^{N+j}\frac{1}{\om_{2^{k+1}}}
    \lesssim\sum_{k=j}^{N+j}\frac{\om_{2^{k+1}+1}}{\om_{2^{k+1}}^2}
    \lesssim\|g_{j,N}\|_{\B}\| f_{j,N}\|_{A^1_\om}
    \le\|g\|_{\B}\| f_{j,N}\|_{A^1_\om}
    \asymp\| f_{j,N}\|_{A^1_\om},
    \end{equation}
where. 
    \begin{equation*}
    \begin{split}
    \|f_{j,N}\|_{A^1_\om}
    &\le2\int_0^1M_1(s,f_{j,N})\om(s)\,ds
    \le2\int_0^1M_2(s,f_{j,N})\om(s)\,ds\\
    &=2\int_{0}^{1-\frac{1}{2^{N+j}}} \left(\sum_{k=j}^{N+j}\frac{s^{2^{k+1}}}{\om^4_{2^{k+1}}}\right)^{\frac12}\om(s)\,ds+
    2\int_{1-\frac{1}{2^{N+j}}}^1
    \left(\sum_{k=j}^{N+j} \frac{s^{2^{k+1}}}{\om^4_{2^{k+1}}}\right)^{\frac12}\om(s)\,ds\\
    &=2(I_1+I_2).
    \end{split}
    \end{equation*}
Since
    \begin{equation*}
    \begin{split}
    I_1&
    \le\int_{0}^{1-\frac{1}{2^{N+j}}} s^{2^j} \left(\sum_{k=j}^{N+j}\frac{1}{\om^4_{2^{k+1}}}\right)^{\frac12}\om(s)\,ds
    \le\left(\sum_{k=j}^{N+j}\frac{1}{\om_{2^{k+1}}}\right)^{\frac12}\frac{\om_{2^{j}}}{\om^{\frac32}_{2^{N+j+1}}}
    \end{split}
    \end{equation*}
and
    \begin{equation*}
    \begin{split}
    I_2&
    \le\left(\sum_{k=j}^{N+j}\frac{1}{\om^4_{2^{k+1}}}\right)^{\frac12}\omg\left( 1-\frac{1}{2^{N+j}}\right)
    \lesssim\left(\sum_{k=j}^{N+j}\frac{1}{\om_{2^{k+1}}}\right)^{\frac12}\frac{\om_{2^{j}}}{\om^{\frac32}_{2^{N+j+1}}},
    \end{split}
    \end{equation*}
we obtain
   $$
   \left(\sum_{k=j}^{N+j}\frac{1}{\om_{2^{k+1}}}\right)^{\frac12}
   \lesssim\frac{\om_{2^{j}}}{\om^{\frac32}_{2^{N+j+1}}},\quad j,N\in\N\cup\{0\},
    $$
and hence
    \begin{equation}\label{eq:otromundo51}
    \om_{2^{N+j+1}} \le \frac{C}{(N+1)^{\frac13}} \om_{2^{j}}, \quad j,N\in\N\cup\{0\},
    \end{equation}
for some constant $C=C(\om)>0$. Finally, for each $x>1/2$, take $j\in\N\cup\{0\}$ such that $2^{j-1}<x\le 2^{j}$. Then
    $$
    \frac{C}{(N+1)^{\frac13}}\om_x
    \ge\frac{C}{(N+1)^{\frac13}}\om_{2^{j}}
    \ge\om_{2^{N+j+1}}
    \ge\om_{2^{N+2}x},\quad x>\frac12,
    $$
and by choosing $N$ large enough such that $\frac{C}{(N+1)^{\frac13}}<1$ we deduce $\om\in\M$ with $K=2^{N+2}$.

Assume next $\om\in\M$. To see that (i) is satisfied, we first transform the definition of $\M$ to a more suitable form for our purposes. Our goal is to show that if $\om\in\M$ then for each $\b>0$, there exists $C_1=C_1(\om,\b)>0$ such that
    \begin{equation}\label{eq:characterization-M}
    \om_x\le C_1x^\b\left(\om_{[\b]}\right)_x,\quad 1\le x<\infty.
    \end{equation}
To do this, let first $1\le x\le y<\infty$. Then there exist $m,k\in\N\cup\{0\}$ such that $m\ge k$, $K^k\le x<K^{k+1}$ and $K^m\le y<K^{m+1}$, where $K=K(\om)>1$ is that of the definition of the class $\M$. If $m=k$, then
    $$
    \om_x\ge\om_y=\om_y\left(\frac{y}{x}\right)^\eta\left(\frac{x}{y}\right)^\eta\ge\om_y\left(\frac{y}{x}\right)^\eta K^{-\eta},\quad \eta>0,
    $$
while if $m>k$ we have
    \begin{equation*}
    \begin{split}
    \om_x
    &\ge\om_{K^{k+1}}
    \ge C\om_{K^{k+2}}
    \ge\cdots
    \ge C^{m-k-1}\om_{K^m}\\
    &\ge C^{m-k-1}\om_y
    =K^{(m-k-1)\log_KC}\om_y
    \ge\left(\frac{y}{x}\right)^{\log_KC}\frac{\om_y}{C^2}.
    \end{split}
    \end{equation*}
By combining these estimates we deduce that if $\om\in\M$, then there exist $C_2=C_2(\om)=C^{-2}>0$ and $\eta=\eta(\om)=\log_KC>0$ such that
    \begin{equation}\label{eq:11}
    \om_x\ge C_2\left(\frac{y}{x}\right)^\eta\om_y,\quad 1\le x\le y<\infty.
    \end{equation}
The choice $y=Mx$ gives $\om_x\ge C_2M^\eta\om_{Mx}$ for all $1\le x<\infty$.
This shows that the constant $C=C(\om)>1$ appearing in the definition of $\M$ can be taken as large as we please by choosing~$K$ sufficiently large. Let now $M=M(\om)>2$ be sufficiently large such that $M>\left(\frac4{C_2}\right)^\frac1\eta$.
By choosing $y=Mx$ and replacing $x$ by $x/M$ in \eqref{eq:11}, we deduce
    \begin{equation*}
    \begin{split}
    \frac14\int_{1-\frac1x}^1\om(r)\,dr
    &\le\int_{1-\frac1x}^1r^x\om(r)\,dr\\
    &\le\frac1{C_2M^\eta}\int_0^{1-\frac1x}r^\frac{x}M\om(r)\,dr
		+\frac1{C_2M^\eta}\int_{1-\frac1x}^1r^\frac{x}M\om(r)\,dr
		-\int_0^{1-\frac1x}r^x\om(r)\,dr\\
    &\le\frac1{C_2M^\eta}\int_0^{1-\frac1x}r^\frac{x}M\om(r)\,dr
		+\frac1{C_2M^\eta}\int_{1-\frac1x}^1\om(r)\,dr,\quad x\ge M,
    \end{split}
    \end{equation*}
and hence
    \begin{equation}\label{10}
    \widehat{\om}(t)\le C_3\int_0^ts^{\frac1{M(1-t)}}\om(s)\,ds,\quad 1-\frac{1}{M}\le t<1,
    \end{equation}
where $C_3=\frac{4}{C_2M^\eta-4}$.

To achieve \eqref{eq:characterization-M}, it suffices to show that
    $$
    \int_{1-\frac{1}{Mx}}^{1} s^{x}\om(s)\,ds\le C_1x^\b\left(\om_{[\b]}\right)_x-\int_0^{1-\frac{1}{Mx}} s^{x}\om(s),\quad 1\le x<\infty.
    $$
But \eqref{10} implies
    $$
    \int_{1-\frac{1}{Mx}}^{1} s^{x}\om(s)\,ds
		\le\omg\left(1-\frac{1}{Mx}\right)
		\le C_3\int_{0}^{1-\frac{1}{Mx}}s^{x}\om(s)\,ds,\quad 1\le x<\infty,
    $$
and therefore, to establish \eqref{eq:characterization-M}, it remains to prove
    \begin{equation*}
    \begin{split}
    C_3\int_{0}^{1-\frac{1}{Mx}}s^{x}\om(s)\,ds
    \le C_1x^\b\left(\om_{[\b]}\right)_{x}-\int_0^{1-\frac{1}{Mx}}s^{x}\om(s),
    \end{split}
    \end{equation*}
that is,
    \begin{equation}
    \begin{split}\label{eq:ot4}
    \int_{0}^{1-\frac{1}{Mx}} s^{x}\om(s)\,ds
    \le\frac{C_1}{C_3+1}x^\b\left(\om_{[\b]}\right)_{x},\quad 1\le x<\infty.
    \end{split}
    \end{equation}
But clearly,
    \begin{equation*}
    \begin{split}
    \int_{0}^{1-\frac{1}{Mx}} s^{x}\om(s)\,ds
    &=\int_{0}^{1-\frac{1}{Mx}}s^{x}\om(s)\frac{(1-s)^\beta}{(1-s)^\beta}\,ds\\
    &\le M^\b x^\b\int_{0}^{1-\frac{1}{Mx}}s^{x}\om(s)(1-s)^\beta\,ds
    \le M^\b x^\b\left(\om_{[\b]}\right)_{x},
    \end{split}
    \end{equation*}
which gives \eqref{eq:ot4} with $C_1=(C_3+1)M^\b$, where $M$ and $C_3$ are those of \eqref{10}.

We proceed to show (i) by using \eqref{eq:characterization-M}. Let $f\in\B$ and $\b>0$ be given. Denote 
$D^\b h(z)=\sum_{n=0}^\infty(n+1)^\b\widehat{h}(n)z^n$ for each $h(z)=\sum_{n=0}^\infty\widehat{h}(n)z^n$. Then
    \begin{equation}\label{2}
    g(z)=(1-|z|)^\b D^\b h(z)=(1-|z|)^\b\sum_{n=0}^\infty(n+1)^\b\widehat{h}(n) z^n,\quad z\in\D,
    \end{equation}
satisfies
    \begin{equation*}
    \begin{split}
    P_\om(g)(z)
		&=\int_{\D}(1-|\z|)^\b\left(\sum_{n=0}^\infty(n+1)^\b\widehat{h}(n)\z^n \right)\overline{B^\om_z(\z)}\om(\z)\,dA(\z)\\
    &=\int_0^1\sum_{n=0}^\infty\frac{(n+1)^\b}{\om_{2n+1}}\widehat{h}(n)z^n(1-r)^\b r^{2n+1}\omega(r)\,dr\\
    &=\sum_{n=0}^\infty\frac{(n+1)^\b(\om_{[\b]})_{2n+1}}{\om_{2n+1}} \widehat{h}(n) z^n,\quad z\in\D.
    \end{split}
    \end{equation*}
By choosing $\widehat{h}(n)=\widehat{f}(n)\frac{\om_{2n+1}}{(n+1)^\b(\om_{[\b]})_{2n+1}}$ for all $n\in\N\cup\{0\}$, we have $P_\om(g)=f$. It remains to prove $\|g\|_{L^\infty}\le C\|f\|_{\B}$ for some constant $C=C(\om)>0$. A calculation shows that
    \begin{equation*}
    \begin{split}
    \|g\|_{L^\infty}
    &=\sup_{z\in\D}|D^\b h(z)|(1-|z|)^\b\asymp\|f\ast\lambda\|_{\B}
    \asymp\sup_{z\in\D}|D^{2\b}h(z)|(1-|z|)^{2\b}\\
    &=\sup_{z\in\D}|D^{2\b}(f*\lambda)(z)|(1-|z|)^{2\b},
    \end{split}
    \end{equation*}
where $\lambda(z)=\sum_{n=0}^\infty\widehat{\lambda}(n)z^n=\sum_{n=0}^\infty
\frac{\om_{2n+1}}{(n+1)^\b(\om_{[\b]})_{2n+1}}z^n$. Therefore, what actually remains to be shown is that $\lambda$ is a coefficient multiplier from the Bloch space into itself, that is, $\|f*\lambda\|_\B\lesssim\|f\|_\B$. To see this, note first that \eqref{eq:hadprod} yields
    \begin{equation*}
    \begin{split}
    |D^\b(D^\b\lambda\ast f)(r^2e^{it})|(1-r)^{2\b}
    &=\left|\frac{1}{2\pi}\int_{-\pi}^\pi D^\b\lambda(re^{i(t+\theta)})D^\b f(re^{-i\t})\,d\t\right|(1-r)^{2\b}\\
    &\le M_\infty(r,D^\b f)M_1(r,D^\b\lambda)(1-r)^{2\b}
    \lesssim\|f\|_\B M_1(r,D^\b\lambda)(1-r)^\b,
    \end{split}
    \end{equation*}
and hence it suffices to show that
    \begin{equation}\label{eq:coefmult}
    \sup_{0<r<1}M_1(r, D^\b\lambda)(1-r)^\b<\infty.
    \end{equation}
In fact, the condition \eqref{eq:coefmult} describes the coefficient multipliers from the Bloch space into itself, see \cite[Theorem~3]{ShiWiTams71} for a proof of the case $\b=1$. To obtain \eqref{eq:coefmult} we will use the families of polynomials defined by \eqref{eq:polynomials} and \eqref{vn}. It follows from \eqref{propervn} that
    \begin{equation}\label{fbnorm}
    M_1(r,D^\b\lambda)=\|(D^\b\lambda)_r\|_{H^1}\le C(\om)+\sum_{n=2}^\infty\|V_n\ast(D^\b\lambda)_r\|_{H^1},
    \end{equation}
where $(D^\b\lambda)_r(z)=\sum_{n=0}^\infty\frac{\om_{2n+1}}{(\om_{[\b]})_{2n+1}}r^nz^n$. Next, for each $n\in\N\setminus\{1\}$ and
$r\in\left[\frac12,1\right)$, consider
    $$
    F_n(x)=\frac{\om_{2x+1}}{(\om_{[\b]})_{2x+1}}r^x\chi_{[2^{n-1},2^{n+1}]}(x),\quad x\in\RR.
    $$
Since for each radial weight $\nu$ there exists a constant $C=C(\nu)>0$ such that
    \begin{equation*}
    \int_0^1 s^x\left(\log\frac1s\right)^n\nu(s)\,ds\le C\nu_x, \quad n\in\{1,2\},\quad x\ge 2,
    \end{equation*}
it follows by a direct calculation that
    $$
    |F_n''(x)|\le C|F_n(x)|, \quad n\in\N\setminus\{1\}, \quad r\in\left[\frac12,1\right),\quad x\ge2,
    $$
for some constant $C=C(\om,\beta)>0$. Therefore \eqref{eq:characterization-M} yields
    \begin{equation*}
    \begin{split}
    A_{F_n,2}&=\max_{x\in[2^{n-1},2^{n+1}]}|F_n(x)|+\max_{x\in[2^{n-1},2^{n+1}]}|F_n''(x)|
		\lesssim \max_{x\in[2^{n-1},2^{n+1}]}|F_n(x)|\\
    &
    \lesssim \max_{x\in[2^{n-1},2^{n+1}]} (2x+1)^\beta r^x
    \lesssim 2^{n\beta} r^{2^{n-1}},\quad n\in\N\setminus\{1\}.
    \end{split}
    \end{equation*}
For each $n\in\N\setminus\{1\}$, choose a $C^\infty$-function $\Phi_n$ with compact support contained in $[2^{n-2},2^{n+2}]$ such that
$\Phi_n=F_n$ on $[2^{n-1},2^{n+1}]$ and
    \begin{equation}\label{phin}
    A_{\Phi_n,2}=\max_{x\in\mathbb{R}}|\Phi_n(x)|+\max_{x\in\mathbb{R}}|\Phi''_n(x)|
    \lesssim  2^{n\beta} r^{2^{n-1}},\quad n\in\N\setminus\{1\}.
    \end{equation}
Since
    $$
    W_1^{\Phi_n}(z)
    =\sum_{k\in\mathbb{Z}}\Phi_n\left(k\right)z^k
    =\sum_{k\in\mathbb{Z}\cap[2^{n-2},2^{n+2}]}\Phi_n\left(k\right)z^k,
    $$
the identity \eqref{vn} yields
    \begin{equation*}
    \begin{split}
    \left(V_n\ast(D^\b\lambda)_r\right)(z)
    &=\sum_{k=2^{n-1}}^{2^{n+1}-1} \psi\left(\frac{k}{2^{n-1}}\right)\frac{\om_{2k+1}}{(\om_{[\b]})_{2k+1}}r^kz^k
    =\sum_{k=2^{n-1}}^{2^{n+1}-1}\psi\left(\frac{k}{2^{n-1}}\right)\Phi_n(k)z^k\\
    &=\left(W_1^{\Phi_n}\ast V_n\right)(z),\quad n\in\N\setminus\{1\}.
    \end{split}
    \end{equation*}
This together with Theorem~\ref{th:cesaro}(iii), \eqref{phin} and \eqref{propervn} implies
    \begin{equation*}
    \begin{split}
    \|V_n\ast(D^\b\lambda)_r\|_{H^1}
    &=\|W_1^{\Phi_n}\ast V_n\|_{H^1}
    \lesssim A_{\Phi_n,2}\|V_n\|_{H^1}
    \lesssim  2^{n\beta} r^{2^{n-1}}\|V_n\|_{H^1}\\
    &\lesssim 2^{n\beta} r^{2^{n-1}},\quad r\in \left[\frac12,1\right),\quad n\in\N\setminus\{1\},
    \end{split}
    \end{equation*}
which combined with \eqref{fbnorm} gives
    \begin{equation}\label{endstep1new1}
    M_1(r,D^\b\lambda)
    \lesssim\sum_{n=2}^\infty2^{\b n}r^{2^{n-1}}
    \lesssim\frac{1}{(1-r)^\b}, \quad  r\in\left[\frac12,1\right).
    \end{equation}
This implies \eqref{eq:coefmult}, and thus (i) is satisfied.

To complete the proof, it remains to show that (i) implies (ii). Let $f\in A^1_\om$ and $g\in\B$. By the hypothesis (i), we may choose $h\in L^\infty$ such that $P_\om(h)=g$ and $\|h\|_{L^\infty}\le C\|g\|_\B$, where $C=C(\om)>0$. The identity
\eqref{eq:fubinisubs} yields
    \begin{equation*}
    \begin{split}
    |\langle f,g\rangle_{A^2_\om}|&=\lim_{r\to 1^-}\left|\langle f_r,P_\om(h)\rangle_{L^2_\om}\right|
		=\lim_{r\to 1^-}\left|\langle f_r,h\rangle_{L^2_\om}\right|
		\le\|h\|_{L^\infty}\|f\|_{A^1_\om}\le C\|g\|_\B\|f\|_{A^1_\om},
    \end{split}
    \end{equation*}
and thus (ii) is satisfied and the proof is complete.
\end{Prf}
\medskip

The proof of Theorem~\ref{th:otromundo} shows that $\om\in\M$ if and only if for some (equivalently for each) $\b>0$, there exists $C_1=C_1(\om,\b)>0$ such that \eqref{eq:characterization-M} is satisfied. Later, when Littlewood-Paley estimates are considered, we will find an analogous characterization for weights in $\DD$, see \eqref{Corollary:D-hat} below. Another fact that follows from the proof and is useful for our purposes states that $\om\in\M$ if and only if \eqref{10} is satisfied for some $C_3=C_3(\om)>0$ and $M=M(\om)>1$.

Next, we will describe the radial weights $\om$ such that $P_\om:L^\infty\to\B$ is bounded and onto. Recall that $\DDD=\DD\cap\Dd$ by the definition.

\medskip
\begin{Prf}{\em{Theorem~\ref{th:projectionboundedonto}.}}
In view of Theorems~\ref{ELTEOREMA} and~\ref{th:otromundo}, it suffices to show that $\DDD=\DD\cap\M$. Let first $\om\in\DD\cap\M$. By \cite[Lemma~2.1(iv)]{PelSum14} there exists $C=C(\om)>0$ such that
	\begin{equation*}
	\int_0^rs^{\frac1{1-r}}\om(s)\,ds\le C\widehat{\om}(r),\quad 0\le r<1.	
	\end{equation*}
This applied to $r=1-M(1-t)$ gives
	\begin{equation*}
	\begin{split}
	\int_0^ts^{\frac1{M(1-t)}}\om(s)\,ds
	&=\int_0^{1-M(1-t)}s^{\frac1{M(1-t)}}\om(s)\,ds
	+\int_{1-M(1-t)}^ts^{\frac1{M(1-t)}}\om(s)\,ds\\
	&\le C\widehat{\om}\left(1-M(1-t)\right)+\int_{1-M(1-t)}^t\om(s)\,ds\\
	&=\left(C+1\right)\widehat{\om}\left(1-M(1-t)\right)-\widehat{\om}\left(t\right),
	\end{split}
	\end{equation*}
which together with \eqref{10} yields
\begin{equation*}
	\begin{split}
	\widehat{\om}(t)
	\le\frac{4}{C_2M^\eta}\left(C+1\right)\widehat{\om}\left(1-M(1-t)\right).
	\end{split}
	\end{equation*}	
By choosing $M=M(\om)>2$ sufficiently large such that $M>\left(\frac{4(C+1)}{C_2}\right)^\frac1\eta$, we deduce $\widehat{\om}(1-M(1-t))\ge C_4\widehat{\om}(t)$ for some $C_4>1$ and all $1-\frac1M\le t<1$. It follows that $\om\in\Dd$ and thus $\om\in\DDD$.

To complete the proof it suffices to prove $\Dd\subset\M$. To do this, we first observe that $\om\in\Dd$ if and only if for some (equivalently for each) $\g>0$, there exists $C=C(\g,\om)>0$ such that
    \begin{equation}\label{eq:Dd-condition}
    \int_r^1 \frac{\omg(s)^\g}{1-s}\,ds\le C \omg(r)^\g,\quad 0\le r<1.
    \end{equation}
Namely, if \eqref{eq:Dd-condition} is satisfied for some $\gamma>0$, then, for each $K>1$, we have
    $$
    C\widehat{\om}(r)^\gamma\ge\int_r^1\frac{\omg(s)^\g}{1-s}\,ds\ge\int_r^{1-\frac{1-r}{K}}\frac{\omg(s)^\g}{1-s}\,ds\ge\omg\left(1-\frac{1-r}{K} \right)^\g\log K,
    $$
and hence, by choosing $K$ sufficiently large such that $\log K>C$ we obtain $\om\in\Dd$. Conversely, if $\om\in\Dd$ and $\gamma>0$, then
    $$
    \int_r^1\frac{\omg(s)^\g}{1-s}\,ds
    \ge C^\g\int_r^1\frac{\omg\left(1-\frac{1-s}{K}\right)^\g}{1-s}\,ds
    =C^\g\int_{1-\frac{1-r}{K}}^1\frac{\omg(s)^\g}{1-s}\,ds,
    $$
and hence
    \begin{equation*}
    \begin{split}
    \int_r^1\frac{\omg(s)^\g}{1-s}\,ds
    &\le\int_r^{1-\frac{1-r}{K}}\frac{\omg(s)^\g}{1-s}\,ds+\frac{1}{C^\g}\int_r^1\frac{\omg(s)^\g}{1-s}\,ds
		\le\omg(r)^\g\log K+\frac{1}{C^\g}\int_r^1\frac{\omg(s)^\g}{1-s}\,ds,
    \end{split}
    \end{equation*}
that is,
    $$
    \int_r^1\frac{\omg(s)^\g}{1-s}\,ds\le\omg(r)^\g \frac{C^\g}{C^\g-1}\log K,\quad 0\le r<1.
    $$
Thus \eqref{eq:Dd-condition} is satisfied.

Let $\om\in\Dd$ and write $\widetilde{\om}(r)=\widehat{\om}(r)/(1-r)$ for all $0\le r<1$. Let $f\in A^p_\om$ with $f(0)=0$. Then \eqref{eq:Dd-condition} with $\gamma=1$ yields
    $$
    M^p_p(r,f)\widehat{\widetilde{\om}}(r)\lesssim M^p_p(r,f)\widehat{\om}(r)\le\frac1r\int_{\D\setminus D(0,r)}|f(z)|^p\om(z)\,dA(z)\to0,\quad r\to1^-.
    $$
Hence, an integration by parts together with another application of \eqref{eq:Dd-condition} gives
  \begin{equation}\label{eq:intbyparts}
  \begin{split}
  \int_{0}^{1}M_p^p(r,f)\widetilde{\om}(r)\,dr
  &=\int_{0}^{1} \frac{\partial}{\partial r}M_p^p(r,f)\widehat{\widetilde{\om}}(r)\,dr
  \lesssim\int_{0}^{1} \frac{\partial}{\partial r}M_p^p(r,f)\omg(r)\,dr\\
  &=\int_{0}^{1}M_p^p(r,f)\om(r)\,dr,\quad f\in A^p_\om,\quad f(0)=0,
  \end{split}
  \end{equation}
and it follows that $\|f\|_{A^p_{\widetilde{\om}}}\lesssim\|f\|_{A^p_\om}$ for all $f\in\H(\D)$.

Denote $\omega^\star(z)=\int_{|z|}^1\log\frac{s}{|z|}\omega(s)s\,ds$ for all $z\in\D\setminus\{0\}$.
Now apply Green's formula and the Cauchy-Schwarz inequality, then use an equivalent norm in $A^1_\om$ in terms of the Laplacian of $|f|$ given in \cite[Theorem~4.2]{PelRat}, and finally the norm inequality just obtained to deduce
    \begin{equation*}
    \begin{split}
    \left|\langle f,g\rangle_{A^2_\om}\right|
    &\lesssim\left|\langle f',g'\rangle_{A^2_{\om^\star}}\right|+|f(0)||g(0)|
    \le\|g\|_\B\int_\D|f'(z)|\frac{\om^\star(z)}{1-|z|}\,dA(z)+|f(0)||g(0)|\\
    &\lesssim\|g\|_\B\|\Delta|f|\|_{L^1_{\om^\star}}^\frac12\left(\int_\D|f(z)|\frac{\om^\star(z)}{(1-|z|)^2}\,dA(z)\right)^\frac12
		+|f(0)||g(0)|\\
    &\lesssim\|g\|_\B\|f\|_{A^1_\om}^\frac12\left(\int_\D|f(z)|\frac{\widehat{\om}(z)}{|z|(1-|z|)}\,dA(z)\right)^\frac12
		\lesssim\|g\|_\B\|f\|_{A^1_\om},\quad g\in\B,\quad f\in A^1_\om.
    \end{split}
    \end{equation*}
Therefore $\om\in\M$ by Theorem~\ref{th:otromundo}.
\end{Prf}

\medskip

Note that (iv) implies (i) in Theorem~\ref{th:projectionboundedonto} is easy to prove without appealing to Theorem~\ref{th:otromundo}, where a concrete preimage is constructed, see~\cite{PeralaRattya2017} for details. The proof of Theorem~\ref{th:projectionboundedonto} shows that $\Dd\subset\M$. This inclusion is strict as is seen next.

\begin{proposition}\label{pr:counterxample}
The class $\Dd$ is a proper subset of $\M$.
\end{proposition}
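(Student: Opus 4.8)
Recall that the inclusion $\Dd\subseteq\M$ was already obtained inside the proof of Theorem~\ref{th:projectionboundedonto}, so it only remains to exhibit a radial weight belonging to $\M\setminus\Dd$. I would construct such a weight as a sparse \emph{comb}: it is supported on a sequence of intervals $J_n$ racing to the boundary, with gaps between consecutive intervals whose relative length blows up, so that $\widehat{\om}$ is \emph{constant} on each gap --- which immediately forces $\om\notin\Dd$ --- while the mass placed on $J_n$ is made to decay so rapidly that the moment characterization of $\M$ still holds. This is the quantitative incarnation of the slogan that the global moment integral eats up exactly the irregularities that make the tail condition of $\Dd$ fail.

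Precisely, I would fix a strictly decreasing sequence $\delta_n\downarrow0$ with $\delta_{n+1}\le\delta_n/2$ and $\delta_n/\delta_{n+1}\to\infty$, together with masses $m_n>0$ such that $\sum_n m_n<\infty$, $m_{n+1}\le\tfrac12 m_n$, and $\log(m_n/m_{n+1})\ge c_0\,\delta_n/\delta_{n+1}$ for all large $n$ and some $c_0>0$ (for instance $\delta_n=2^{-n}/n!$ and $m_n=e^{-n^2}$), put $J_n=[1-\delta_n,\,1-\tfrac{\delta_n}{2}]$, and set $\om=\sum_{n}\frac{m_n}{|J_n|}\chi_{J_n}$. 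Then $\om$ is a nonnegative function in $L^1([0,1))$ with $\widehat{\om}(r)>0$ for every $r\in[0,1)$, hence an admissible radial weight; it vanishes on each gap $\bigl(1-\tfrac{\delta_n}{2},\,1-\delta_{n+1}\bigr)$, on which $\widehat{\om}\equiv T_n:=\sum_{k>n}m_k$, and here $T_n\asymp m_{n+1}$ by $m_{n+1}\le\tfrac12 m_n$.

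To see $\om\notin\Dd$: given any $K>1$, choose $n$ with $\delta_n/\delta_{n+1}>2K$ and take $r=1-\tfrac{\delta_n}{2}$; then $1-\tfrac{1-r}{K}=1-\tfrac{\delta_n}{2K}$ lies strictly inside the gap $\bigl(1-\tfrac{\delta_n}{2},1-\delta_{n+1}\bigr)$, so $\widehat{\om}\bigl(1-\tfrac{1-r}{K}\bigr)=T_n=\widehat{\om}(r)$, which precludes $\widehat{\om}(r)\ge C\,\widehat{\om}\bigl(1-\tfrac{1-r}{K}\bigr)$ for any $C>1$; since $K$ was arbitrary, $\om\notin\Dd$. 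To see $\om\in\M$, I would verify condition \eqref{10} (equivalently \eqref{eq:characterization-M}), which by the remark following the proof of Theorem~\ref{th:otromundo} is equivalent to $\om\in\M$. Take $M$ large, to be specified, and let $t\in[1-\tfrac1M,1)$. If $t$ lies in the gap after $J_n$, then $J_n\subset[0,t]$ and $1-t>\delta_{n+1}$, so
\[
\int_0^t s^{\frac1{M(1-t)}}\om(s)\,ds\ \ge\ \frac{m_n}{|J_n|}\int_{J_n}s^{\frac1{M(1-t)}}\,ds\ \ge\ m_n(1-\delta_n)^{\frac1{M(1-t)}}\ \ge\ m_n\,e^{-2\delta_n/(M\delta_{n+1})},
\]
while $\widehat{\om}(t)=T_n\le 2m_{n+1}$; hence \eqref{10} at this $t$ reduces to the inequality $\log(m_n/m_{n+1})\ge\tfrac2M\,\tfrac{\delta_n}{\delta_{n+1}}+\log\tfrac2{C_3}$, which holds for all $n$ relevant for $t\in[1-\tfrac1M,1)$ provided $M$ is chosen large enough depending on $c_0$ (say $M\ge 4/c_0$); take $C_3=2$. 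The case $t\in J_n$ is handled identically, using the preceding tooth $J_{n-1}$ in place of $J_n$ and requiring $M$ only somewhat larger. Therefore $\om$ satisfies \eqref{10} for a suitable large $M$ and $C_3=2$, so $\om\in\M$, and $\Dd\subsetneq\M$.

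The only delicate point is the balancing act in the last step: $\om\notin\Dd$ demands $\delta_n/\delta_{n+1}\to\infty$, but then in \eqref{10} the factor $s^{1/(M(1-t))}$ damps the contribution of the nearest usable tooth $J_n$ by the possibly minuscule factor $e^{-2\delta_n/(M\delta_{n+1})}$, so the masses $m_n$ must decay at a commensurate (super-geometric) rate, and the resulting estimate has to be checked uniformly over all $t\in[1-1/M,1)$, including the transitional values at which $t$ sits inside a tooth rather than in a gap.
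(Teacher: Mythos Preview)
Your argument is correct and follows the same blueprint as the paper: build a ``comb'' weight supported on intervals racing to the boundary with gaps of blowing-up relative length, so that $\widehat{\om}$ is constant on each gap (forcing $\om\notin\Dd$), and then verify the moment characterization \eqref{10} of $\M$ by comparing $\widehat{\om}(t)$ to the contribution of the nearest tooth to the left. The paper carries this out with the specific density $\om(r)$ satisfying $\widehat{\om}(r)=\exp(-\exp\tfrac1{1-r})$ restricted to intervals $[s_n,t_{n+1}]$ determined by $t_n=1-2^{-2^n}$ and a slowly varying $\varphi$, whereas you use constant-density teeth $J_n$ with fully explicit parameters $(\delta_n,m_n)$; your version is a bit more transparent, since the balancing condition $\log(m_n/m_{n+1})\gtrsim\delta_n/\delta_{n+1}$ makes explicit the trade-off between gap growth and mass decay that is implicit in the paper's doubly-exponential choice.
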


\begin{proof}
To construct a weight in $\M\setminus\Dd$, let $\om$ be a radial weight and $\{t_n\}_{n\in\N}$ an increasing sequence on $[0,1)$ such that $t_n\to1^-$ as $n\to\infty$. Let $\vp:[t_1,1)\to[1,\infty)$ be an increasing unbounded function such that $s_n=1-\frac{1-t_n}{\vp(t_n)}<t_{n+1}$ for all $n\in\N$. For $N\in\N$, define
    $$
    W(r)=W_{\om,N}(r)=\om(r)\sum_{n=N}^\infty\chi_{[s_n,t_{n+1}]}(r),\quad 0\le r<1.
    $$
Since $\vp$ is unbounded, for each prefixed $K>1$, $\widehat{W}(t_n)=\widehat{W}\left(1-\frac{1-t_n}{K}\right)$ for all $n\in\N$ sufficiently large; this is the case when $t_n>\vp^{-1}(K)\in(0,1)$. Consequently, $W\notin \Dd$.

We will show next that if $t_n=1-2^{-2^n}$ for all $n\in\N$, the weight $\om$ is defined by
    $$
    \widehat{\om}(r)=\exp\left(-\exp\frac1{1-r}\right),\quad 0\le r<1,
    $$
and
    $$
    \vp(r)=\log_2\log_2\frac1{1-r},\quad t_1=\frac34\le r<1,
    $$
then $W=W_{\om,2}\in\M$. First, observe that with these choices
    $$
    s_n=1-\frac{1-t_n}{\vp(t_n)}=1-\frac1{n2^{2^n}}<1-\frac1{2^{2^{n+1}}}=t_{n+1},\quad n\in\N.
    $$
For $\frac{15}{16}\le r<1$, choose $n\in\N\setminus\{1\}$ such that $t_n\le r<t_{n+1}$, and let $K>1$. Then
    \begin{equation*}
    \widehat{W}(r)\le\widehat{W}(s_n)<\widehat{\om}(s_n)=\exp\left( -\exp\left( \frac{1}{1-s_n}\right)\right)
    \end{equation*}
and for $K>4/3$ we have
		\begin{equation*}
    \begin{split}
    \int_0^r s^{\frac{1}{K(1-r)}}W(s)\,ds & \ge  \int_0^{t_n} s^{\frac{1}{K(1-r)}}W(s)\,ds
    \ge s_{n-1}^{\frac{1}{K(1-t_{n+1})}} \int_{s_{n-1}}^{t_n} \om(s)\,ds\\
    &=\left( s_{n-1}^{\frac{s_{n-1}}{1-s_{n-1}}}\right)^{\frac{1-s_{n-1}}{s_{n-1}K(1-t_{n+1})}}
    \left[\exp\left( -\exp\left( \frac{1}{1-s_{n-1}}\right)\right)-\exp\left( -\exp\left( \frac{1}{1-t_{n}}\right)\right) \right]\\
    &\ge\exp\left(- \frac{1-s_{n-1}}{s_{n-1}K(1-t_{n+1})}\right) \left[\exp\left( -\exp\left( \frac{1}{1-s_{n-1}}\right)\right)-\exp\left( -		\exp\left(  \frac{1}{1-t_{n}}\right)\right) \right]\\
    &\ge\frac{e-1}{e}\exp\left(- \frac{1-s_{n-1}}{s_{n-1}K(1-t_{n+1})}-\exp\left( \frac{1}{1-s_{n-1}}\right)\right)\\
    &\ge\frac{e-1}{e}\exp\left(-\exp\left( \frac{1}{1-s_{n}}\right)\right).
    \end{split}
    \end{equation*}
It follows that \eqref{10} is satisfied for some $C=C(\om)>0$ and $K=K(\om)>4/3$, and hence $W\in\M$ by the observation right after the proof of Theorem~\ref{th:otromundo}.
\end{proof}

By Theorem~\ref{th:projectionboundedonto}, $P_\om:L^\infty\to\B$ is bounded and onto if and only if $\om\in\DDD$. We next show that for given $\om\in\DDD$ we can find a radial weight $W$ which induces a weighted Bergman space only slightly larger than $A^p_\om$ but
the weight lies outside of $\DD\cup\M$.

\begin{theorem}\label{theorem:non-regular weights close to D}
Let $\om\in\DDD$ and $\vp:[0,1]\to[0,\infty)$ decreasing such that $\lim_{r\to1^-}\vp(r)=0$. Then there exists a weight $W=W_{\om,\vp}$ such that $P_W:L^\infty\to\B$ is neither bounded nor~$\B$ is continuously embedded into $P_W(L^\infty)$, but $A^p_\om\subset A^p_W\subset A^p_{\om\vp}$ for all $0<p<\infty$. If $\vp$ satisfies $-\vp'(t)/\vp(t)\lesssim1/(1-t)$ for all $0\le t<1$, then $\vp\om\in\DDD$.
\end{theorem}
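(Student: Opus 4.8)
The plan is to manufacture $W$ by alternating $\om$ and $\om\vp$ on a sequence of blocks that become multiplicatively wide as they approach the boundary. Let $K_0>1$ and $C_0>1$ be the constants for which $\om\in\Dd$ reads $\widehat{\om}(r)\ge C_0\,\widehat{\om}\bigl(1-\tfrac{1-r}{K_0}\bigr)$. Starting from some $t_0$ close to $1$, I build the blocks recursively: take the $\om$-blocks of \emph{exact} width $K_0$, i.e. $t_{2n+1}=1-\tfrac{1-t_{2n}}{K_0}$, and set $W=\om$ on $[t_{2n},t_{2n+1})$; then choose $t_{2n+2}$ so close to $1$ that
\[
\widehat{\om}(t_{2n+2})\le\vp(t_{2n+1})\,\widehat{\om}(t_{2n+1}),\qquad W_n:=\frac{1-t_{2n+1}}{1-t_{2n+2}}\ \text{is large},
\]
and set $W=\om\vp$ on $[t_{2n+1},t_{2n+2})$. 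Since $W$ takes only the values $\om$ and $\om\vp$ and $\vp$ is bounded, $\om\vp\lesssim W\lesssim\om$ pointwise, so the monotonicity of $\nu\mapsto\|\cdot\|_{A^p_\nu}$ yields $A^p_\om\subset A^p_W\subset A^p_{\om\vp}$ for all $0<p<\infty$ simultaneously, and $\widehat{W}\ge\widehat{\om\vp}>0$ makes $W$ admissible. It then remains only to check $W\notin\DD$ and $W\notin\M$, for Theorems~\ref{ELTEOREMA} and~\ref{th:otromundo} translate these into the assertions that $P_W\colon L^\infty\to\B$ is unbounded and that $\B$ is not continuously contained in $P_W(L^\infty)$.

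To see $W\notin\DD$ I violate the doubling of $\widehat{W}$ at the scale $K_0$; since doubling at scale $2$ propagates to every scale (see \cite[Lemma~2.1]{PelSum14}), a violation at scale $K_0$ already forces $W\notin\DD$. Test at $r=t_{2n}$, so that $1-\tfrac{1-t_{2n}}{K_0}=t_{2n+1}$ and $W=\om$ on all of $[t_{2n},t_{2n+1}]$. Using $\om\in\Dd$ in the form $\widehat{\om}(t_{2n})\ge C_0\widehat{\om}(t_{2n+1})$ gives $\widehat{W}(t_{2n})\ge\widehat{\om}(t_{2n})-\widehat{\om}(t_{2n+1})\ge(C_0-1)\widehat{\om}(t_{2n+1})\gtrsim\widehat{\om}(t_{2n+1})$. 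On the other hand the design constraint yields $\widehat{W}(t_{2n+1})=\int_{t_{2n+1}}^{t_{2n+2}}\om\vp+\widehat{W}(t_{2n+2})\le\vp(t_{2n+1})\widehat{\om}(t_{2n+1})+\widehat{\om}(t_{2n+2})\lesssim\vp(t_{2n+1})\widehat{\om}(t_{2n+1})$. Dividing, $\widehat{W}(t_{2n})/\widehat{W}(t_{2n+1})\gtrsim 1/\vp(t_{2n+1})\to\infty$, so $W\notin\DD$.

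The genuinely delicate failure is $W\notin\M$, for which the localization $\om_x\asymp\widehat{\om}(1-\tfrac1{x+1})$ enjoyed by $\DD$-weights is unavailable. I use instead the tail form of $\M$ from the remark following Theorem~\ref{th:otromundo}, namely that $W\in\M$ iff $\widehat{W}(t)\le C_3\int_0^t s^{1/(M(1-t))}W(s)\,ds$ for some $C_3,M$ and all $t$ near $1$, cf.~\eqref{10}, and I test it at the up-transitions $t=t_{2n+2}$. Since $W=\om$ on the next block $[t_{2n+2},t_{2n+3}]$ of width $K_0$, the property $\om\in\Dd$ gives $\widehat{W}(t_{2n+2})\gtrsim\widehat{\om}(t_{2n+2})$. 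Writing $\mu=1/(M(1-t))$ and splitting $\int_0^t s^\mu W$ at $t_{2n+1}$, on the $\om\vp$-block I bound $W\le\vp(t_{2n+1})\om$ and invoke the standard doubling estimate $\int_0^t s^{\mu}\om\lesssim_M\widehat{\om}(t)$ valid for $\om\in\DD$, so this part is $\lesssim_M\vp(t_{2n+1})\widehat{\om}(t_{2n+2})=o(\widehat{\om}(t_{2n+2}))$; on $[0,t_{2n+1}]$ the factor $s^\mu$ is at most $e^{-W_n/M}$, so—using that $\widehat{\om}$ decays at most polynomially for $\om\in\DD$ and choosing the widths $W_n\to\infty$ fast enough in the construction—this part is also $o(\widehat{\om}(t_{2n+2}))$ for each fixed $M$. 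Hence $\int_0^{t_{2n+2}}s^\mu W=o_M(\widehat{\om}(t_{2n+2}))$ while $\widehat{W}(t_{2n+2})\gtrsim\widehat{\om}(t_{2n+2})$, so the defining inequality fails for every prescribed $(C_3,M)$ once $n$ is large, giving $W\notin\M$. The crux—and the main obstacle of the whole proof—is precisely this estimate: the weight $s^\mu$ must be shown to annihilate the large inner $\om\vp$-mass uniformly in the test parameter $M$, and it is the rapid widening $W_n\to\infty$ that secures this.

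Finally, under the extra hypothesis $-\vp'(t)/\vp(t)\lesssim 1/(1-t)$, I first record the comparison $\widehat{\om\vp}(r)\asymp\vp(r)\widehat{\om}(r)$. The bound $\le$ is monotonicity of $\vp$; for $\ge$, integrating the hypothesis over $[r,1-\tfrac{1-r}2]$ gives $\vp(1-\tfrac{1-r}2)\gtrsim\vp(r)$, whence, using $\om\in\Dd$, $\widehat{\om\vp}(r)\ge\int_r^{1-(1-r)/2}\vp\om\gtrsim\vp(r)\bigl(\widehat{\om}(r)-\widehat{\om}(1-\tfrac{1-r}2)\bigr)\gtrsim\vp(r)\widehat{\om}(r)$. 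The same integration furnishes the two-sided controls $\vp(r)\asymp\vp(\tfrac{1+r}2)$ and $\vp(r)\gtrsim\vp(1-\tfrac{1-r}{K})$. Inserting these and the comparison into the definitions, the doubling of $\widehat{\om\vp}$ reduces to that of $\widehat{\om}$ (as $\om\in\DD$), and the reverse doubling of $\widehat{\om\vp}$ reduces to that of $\widehat{\om}$ (as $\om\in\Dd$, taking $K$ large). Therefore $\vp\om\in\DD\cap\Dd=\DDD$, completing the proof.
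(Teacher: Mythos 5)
Your proposal is correct, but it rests on a genuinely different construction from the paper's. The paper takes $W=\om\sum_{j\ge N}\chi_{[r_{2j+1},r_{2j+2}]}$ with $r_x=1-2^{-x\psi(x)}$, i.e.\ it switches the weight completely \emph{off} between blocks; as a consequence the inclusions $A^p_\om\subset A^p_W\subset A^p_{\om\vp}$ have to be extracted from the tail comparison $\widehat{\om}\gtrsim\widehat{W}\gtrsim\vp\,\widehat{\om}$ together with an integration-by-parts argument as in \eqref{eq:intbyparts}, the block function $\psi$ must obey an extra regularity condition $\psi(x+1)-\psi(x)\lesssim1/x$, and the verification of $W\notin\M$ goes through a fairly long geometric-sum estimate. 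You instead alternate $\om$ and $\om\vp$, which yields the pointwise comparability $\om\vp\lesssim W\lesssim\om$ and hence the Bergman-space inclusions for free, for all $p$ simultaneously and with no integration by parts. Your non-membership arguments use the same black boxes as the paper — the moment characterization \eqref{10} of $\M$, the form \eqref{6} of $\Dd$, and the facts from \cite[Lemma~2.1]{PelSum14} — but more transparently: the $\DD$-failure is a clean blow-up of $\widehat{W}(t_{2n})/\widehat{W}(t_{2n+1})\gtrsim1/\vp(t_{2n+1})$ (failure of doubling at the fixed scale $K_0$ does propagate back to scale $2$, as you say), and for the $\M$-failure the split at $t_{2n+1}$ works because $s^{\mu}\le e^{-W_n/M}$ on the inner region while $\int_0^t s^{1/(M(1-t))}\om\,ds\lesssim_M\widehat{\om}(t)$ handles the $\om\vp$-block; the polynomial lower bound $\widehat{\om}(t)\gtrsim(1-t)^{\log_2C}$, valid for $\om\in\DD$, indeed lets a choice such as $W_n\ge\bigl(\log\frac{1}{1-t_{2n+2}}\bigr)^2$ annihilate the inner mass for every fixed $M$, and combined with $\widehat{W}(t_{2n+2})\ge(1-C_0^{-1})\widehat{\om}(t_{2n+2})$ from the next $\om$-block this defeats every pair $(C_3,M)$ in \eqref{10}. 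Two small repairs are needed. First, in the last part the lower bound $\widehat{\om\vp}(r)\gtrsim\vp(r)\widehat{\om}(r)$ should be run at the $\Dd$-scale $K=K(\om)$ rather than at scale $2$: from \eqref{6} one only gets $\widehat{\om}\bigl(1-\frac{1-r}{2}\bigr)\le C2^{-\b}\widehat{\om}(r)$ with possibly $C2^{-\b}\ge1$, so integrate the hypothesis over $[r,1-\frac{1-r}{K}]$ instead, obtaining $\vp\bigl(1-\frac{1-r}{K}\bigr)\ge K^{-C}\vp(r)$ and $\widehat{\om}(r)-\widehat{\om}\bigl(1-\frac{1-r}{K}\bigr)\ge(1-C_0^{-1})\widehat{\om}(r)$; the rest of your reduction of the doubling and reverse doubling of $\widehat{\om\vp}$ to those of $\widehat{\om}$ then goes through verbatim. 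Second, justify $\widehat{W}>0$ directly from the $\om$-blocks, $\int_{t_{2n}}^{t_{2n+1}}\om\ge(1-C_0^{-1})\widehat{\om}(t_{2n})>0$, since $\widehat{\om\vp}$ could a priori vanish near $1$ if $\vp$ does; note that, like the paper's proof (which needs $\psi(x)=-\frac{1}{2\b}\log_2\vp(1-\frac1x)$ finite), your design constraint $\widehat{\om}(t_{2n+2})\le\vp(t_{2n+1})\widehat{\om}(t_{2n+1})$ implicitly assumes $\vp>0$ on $[0,1)$, which is the intended reading of the theorem. With these cosmetic fixes your argument is complete and arguably more elementary than the paper's.
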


\begin{proof}
We first observe that $\om\in\Dd$ if and only if there exist $C=C(\om)>0$ and $\b=\b(\om)>0$ such that
    \begin{equation}\label{6}
    \begin{split}
    \widehat{\om}(t)\le C\left(\frac{1-t}{1-r}\right)^{\b}\widehat{\om}(r),\quad 0\le r\le t<1.
    \end{split}
    \end{equation}
To construct a weight $W$ with desired properties, let $\psi:[1,\infty)\to(0,\infty)$ an increasing unbounded function. Let $W(r)=W_{\om,\psi}(r)=\om(r)\sum_{j=N}^\infty\chi_{[r_{2j+1},r_{2j+2}]}(r)$, where $N=N(\om,\psi)\in\N$ will be fixed later and $r_x=1-\frac{1}{2^{x\psi(x)}}$ for all $x\ge1$. Then
    \begin{equation}\label{20'}
    \frac{1-r_x}{1-r_{x+1}}=\frac{2^{(x+1)\psi(x+1)}}{2^{x\psi(x)}}=2^{x(\psi(x+1)-\psi(x))+\psi(x+1)}\ge 2^{\psi(x+1)}\to\infty,\quad x\to\infty.
    \end{equation}
Since $\om\in\Dd$, \eqref{6} shows that for each $C>1$ there exists $K=K(C,\om)>1$ such that
    \begin{equation}\label{22'}
    \widehat{\om}(t)\le\frac1C\widehat{\om}\left(1-K(1-t)\right),\quad t\ge1-\frac1K.
    \end{equation}
Take $C=2$ and fix $K>4$ accordingly. Then, for $1\le x<\infty$ and $0<y<\log_2 K$,
    \begin{equation}\label{23'}
    \widehat{\om}(r_{x+y})
    \le\frac12\widehat{\om}\left(1-K(1-r_{x+y})\right)
    =\frac12\widehat{\om}\left(1-\frac1{2^{(x+y)\psi(x+y)-\log_2K}}\right)\le\frac12\widehat{\om}(r_x)
    \end{equation}
if $x\ge\psi^{-1}\left(\frac{\log_2K}{y}\right)$. For a moment take $N=N(\om,\psi)\ge\frac12\left(\psi^{-1}\left(\log_2K\right)-1\right)$. Then \eqref{23'} can be applied to all $x=2j+1\ge2N+1$ if $1\le y<\log_2 K\in(2,\infty)$. The case $y=2$ implies
    \begin{equation}\label{19'}
    \begin{split}
    \widehat{W}(r_{2j})
    &=\sum_{k=j}^\infty\left(\widehat{\om}(r_{2k+1})-\widehat{\om}(r_{2k+2})\right)
    \le\sum_{k=j}^\infty\widehat{\om}(r_{2k+1})\le2\widehat{\om}(r_{2j+1}),\quad j\ge N.
    \end{split}
    \end{equation}
Let $r_j^\star=1-K(1-r_{2j+2})$ for all $j\ge N$. Then $r_{2j+1}<r_j^\star<1-\frac{1-r_j^\star}{K}=r_{2j+2}$ by the last inequality in \eqref{23'} with $y=1$, and hence $\widehat{W}\left(1-\frac{1-r_j^\star}{K}\right)=\widehat{W}\left(r_{2j+2}\right)$ and
    \begin{equation*}
    \begin{split}
    \widehat{W}\left(r_j^\star\right)=\int_{r_j^\star}^{1-\frac{1-r_j^\star}{K}}\om(r)\,dr+\widehat{W}\left(1-\frac{1-r_j^\star}{K}\right).
    \end{split}
    \end{equation*}
Therefore there exists $C_1=C_1(\om,K)>1$ such that
    \begin{equation}\label{21'}
    \widehat{W}\left(r^\star_{j}\right)\le C_1\widehat{W}\left(1-\frac{1-r_j^\star}{K}\right), \quad j\ge N,
    \end{equation}
if and only if
    $$
    \int_{r_j^\star}^{1-\frac{1-r_j^\star}{K}}\om(r)\,dr\le(C_1-1)\widehat{W}\left(1-\frac{1-r_j^\star}{K}\right), \quad j\ge N.
    $$
But $\int_{r_j^\star}^{1-\frac{1-r_j^\star}{K}}\om(r)\,dr\ge(C-1)\widehat{\om}\left(1-\frac{1-r_j^\star}{K}\right)$ by \eqref{22'}, and $\widehat{W}\left(1-\frac{1-r_j^\star}{K}\right)=\widehat{W}(r_{2j+2})\le2\widehat{\om}(r_{2j+3})$ by \eqref{19'}, so a necessary condition for \eqref{21'} to hold is $\widehat{\om}(r_{2j+2})\lesssim\widehat{\om}(r_{2j+3})$ for all $j\ge N$. But since $\om\in\Dd$, this fails for $j$ large enough by \eqref{6} and \eqref{20'}, and hence we deduce $W\notin\DD$. Thus $P_W:L^\infty\to\B$ is not bounded by Theorem~\ref{ELTEOREMA}.

The first part of the proof does not require much of $\psi$. Now assume there exists a constant $C_2>0$ such that $\psi(x+1)-\psi(x)\le C_2/x$ for all $x\ge1$. Then \eqref{20'} shows that
    \begin{equation}\label{17'}
    2^{\psi(x+1)}\le\frac{1-r_x}{1-r_{x+1}}\le2^{2C_2}2^{\psi(x)},\quad x\ge1.
    \end{equation}
To see that $W\notin\M$, it is equivalent to find a contradiction with \eqref{10} for each prefixed $M>1$. Assume for a moment that
    \begin{equation}\label{18'}
    \int_{0}^{r_{2j+1}} s^{\frac{1}{M(1-r_{2j+1})}}W(s)\,ds\lesssim r_{2j}^{\frac{1}{M(1-r_{2j+1})}}
    \widehat{\omega}(r_{2j-1}),\quad j\ge N.
    \end{equation}
Then, since $\om\in\DD$ by the hypothesis, \cite[Lemma~2.1]{PelSum14} and \eqref{17'} imply
    \begin{equation*}
    \begin{split}
    \int_{0}^{r_{2j+1}} s^{\frac{1}{M(1-r_{2j+1})}}W(s)\,ds
    &\lesssim r_{2j}^{\frac{1}{M(1-r_{2j+1})}}\widehat{\om}(r_{2j-1})\\
    &\lesssim\left(r_{2j}^{\frac{1}{(1-r_{2j})}}\right)^{\frac{1-r_{2j}}{M(1-r_{2j+1})}}\left(\frac{1-r_{2j-1}}{1-r_{2j+1}}\right)^\b
		\widehat{\om}(r_{2j+1})\\
    &\lesssim e^{-\frac{2^{\psi(2j+1)}}{M}}2^{\b(\psi(2j+1)+\psi(2j))}\widehat{\om}(r_{2j+1})\\
    &\le e^{-\frac{2^{\psi(2j+1)}}{M}}2^{2\b\psi(2j+1)}\widehat{\om}(r_{2j+1}),\quad j\ge N,
    \end{split}
    \end{equation*}
and
    $$
    \widehat{W}(r_{2j+1})\ge\widehat{\om}(r_{2j+1})-\widehat{\om}(r_{2j+2})\ge\frac12\widehat{\om}(r_{2j+1}),\quad j\ge N,
    $$
by \eqref{23'}. These estimates yield a contradiction as $j\to\infty$. It remains to prove \eqref{18'}. First observe that
    \begin{equation*}
    \begin{split}
    \int_{0}^{r_{2j+1}}s^{\frac{1}{M(1-r_{2j+1})}}W(s)\,ds
    &\le\sum_{k=N}^jr_{2k}^{\frac{1}{M(1-r_{2j+1})}}\widehat{\om}(r_{2k-1})
		=r_{2j}^{\frac{1}{M(1-r_{2j+1})}}\widehat{\om}(r_{2j-1})S,
    \end{split}
    \end{equation*}
where, by \cite[Lemma~2.1]{PelSum14},
	\begin{equation*}
	\begin{split}
	S&=\sum_{k=N}^j\frac{\left(r_{2k}^{\frac{1}{1-r_{2k}}}\right)^\frac{1-r_{2k}}{M(1-r_{2j+1})}}
	{\left(r_{2j}^{\frac{1}{1-r_{2j}}}\right)^\frac{1-r_{2j}}{M(1-r_{2j+1})}}\frac{\widehat{\om}(r_{2k-1})}{\widehat{\om}(r_{2j-1})}\\
	&\lesssim1+\sum_{k=N}^{j-1}
	e^{-\frac{1-r_{2k}}{M(1-r_{2j+1})}+\frac1{r_{2j}}\frac{1-r_{2j}}{M(1-r_{2j+1})}+\b\log\frac{1-r_{2k-1}}{1-r_{2j-1}}}\\
	&\le1+\sum_{k=N}^{j-1}
	e^{-\frac{1-r_{2k}}{M(1-r_{2j+1})}+\frac1{r_{2j}}\frac{1-r_{2j}}{M(1-r_{2j+1})}+\b\log\frac{1-r_{2k}}{1-r_{2j+1}}
	+\b\log\frac{1-r_{2k-1}}{1-r_{2k}}}\\
	&\le1+\sum_{k=N}^{j-1}
	e^{-\frac{1-r_{2k}}{M(1-r_{2j+1})}+\frac1{r_{2j}}\frac{1-r_{2j}}{M(1-r_{2j+1})}+\b\log\frac{1-r_{2k}}{1-r_{2j+1}}
	+2\b C_2\log2+\b\log\frac{1-r_{2j}}{1-r_{2j+1}}}\\
	&\le1+e^{2\b C_2\log2}\sum_{k=N}^{j-1}
	e^{-\frac{1-r_{2k}}{M(1-r_{2j+1})}+\left(\frac1{r_{2N}}+\b\right)\frac{1-r_{2j}}{M(1-r_{2j+1})}+\b\log\frac{1-r_{2k}}{1-r_{2j+1}}}
	\end{split}
	\end{equation*}
for some constant $\b=\b(\om)>0$. By \eqref{17'} we may now fix $N=N(\om,\psi)$ sufficiently large such that
	$$
	\b\log\frac{1-r_{2k}}{1-r_{2j+1}}\le\frac1{2M}\frac{1-r_{2k}}{1-r_{2j+1}},\quad N\le k\le j-1,
	$$
and
	$$
	\frac{1-r_{2j}}{1-r_{2j-2}}\le\frac{1}{4\left(\frac1{r_{2N}}+\b\right)},\quad N\le j-1.
	$$
Then another application of \eqref{17'} yields
	\begin{equation*}
	\begin{split}
	S&\lesssim1+e^{2\b C_2\log2}\sum_{k=N}^{j-1}
	e^{-\frac{1-r_{2k}}{2M(1-r_{2j+1})}+\left(\frac1{r_{2N}}+\b\right)\frac{1-r_{2j}}{M(1-r_{2j+1})}}\\
	&\le1+e^{2\b C_2\log2}\sum_{k=N}^{j-1}
	e^{-\frac{1-r_{2k}}{4M(1-r_{2j+1})}}
	\le1+e^{2\b C_2\log2}\sum_{k=N}^{j-1}
	e^{-\frac{2^{\psi(2k+1)+\cdots+\psi(2j+1)}}{4M}}\\
	&\le1+e^{2\b C_2\log2}\sum_{k=N}^{j-1}
	e^{-\frac{2^{(j-k)\psi(2N+1)}}{4M}}
	\le1+e^{2\b C_2\log2}\sum_{\ell=1}^\infty
	e^{-\frac{2^{\ell\psi(2N+1)}}{4M}}<\infty,
	\end{split}
	\end{equation*}
and thus \eqref{18'} is valid. Therefore $W\notin\M$, and hence $\B$ is not continuously embedded into $P_W(L^\infty)$ by Theorem~\ref{th:otromundo}.

We next prove $A^p_\om\subset A^p_W\subset A^p_{\om\vp}$ for any prefixed $\vp$ tending to zero. Since we are interested in the case in which $\vp$ tends to zero very slowly, we may assume without loss of generality that $\vp$ is a differentiable concave function such that $-\vp'(t)/\vp(t)\lesssim1/(1-t)$. Moreover, since the statement concerns inclusions and thus only the behavior of $\vp$ near one matters, we may further assume that $\vp(t)\le(1-t)^{(1-t)2\b}$ for all $0\le t<1$. If $r_{2n}\le r<r_{2n+1}$, then \eqref{23'}, \cite[Lemma~2.1(ii)]{PelSum14} and \eqref{17'} yield
    $$
    \widehat{\om}(r)\ge\widehat{W}(r)
    =\widehat{W}(r_{2n+1})
    \ge\frac12\widehat{\om}(r_{2n+1})
    \gtrsim\widehat{\om}(r_{2n})\left(\frac{1-r_{2n+1}}{1-r_{2n}}\right)^\b
    \gtrsim\widehat{\om}(r)2^{-\psi(2n)\b}.
    $$
The same estimate with $2\b$ in place of $\b$ is valid for $r_{2n+1}\le r<r_{2n+2}$, and thus
    $$
    \widehat{\om}(r)\ge\widehat{W}(r)
    \gtrsim\widehat{\om}(r)2^{-\psi(2n)2\b},\quad r_{2n}\le r<r_{2n+2}.
    $$
Since $\vp$ is decreasing, it therefore remains to find $\psi=\psi_{\om,\vp}$ such that
\begin{equation}\label{eq:vp}
    \vp\left(r_x\right)=\vp\left(1-\frac{1}{2^{x\psi(x)}}\right)\lesssim\frac{1}{2^{\psi(x)2\b}},\quad 1\le x<\infty.
    \end{equation}
Define $\psi(x)=-\frac1{2\b}\log_2\vp\left(1-\frac1x\right)$ for all $1\le x<\infty$. Then \eqref{eq:vp} holds whenever
    $$
    \vp\left(1-\vp\left(1-\frac1x\right)^\frac{x}{2\b}\right)\le\vp\left(1-\frac1x\right),\quad 1\le x<\infty,
    $$
which is equivalent to $\vp(t)\le(1-t)^{(1-t)2\b}$ for all $0\le t<1$ because $\vp$ is decreasing by the hypothesis. To be precise, we also have to require $\psi(x+1)-\psi(x)\lesssim1/x$ for all $1\le x<\infty$, but this easily follows from the concavity of $\vp$ and $-\vp'(t)/\vp(t)\lesssim1/(1-t)$. Therefore we have
    $$
    \widehat{\om}(r)\ge\widehat{W}(r)
    \gtrsim\widehat{\om}(r)\vp(r)\ge\widehat{\om\vp}(r),\quad 0\le r<1,
    $$
for each differentiable concave function $\vp$ tending to zero sufficiently slowly and satisfying $\vp(t)\le(1-t)^{2\b(1-t)}$. A reasoning similar to that in \eqref{eq:intbyparts} now gives $A^p_\om\subset A^p_W\subset A^p_{\om\vp}$.

It remains to show that $\vp\om\in\DDD$ whenever $\om\in\DDD$ and $-\vp'(t)/\vp(t)\lesssim1/(1-t)$. But this is straightforward. Namely, if $\om\in\Dd$ and $\vp$ is decreasing, then simple estimates yield $\vp\om\in\Dd$. Next consider the sequence $r_j=1-K^{-j}$, where $K=K(\om)>1$ is the one appearing in the definition of $\Dd$. Observe that the hypothesis on $\vp$ implies $\vp(r_j)\lesssim\vp(r_{j+1})$, and $\om\in\DDD$ easily gives $\int_{r_j}^{r_{j+1}}\om(t)\,dt\lesssim\int_{r_{j+1}}^{r_{j+2}}\om(t)\,dt$, both inequalities being valid for all $j$. By using these estimates it follows that $\widehat{\vp\om}(r_N)\lesssim\widehat{\vp\om}(r_{N+1})$ for all $N\in\N$, from which we deduce $\om\in\DD$.
\end{proof}

\section{Littlewood-Paley estimates}\label{Sec:L-P-estimates}

In this section, we will prove Theorems~\ref{th:L-P-D} and~\ref{theorem:L-P-D-hat}.

\medskip

\begin{Prf}{\em{Theorem~\ref{theorem:L-P-D-hat}.}}
Assume first $\om\in\DD$. For $0\le r<1$ and $n\in\N\cup\{0\}$, let $r_n=r_n(r)=1-\frac{1-r}{2^n}$ so that $r_0=r$, $r_{n+1}=\frac{1+r_n}{2}$ and $r_{n}-r_{n-1}=\frac{1-r}{2^n}$ for all $n\in\N$.
 Then,  $k$ applications of \cite[Lemma~3.1]{PavP}, with $\r=r_n$ for $n=1,\ldots,k$, and $q=p$,
an integration by parts, the fact that $M_p^p(r,f)\widehat{\om}(r)\to 0$ as $r\to 1^-$ for each $f\in A^p_\om$ and the definition of $\DD$ yield
		\begin{equation}\label{eq:L-P-D-hat2prueba}
		\begin{split}
		&\int_\D|f^{(k)}(z)|^p(1-|z|)^{kp}\om(z)\,dA(z)
		\lesssim\int_0^1M_p^p\left(r_1,f^{(k-1)}\right)(1-r)^{(k-1)p}\om(r)\,dr\\
		&\lesssim\cdots\lesssim\int_0^1M_p^p\left(r_k,f\right)\om(r)\,dr\\
    & = \int_{1-2^{-k}}^1M_p^p\left(r,f\right)2^{k}\om\left(1-2^k(1-r)\right)\,dr\\
		& = M_p^p\left(1-2^{-k} ,f\right)\widehat{\om}\left(0\right)+
    \int_{1-2^{-k}}^1\frac{d}{dr}M_p^p\left(r,f\right)\widehat{\om}\left(1-2^k(1-r)\right)\,dr\\
		&\le M_p^p\left(1-2^{-k} ,f\right)\widehat{\om}\left(0\right)+
    C^k\int_{1-2^{-k}}^1\frac{d}{dr}M_p^p\left(r,f\right)\widehat{\om}\left(r\right)\,dr\\
		&\le M_p^p\left(1-2^{-k} ,f\right)\widehat{\om}\left(0\right)+
    C^k\int_{1-2^{-k}}^1 M_p^p\left(r,f\right)\om(r)\,dr
		\lesssim\|f\|_{A^p_\om}^p,
    \end{split}
    \end{equation}
and \eqref{eq:desLP} follows.

Assume now that \eqref{eq:desLP} is valid for some $0<p<\infty$ and $k\in\N$. By choosing $f(z)=z^n$, we obtain
    $$
    \left(\prod_{j=0}^{k-1}(n-j)\right)^p\int_0^1r^{(n-k)p}\left(1-r\right)^{kp}\om(r)\,dr\lesssim\int_0^1r^{np}\om(r)\,dr,\quad n\ge k.
    $$
If $m\le kx<m+1$ for $m\ge k$, then
    \begin{equation*}
    \begin{split}
    x^{kp}\int_0^1r^{kpx}\left(1-r\right)^{kp}\om(r)\,dr
    &\le e^p\left(\prod_{j=0}^{k-1}(m-j)\right)^p\int_0^1r^{(kx+1-k)p}\left(1-r\right)^{kp}\om(r)\,dr\\
    &\le e^p\left(\prod_{j=0}^{k-1}(m+1-j)\right)^p\int_0^1r^{(m+1-k)p}\left(1-r\right)^{kp}\om(r)\,dr\\
    &\lesssim\int_0^1r^{(m+1)p}\om(r)\,dr
    \le\om_{pkx},\quad 1\le x<\infty.
    \end{split}
    \end{equation*}
By writing $q=kp\in(0,\infty)$ and extending the conclusion trivially for small values of $x$, we can write the above inequality in the form
    \begin{equation}\label{eq:moment-condition}
    \begin{split}
    x^{q}\int_0^1r^{qx}\left(1-r\right)^{q}\om(r)\,dr
    &\le C^q\om_{qx},\quad 0\le x<\infty,
    \end{split}
    \end{equation}
for some $C=C(\om,p,k)>0$. It follows that
    \begin{equation*}
    \begin{split}
    \int_0^{1-\frac{C}{x}}r^{qx}\left((x(1-r))^q -C^q\right)\om(r)\,dr
    &\le\int_{1-\frac{C}{x}}^1 r^{qx}\left(C^q-(x(1-r))^q \right)\om(r)\,dr\\
    &\le C^q\widehat{\om}\left(1-\frac{C}{x}\right),\quad x\ge C,
    \end{split}
    \end{equation*}
and hence
    \begin{equation*}
    \begin{split}
    \widehat{\om}\left(1-\frac{C}{x}\right)
    &\ge C^{-q}\int_0^{1-\frac{C\left(\frac32\right)^\frac1q}{x}}r^{qx}\left((x(1-r))^q -C^q\right)\om(r)\,dr\\
    &\ge\frac{1}{2}\int_0^{1-\frac{C\left(\frac32\right)^\frac1q}{x}}r^{qx}\om(r)\,dr,\quad x>\left(\frac{3}{2}\right)^{\frac1q}C.
    \end{split}
    \end{equation*}
This estimate together with Fubini's theorem yields
    \begin{equation*}
    \begin{split}
    2\widehat{\om}\left(1-\frac{C}{x}\right)
    &\ge\int_0^{1-\frac{C\left(\frac{3}{2}\right)^{\frac1q}}{x}}\om(t)\left(\int_0^tqxs^{qx-1}\,ds\right)\,dt\\
    &=\int_0^{1-\frac{C\left(\frac{3}{2}\right)^{\frac1q}}{x}}qxs^{qx-1}
    \left(\int_{s}^{1-\frac{C\left(\frac{3}{2}\right)^{\frac1q}}{x}}\om(t)\,dt\right)\,ds\\
    &=\int_0^{1-\frac{C\left(\frac{3}{2}\right)^{\frac1q}}{x}}qxs^{qx-1}
    \left(\widehat{\om}\left(s\right)-\widehat{\om}\left(1-\frac{C\left(\frac{3}{2}\right)^{\frac1q}}{x}\right) \right)\,ds\\
    &\ge\int_0^rqxs^{qx-1}\widehat{\om}\left(s\right)\,ds
    -\widehat{\om}\left(1-\frac{C\left(\frac{3}{2}\right)^{\frac1q}}{x}\right)\int_0^{1-\frac{C\left(\frac{3}{2}\right)^{\frac1q}}{x}}qxs^{qx-1}\,ds\\
    &\ge\widehat{\om}(r)r^{qx}-\widehat{\om}\left(1-\frac{C\left(\frac{3}{2}\right)^{\frac1q}}{x}\right) \left(1-\frac{C\left(\frac{3}{2}\right)^{\frac1q}}{x}\right)^{qx}
    \end{split}
    \end{equation*}
for all $0<r<1-\frac{C\left(\frac{3}{2}\right)^{\frac1q}}{x}<1$.
Therefore
    $$
    \widehat{\om}(r)r^{qx}
    \le\left(2+\left(1-\frac{C\left(\frac{3}{2}\right)^{\frac1q}}{x}\right)^{qx}\right)
    \widehat{\om}\left(1-\frac{C\left(\frac{3}{2}\right)^{\frac1q}}{x}\right)
    \le3\widehat{\om}\left(1-\frac{C\left(\frac{3}{2}\right)^{\frac1q}}{x}\right)
    $$
for all $0<r<1-\frac{C\left(\frac{3}{2}\right)^{\frac1q}}{x}<1$.
By choosing $x$ such that $1-\frac{C\left(\frac{3}{2}\right)^{\frac1q}}{x}=\frac{1+r}{2}$, we obtain
    $$
    \widehat{\om}(r)
    \le3\left(\inf\left\{r^{\frac{2C\left(\frac{3}{2}\right)^{\frac1q}}{1-r}}:\frac12\le r<1\right\}\right)^{-q}\widehat{\om}\left( \frac{1+r}{2}\right)\lesssim\widehat{\om}\left( \frac{1+r}{2}\right),\quad \frac12\le r<1,
    $$
and it follows that $\om\in\DD$.
\end{Prf}

\medskip

If $\om\in\DD$, then \eqref{eq:desLP} is satisfied by Theorem~\ref{theorem:L-P-D-hat}, and this in turn implies \eqref{eq:moment-condition} by the latter part of the proof of Theorem~\ref{theorem:L-P-D-hat}. By replacing $x$ by $x/\b$ and choosing $q=\b$ we deduce that \eqref{eq:moment-condition} is equivalent to $x^\b(\om_{[\b]})_x\le (C\b)^\b\om_x$ for all $0\le x<\infty$. Now that \eqref{eq:moment-condition} implies $\om\in\DD$  by the proof of Theorem~\ref{theorem:L-P-D-hat}, we deduce that a radial weight $\om$ belongs to $\DD$ if and only if for some (equivalently for each) $\b>0$, there exists a constant $C=C(\om,\b)>0$ such that
    \begin{equation}\label{Corollary:D-hat}
    x^\b(\om_{[\b]})_x\le C\om_x,\quad 0\le x<\infty.
    \end{equation}
		
Some more notation is needed for the proof of Theorem~\ref{th:L-P-D}.
For a radial weight $\om$, $K>1$ and $0\le r<1$, let $\r_n^r=\r_n^r(\om,K)$ be defined by
$\widehat{\om}(\r_n^r)=\widehat{\om}(r)K^{-n}$. Write $\r_n^0=\r_n$ for short. By defining $\r_n^r=\min\{\r:\widehat{\om}(\r)=\widehat{\om}(r)K^{-n}\}$, the sequence becomes unique.

\medskip
\par \begin{Prf}{\em{Theorem~\ref{th:L-P-D}.}}
Let first $\om\in\DDD$. Then the left hand side dominates the right hand side in \eqref{Eq:L-P-D} by Theorem~\ref{theorem:L-P-D-hat}. For the converse inequality,
 a calculation shows that $(1-s)^\g\om(s)\in\Dd$, whenever $\om\in\Dd$ and $\gamma\ge 0$, so it is enough to prove that 
$$\|f\|_{A^p_\om}^p\lesssim |f(0)|^p+ \int_{\D}|f'(z)|^p(1-|z|)^p\om(z)\,dA(z),\quad f\in\H(\D).$$
Next, by choosing $t=\r_{n+1}$ and $r=\r_n$ in \eqref{6}, we deduce that for each $K>1$, there exists $C=C(\om,K)>0$ such that $1-\r_n\le C(1-\r_{n+1})$ for all $n\in\N\cup\{0\}$.
So, applying \cite[Lemma~3.4]{PavP} (with $\gamma=p$ and $A_n=M_p(\r_{n+1},f)$ for $p\ge 1$, and $\gamma=1$ and $A_n=M^p _p(\r_{n+1},f)$ for $p<1$)
 and \eqref{dinkeli} yield
    \begin{equation*}
    \begin{split}
    \|f\|_{A^p_\om}^p
    &\le\int_0^1 M_p^p(r,f)\om(r)\,dr
    \le\left(1-\frac1K\right)\sum_{n=-1}^\infty M_p^p(\r_{n+1},f)\frac{\widehat{\om}(0)}{K^{n}}\\
    &\lesssim\sum_{n=-1}^\infty M_p^p(\r_{n+2},f')(\r_{n+2}-\r_{n+1})^p\frac{\widehat{\om}(0)}{K^{n}}+|f(0)|^p\\
   &\lesssim\sum_{n=-1}^\infty M_p^p(\r_{n+2},f')(1-\r_{n+3})^p\frac{\widehat{\om}(0)}{K^{n}}+|f(0)|^p\\
   &\lesssim\sum_{n=-1}^\infty M_p^p(\r_{n+2},f')(1-\r_{n+3})^p\int_{\r_{n+2}}^{\r_{n+3} }\om(s)\,ds+|f(0)|^p\\
   &\lesssim \int_{\r_1}^1 M_p^p(s,f)(1-s)^p\om(s)\,ds +|f(0)|^p, \quad f\in\H(\D).
    \end{split}
    \end{equation*}
Therefore we have shown that \eqref{Eq:L-P-D} is satisfied if $\om\in\DDD$.

Conversely, assume that \eqref{Eq:L-P-D} is satisfied for some $k\in\N$ and $0<p<\infty$. Then $\om\in\DD$ by Theorem~\ref{theorem:L-P-D-hat}. By applying \eqref{Eq:L-P-D} to the monomials $z^n$, we deduce
    \begin{equation*}
    \begin{split}
    \om_{np+1}
    &\lesssim n^{kp}\int_\D|z|^{(n-k)p}(1-|z|)^{kp}\om(z)\,dA(z)
    \asymp n^{kp}(\om_{[kp]})_{np+1},\quad n\ge k,
    \end{split}
    \end{equation*}
where the last step follows by the monotonicity of $L^p$-means of analytic functions. It follows that $\om\in\M$, because \eqref{eq:characterization-M} characterizes $\M$ for some (equivalently for each) $\b>0$. Now that $\DD\cap\M=\DDD$ by Theorem~\ref{th:projectionboundedonto}, we have $\om\in\DDD$.
\end{Prf}

\section{$P_\om: L^p_\om \to D^p_{\om,k}$ plus duality $(A^{p'}_\om)^\star\simeq D^p_{\om,k}$ }\label{sec:projDp}

In this section we will prove Theorems~\ref{th:PomegaDp} and~\ref{th:PwDpwonto}.
\medskip

\begin{Prf}{\em{Theorem~\ref{th:PomegaDp}.}}
It is clear that (i) implies (iii), and (iv) follows from (iii) by Theorem~\ref{theorem:L-P-D-hat}. The fact that $P_\om: L^p_\om \to D^p_{\om,k}$ is bounded whenever $\om\in\DD$ is an immediate consequence of Theorem~\ref{th:proyderivLP} below. To complete the proof we will show that (i) and (ii) are equivalent. But before that, we observe that one can also deduce (i) from (iv) by Theorems~\ref{theorem:L-P-D-hat} and \ref{th:dualityAp}.

First observe that the dual space $(A^{p'}_\om)^\star$ can be identified with $P_\om(L^p_\om)$ under the $A^2_\om$-pairing, and in particular, for each $L\in(A^{p'}_\om)^\star$ there exists $h\in L^p_\om$ such that
    \begin{equation}\label{eq:k1p}
    L(f)=L_{P_\om(h)}(f)=\langle f,P_\om(h)\rangle_{A^2_\om},\quad f\in A^{p'}_\om,\quad\|L\|=\|h\|_{L^p_\om}.
    \end{equation}

Now assume (i). Then, by \eqref{eq:k1p}, for each $L\in (A^{p'}_\om)^\star$ there exists $h\in L^p_\om$ such that $L=L_{P_\om(h)}$, where
	$$
	\| P_\om(h)\|_{D^p_{\om,k}}\le C\| h\|_{L^p_\om}=C\|L\|.
	$$
Thus (ii) holds. Conversely, assume (ii) and let $h\in L^p_\om$. Then, by \eqref{eq:k1p}, $L_{P_\om(h)}\in  (A^{p'}_\om)^\star$
and $\|L_{P_\om(h)}\|\le \|h\|_{L^p_\om}$. Moreover,  by (ii) $L_{P_\om(h)}=L_g$ for some $g\in D^p_{\om,k}$ and
$\| g\|_{D^p_{\om,k}}\le C \|L_{P_\om(h)}\|$. Therefore $g=P_\om(h)$ and $\| P_\om(h)\|_{D^p_{\om,k}}\le C \| h\|_{L^p_\om}$. Since $h$ was arbitrary, (i) follows.
\end{Prf}

\medskip

For $k\in\N\cup\{0\}$ and a radial weight $\om$, consider the operators
    $$
    T_{\om,k}(f)(z)=(1-|z|)^k\int_{\D} f(\z)(B^{\om}_\z)^{(k)}(z)\om(\z)\,dA(\z),\quad z\in\D,\quad f\in L^1_\om,
    $$
and
    $$
    T^+_{\om,k}(f)(z)=(1-|z|)^k\int_{\D} f(\z)\left|(B^{\om}_\z)^{(k)}(z)\right|\om(\z)\,dA(\z),\quad z\in\D,\quad f\in L^1_\om.
    $$
Then obviously $T_{\om,0}=P_\om$ and $T^+_{\om,0}=P^+_\om$.

\begin{theorem}\label{th:proyderivLP}
Let $\om$ be a radial weight, $1<p<\infty$ and $k\in\N$. Then the following statements are equivalent:
    \begin{itemize}
    \item[(i)] $T^+_{\om,k}:L^p_\om\to L^p_\om$ is bounded;
    \item[(ii)] $T_{\om,k}:L^p_\om\to L^p_\om$ is bounded;
    \item[(iii)] $\om\in\DD$.
    \end{itemize}
\end{theorem}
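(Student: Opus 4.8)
The plan is to run the cycle (i)$\Rightarrow$(ii)$\Rightarrow$(iii)$\Rightarrow$(i). The first implication is immediate, since $|T_{\om,k}f|\le T^+_{\om,k}(|f|)$ pointwise. For (ii)$\Rightarrow$(iii) I would test on monomials: from $B^\om_\z(z)=\sum_{m}\frac{(\overline{\z}z)^m}{2\om_{2m+1}}$ and the orthogonality of the powers in $L^2_\om$ one computes $T_{\om,k}(z^n)(z)=(1-|z|)^k\,n(n-1)\cdots(n-k+1)\,z^{n-k}$ for $n\ge k$, the odd moments cancelling; this also exhibits the identity $T_{\om,k}f=(1-|z|)^k(P_\om f)^{(k)}$. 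Applying the boundedness hypothesis to $f=z^n$ gives $n^{kp}(\om_{[kp]})_{(n-k)p+1}\lesssim\om_{np+1}$, and shifting moment indices by bounded amounts (\cite[Lemma~2.1]{PelSum14}) together with the monotonicity of $L^p$-means turns this into $n^{kp}(\om_{[kp]})_{np}\lesssim\om_{np}$ for $n\ge k$; the interpolation-across-integers device already used in the proof of Theorem~\ref{theorem:L-P-D-hat} then extends it to $x^{kp}(\om_{[kp]})_x\lesssim\om_x$ for all $0\le x<\infty$, which is precisely $\om\in\DD$ by the characterization \eqref{Corollary:D-hat} with $\b=kp$.

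The core is (iii)$\Rightarrow$(i): if $\om\in\DD$ then $T^+_{\om,k}:L^p_\om\to L^p_\om$ is bounded, and I would prove this by Schur's test. Comparing Maclaurin coefficients gives $z^k(B^\om_\z)^{(k)}(z)=\overline{\z^k(B^\om_z)^{(k)}(\z)}$, hence $|z|^k|(B^\om_\z)^{(k)}(z)|=|\z|^k|(B^\om_z)^{(k)}(\z)|$; on $\{|z|\le\frac12\}\cup\{|\z|\le\frac12\}$ both $(1-|z|)^k$ and $(B^\om_\z)^{(k)}(z)$ are bounded while $\|\cdot\|_{L^1_\om}\lesssim\|\cdot\|_{L^p_\om}$, so that part of the operator is harmless. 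It remains to bound on $L^p_\om$ the positive operator with kernel $(1-|z|)^k|(B^\om_z)^{(k)}(\z)|$ against $\om\,dA$. With a radial weight $h$ to be chosen (adapted to $\om$, built out of $1-|z|$ and $\widehat\om$), Schur's test reduces to the two inequalities $(1-|z|)^k\|(B^\om_z)^{(k)}\|_{A^1_{h^{p'}\om}}\lesssim h(z)^{p'}$ and $\|(B^\om_\z)^{(k)}\|_{A^1_{\nu}}\lesssim h(\z)^{p}$ with $\nu(z)=(1-|z|)^kh(z)^p\om(z)$. The decisive tool is the estimate \eqref{12345} from \cite{PelRatproj} with $p=1$, $N=k$, which turns the $A^1_\mu$-norm of $(B^\om_w)^{(k)}$ into $\int_0^{|w|}\widehat{\mu}(t)\widehat{\om}(t)^{-1}(1-t)^{-(k+1)}\,dt+1$, provided $\om,\mu\in\DD$. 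One then checks that $h$ can be chosen so that both weights lie in $\DD$ and so that, after a dyadic splitting that uses $\om\in\DD$ (whence $\widehat\om(t)\asymp\widehat\om(r)$ on the dyadic block preceding $r$, and $\widehat\om$ grows at most geometrically over earlier blocks, being then absorbed by $(1-t)^{-k}$ since $k\ge1$), both Schur inequalities close.

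The main obstacle, I expect, is the construction of $h$. The difficulty is that $\DD$, unlike $\Dd$, is not closed under multiplication by powers of $1-|z|$ (cf.\ \cite[Theorem~3]{PRHarmonic}, quoted after Theorem~\ref{th:L-P-D}), and estimating $|(B^\om_z)^{(k)}(\z)|$ crudely by the moduli of its Taylor coefficients is too lossy (it would force a $\Dd$-hypothesis); one must instead balance the two Schur estimates against each other by exploiting the gain $(1-|z|)^k$ with $k\ge1$ in a sharp quantitative way. This is exactly the feature separating $k\ge1$ from $k=0$: for $P^+_\om=T^+_{\om,0}$ the corresponding endpoint integral $\int_0^{|z|}(1-t)^{-1}\,dt$ is only logarithmic, and boundedness then genuinely needs $\om\in\DDD$ (Theorem~\ref{th:P+1peso}). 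The argument is of the same type as, but technically lighter than, the implication (iv)$\Rightarrow$(i) of Theorem~\ref{Theorem:P_w-L^p_v} (the hardest Schur estimate of the paper), of which it can be viewed as a model case. Closing the cycle gives the equivalence of (i)--(iii), and the identity $T_{\om,k}f=(1-|z|)^k(P_\om f)^{(k)}$ then also yields that $P_\om:L^p_\om\to D^p_{\om,k}$ is bounded for $\om\in\DD$, hence the Littlewood--Paley inequality \eqref{eq:desLP}.
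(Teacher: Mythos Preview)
Your overall scheme---the cycle (i)$\Rightarrow$(ii)$\Rightarrow$(iii)$\Rightarrow$(i), the monomial test for (ii)$\Rightarrow$(iii), and Schur's test with the kernel estimate \eqref{12345} for (iii)$\Rightarrow$(i)---is exactly the paper's approach. Your (ii)$\Rightarrow$(iii) is even slightly more direct: the paper observes that $T_{\om,k}f=(1-|z|)^k f^{(k)}$ on $A^p_\om$ and then invokes Theorem~\ref{theorem:L-P-D-hat}, whereas you unwind that theorem's monomial argument in place and appeal straight to \eqref{Corollary:D-hat}. Either way is fine.

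Where you drift from the paper is in (iii)$\Rightarrow$(i). You flag the construction of the Schur weight $h$ as the ``main obstacle'', anticipate dyadic splitting and a delicate balancing act, and worry that $\DD$ is not closed under multiplication by $(1-|z|)^\gamma$. None of this is needed. The single choice $h=\widehat\om^{-1/(pp')}$ closes both Schur inequalities by a short direct computation: \cite[Theorem~1]{PelRatproj} supplies not just the integrated norm \eqref{12345} but (what you actually need here) the pointwise $M_1$-type estimate $\int_0^{s|z|}\frac{dt}{\widehat\om(t)(1-t)^{k+1}}+1$, after which Fubini, \cite[Lemma~2.1(ii)]{PelSum14}, and splitting the radial integral at $|z|$ give
\[
(1-|z|)^k\int_0^1\frac{\om(s)}{(1-s|z|)^k\widehat\om(sz)\widehat\om(s)^{1/p}}\,ds
\lesssim\int_0^{|z|}\frac{\om(s)}{\widehat\om(s)^{1+1/p}}\,ds+\frac1{\widehat\om(z)}\int_{|z|}^1\frac{\om(s)}{\widehat\om(s)^{1/p}}\,ds
\lesssim\widehat\om(z)^{-1/p}.
\]
No auxiliary weight of the form $(1-|\cdot|)^k\om/\widehat\om^{1/p'}$ ever has to be checked for membership in $\DD$, so the closure issue you raise does not arise. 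The case $k\ge1$ versus $k=0$ manifests itself simply in that $\int_0^{r}\frac{dt}{\widehat\om(t)(1-t)^{k+1}}\asymp\frac{1}{\widehat\om(r)(1-r)^k}$ for $k\ge1$ by \cite[Lemma~2.1(ii)]{PelSum14}, whereas for $k=0$ this fails and the extra hypothesis $\om\in\Dd$ enters (Theorem~\ref{th:P+1peso}).
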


\begin{proof}
We begin with showing that $T^+_{\om,k}:L^p_\om\to L^p_\om$ is bounded if $\om\in\DD$. Let $1<p<\infty$, and choose $h=\widehat{\om}^{-\frac{1}{pp'}}$. By Schur's test, it suffices to show that
    \begin{equation}\label{t1}
    \int_{\D}(1-|z|)^k\left|(B^{\om}_\z)^{(k)}(z)\right|h^{p'}(\z)\omega(\z)\,dA(\z)\lesssim h^{p'}(z),\quad z\in\D,
    \end{equation}
and
    \begin{equation}\label{t2}
    \int_{\D}(1-|z|)^k\left|(B^{\om}_\z)^{(k)}(z)\right|h^{p}(z)\omega(z)\,dA(z)\lesssim h^{p}(\z),\quad \z\in\D.
     \end{equation}
By applying \cite[Theorem~1]{PelRatproj} and \cite[Lemma~2.1(ii)]{PelSum14}, we deduce
    \begin{equation*}
    \begin{split}
    &(1-|z|)^k\int_{\D}\left|(B^{\om}_\z)^{(k)}(z)\right| h^{p'}(\z)\omega(\z)\,dA(\z)\\
    &\asymp(1-|z|)^k\int_0^1\left(\int_0^{s|z|}\frac{dt}{\widehat{\om}(t)(1-t)^{k+1}}+1\right)
    \frac{\om(s)}{\widehat{\om}(s)^{\frac{1}{p}}}ds\\
    &\asymp (1-|z|)^k\left(\int_0^1
    \frac{\om(s)}{(1-s|z|)^k\widehat{\om}(sz)\widehat{\om}(s)^{\frac{1}{p}}}ds \right)\\
    &\le\int_0^{|z|}
    \frac{\om(s)}{\widehat{\om}(s)^{1+\frac{1}{p}}}ds
    +\frac1{\widehat{\om}(z)}\int_{|z|}^1
    \frac{\om(s)}{\widehat{\om}(s)^{\frac{1}{p}}}ds
		\lesssim\frac1{\widehat{\om}(z)^\frac1p}=h^{p'}(z),\quad z\in\D,
    \end{split}
    \end{equation*}
and hence \eqref{t1} is satisfied. A similar reasoning gives \eqref{t2}. Thus $T^+_{\om,k}:L^p_\om\to L^p_\om$ is bounded if $\om\in\DD$ and $1<p<\infty$.

Since (i) trivially implies (ii), it remains to show that $\om\in\DD$ if $T_{\om,k}:L^p_\om\to L^p_\om$ is bounded. The representation formula gives
$f=P_\om(f)$ for each $f\in A^1_\om$. Hence $(1-|z|)^kf^{(k)}(z)=T_{\om,k}(f)(z)$ and
    \begin{equation*}
    \begin{split}
    \|f^{(k)}(\cdot)(1-|\cdot|)^k\|_{L^p_\om}=\| T_{\om,k}(f)\|_{L^p_\om}\lesssim\|f\|_{A^p_\om},\quad f\in A^p_\om,
    \end{split}
    \end{equation*}
if $T_{\om,k}:L^p_\om\to L^p_\om$ is bounded. Therefore $\om\in\DD$ by Theorem~\ref{theorem:L-P-D-hat}.
\end{proof}

The proof of Theorem~\ref{th:proyderivLP} shows that if $\om\in\DD$, then $\|f^{(k)}(\cdot)(1-|\cdot|)^k\|_{L^p_\om}\lesssim\|f\|_{A^p_\om}$ for all $f\in A^p_\om$ provided $1<p<\infty$. This yields an alternate proof of \eqref{eq:desLP} under the hypotheses $1<p<\infty$ and $\om\in\DD$. Moreover, the equivalence between (i) and (iv) in Theorem~\ref{ELTEOREMA} can be considered as the limit case $p=\infty$ of Theorem~\ref{th:proyderivLP}.

Now we characterize the radial weights $\om$ such that $P_\om: L^p_\om\to D^p_{\om,k}$ is bounded and onto.
\medskip\par
\begin{Prf}{\em{Theorem~\ref{th:PwDpwonto}.}}
Theorem~\ref{th:L-P-D} implies that (iii) and (iv) are equivalent. Moreover, if (iv) holds, then
$P_\om: L^p_\om\to D^p_{\om,k}$ is bounded by Theorem~\ref{th:PomegaDp}, and it is onto since $A^p_\om=D^p_{\om,k}$ by Theorem~\ref{th:L-P-D}. Thus (i) is satisfied.

Assume (i). Then $(A^{p'}_\om)^\star$ is continuously embedded into $D^p_{\om,k}$ via the $A^2_\om$-pairing by Theorem~\ref{th:PomegaDp}. Moreover, since $P_\om: L^p_\om \to D^p_{\om,k}$ is onto, there exists $C>0$ such that
for each $g\in D^p_{\om,k}$ there is $h\in L^p_\om$ such that $g=P_\om(h)$ and $\| h\|_{L^p_\om}\le C\| g\|_{D^p_{\om,k}}$.
Then \eqref{eq:fubinisubs} yields
	$$
	|L_g(f)|\le \|h\|_{L^p_\om}\| f\|_{A^{p'}_\om}
	\le C\|g\|_{D^p_{\om,k}}\|f\|_{A^{p'}_\om} ,\quad f\in A^{p'}_\om,\quad g\in D^p_{\om,k},
	$$
that is, $D^p_{\om,k}$ is continuously embedded into $(A^{p'}_\om)^\star$, and thus (ii) holds.

We complete the proof by showing that (ii) implies (iv). Since $A^p_\om$ is continuously embedded into $(A^{p'}_\om)^\star$, it follows
  that $I_d:A^p_\om \to D^p_{\om,k}$ is bounded whenever (ii) is satisfied. Hence $\om\in\DD$ by Theorem~\ref{theorem:L-P-D-hat}. It remains to show that $\om\in\Dd$. Since each $g\in D^p_{\om,k}$ induces a bounded linear functional $L_g$, defined by $L_g(f)=\langle f,g\rangle_{A^2_\om}$, on $A^{p'}_\om$, there exists a constant $C>0$ such that
	\begin{equation}\label{eq:pairingdp}
	\left| \langle f,g\rangle_{A^2_\om}\right|\le C \|f\|_{A^{p'}_\omega}\|g\|_{D^p_{\om,k}}, \quad f,\,g\in\H(\D).
	\end{equation}
For each $t\in \left(\frac12,1\right)$, consider the analytic functions $g_t(z)=\sum_{k=0}^\infty\frac{(tz)^{2^k}}{\om_{2^{k+1}}}$ and $f_{t,\beta}(z)=\sum_{k=0}^\infty\frac{(tz)^{2^k}}{(\om_{2^{k+1}})^{\beta}}$, where $\beta>\frac{2}{p'}$. Since $f_t$ and $g_t$ are lacunary series with the same gaps, for each $0<q<\infty$ there exist constants $C_1=C_1(q)>0$ and $C_2=C_2(q)>0$ such that
    \begin{equation}\label{eq:lacunary}
    C_1M_2(r,h)\le M_q(r,h)\le C_2M_2(r,h),\quad h\in\{f_{t,\b},g_t\},\quad r,t\in(0,1).
    \end{equation}
The choices $f=f_{t,\b}$ and $g=g_t$ in \eqref{eq:pairingdp} yield
    \begin{equation}\label{eq:onto1}
    \sum_{k=0}^\infty\frac{t^{2^{k+1}}}{(\om_{2^{k+1}})^\beta}\lesssim \|f_{t,\beta}\|_{A^{p'}_\omega}\|g_t\|_{D^p_{\om,k}}.
    \end{equation}
For $\om\in\DD$, $N\in\N\cup\{0\}$ and $\alpha,\gamma>0$ we have the estimate
		\begin{equation}\label{eq:onto2}
		\begin{split}
    \int_{0}^r\frac{dx}{\widehat{\om}(t)^\alpha(1-t)^\gamma}
		&\asymp\sum_{n=N}^\infty\frac{r^{2^{n+1}}}{2^{n(1-\gamma)}\om_{2^{n+1}}^\alpha},\quad r\to1^-,
    \end{split}
		\end{equation}
the proof of which is postponed for a moment. This together with \eqref{eq:lacunary} and \eqref{eq:onto2} imply
	\begin{equation}
	\begin{split}\label{eq:fpprima}
	\|f_{t,\beta}\|^{p'}_{A^{p'}_\omega}
	&\asymp\int_{0}^1\left( \sum_{k=0}^\infty\frac{(st)^{2^{k+1}}}{\om^{2\beta}_{2^{k+1}}}\right)^{\frac{p'}{2}}\,\om(s)\,ds
	\asymp\int_{0}^1\left(\int_{0}^{st}\frac{dx}{\widehat{\om}(x)^{2\beta}(1-x)}\right)^{\frac{p'}{2}}\,\om(s)\,ds+1\\
	&\le\left(\int_{0}^{t}\frac{dx}{\widehat{\om}(x)^{\beta}(1-x)}\right)^{\frac{p'}{2}}\int_{0}^1
	\frac{\om(s)}{\widehat{\om}(st)^{\frac{p'	\beta}{2}}}\,ds+1\\
	&\lesssim\left(\int_{0}^{t}\frac{dx}{\widehat{\om}(x)^\beta(1-x)}\right)^{\frac{p'}{2}}
	\left(\int_{0}^t\frac{\om(s)}{\widehat{\om}(s)^{\frac{p'\beta}{2}}}\,ds
	+\frac{1}{\widehat{\om}(t)^{\frac{p'\beta}{2}-1}} \right)+1\\
	&\asymp\left(\int_{0}^{t}\frac{dx}{(1-x)\widehat{\om}(x)^\beta}\right)^{\frac{p'}{2}}\frac{1}{\widehat{\om}(t)^{\frac{p'\beta}{2}-1}}
	,\quad t\to1^-.
  \end{split}
	\end{equation}
In a similar manner \eqref{eq:lacunary} and \eqref{eq:onto2} yield
	\begin{equation}
	\begin{split}\label{eq:gpdp}
	\|g_t\|^p_{D^p_{\om,k}}
	&\asymp \int_{0}^1\left(\sum_{j>\frac{\log_2k}{\log2}}
	\frac{2^{2kj}s^{2^{j+1}-2k}t^{2^{j+1}}}{\om^2_{2^{j+1}}}\right)^{\frac{p}{2}}\,\om(s)(1-s)^{kp}\,ds\\
	&\lesssim\int_{0}^1\left(\int_{0}^{st}\frac{dx}{\widehat{\om}(x)^2(1-x)^{2k+1}}\right)^{\frac{p}{2}}\,\om(s)(1-s)^{kp}\,ds+1\\
	&\asymp\int_{0}^1 \frac{\om(s)}{\widehat{\om}(st)^{p}}\,ds+1
	\asymp\frac{1}{\widehat{\om}(t)^{p-1}},\quad t\to1^-.
  \end{split}
	\end{equation}
By combining \eqref{eq:onto1}--\eqref{eq:gpdp}, we deduce
	$$
	\int_{0}^{t}\frac{dx}{\widehat{\om}(x)^\beta(1-x)}
	\lesssim\left(\int_{0}^{t}\frac{dx}{\widehat{\om}(x)^\beta(1-x)}\right)^{\frac{1}{2}}\frac{1}{\widehat{\om}(t)^{\frac{\beta}{2}}}
	,\quad t\to1^-,
	$$
and it follows that
	$$
	\int_{0}^{t}\frac{dx}{\widehat{\om}(x)^\beta(1-x)}\lesssim\frac{1}{\widehat{\om}(t)^{\beta}},\quad 0\le t<1.
	$$
If now $0\le r\le t<1$, then this estimate yields
    \begin{equation*}
    \begin{split}
    \frac{1}{\widehat{\om}(r)^\b}\log\frac{1-r}{1-t}
    =\frac{1}{\widehat{\om}(r)^\b}\int_r^t\frac{ds}{1-s}
    \le\int_r^t\frac{ds}{\widehat{\om}(s)^\b(1-s)}
    \le \frac{C}{\widehat{\om}(t)^\b},
    \end{split}
    \end{equation*}
where $C=C(\b,\om)>0$. By setting $t=1-\frac{1-r}{K}$, where $K>1$, we deduce
    $$
    \widehat{\om}(r)\ge\left(\frac{\log K}{C}\right)^\frac1{\b}\widehat{\om}\left(1-\frac{1-r}{K}\right),\quad 0\le r<1,
    $$
from which it follows that $\om\in\Dd$ by choosing $K$ sufficiently large.

It remains to prove \eqref{eq:onto2}. Let $r>1-2^{-N-1}$ and choose $N^*\in\N$ such that $1-2^{-N^*}\le r<1-2^{-N^*-1}$. Then \cite[Lemma~2.1(ii)(vi)]{PelSum14} yields
	\begin{equation*}
	\begin{split}
	\sum_{n=N}^{N^*}\frac{r^{2^{n+1}}}{2^{n(1-\gamma)}\om_{2^{n+1}}^\alpha}
	&\asymp\sum_{n=N}^{N^*}\frac{1}{2^{n(1-\gamma)}\om_{2^{n+1}}^\alpha}
	\asymp\sum_{n=N}^{N^*}\frac{1}{2^{n(1-\gamma)}\widehat{\om}\left(1-\frac1{2^{n+1}}\right)^\alpha}\\
	&\asymp\int_N^{N^*}\frac{ds}{2^{s(1-\gamma)}\widehat{\om}\left(1-\frac1{2^{s}}\right)^\alpha}
	\asymp\int_{1-\frac1{2^N}}^{1-\frac1{2^{N^*+1}}}\frac{dt}{(1-t)^\gamma\widehat{\om}(t)^\alpha}\\
	&\ge\int_{1-\frac1{2^N}}^r\frac{dt}{(1-t)^\gamma\widehat{\om}(t)^\alpha}
	\asymp\int_0^r\frac{dt}{(1-t)^\gamma\widehat{\om}(t)^\alpha},\quad r\to1^-.
	\end{split}
	\end{equation*}
The same upper bound can be established in a manner similar to that above. By \cite[Lemma~2.1(ii)]{PelSum14} we may fix 
$\b>0$ such that $\widehat{\om}(r)^\alpha(1-r)^{-\beta}$ is essentially increasing. This and \cite[Lemma~2.1(ii)(vi)]{PelSum14} yield
	\begin{equation*}
	\begin{split}
	\sum_{n=N^*}^\infty\frac{r^{2^{n+1}}}{2^{n(1-\gamma)}\om_{2^{n+1}}^\alpha}
	&\asymp\sum_{n=N^*}^\infty\frac{r^{2^{n+1}}}{2^{n(1-\gamma)}\widehat{\om}\left(1-\frac1{2^{n}}\right)^\alpha}
	\lesssim\frac1{2^{\beta N^*}\widehat{\om}\left(1-\frac1{2^{N^*}}\right)^\alpha}\sum_{n=N^*}^\infty\frac{r^{2^{n+1}}}{2^{n(1-\gamma-\b)}}\\
	&\asymp\frac1{2^{\beta N^*(1-\gamma)}\widehat{\om}\left(1-\frac1{2^{N^*}}\right)^\alpha}
	\asymp\frac1{(1-r)^{\gamma-1}\widehat{\om}(r)^\alpha}\\
	&\asymp\int_{\frac{4r-1}{3}}^r\frac{dt}{(1-t)^\gamma\widehat{\om}(t)^\alpha},\quad r\to1^-.
	\end{split}
	\end{equation*}
By combining the estimates established we deduce \eqref{eq:onto2}.
\end{Prf}

\section{Bergman projection $P_\om:L^p_\om\to L^p_\om$}\label{Sec:P_w}

We begin with some necessary definitions. Recall that for a radial weight $\om$ and $K>1$, $\{\r_n\}_{n=0}^\infty=\{\r_n(\om,K)\}_{n=0}^\infty$ is a sequence defined by $\widehat{\om}(\r_n)=\widehat{\om}(0)K^{-n}$ for all $n\in\N\cup\{0\}$. Denote
\begin{equation}\label{eq:Mn}
M_n=M_n(\om,K)=E\left(\frac{1}{1-\r_{n}}\right),
\end{equation}
 where $E(x)\in\N\cup\{0\}$ satisfies $E(x)\le x<E(x)+1$ for all $x>0$. Write
    $$
    I(0)=I_{\om,K}(0)=\left\{k\in\N\cup\{0\}:k<M_1\right\}
    $$
and
   \begin{equation*}
    I(n)=I_{\om,K}(n)=\left\{k\in\N:M_n\le
    k<M_{n+1}\right\},\quad n\in\N.
  \end{equation*}

For a Banach space $X\subset\H(\D)$, $s\in\mathbb{R}$, $0<q\le\infty$ and a sequence of polynomials $\{P_n\}_{n=0}^\infty$, let
    \begin{equation}\label{eq:lqXPn}
    \ell^q_s(X,\{P_n\})
    =\left\{f\in\H(\D):\|f\|^q_{\ell^q_s(X,\{P_n\})}=\sum_{n=0}^\infty\left(2^{-ns}\|P_n*f\|_X\right)^q<\infty\right\}, \quad 0<q<\infty,
    \end{equation}
and
    \begin{equation}\label{eq:linftyXPn}
    \ell^\infty_s(X,\{P_n\})
    =\left\{f\in\H(\D):\|f\|_{\ell^\infty_s(X,\{P_n\})}=\sup_n\left(2^{-ns}\|P_n*f\|_X\right)<\infty\right\}.
    \end{equation}
For quasi-normed spaces $X,Y\subset\H(\D)$, let $[X,Y]=\{g\in\H(\D):f*g\in Y\,\text{for all} \,f\in X\}$ denote the space of coefficient multipliers from $X$ to $Y$ equipped with the norm
    $$
    \|g\|_{[X,Y]}=\sup\left\{\|f*g\|_Y:f\in X,\,\|f\|_X\le1\right\},
    $$
and finally, let $\mathcal{A}$ denote the disk algebra.

\medskip

\begin{Prf}{\em{Theorem~\ref{th:dualityAp}.}} It is well known that if $P_\om: L^{p'}_\om \to L^{p'}_\om$ is bounded, then $(A^p_\om)^\star\simeq A^{p'}_\om$ via the $A^2_\om$-pairing with equivalence of norms. The converse implication is true also. Namely, assume that $(A^p_\om)^\star\simeq A^{p'}_\om$ via the $A^2_\om$-pairing with equivalence of norms. Let $h\in L^{p'}_\om$, and consider the bounded linear functional $T_h(f)=\langle f,h\rangle_{L^2_\om}$ on $A^{p}_{\om}$ that satisfies $\|T_h\|\le \|h\|_{L^{p'}_\om}$.
Then $T_h(f)=\langle f,P_\om(h)\rangle_{A^2_\om}$ for all~$f\in A^p_\om$ by \eqref{eq:fubinisubs}.
Further, by the hypothesis, there exists $g\in A^{{p'}}_{\om}$ such that $T_h(f)=\langle f,g\rangle_{A^2_\om}$ for all $f\in A^{p}_{\om}$, and $\|T_h\|\asymp\|g\|_{A^{p'}_\om}$. It follows that $P_\om(h)=g$, and hence $\|P_\om(h)\|_{A^{{p'}}_{\om}}\asymp\|T_h\|\le\|h\|_{L^{p'}_{\om}}$. Since $h\in L^{p'}_\om$ was arbitrary, this shows that $P_\om: L^{p'}_\om\to L^{p'}_{\om}$ is bounded.

Consequently, it suffices to show that $\left(A^p_\om \right)^\star$ can be identified with $A^{p'}_\om$ via the $A^2_\om$-pairing with equivalence of norms.

For an analytic function $f$ in $\D$, with Maclaurin series $f(z)=\sum_{n=0}^\infty\widehat{f}(n)z^n$, define the
polynomials $\Delta^{\om}_nf$ by
    $$
    \Delta_n^{\om}f(z)=\sum_{k\in I_{\om,2}(n)}\widehat{f}(k)z^k,\quad n\in\N\cup\{0\},
    $$
so that $f=\sum_{n=0}^\infty\Delta_n^\om f$. Then the case $\alpha=1$ and $q=p$ of \cite[Theorem~3.4]{PRAntequera} (see also \cite[Theorem~4.1]{PavMixnormI} and \cite[Theorem~4]{PelRathg} for similar results) shows that for $1<p<\infty$ and $\om\in\DD$, we have
    \begin{equation}\label{18}
    \|f\|_{A^p_\om}^p
    \asymp\sum_{n=0}^\infty 2^{-n}\|\Delta^{\om}_n f\|_{H^p}^p,\quad f\in\H(\D).
    \end{equation}
Therefore $A^p_\om=\ell^p_{\frac{1}{p}}(H^p,\{\Delta^{\om}_n\})$ with equivalent norms.

To prove the theorem, first observe that $\left(A^p_\om\right)^\star$ can be identified with $[A^p_\om,H^\infty]=[A^p_\om,\mathcal{A}]$ via the $H^2$-pairing with equality of norms by \cite[Proposition~1.3]{PavMixnormI}. That is, for each $L\in\left(A^p_\om\right)^\star$ there exists a unique $g\in[A^p_\om, H^\infty]$ such that $L(f)=\langle f, g\rangle_{H^2}$ and $\|L\|=\|g\|_{[A^p_\om,H^\infty]}$, and conversely, each functional $L(f)=\langle f,g\rangle_{H^2}$ induced by $g\in[A^p_\om,H^\infty]$ belongs to $\left(A^p_\om\right)^\star$ and satisfies $\|L\|=\| g\|_{[A^p_\om,H^\infty]}$. Next, use \cite[Theorem~5.4]{PavMixnormI} and the fact that $(H^p)^\star\simeq[H^p,H^\infty]=H^{p'}$ via the $H^2$-pairing to deduce
    \begin{equation}\label{eq:dualap1}
    \left[\ell^p_{\frac{1}{p}}\left(H^p,\{\Delta^{\om}_n\}\right),H^\infty\right]
    \simeq\ell^{p'}_{-\frac{1}{p}}\left([H^p,H^\infty],\{\Delta^{\om}_n\}\right)
    =\ell^{p'}_{-\frac{1}{p}}\left(H^{p'},\{\Delta^{\om}_n\}\right)
    \end{equation}
with equivalent norms. Since $A^p_\om\simeq\ell^p_{\frac{1}{p}}(H^p,\{\Delta^{\om}_n\})$ with equivalent norms, it follows that
    \begin{equation}\label{eq:ap1j}
    \left(A^p_\om\right)^\star
    \simeq[A^p_\om,H^\infty]\simeq\ell^{p'}_{-\frac{1}{p}}(H^{p'},\Delta^{\om}_n)
    \end{equation}
via the $H^2$-pairing with equivalence of norms.

Finally, define the operator $I^\om:\H(\D)\to\H(\D)$ by
$I^\om(g)(z)=\sum_{k=0}^\infty \widehat{g}(k)\om_{2k+1}z^{k}$ for all $z\in\D$.
The sequence $\{\om_{2k+1}\}_{k=0}^\infty$ is non-increasing,
and hence there exists a constant $C>0$ such that
    $$
    C^{-1}\om_{M_{n+1}}\|\Delta^{\om}_n g\|_{H^{p'}}
    \le\|\Delta^{\om}_n I^\om(g)\|_{H^{p'}}
    \le C\om_{M_n}\|\Delta^{\om}_n g\|_{H^{p'}},\quad g\in\H(\D),
    $$
by \cite[Lemma~E]{PelRathg}. This combined with $\om_x\asymp\widehat{\om}\left(1-\frac1x\right)$, valid for all $1\le x<\infty$ by \cite[Lemma~2.1(vi)]{PelSum14}, gives $\|\Delta^{\om}_nI^\om(g)\|_{H^{p'}}\asymp 2^{-n}\|\Delta^{\om}_n g\|_{H^{p'}}$ for all $g\in\H(\D)$.
Therefore \eqref{18} yields
    \begin{equation*}
    \begin{split}
    \| I^\om(g)\|^{p'}_{\ell^{p'}_{-\frac{1}{p}}(H^{p'} ,\Delta^{\om}_n )}
    &=\sum_{n=0}^\infty  2^{\frac{np'}{p}}\|\Delta^{\om}_n I^\om(g)\|^{p'}_{H^{p'}}
    \asymp \sum_{n=0}^\infty  2^{\frac{np'}{p}-np'}\|\Delta^{\om}_n g\|^{p'}_{H^{p'}}\\
    &=\sum_{n=0}^\infty  2^{-n}\|\Delta^{\om}_n g\|^{p'}_{H^{p'}}
    \asymp\| g\|^{p'}_{A^{p'}_\om},\quad g\in\H(\D),
    \end{split}
    \end{equation*}
and hence $I^\om:A^{p'}_\om\to\ell^{p'}_{-\frac{1}{p}}(H^{p'},\{\Delta^{\om}_n\})$ is isomorphic and onto. This converts $\left(A^p_\om\right)^\star\simeq\ell^{p'}_{-\frac{1}{p}}(H^{p'},\Delta^{\om}_n)$ via the $H^2$-pairing, valid by \eqref{eq:ap1j}, to $\left(A^p_\om\right)^\star\simeq A^{p'}_\om$ via the $A^2_\om$-pairing, and thus finishes the proof.
\end{Prf}

\medskip

Dostani\'c~\cite{Dostanic} showed that for a radial weight $\om$ and $1<p<\infty$, the reverse H\"older's inequality
    \begin{equation}\label{Eq:Dostanic-condition}
    \om_{np+1}^\frac1p\om_{np'+1}^\frac1{p'}\lesssim \om_{2n+1},\quad n\in\N,
    \end{equation}
is a necessary condition for $P_\om:L^p_\om\to L^p_\om$ to be bounded. If $\om\in\DD$ and $k>0$ are fixed, then $\om_x\asymp\om_{kx}$ for all $x\ge1$ by \cite[Lemma~2.1]{PelSum14}, and hence \eqref{Eq:Dostanic-condition} is satisfied. Of course this also follows by Theorem~\ref{th:dualityAp}. We offer two simple proofs of the necessary condition \eqref{Eq:Dostanic-condition}. In fact, we prove the following more general result.

\begin{proposition}\label{proposition:dostanic}
Let $\om$ a radial weight and $1<p<\infty$. If $P_\om:L^p_\om\to L^p_\om$ is bounded, then the following statements hold and are equivalent:
\begin{itemize}
\item[\rm(i)] $\displaystyle\sup_{n\in\N}\frac{\om_{np+1}^\frac1p\om_{np'+1}^\frac1{p'}}{\om_{2n+1}}<\infty$;
\item[\rm(ii)] $\displaystyle \sup_{m-n\in\N\cup\{0\}}\left(\frac{\om_{2m+1}}{\om_{2(m-n)+1}}\right)^p\frac{\om_{p(m-n)+1}}{\om_{p(m+n)+1}}<\infty$;
\item[\rm(iii)] $\displaystyle \left(\frac1{\om_{2n+1}}\int_0^1f(r)r^{2n+1}\om(r)\,dr\right)^p\lesssim\frac1{\om_{np+1}}\int_0^1f(r)^pr^{pn+1}\om(r)\,dr,\,\,\, f\in L^p_\om([0,1)),\,\,\, f\ge 0$.
\end{itemize}
\end{proposition}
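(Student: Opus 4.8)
The plan is to get each of (i)--(iii) from the boundedness of $P_\om$ by testing against a few explicit functions, and then to prove the three conditions equivalent by elementary inequalities for the moments $\om_x=\int_0^1 r^x\om(r)\,dr$.

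The basic computation is that inserting the series $B^\om_z(\z)=\sum_m(\bar z\z)^m/(2\om_{2m+1})$ into $P_\om$ and integrating in the argument of $\z$ annihilates all terms but one, so that for $m\ge n$
$$P_\om\bigl(\z^n u(|\z|)\bigr)(z)=\frac{z^n}{\om_{2n+1}}\int_0^1 r^{2n+1}u(r)\om(r)\,dr ,\qquad P_\om\bigl(\z^m\bar\z^n\bigr)(z)=\frac{\om_{2m+1}}{\om_{2(m-n)+1}}\,z^{m-n}.$$
Since $\|z^k\|_{L^p_\om}^p=2\om_{kp+1}$, plugging $f(\z)=\z^n u(|\z|)$ with $u\ge0$ into $\|P_\om f\|_{L^p_\om}\le\|P_\om\|\,\|f\|_{L^p_\om}$ gives (iii) with constant $\|P_\om\|^p$, plugging $f(\z)=\z^m\bar\z^n$ gives (ii), and plugging the single function $f(\z)=\z^n|\z|^{n(2-p)/(p-1)}$ (which lies in $L^p_\om$ because its modulus is $|\z|^{np'/p}$) gives (i) at once; this yields the two promised short proofs of Dostani\'c's condition.

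For the equivalences I would argue (i)$\Leftrightarrow$(iii) and then connect (ii). For (i)$\Rightarrow$(iii), apply H\"older with exponents $p,p'$ to $\int_0^1 f\,r^{2n+1}\om=\int_0^1(f r^{n+1/p})(r^{n+1/p'})\om$, using $(n+\tfrac1p)+(n+\tfrac1{p'})=2n+1$, to get $\bigl(\om_{2n+1}^{-1}\int f r^{2n+1}\om\bigr)^p\le\om_{2n+1}^{-p}\om_{np'+1}^{p-1}\int f^p r^{np+1}\om$, and then absorb $\om_{np'+1}^{p-1}/\om_{2n+1}^p$ into $1/\om_{np+1}$ by (i). Conversely, testing (iii) with $f(r)=r^{n(2-p)/(p-1)}$ and using $1/p+1/p'=1$ (so that $2n+1+n(2-p)/(p-1)=np'+1=(n+n(2-p)/(p-1))p+1$) collapses (iii) to $(\om_{np'+1}/\om_{2n+1})^p\lesssim\om_{np'+1}/\om_{np+1}$, i.e.\ (i); when $p>2$, where this extremiser need not lie in the admissible class, one replaces it by its bounded truncations $\min\{r^{n(2-p)/(p-1)},\e^{n(2-p)/(p-1)}\}$ and lets $\e\to0^+$, which is harmless since the exponent $np'+1$ appearing on both sides is positive. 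For (iii)$\Rightarrow$(ii), insert $f(r)=r^{2j}$ into (iii) at index $m-n$ with $j=n$; the left side becomes $(\om_{2m+1}/\om_{2(m-n)+1})^p$ and the right side $\om_{(m+n)p+1}/\om_{(m-n)p+1}$, which is exactly (ii). Finally, for (ii)$\Rightarrow$(i), rewrite (ii) as $(\om_{N+b+1}/\om_{2N+1})^p\lesssim\om_{pb+1}/\om_{pN+1}$ with $N=m-n$, $b=m+n$ (so $b\ge N$, $b\equiv N\bmod2$), and choose $b$ an integer of the correct parity within $O(1)$ of $Np'/p$; then $N+b+1$ and $pb+1$ both equal $Np'+1$ up to $O(1)$, and the comparability $\om_x\asymp\om_{x+c}$ for fixed $c\ge0$ (a standard consequence of the standing hypothesis $\widehat\om>0$ on $[0,1)$) lets us substitute back to obtain $\om_{2N+1}^p\gtrsim\om_{Np'+1}^{p-1}\om_{Np+1}$, i.e.\ (i). This is carried out directly when $p\le2$ (so that $Np'/p\ge N$); for $p\ge2$ the same argument applies after interchanging $p$ and $p'$, which is available since $P_\om$ is bounded on $L^p_\om$ if and only if on $L^{p'}_\om$.

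The genuinely delicate point is the implication (ii)$\Rightarrow$(i): the target index $Np'/p$ is generically non-integral and constrained by the parity condition built into (ii), so the argument truly relies on the $O(1)$-comparability of the moments, and it is useful here to keep in mind the log-convexity $\om_{\lambda x+(1-\lambda)y}\le\om_x^{\lambda}\om_y^{1-\lambda}$ of the moment sequence --- in particular, (i) is equivalent to the two-sided estimate $\om_{2n+1}\asymp\om_{np+1}^{1/p}\om_{np'+1}^{1/p'}$. Everything else --- the two kernel identities, the H\"older step and the monomial test --- is routine.
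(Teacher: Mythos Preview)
Your proof is correct and follows the same test-function-and-H\"older strategy as the paper. The one place where your argument is more laborious than it needs to be is the implication (ii)$\Rightarrow$(i). In the paper, condition (ii) is understood with $m,n\in\mathbb{R}$ subject only to $m-n\in\N\cup\{0\}$; the test function is written as $f_{m,n}(z)=z^{m-n}|z|^{2n}$, which is well-defined for real $n$. With this reading, one simply takes $m=kp'/2$ and $n=k(p'-2)/2$ for $k\in\N$ (allowing $n<0$ when $p>2$; the modulus $|z|^{m+n}=|z|^{kp'/p}$ is still integrable), and (ii) collapses directly to $\bigl(\om_{kp'+1}/\om_{2k+1}\bigr)^p\om_{pk+1}/\om_{kp'+1}\lesssim1$, which is (i). This is, of course, exactly your ``single function'' $\z^k|\z|^{k(2-p)/(p-1)}$---so you already had the right extremiser in hand; you just did not recognise that it is a legitimate instance of (ii). Once you allow real $n$, the parity/$O(1)$-approximation step and the appeal to the $L^p\leftrightarrow L^{p'}$ duality of $P_\om$ for $p>2$ become unnecessary, and the equivalence closes cleanly for every $1<p<\infty$.
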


\begin{proof}
Let $1<p<\infty$ and $\om$ a radial weight such that $P_\om:L^p_\om\to L^p_\om$ is bounded. Further,
for $m,n\in\mathbb{R}$ with $m-n\in\N\cup\{0\}$, let $f_{m,n}(z)=z^{m-n}|z|^{2n}$ for all $z\in\D$. Then $P_\om(f_{m,n})(z)=\frac{\om_{2m+1}}{\om_{2(m-n)+1}}z^{m-n}$, and hence
    \begin{equation*}
    \begin{split}
    2\|P_\om\|^p_{L^p_\om\to L^p_\om}\om_{p(m+n)+1}
    &=\|P_\om\|^p_{L^p_\om\to L^p_\om}\|f_{m,n}\|_{L^p_\om}^p
    \ge\|P_\om(f_{m,n})\|_{L^p_\om}^p\\
    &=2\left(\frac{\om_{2m+1}}{\om_{2(m-n)+1}}\right)^p\om_{p(m-n)+1}.
    \end{split}
    \end{equation*}
Therefore (ii) is satisfied and the supremum is bounded by $\|P_\om\|^p_{L^p_\om\to L^p_\om}$. The choices $n=m-k$ and $m=kp'/2$ with $k\in\N$ in (ii) give
    $$
    \sup_{k\in\N}\left(\frac{\om_{kp'+1}}{\om_{2k+1}}\right)^p\frac{\om_{kp+1}}{\om_{kp'+1}}<\infty,
    $$
which is equivalent to (i). Further, (i) together with H\"older's inequality yields
    \begin{equation*}
    \begin{split}
    \frac1{\om_{2n+1}}\int_0^1f(r)r^{2n+1}\om(r)\,dr
    &\le\frac1{\om_{2n+1}}\left(\int_0^1f(r)^pr^{pn+1}\om(r)\,dr\right)^\frac1p\om_{np'+1}^\frac1{p'}\\
    &\lesssim\left(\frac1{\om_{np+1}}\int_0^1f(r)^pr^{pn+1}\om(r)\,dr\right)^\frac1p,
    \end{split}
    \end{equation*}
and thus (iii) is satisfied. By replacing $n$ by $m-n$ in (iii), and choosing $f(r)=r^{2n}$, one obtains (ii).

An alternate way to see that each $\om\in\DD$ satisfies the Dostani\'c condition \eqref{Eq:Dostanic-condition} reads as follows.
Since $P_\om:L^p_\om\to L^p_\om$ is bounded by Theorem~\ref{th:dualityAp}, the identification $\left(A^p_\om\right)^\star$ of the dual of $A^p_\om$ via the $A^2_\om$-pairing is a subset of $A^{p'}_\om$. Therefore there exists a constant $C=C(\om,p)>0$ such that
      $$
      \| g\|_{A^{p'}_\om }\le C \sup_{f\in A^p_\om\setminus\{0\}}\frac{\left|\langle f,g\rangle_{A^2_\om}\right|}{\| f\|_{A^p_\om}},\quad g\in\H(\D).
      $$
By choosing $g(z)=z^n$, and using the boundedness of the Riesz projection $P$ on $H^p$ for $p>1$, we deduce
      \begin{equation*}
      \om_{np'+1}^\frac1{p'}
      \le C\sup_{f\in A^p_\om\setminus\{0\}}\frac{|\widehat{f}(n)|\om_{2n+1}}{\|f\|_{A^p_\om}}
      \le C\|P\|_{L^p\to H^p}\frac{\om_{2n+1}}{\om_{np+1}^\frac1p},\quad n\in\N,
      \end{equation*}
and the assertion follows.
\end{proof}

It is obvious that \eqref{Eq:Dostanic-condition} implies $\om_{xp}^\frac1p\om_{xp'}^\frac1{p'}\lesssim\om_{2x}$ for all $x\ge1$. Namely, since for any fixed $y\ge0$, there exists $C=C(y,\om)>0$ such that $\om_x\le C\om_{x+y}$ for all $x\ge1$, by choosing $n\le x<n+1$, we deduce
    $$
    \om_{2x}
    \ge\om_{2x+1}
    \ge\om_{2(n+1)+1}
    \gtrsim\om_{(n+1)p+1}^\frac1p\om_{(n+1)p'+1}^\frac1{p'}
    \ge\om_{xp+p+1}^\frac1p\om_{xp'+p'+1}^\frac1{p'}
    \asymp\om_{xp}^\frac1p\om_{xp'}^\frac1{p'},\quad x\ge1.
    $$

With these preparations we can embark on the proof of Theorem~\ref{th:P+1peso}.

\medskip

\begin{Prf}{\em{Theorem~\ref{th:P+1peso}.}} Denote $J_\om(r)=\int_{0}^{r}\frac{dt}{\widehat{\om}(t)(1-t)}$ for all $0\le r<1$. If $\om\in\Dd$, then \eqref{6} yields
	\begin{equation}\label{eq:JDbelowdescription}
	J_\om(r)\le \frac{C(1-r)^\beta}{\widehat{\om}(r)}\int_{0}^{r}\frac{dt}{(1-t)^{1+\beta}}\le \frac{C}{\beta\widehat{\om}(r)},\quad 0\le r <1.
	\end{equation}
The proof of Theorem~\ref{th:proyderivLP} with $k=0$ together with $J_\om(r)\widehat{\om}(r)\lesssim1$
readily gives that $P^+_\om:L^p_\om\to L^p_\om$ is bounded.

Conversely, assume that $P^+_\om :\, L^p_\om\to L^p_\om$ is bounded. Then \cite[Proposition~3]{KorhonenPelaezRattya2018} implies
    \begin{equation}\label{muck7}
    \sup_{0<t<1}\left(\int_0^t(J_\om(r))^{p}\om(r)\,dr+1\right)
    \widehat{\om}(t)^{p-1}<\infty.
    \end{equation}
Recall that for $K>1$ and $n\in\N\cup\{0\}$, $\r_n=\r_n(K,\om)\in[0,1)$ is defined by $\widehat{\om}(\r_n)=\widehat{\om}(0)K^{-n}$. Therefore
    \begin{equation*}
    \begin{split}
    \int_0^{\r_n}J_\om^p(t)\om(t)\,dt
    &\ge\int_{\r_{n-1}}^{\r_{n}}\left(\int_{\r_{n-2}}^{\r_{n-1}}\frac{ds}{\widehat{\om}(s)(1-s)}\right)^p\om(t)\,dt\\
    &\ge\frac{\widehat{\om}(\r_{n-1})-\widehat{\om}(\r_{n})}{\widehat{\om}(\r_{n-2})^p}\left(\log\frac{1-\r_{n-2}}{1-\r_{n-1}}\right)^p\\
    &=\frac{K-1}{\widehat{\om}(\r_n)^{p-1}K^{2p}}\left(\log\frac{1-\r_{n-2}}{1-\r_{n-1}}\right)^p,\quad n\in\N\setminus\{1\},
    \end{split}
    \end{equation*}
and hence \eqref{muck7} implies
	\begin{equation}\label{Dbelowdescriptionroene}
	1-\r_n\le C_11-\r_{n+1}, \quad n\in\N\cup\{0\},
	\end{equation}
for some constant $C_1=C_1(p,\om)>0$. Let now $\b>0$, and for $0<r<1$, choose $n\in\N\cup\{0\}$
such that $\r_n\le r<\r_{n+1}$. Then \eqref{Dbelowdescriptionroene} yields
    \begin{equation*}
    \begin{split}
    \frac{\widehat{\om_{[\b]}}(r)}{(1-r)^\b}
    &\ge\frac{\widehat{\om_{[\b]}}(\r_{n+1})}{(1-\r_n)^\b}
    =\frac1{(1-\r_n)^\b}\sum_{j=n+1}^\infty\int_{\r_j}^{\r_{j+1}}\om(t)(1-t)^\b\,dt\\
    &\ge\frac1{(1-\r_n)^\b}\sum_{j=n+1}^\infty(1-\r_{j+1})^\b\int_{\r_j}^{\r_{j+1}}\om(t)\,dt\\
    &=\frac{K-1}{K^2}\sum_{j=n+1}^\infty\left(\frac{1-\r_{j+1}}{1-\r_n}\right)^\b\frac{\widehat{\om}(0)}{K^{j-1}}\\
    &\ge\frac{K-1}{K^2}\left(\frac{1-\r_{n+2}}{1-\r_n}\right)^\b\widehat{\om}(\r_n)
    \ge\frac{K-1}{K^2C^{2\b}}\widehat{\om}(r),
    \end{split}
    \end{equation*}
and therefore there exists a constant $C_2=\frac{K^2C_1^2}{K-1}>1$ such that
    \begin{equation}\label{7}
    \widehat{\om}(r)\le C_2\frac{\widehat{\om_{[\beta]}}(r)}{(1-r)^{\beta}},\quad 0\le r<1.
    \end{equation}
We will show next that if \eqref{7} is satisfied for some $\b\in(0,\infty)$,
then $\om\in\Dd$. To see this, first integrate by parts to get
    $$
    \widehat{\om_{[\b]}}(r)=\int_r^1\om(t)(1-t)^\b\,dt=\widehat{\om}(r)(1-r)^\b-\int_r^1\widehat{\om}(t)\b(1-t)^{\b-1}\,dt.
    $$
Hence, for a given $\b>0$, the inequality in \eqref{7} is equivalent to
    $$
    \widehat{\om}(r)\le C_2\widehat{\om}(r)-\frac{C_2\int_r^1\widehat{\om}(t)\b(1-t)^{\b-1}\,dt}{(1-r)^\b},
    $$
that is,
    \begin{equation}\label{7prima}
    \frac{\int_r^1\widehat{\om}(t)\b(1-t)^{\b-1}\,dt}{(1-r)^{\b}}\le C_3\widehat{\om}(r),\quad 0\le r<1,
    \end{equation}
where $C_3=\frac{C_2-1}{C_2}\in(0,1)$. By choosing $M$ sufficiently large
such that $1-\frac{1}{M^\b}>C_3$ we obtain
    \begin{equation*}
    \begin{split}
    C_3\widehat{\om}(r)
    \ge\frac{\int_r^{1-\frac{1-r}{M}}\widehat{\om}(t)\b(1-t)^{\b-1}\,dt}{(1-r)^\b}
    \ge\widehat{\om}\left(1-\frac{1-r}{M}\right)\left(1-\frac{1}{M^\b}\right),
    \end{split}
    \end{equation*}
and thus $\om\in\Dd$. Since $\om\in\DD$ by the hypothesis, we deduce $\om\in\DDD$.
\end{Prf}

\medskip

We make a couple of observations on the proof of Theorem~\ref{th:P+1peso}. If $\om\in\Dd$ and $\g>0$, then \eqref{6} implies
	\begin{equation}\label{A}
	\int_0^r\frac{dt}{\widehat{\om}(t)^\g(1-t)}\le \frac{C_1}{\widehat{\om}(r)^\g},\quad 0\le r<1,
	\end{equation}
for some $C_1=C_1(\g,\om)$, see \eqref{eq:JDbelowdescription} for the case $\g=1$. Conversely, \eqref{A} implies
	\begin{equation*}
	\frac{C_1}{\widehat{\om}\left(1-\frac{1-r}{K}\right)^\g}
	\ge\int_r^{1-\frac{1-r}{K}}\frac{dt}{\widehat{\om}(t)^\g(1-t)}
	\ge\frac{\log K}{\widehat{\om}(r)^\g},\quad 0\le r<1,\quad K>1,
	\end{equation*}
and by choosing $K$ sufficiently large we deduce $\om\in\Dd$. Further, for each $K>1$, \eqref{6} with $\r_n=r\le t=\r_{n+1}$ easily implies \eqref{Dbelowdescriptionroene} for some constant $C_1=C_1(K,\om)>0$. The proof of Theorem~\ref{th:P+1peso} shows that this further implies \eqref{7}, which in turn yields \eqref{7prima}. Therefore \eqref{Dbelowdescriptionroene}, \eqref{7}, \eqref{7prima} and \eqref{A} are equivalent characterizations of the class $\Dd$.
	
\begin{proposition}\label{proposition:dostanic fails}
The weight $W=W_{\om,\vp}$ defined in the proof of Theorem~\ref{theorem:non-regular weights close to D} with the choice $\om\equiv1$ does not belong to the Dostani\'c class for any $p\in(1,\infty)\setminus\{2\}$.
\end{proposition}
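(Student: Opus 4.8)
The plan is to show that, for $\om\equiv1$, the weight $W=W_{\om,\vp}$ violates the Dostani\'c condition \eqref{Eq:Dostanic-condition} --- equivalently, fails condition~(i) of Proposition~\ref{proposition:dostanic} --- for every $p\in(1,\infty)\setminus\{2\}$; membership in the Dostani\'c class means exactly \eqref{Eq:Dostanic-condition} for the given $p$. Since \eqref{Eq:Dostanic-condition} is symmetric under $p\leftrightarrow p'$, it is enough to treat $1<p<2<p'$. Recall from the proof of Theorem~\ref{theorem:non-regular weights close to D} that, with $\om\equiv1$, one has $W=\sum_{k\ge N}\chi_{[r_{2k+1},r_{2k+2}]}$ where $r_x=1-2^{-x\psi(x)}$, so $W_x=\sum_{k\ge N}\int_{r_{2k+1}}^{r_{2k+2}}t^x\,dt$. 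The goal is to exhibit $n_j\to\infty$ along which the Dostani\'c quotient $W_{n_jp+1}^{1/p}W_{n_jp'+1}^{1/p'}/W_{2n_j+1}$ tends to infinity.

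\emph{Moment estimates.} Using $t^x\asymp e^{-x(1-t)}$ for $t$ near $1$, each band satisfies $\int_{r_{2k+1}}^{r_{2k+2}}t^x\,dt\le\min\bigl\{1-r_{2k+1},\,\tfrac1x e^{-x(1-r_{2k+2})}\bigr\}$, with matching lower bounds $\gtrsim\tfrac1x e^{-x(1-r_{2k+2})}$ when $1-\tfrac1x\ge r_{2k+1}$ and $\gtrsim 1-r_{2k+1}$ when $1-\tfrac1x\le r_{2k+1}$. By \eqref{20'} the ratios $(1-r_{2k})/(1-r_{2k+1})$ and $(1-r_{2k+1})/(1-r_{2k+2})$ tend to infinity, so for each $x$ only the support band immediately below and the one immediately above the ``sweet point'' $1-\tfrac1x$ contribute non-negligibly to $W_x$; in particular, if $1-\tfrac1x$ lies in a gap $(r_{2j+2},r_{2j+3})$ then $W_x\asymp\tfrac1x e^{-x(1-r_{2j+2})}+(1-r_{2j+3})$, whereas $W_x\asymp\tfrac1x$ when $1-\tfrac1x$ lies comfortably inside a support band. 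Only one-sided versions of these will be needed.

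\emph{Choice of the test index and conclusion.} Given a large $j$, write $\psi=\psi(2j)$ and choose $n=n_j$ so that $1-\tfrac1{2n}$ lies near the lower end of the gap $(r_{2j+2},r_{2j+3})$, tuned so that $2n(1-r_{2j+2})=(\ln2)\psi\,(1+o(1))$, equivalently $1-\tfrac1{2n}=r_{2j+2+s}$ with $s\psi=\log_2\psi+O(1)$. The slow variation $\psi(x+1)-\psi(x)\lesssim 1/x$ then yields $e^{-2n(1-r_{2j+2})}\asymp 2^{-\psi}$ and $1-r_{2j+3}\asymp \tfrac1n\psi2^{-\psi}$, and keeps the sweet points of $np+1$ and $np'+1$ inside the same gap, with $np(1-r_{2j+2})\asymp\tfrac p2\psi$ and $np'(1-r_{2j+2})\asymp\tfrac{p'}2\psi$. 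Because $\tfrac p2<1<\tfrac{p'}2$, feeding the moment estimates (the upper bound for $W_{2n+1}$, the band-below lower bound for $W_{np+1}$, and the band-above lower bound for $W_{np'+1}$) gives
\[
W_{2n+1}\lesssim \tfrac1n\,\psi\,2^{-\psi},\qquad W_{np+1}\gtrsim \tfrac1n\,2^{-\frac p2\psi},\qquad W_{np'+1}\gtrsim \tfrac1n\,\psi\,2^{-\psi},
\]
so that, using $\tfrac1p+\tfrac1{p'}=1$,
\[
\frac{W_{np+1}^{1/p}\,W_{np'+1}^{1/p'}}{W_{2n+1}}\ \gtrsim\ 2^{\,(\frac12-\frac1{p'})\psi}\,\psi^{\,\frac1{p'}-1},
\]
which tends to infinity as $j\to\infty$, since $\tfrac12-\tfrac1{p'}>0$ for $p'>2$ and $\psi(2j)\to\infty$. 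Hence $W$ fails \eqref{Eq:Dostanic-condition} for this $p$; the case $p>2$ follows by the $p\leftrightarrow p'$ symmetry, and $p=2$ is excluded because \eqref{Eq:Dostanic-condition} is then trivially true.

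\emph{Main obstacle.} The delicate point is that this blow-up is \emph{absent} generically: if $1-\tfrac1{2n}$ lies inside a support band, or well inside a gap, then $W_{np+1}$, $W_{2n+1}$, $W_{np'+1}$ all scale alike and the weighted geometric mean $W_{np+1}^{1/p}W_{np'+1}^{1/p'}$ matches $W_{2n+1}$ exactly, so the quotient stays bounded. The sweet point must be pushed to the lower edge of a gap, precisely where $2n(1-r_{2j+2})$ has order $\psi(2j)$, so that the band-below contribution $e^{-2n(1-r_{2j+2})}\asymp 2^{-\psi(2j)}$ and the band-above contribution $1-r_{2j+3}$ become comparable up to a polynomial factor; only then does the asymmetry $\tfrac p2<1<\tfrac{p'}2$ tip the balance. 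Carrying out the two-sided moment estimates rigorously --- controlling the error in $t^x\asymp e^{-x(1-t)}$ and verifying that every band other than the two adjacent to the sweet point is super-exponentially negligible, via \eqref{20'} and the bounds for $r_x$ from the proof of Theorem~\ref{theorem:non-regular weights close to D} --- is where the bulk of the technical work lies, though it is routine.
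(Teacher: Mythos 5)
Your proposal is correct and follows essentially the same route as the paper: both arguments reduce $W_x$ to the two-term asymptotics ``band immediately below the sweet point plus tail above'', test the Dostani\'c condition \eqref{Eq:Dostanic-condition} along indices $n_j$ tuned so that $2n_j(1-r_{2j+2})\asymp\psi$ (which is exactly what the paper's choice of $t_j\to0^+$ making the completed square vanish amounts to), and exploit the crossing-term imbalance coming from $\tfrac p2<1<\tfrac{p'}2$. Your direct computation of the quotient, giving the explicit divergence rate $2^{(\frac12-\frac1{p'})\psi}\psi^{-1/p}$, is a slightly more transparent bookkeeping of the same mechanism that the paper packages via completing the square.
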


\begin{proof}
The weight defined in the proof of Theorem~\ref{theorem:non-regular weights close to D} with $\om\equiv1$ is
    $$
    W(r)=W_{\om,\psi}(r)=\sum_{n=N}^\infty\chi_{[r_{2n+1},r_{2n+2}]}(r),\quad r_x=1-\frac{1}{2^{x\psi(x)}},\quad x\ge1,
    $$
where $\psi:[1,\infty)\to(0,\infty)$ an increasing unbounded function. Denote $x=\frac{p}{1-r_{2j+t}}$, where $0\le t\le2$. If $0\le t\le 1$, then \eqref{23'} and the last inequality in \eqref{19'} imply
    \begin{equation*}
    \begin{split}
    W_x&=\int_0^{r_{2j}}r^xW(r)\,dr+\int_{r_{2j+1}}^1r^xW(r)\,dr
    \asymp\int_0^{r_{2j}}r^xW(r)\,dr+\int_{r_{2j+1}}^1W(r)\,dr\\
    &=\sum_{k=N}^{j-1}\int_{r_{2k+1}}^{r_{2k+2}}r^x\om(r)\,dr+\sum_{k=j}^{\infty}\int_{r_{2k+1}}^{r_{2k+2}}\om(r)\,dr\\
    &=\sum_{k=N}^{j-1}\int_{r_{2k+1}}^{r_{2k+2}}r^x\om(r)\,dr+\sum_{k=j}^{\infty}\left(\widehat{\om}(r_{2k+1})-\widehat{\om}({r_{2k+2}})\right)\\
    &\asymp\sum_{k=N}^{j-1}\int_{r_{2k+1}}^{r_{2k+2}}r^x\om(r)\,dr+\sum_{k=j}^{\infty}\widehat{\om}(r_{2k+1})
    \asymp\sum_{k=N}^{j-1}\int_{r_{2k+1}}^{r_{2k+2}}r^x\om(r)\,dr+\widehat{\om}(r_{2j+1})=I_x+II_x,
    \end{split}
    \end{equation*}
where
    \begin{equation*}
    \begin{split}
    I_x&=\sum_{k=N}^{j-1}\int_{r_{2k+1}}^{r_{2k+2}}r^x\,dr
    \asymp\frac1x\sum_{k=N}^{j-1}\left(r_{2k+2}^x-r_{2k+1}^x\right).
    \end{split}
    \end{equation*}
Now
    $$
    r_{2j}^x-r_{2j-1}^x=r_{2j}^x\left(1-\left(\frac{r_{2j-1}}{r_{2j}}\right)^x\right),\quad
    \left(\frac{r_{2j-1}}{r_{2j}}\right)^x\le\left(\frac{r_{2j-1}}{r_{2j}}\right)^\frac{p}{1-r_{2j}}
    \asymp r_{2j-1}^{\frac{p}{1-r_{2j}}}\to0,\quad j\to\infty,
    $$
and for $k\le j-2$ we have
    $$
    r_{2k+2}^x-r_{2k+1}^x\le r_{2k+2}^x\le r_{2j-2}^x\le  r_{2j-1}^{\frac{p}{1-r_{2j}}}\to 0,\quad j\to\infty,
    $$
and thus $I_x\asymp\frac1xr_{2j}^x$ as $j\to\infty$. Therefore
    $$
    W_x\asymp\frac1xr_{2j}^x+(1-r_{2j+1}),\quad j\to\infty.
    $$
Therefore the Dostani\'c condition for $W$ in the case $0\le t<1$ boils down to the inequality
    \begin{equation}
    \begin{split}
    &\left((1-r_{2j+t})^\frac1pr_{2j}^{\frac{1}{1-r_{2j+t}}}+(1-r_{2j+1})^\frac1p\right)
    \left((1-r_{2j+t})^\frac1{p'}r_{2j}^{\frac{1}{1-r_{2j+t}}}+(1-r_{2j+1})^\frac1{p'}\right)\\
    &\quad\lesssim\left((1-r_{2j+t})r_{2j}^{\frac{2}{1-r_{2j+t}}}+(1-r_{2j+1})\right),\quad j\to\infty.
    \end{split}
    \end{equation}
This inequality may only fail for the ``crossing'' terms resulting from the multiplication on the left, the rest of the terms are trivially fine. Let us look at the first inequality involving a crossing term:
    $$
    (1-r_{2j+t})^\frac1pr_{2j}^{\frac{1}{1-r_{2j+t}}}(1-r_{2j+1})^\frac1{p'}
    \le C\left((1-r_{2j+1})+(1-r_{2j+t})r_{2j}^{\frac{2}{1-r_{2j+t}}}\right).
    $$
By dividing the inequality by $(1-r_{2j+t})r_{2j}^{\frac{2}{1-r_{2j+t}}}$ and completing the square, we deduce
    $$
    \left(1-\left(\frac{1-r_{2j+1}}{1-r_{2j+t}}\right)^\frac12r_{2j}^{-\frac{1}{1-r_{2j+t}}}\right)^2
    +r_{2j}^{-\frac{1}{1-r_{2j+t}}}
    \left(2\left(\frac{1-r_{2j+1}}{1-r_{2j+t}}\right)^\frac12-C^{-1}\left(\frac{1-r_{2j+1}}{1-r_{2j+t}}\right)^\frac{1}{p'}\right)\ge0.
    $$
Because of how $r_x$ is defined, there exists a sequence $\{t_j\}$ tending to zero from the right such that the square term above with $t_j$ in place of $t$ vanishes for all large $j$. If $1<p<2$, then for each prefixed $C$ the sign of the second term with $t_j$ in place of $t$ becomes negative for all $j$ large enough because $t_j\to0^+$ as $j\to\infty$. Thus the Dostani\'c condition is not valid. If $2<p<\infty$, then the other crossing term makes the inequality fail.
\end{proof}

Let $\om=\varphi:[0,1)\to[0,\infty)$ decreasing to zero arbitrarily slowly and such that $-\vp'(t)/\vp(t)\lesssim1/(1-t)$ for all $0\le t<1$. Then $\om\in\DDD$ by Theorem~\ref{theorem:non-regular weights close to D}, and therefore Proposition~\ref{proposition:dostanic fails} and Theorem~\ref{theorem:non-regular weights close to D} combined with Theorem~\ref{th:P+1peso} show that there exists a weight $W$ such that $A^p\subset A^p_W\subset A^p_\om$, $P^+:L^p\to L^p$ and $P^+_\om:L^p_\om\to L^p_\om$ are bounded but $P_W:L^p_W\to L^p_W$ is not. This shows that the boundedness of $P_\om$ is equally much related to the regularity of the weight~$\om$ as to its growth.

\section{Bergman projection $P_\om:L^p_\nu\to L^p_\nu$}\label{sec:Pwnu}

The main objective of this section is to prove Theorem~\ref{Theorem:P_w-L^p_v}. We begin with auxiliary results which are of independent interest.

\begin{proposition}\label{proposition:P-necessary}
Let $1<p<\infty$ and $\om,\nu,\eta$ radial weights. If $P_\om:L^p_\nu\to L^p_\eta$ is bounded, then
    \begin{equation}\label{Eq:P-necessary-condition}
    \sup_{m-n\in\N\cup\{0\}}\left(\frac{\om_{2m+1}}{\om_{2(m-n)+1}}\right)^p\frac{\eta_{p(m-n)+1}}{\nu_{p(m+n)+1}}\le\|P_\om\|^p_{L^p_\nu\to L^p_\eta}.
    \end{equation}
In particular, if $P_\om:L^p_\nu\to L^p_\nu$ is bounded, then the following statements hold:
\begin{itemize}
\item[\rm(i)] $\om\in\DD\Rightarrow\nu\in\DD$;
\item[\rm(ii)] $\nu\in\M\Rightarrow\om\in\M$.
\end{itemize}
\end{proposition}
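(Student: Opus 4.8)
The plan is to establish the key inequality \eqref{Eq:P-necessary-condition} by a direct testing argument with the monomial-type functions $f_{m,n}(z)=z^{m-n}|z|^{2n}$, exactly as in the proof of Proposition~\ref{proposition:dostanic}, and then to deduce (i) and (ii) from it by specialising the indices.

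First I would compute the Bergman projection of the test function. For $m,n$ with $m-n\in\N\cup\{0\}$, the radiality of $\om$ gives, after integrating in polar coordinates and using $B^\om_\z(z)=\sum_k \frac{(\overline\z z)^k}{2\om_{2k+1}}$, that $P_\om(f_{m,n})(z)=\frac{\om_{2m+1}}{\om_{2(m-n)+1}}z^{m-n}$. Then one simply computes both sides of the boundedness inequality: $\|f_{m,n}\|_{L^p_\nu}^p=2\nu_{p(m+n)+1}$ and $\|P_\om(f_{m,n})\|_{L^p_\eta}^p=2\left(\frac{\om_{2m+1}}{\om_{2(m-n)+1}}\right)^p\eta_{p(m-n)+1}$. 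Dividing and taking the supremum over admissible $m,n$ yields \eqref{Eq:P-necessary-condition} with the stated constant $\|P_\om\|^p_{L^p_\nu\to L^p_\eta}$. This part is routine and should be carried out explicitly but briefly.

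Next, for the case $\eta=\nu$, I would extract (i) and (ii) from \eqref{Eq:P-necessary-condition} by clever index choices. For (i), assume $\om\in\DD$; then by \eqref{Corollary:D-hat} (or \cite[Lemma~2.1]{PelSum14}) we have $\om_x\asymp\om_{2x}$ for $x\ge1$, so the factor $\left(\frac{\om_{2m+1}}{\om_{2(m-n)+1}}\right)^p$ is bounded below by a positive constant whenever $2(m-n)+1\asymp 2m+1$, i.e. when $m\le 2(m-n)$, say. Choosing $m-n=k$ and $m=2k$ (so $m+n=3k$) gives $\sup_{k}\frac{\nu_{pk+1}}{\nu_{3pk+1}}<\infty$, and by the characterisation of $\M$ via $\nu_x\le C\nu_{Kx}$ discussed after \eqref{12345}, together with the remark that $\DD$ is equivalent to a doubling condition on moments, this forces $\nu\in\DD$; one needs to check that the comparison $\nu_{pk+1}\lesssim\nu_{3pk+1}$ for all $k$ (hence for all large $x$, after the standard passage from integer to real arguments as in the proof of Theorem~\ref{th:otromundo}) is precisely the moment-doubling condition characterising $\DD$. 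For (ii), assume $\nu\in\M$; then $\nu_x\ge C\nu_{Kx}$ for some $K>1$, so choosing $m+n$ and $m-n$ with ratio $K$ (e.g. $m-n=k$, $m+n=Kk$, which forces $m=\frac{(K+1)k}{2}$, $n=\frac{(K-1)k}{2}$, rounding to integers) makes $\frac{\nu_{p(m-n)+1}}{\nu_{p(m+n)+1}}\gtrsim 1$, whence $\sup_k\left(\frac{\om_{2m+1}}{\om_{2(m-n)+1}}\right)^p<\infty$, i.e. $\om_{2(m-n)+1}\lesssim\om_{2m+1}$, and since $m$ and $m-n$ differ by the fixed factor $\frac{K+1}{2}$ this is the moment version of $\om\in\M$, \eqref{eq:characterization-M} being the relevant reformulation.

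The main obstacle will be the bookkeeping in passing from the restricted set of integer pairs $(m,n)$ to a clean statement about all real arguments $x\ge1$: one must handle the rounding of $\frac{(K\pm1)k}{2}$ to integers, verify that the moment ratios are stable under bounded perturbations of the indices (using $\nu_x\asymp\nu_{x+O(1)}$, valid for any radial weight), and then invoke the already-established equivalences — the moment-doubling characterisation of $\DD$ from the discussion around \eqref{12345}, and \eqref{eq:characterization-M} for $\M$ — to conclude. None of these steps is deep, but assembling them so that the implications (i) and (ii) come out cleanly, with the correct quantifiers on the constants $C$ and $K$, requires a little care; I would state the two monotonicity facts ($\nu_x\asymp\nu_{Kx}$ characterises the absence of $\M$-type decay, and bounded shifts of the argument change moments only by constants) as the two pillars and keep the index juggling terse.
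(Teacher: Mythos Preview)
Your approach matches the paper's exactly: the test functions $f_{m,n}(z)=z^{m-n}|z|^{2n}$, the computation of $P_\om(f_{m,n})$, and the index choices for (i) ($m=2n$) and (ii) ($m-n=j$, $m+n=Kj$) are all identical. The main inequality and part (i) are fine.

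There is, however, a genuine gap in your argument for (ii). From \eqref{Eq:P-necessary-condition} with $\eta=\nu$ and your index choice you get
\[
\left(\frac{\om_{(K+1)j+1}}{\om_{2j+1}}\right)^p\frac{\nu_{pj+1}}{\nu_{pKj+1}}\le C_1:=\|P_\om\|^p_{L^p_\nu\to L^p_\nu}.
\]
Saying $\frac{\nu_{pj+1}}{\nu_{pKj+1}}\gtrsim1$ only yields $\om_{(K+1)j+1}\lesssim\om_{2j+1}$, which is automatic since moments are decreasing; it does \emph{not} give $\om\in\M$, which requires $\om_{2j+1}\ge C'\om_{(K+1)j+1}$ with $C'>1$. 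The missing step is quantitative: by \eqref{eq:11} (a consequence of $\nu\in\M$ established in the proof of Theorem~\ref{th:otromundo}), one can choose $K$ large enough that $\nu_{pj+1}\ge C_2\,\nu_{pKj+1}$ with $C_2>C_1$. Only then does the displayed inequality force $\om_{2j+1}\ge(C_2/C_1)^{1/p}\om_{(K+1)j+1}$ with constant strictly exceeding $1$, which is $\M$. (Incidentally, your written conclusion ``$\om_{2(m-n)+1}\lesssim\om_{2m+1}$'' has the inequality reversed.)

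One simplification: your worry about rounding $\frac{(K\pm1)j}{2}$ is unnecessary. The test function $f_{m,n}(z)=z^{m-n}|z|^{2n}$ makes sense for $m,n\in\mathbb{R}$ as long as $m-n\in\N\cup\{0\}$, and the paper exploits this; so $m=\frac{K+1}{2}j$, $n=\frac{K-1}{2}j$ are admissible as stated, with no rounding needed.
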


\begin{proof}
For $m,n\in\mathbb{R}$ with $m-n\in\N\cup\{0\}$, let $f_{m,n}(z)=z^{m-n}|z|^{2n}$ for all $z\in\D$. Then $P_\om(f_{m,n})(z)=\frac{\om_{2m+1}}{\om_{2(m-n)+1}}z^{m-n}$, and hence
    \begin{equation*}
    \begin{split}
    2\|P_\om\|^p_{L^p_\nu\to L^p_\eta}\nu_{p(m+n)+1}
    &=\|P_\om\|^p_{L^p_\nu\to L^p_\eta}\|f_{m,n}\|_{L^p_\nu}^p
    \ge\|P_\om(f_{m,n})\|_{L^p_\eta}^p\\
    &=2\left(\frac{\om_{2m+1}}{\om_{2(m-n)+1}}\right)^p\eta_{p(m-n)+1},
    \end{split}
    \end{equation*}
and \eqref{Eq:P-necessary-condition} follows. Now assume $\om\in\DD$, and take $\eta=\nu$ and $m=2n$. Then \eqref{Eq:P-necessary-condition} and \cite[Lemma~2.1(x)]{PelSum14} yield
    \begin{equation*}
    \begin{split}
    1\gtrsim\left(\frac{\om_{4n+1}}{\om_{2n+1}}\right)^p\frac{\nu_{pn+1}}{\nu_{3pn+1}}
    \gtrsim\frac{\nu_{pn+1}}{\nu_{3pn+1}},\quad n\in\N\cup\{0\}.
    \end{split}
    \end{equation*}
Let $x\ge3p+2$, and choose $n\in\N$ such that $pn+1\le x<p(n+1)+1$. Then
    $$
    \nu_x\le\nu_{pn+1}\lesssim\nu_{3pn+1}\le\nu_{3x-3p-2}\le\nu_{2x},
    $$
and hence $\nu\in\DD$ by \cite[Lemma~2.1(x)]{PelSum14}. Thus (i) is satisfied. Next assume $\nu\in\M$, and take $m=\frac{1}{2}(K+1)j$ and $n=\frac{1}{2}(K-1)j$, where $K\ge2$ will be fixed later and $j\in\N$. Then $m-n=j$ and $m+n=Kj$, and hence \eqref{Eq:P-necessary-condition} yields
    \begin{equation*}
    C_1\ge\left(\frac{\om_{(K+1)j+1}}{\om_{2j+1}}\right)^p\frac{\nu_{pj+1}}{\nu_{pKj+1}},\quad j\in\N,
    \end{equation*}
with $C_1=\|P_\om\|^p_{L^p_\nu\to L^p_\nu}$. Since $\nu\in\M$ by the hypothesis, \eqref{eq:11} shows that $K=K(\nu)>1$ can be chosen sufficiently large such that $\nu_{pj+1}\ge C_2\nu_{pKj+1}$ with $C_2=C_2(\nu,K)>C_1$ for all $j\in\N$. It follows that
    \begin{equation}\label{pirre}
    \om_{2j+1}\ge\left(\frac{C_2}{C_1}\right)^\frac1p\om_{(K+1)j+1},\quad j\in\N.
    \end{equation}
Let $x\ge\frac{K+3}{K-1}$, and choose $j\in\N$ such that $2j-1<x\le2j+1$. Then \eqref{pirre} implies
    $$
    \om_x\ge\om_{2j+1}\ge\left(\frac{C_2}{C_1}\right)^\frac1p\om_{(K+1)j+1}
    \ge\left(\frac{C_2}{C_1}\right)^\frac1p\om_{Kx},
    $$
and thus $\om\in\M$. Therefore (ii) is valid.
\end{proof}

The reasoning employed in the proof of the following result offers an alternate way the show the necessity of the condition $\om\in\DDD$ for $P_\om^+:L^p_\om\to L^p_\om$ to be bounded in Theorem~\ref{th:P+1peso}.

\begin{proposition}\label{pr:P+otromundo}
Let $1<p<\infty$ and $\om,\nu$ radial weights. If $P^+_\om:L^p_\nu\to L^p_\nu$ is bounded, then $\om,\nu\in\M$. If in addition either $\om\in\DD$ or $\nu\in\DD$, then $\om,\nu\in\DDD$.
\end{proposition}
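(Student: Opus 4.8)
The plan is to extract the membership $\om,\nu\in\M$ by testing the boundedness of $P^+_\om$ against a carefully chosen family of nonnegative functions, in the spirit of the testing used for Proposition~\ref{proposition:P-necessary} but exploiting that the kernel inside $P^+_\om$ appears in absolute value. Recall that $B^\om_\z(z)=\sum_{n=0}^\infty\frac{(\overline{z}\z)^n}{2\om_{2n+1}}$ has nonnegative Taylor coefficients, so for a radial test function $f\ge0$ one has the exact identity $P^+_\om(f)(z)=P_\om(f)(z)$ when $f$ is replaced by $f(\z)|\z|^{2m}$ type monomials, but more importantly, testing with $f(\z)=\overline{g(\z)}$ or with lacunary-type data keeps everything positive. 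Concretely, first I would test with $f_{m,n}(z)=z^{m-n}|z|^{2n}$ as in Proposition~\ref{proposition:P-necessary}; since $P^+_\om$ dominates $|P_\om|$ pointwise, the same computation gives
    $$
    \left(\frac{\om_{2m+1}}{\om_{2(m-n)+1}}\right)^p\frac{\nu_{p(m-n)+1}}{\nu_{p(m+n)+1}}\lesssim\|P^+_\om\|^p_{L^p_\nu\to L^p_\nu},
    $$
which already yields $\nu\in\M\Rightarrow\om\in\M$ exactly as before. The genuinely new input needed is $\nu\in\M$ itself, and for that the absolute-value kernel must be used in an essential way.

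To obtain $\nu\in\M$, the idea is to test $P^+_\om:L^p_\nu\to L^p_\nu$ with the function $f(\z)=\chi_{D(0,r)}(\z)$ (or a radial bump) and use a lower bound for $P^+_\om(f)(z)$ valid on a large portion of the disc. Since all Taylor coefficients of $B^\om_\z$ are nonnegative, $P^+_\om(f)(z)=\int_\D f(\z)B^\om_\z(z)\om(\z)\,dA(\z)$ is, for nonnegative radial $f$, comparable to $\int_0^1 f(s)\left(\sum_{n}\frac{(|z|s)^n}{\om_{2n+1}}\right)s\om(s)\,ds$; using the representation \eqref{12345} with $N=0,\,p=1$ and $\nu=\om$, namely $\|B^\om_z\|_{A^1_\om}\asymp\int_0^{|z|}\frac{dt}{\widehat\om(t)(1-t)}+1$, one can pass to a lower estimate of the form $P^+_\om(f)(z)\gtrsim \widehat\om(r)\cdot\frac1{\widehat\om(|z|)}$ for $|z|$ not too small, for a suitable choice of $f$ supported near the circle of radius $r$. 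Plugging this into $\|P^+_\om(f)\|_{L^p_\nu}\lesssim\|f\|_{L^p_\nu}$ and integrating gives an inequality of the type
    $$
    \widehat\om(r)^p\int_r^1\frac{\nu(t)}{\widehat\om(t)^p}\,t\,dt\lesssim \widehat\nu(r)^{\,?}\cdots,
    $$
and an application of Hardy's inequality (exactly the tool the introduction flags for this proposition) converts such a tail estimate into the moment condition \eqref{10}, which by the remark after the proof of Theorem~\ref{th:otromundo} is equivalent to $\nu\in\M$. The precise bookkeeping — choosing the exponents in the test function so that Hardy's inequality applies with the right endpoint behaviour — is where the argument has to be run with care.

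For the last sentence, suppose in addition $\om\in\DD$ (the case $\nu\in\DD$ being symmetric via the first testing inequality, which forces $\nu\in\DD$ too once $\om\in\DD$, cf. Proposition~\ref{proposition:P-necessary}(i) applied with $P^+_\om\ge|P_\om|$). Then $\nu\in\M$ from the previous paragraph together with $\nu\in\DD$ and Theorem~\ref{th:projectionboundedonto} (which gives $\DD\cap\M=\DDD$) yields $\nu\in\DDD$. For $\om$: from $\om\in\DD$ and the testing inequality $\nu\in\M\Rightarrow\om\in\M$, and since we have just shown $\nu\in\M$, we get $\om\in\M$, whence $\om\in\DD\cap\M=\DDD$ again by Theorem~\ref{th:projectionboundedonto}. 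The main obstacle I anticipate is the middle step: producing a clean pointwise lower bound for $P^+_\om(f)$ from \eqref{12345} that is strong enough to survive the $L^p_\nu$-integration and feed Hardy's inequality — this requires choosing the test function adapted to the weight $\om$ (a "tail bump" $\om\chi_{[r,\,1)}$ normalized appropriately), and verifying that the resulting one-dimensional weighted inequality is exactly of Hardy type with the endpoint needed to recover \eqref{10}. The remaining deductions are then immediate from the structural results already proved.
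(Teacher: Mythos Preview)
Your overall architecture is reasonable, but the central step --- extracting $\nu\in\M$ (and then $\om\in\M$) from the boundedness of $P^+_\om$ --- has a genuine gap, and the second assertion is incompletely handled.

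First, you propose to get a pointwise lower bound for $P^+_\om(f)$ via the kernel estimate \eqref{12345}. That estimate is stated for $\om,\nu\in\DD$, so you cannot invoke it before you know $\om\in\DD$; using it here is circular. The paper avoids this entirely: it applies the classical Hardy inequality for $H^1$ Taylor coefficients, $\sum_n|\widehat g(n)|/(n+1)\lesssim\|g\|_{H^1}$, to $g=B^\om_z$, obtaining for every nonnegative radial $\phi$
\[
P^+_\om(\phi)(z)\gtrsim\sum_{n=0}^\infty\frac{|z|^n}{(n+1)\om_{2n+1}}\int_0^1 r^{n+1}\phi(r)\om(r)\,dr,
\]
with no hypothesis on $\om$ beyond radiality. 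This is the ``Hardy's inequality'' the introduction refers to --- not a weighted integral Hardy inequality leading to \eqref{10} as you guessed. From this lower bound the paper tests with the monomials $\phi(s)=s^{KN}$ and $\phi(s)=s^{N}$: summing the resulting series over $N\le n\le KN$ produces a factor $\log K$ and directly yields the moment inequalities $\om_{2KN+1}\lesssim(\log K)^{-1}\om_{2N+1}$ and $\nu_{KNp+1}\lesssim(\log K)^{-1}\nu_{Np+1}$, i.e.\ $\om,\nu\in\M$. Your plan to reach \eqref{10} via characteristic-function tests and tail integrals is, as you admit, not carried out, and it is not clear how to make it work without a kernel estimate you are not entitled to.

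Second, for the last sentence you treat only the case $\om\in\DD$ and call the case $\nu\in\DD$ ``symmetric''. It is not: the roles of $\om$ and $\nu$ in $P^+_\om:L^p_\nu\to L^p_\nu$ are different, and Proposition~\ref{proposition:P-necessary}(i) goes only one way ($\om\in\DD\Rightarrow\nu\in\DD$). In the paper, when $\nu\in\DD$ is the hypothesis, a separate testing argument (again with $\phi(s)=s^N$ but summing over $K_1N\le n\le K_2N$ with $K_2\le 2K_1$) is used to force $\om_{5N+1}\lesssim\om_{6N+1}$, hence $\om\in\DD$. Once both are in $\DD$ and you already have both in $\M$, Theorem~\ref{th:projectionboundedonto} gives $\om,\nu\in\DDD$, as you correctly note.
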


\begin{proof}
Let $\phi$ be a positive radial function. Then Hardy's inequality implies
    \begin{equation*}
    \begin{split}
    P^+_\om(\phi)(z)
    =2\int_{0}^1 M_1(r,B^\om_z)\phi(r)\om(r)r\,dr
    \gtrsim\sum_{n=0}^\infty \frac{|z|^n}{(n+1)\om_{2n+1}}\int_{0}^1r^{n+1}\phi(r)\om(r)\,dr,
    \end{split}
    \end{equation*}
and hence the hypothesis gives
    \begin{equation*}
    \begin{split}
    \int_0^1\left(\sum_{n=0}^\infty\frac{s^n}{(n+1)\om_{2n+1}}\int_{0}^1  r^{n+1}\phi(r)\om(r)\,dr\right)^p\nu(s)s\,ds
    \lesssim\int_0^1\phi(s)^p\nu(s)s\,ds.
    \end{split}
    \end{equation*}
Then, by choosing $\phi(s)=s^{KN}$, where $K,N\in\N$ and $K\ge4$, we deduce
    \begin{equation*}
    \begin{split}
    \nu_{pKN+1}&\gtrsim\int_0^1 \left(\sum_{n=0}^\infty \frac{s^n}{n+1}\frac{\om_{n+KN+1}}{\om_{2n+1}}\right)^p\nu(s)s\,ds
    \ge\int_0^1 \left(\sum_{n=N}^{KN}\frac{s^n}{n+1}\frac{\om_{n+KN+1}}{\om_{2n+1}}\right)^p\nu(s)s\,ds\\
    &\ge\nu_{pKN+1}\left(\frac{\om_{2KN+1}}{\om_{2N+1}}\right)^p
    \left(\sum_{n=N}^{KN}\frac{1}{n+1}\right)^p
    \ge\nu_{pKN+1}\left(\frac{\om_{2KN+1}}{\om_{2N+1}}\right)^p\left(\log\frac{KN+2}{N+1}\right)^p\\
    &\ge\nu_{pKN+1}\left(\frac{\om_{2KN+1}}{\om_{2N+1}}\right)^p\left(\log\frac{K}{2}\right)^p,
    \end{split}
    \end{equation*}
and hence
    $$
    \om_{2KN+1}\le\frac{C}{\log\frac{K}{2}}\om_{2N+1},\quad N\in\N,
    $$
for some constant $C=C(\om,\nu,p)>0$. Let now $1+K^{-1}\le x<\infty$, and choose $N\in\N$ such that $2N-1\le x<2N+1$. Then
    $$
    \om_x
    \ge\om_{2N+1}
    \ge\frac{\log\frac{K}{2}}{C}\om_{2KN+1}
    \ge\frac{\log\frac{K}{2}}{C}\om_{Kx+K+1}
    \ge\frac{\log\frac{K}{2}}{C}\om_{2Kx},\quad 1+K^{-1}\le x<\infty.
    $$
By choosing $K\ge4$ sufficiently large we deduce $\om\in\M$.

To see that $\nu\in\M$, choose $\vp(s)=s^N$ for $N\in\N$. An argument similar to that above with the trivial inequality $\om_{n+N+1}\ge\om_{2n+1}$, valid for $n\ge N$, gives
    $$
    \nu_{Np+1}
    \gtrsim\int_0^1\left(\sum_{n=N}^{KN}\frac{s^n}{n+1}\frac{\om_{n+N+1}}{\om_{2n+1}}\right)^p\nu(s)s\,ds
    \ge\nu_{KNp+1}\left(\log\frac{K}{2}\right)^p,\quad N\in\N,\quad K\ge4.
    $$
By choosing $K=K(\om,\nu,p)$ sufficiently large, we deduce $\nu\in\M$.

If $\om\in\DD$, then $\nu\in\DD$ by Proposition~\ref{proposition:P-necessary}.
If $\nu\in\DD$, then choose $\vp(s)=s^N$ and $K_1,K_2\in\N$ such that $2K_1\ge K_2>K_1$, and argue as above to deduce
    \begin{equation*}
    \begin{split}
    \nu_{Np+1}
    &\gtrsim\int_0^1\left(\sum_{n=K_1N}^{K_2N}\frac{s^n}{n+1}\frac{\om_{n+N+1}}{\om_{2n+1}}\right)^p\nu(s)s\,ds
    \ge\left(\frac{\om_{K_2N+N+1}}{\om_{2K_1N+1}}\right)^p\left(\sum_{n=K_1N}^{K_2N}\frac1{n+1}\right)^p\nu_{pK_2N+1}\\
    &\ge\left(\frac{\om_{N(K_2+1)+1}}{\om_{2K_1N+1}}\right)^p\left(\log\frac{K_2N+2}{K_1N+1}\right)^p\nu_{pK_2N+1}\\
    &\ge\left(\frac{\om_{N(K_2+1)+1}}{\om_{2K_1N+1}}\right)^p\left(\log\frac{K_2}{K_1}\right)^p\nu_{pK_2N+1},\quad N\in\N.
    \end{split}
    \end{equation*}
Since $\nu\in\DD$, \cite[Lemma~2.1(ii)(vi)]{PelSum14} implies
    $$
    \nu_{pK_2N+1}\gtrsim\nu_{Np+1}\left(\frac{Np+1}{NpK_2+1}\right)^\gamma
    \ge\nu_{Np+1}K_2^{-\gamma},\quad N\in\N,
    $$
for some $\gamma=\gamma(\nu)>0$, and therefore
    \begin{equation*}
    \begin{split}
    1
    &\gtrsim\left(\frac{\om_{N(K_2+1)+1}}{\om_{2K_1N+1}}\right)^p\left(\log\frac{K_2}{K_1}\right)^pK_2^{-\gamma},\quad N\in\N.
    \end{split}
    \end{equation*}
The choice $K_1=3=K_2-1$ gives $\om_{5N+1}\lesssim\om_{6N+1}$ for all $N\in\N$, and it follows that $\om\in\DD$ by \cite[Lemma~2.1(x)]{PelSum14}.
Therefore $\om\in\DDD$ by Theorem~\ref{th:projectionboundedonto}.
\end{proof}

Next we describe, under mild assumptions, the pairs of radial weights $(\om,\nu)$ such that $P^+_\om:L^p_\nu\to L^p_\nu$ is bounded. One interesting feature of this result concerns the case when a priori one knowns that $\nu\in\DD$. Namely, then one does not have to know anything about the weight that induces the maximal projection, yet the boundedness can be characterized neatly. This means that the space under which the projection acts, imposes severe restrictions to the inducing kernel.

\begin{theorem}\label{theorem:P+wLpnu}
Let $1<p<\infty$ and $\om,\nu$ radial weights such that either $\om\in\DD$ or $\nu\in\DD$. Then $P^+_\om:L^p_\nu\to L^p_\nu$ is bounded if and only if $M_p(\om,\nu)<\infty$ and $\om,\nu\in\DDD$. Moreover,
	$$
	M_p(\om,\nu)\lesssim \|P^+_\om\|_{L^p_\nu\to L^p_\nu}\lesssim M^{2-\frac{1}{p}}_p(\om,\nu).
	$$
\end{theorem}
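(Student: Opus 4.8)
The plan is to prove both implications by reducing the two-sided hypothesis ``$\om\in\DD$ or $\nu\in\DD$'' to the situation where both weights lie in $\DDD$, and then to run a Schur test with an explicitly chosen auxiliary function. First I would dispose of the necessity direction. Assume $P^+_\om:L^p_\nu\to L^p_\nu$ is bounded. By Proposition~\ref{pr:P+otromundo}, the extra hypothesis that one of $\om,\nu$ belongs to $\DD$ immediately upgrades to $\om,\nu\in\DDD$. It then remains to show $M_p(\om,\nu)<\infty$. For this I would test the boundedness against the radial functions $f_a(s)=s^{a}$ (or more precisely against a suitable family adapted to a point $r\in(0,1)$, e.g. $f=\chi_{[0,r]}\widehat{\om}^{-1}(1-\cdot)^{-1}$ type test functions, as in the proof of \cite[Proposition~3]{KorhonenPelaezRattya2018}), using the lower bound for $P^+_\om$ coming from Hardy's inequality, namely $P^+_\om(\phi)(z)\gtrsim\sum_n\frac{|z|^n}{(n+1)\om_{2n+1}}\int_0^1 r^{n+1}\phi(r)\om(r)\,dr$, exactly as in the proof of Proposition~\ref{pr:P+otromundo}. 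Combining this with the norm estimate \eqref{12345} for $(B^\om_z)'$ and the characterization $\om_x\asymp\widehat\om(1-1/x)$ valid for $\om\in\DD$, the boundedness inequality, after the change of variables $n\leftrightarrow 1/(1-r)$, becomes precisely the statement $M_p(\om,\nu)<\infty$, together with the quantitative bound $M_p(\om,\nu)\lesssim\|P^+_\om\|_{L^p_\nu\to L^p_\nu}$.

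For the sufficiency direction, assume $M_p(\om,\nu)<\infty$ and $\om,\nu\in\DDD$. I would apply Schur's test with the test function $h(z)=\widehat{\om}(z)^{-1/(pp')}\,(\text{something})$; more precisely, guided by the proof of Theorem~\ref{th:proyderivLP} (the case $k=0$), the natural choice is a product of a power of $\widehat\om$ and a power of $\widehat\sigma$, where $\sigma=\sigma_{p,\om,\nu}$ is the weight appearing in \eqref{eq:apcondition}--\eqref{eq:Mpcondition}. One must then verify the two Schur inequalities
\[
\int_\D |B^\om_\z(z)|\,h(\z)^{p'}\nu(\z)\,dA(\z)\lesssim h(z)^{p'},\qquad
\int_\D |B^\om_\z(z)|\,h(z)^{p}\nu(z)\,dA(z)\lesssim h(\z)^{p}.
\]
The estimate \eqref{12345} with $N=0$, $p=1$ gives $\|B^\om_\z\|_{A^1_\nu}\asymp\int_0^{|\z|}\frac{\widehat\nu(t)}{\widehat\om(t)(1-t)}\,dt+1$, and integrating the kernel in the radial variable reduces the left-hand sides to one-dimensional integrals against $\om,\nu,\sigma$. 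Here the hypotheses $\om,\nu\in\DDD$ are used twice: the $\DD$-part gives the upper doubling of $\widehat\om,\widehat\nu$ needed to pull $\widehat\om(s|z|)\asymp\widehat\om(\max\{s,|z|\})$ inside integrals, and the $\Dd$-part gives the reverse-doubling estimates \eqref{A}, i.e. $\int_0^r\frac{dt}{\widehat\om(t)^\gamma(1-t)}\lesssim\widehat\om(r)^{-\gamma}$, which is exactly what is needed to close each integral by $h(z)^{p'}$ (resp.\ $h(\z)^p$) up to a factor controlled by $M_p(\om,\nu)$. Tracking constants through Schur's test yields $\|P^+_\om\|_{L^p_\nu\to L^p_\nu}\lesssim M_p(\om,\nu)^{2-1/p}$, the power $2-1/p$ arising from the asymmetric way $M_p(\om,\nu)$ enters the two Schur inequalities (one side contributes $M_p$, the other $M_p^{1/p'}$, and Schur's test multiplies the two bounds with exponents $1/p$ and $1/p'$).

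The main obstacle I anticipate is the sufficiency direction, specifically the choice of the Schur auxiliary function and the bookkeeping in the two kernel integrals. Since there is no usable closed form for $B^\om_\z$, everything must go through the averaged estimate \eqref{12345}, and one has to split each integral at $s=|z|$ (or $|\z|$) and treat the inner and outer pieces separately; the inner piece is controlled by the $\Dd$-estimate \eqref{A}, while the outer piece needs the $\DD$-estimate $\int_{|z|}^1\frac{\om(s)}{\widehat\om(s)^{\gamma}}\,ds\asymp\widehat\om(|z|)^{1-\gamma}$ for $\gamma<1$ together with $M_p(\om,\nu)<\infty$. Getting the exponents in $h$ right so that both Schur inequalities close simultaneously, and extracting the sharp power $2-1/p$, is the delicate computational heart of the argument; the necessity direction and the reduction to $\om,\nu\in\DDD$ are comparatively routine given Propositions~\ref{proposition:P-necessary} and~\ref{pr:P+otromundo}.
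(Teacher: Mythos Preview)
Your necessity direction matches the paper exactly: invoke Proposition~\ref{pr:P+otromundo} to get $\om,\nu\in\DDD$, then quote \cite[Proposition~3]{KorhonenPelaezRattya2018} for $M_p(\om,\nu)\lesssim\|P^+_\om\|$. No issue there.

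The sufficiency sketch has the right architecture (H\"older/Schur with an auxiliary $h$, kernel estimate \eqref{12345}, split at $|z|$, invoke \eqref{A}), but two concrete points would prevent it from closing as written. First, your Schur inequalities carry the wrong measure: the kernel of $P^+_\om$ is $|B^\om_z(\z)|\om(\z)$, so after H\"older the $p'$-integral must be $\int_\D|B^\om_z(\z)|\bigl(\om(\z)/h(\z)\bigr)^{p'}dA(\z)$, not $\int|B^\om_\z(z)|h(\z)^{p'}\nu(\z)\,dA(\z)$. Second, and more substantively, an auxiliary function built only from powers of $\widehat\om$ and $\widehat\sigma$ will not close both sides; the paper's choice is
\[
h(z)=\frac{\nu(z)^{1/p}}{|z|^{1/p'}}\,\widehat\sigma(|z|)^{\frac{1}{pp'}},
\]
with the \emph{pointwise} value $\nu(z)$, not a tail integral. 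This factor is what makes the post-Fubini integrand land on $\nu(\z)|f(\z)|^p$. The paper also does not run a symmetric Schur test: it applies H\"older once, bounds the $p'$-integral by $M_p(\om,\nu)\widehat\nu(z)^{-1/p}$ using \eqref{A} for $\nu\in\Dd$, then Fubini's, and only afterwards splits the remaining $z$-integral at $|\z|$ (using \eqref{A} for $\om\in\Dd$ on one piece and the defining $M_p(\om,\nu)$ bound on the other). A preliminary lemma $A_p(\om,\nu)\lesssim M_p(\om,\nu)$, i.e.\ \eqref{eq:mu0n}, is established first and used repeatedly. The power $2-1/p$ falls out because the $p'$-integral contributes $M_p^{p-1}$ and the split contributes a further $M_p^{p}$, giving $M_p^{2p-1}$ inside the $p$th power.
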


\begin{proof}
Assume that $P^+_\om:L^p_\nu\to L^p_\nu$ is bounded and either $\om\in\DD$ or $\nu\in\DD$. Then $\om,\nu\in\DDD$ by Proposition~\ref{pr:P+otromundo}, and $\|P^+_\om\|_{L^p_\nu\to L^p_\nu}\gtrsim M_p(\om,\nu)$ by \cite[Proposition~3]{KorhonenPelaezRattya2018} and the last part of \cite[Lemma~2.1]{PelSum14}.

Conversely, assume that $M_p=M_p(\om,\nu)<\infty$ and $\om,\nu\in\DDD$. We will show first that
		\begin{equation}\label{eq:mu0n}
		\sup_{0\le r<1}\frac{\widehat{\nu}(r)^{\frac1p}}{\widehat{\om}(r)}\left(\int_r^1\left(\frac{\om(s)}{\nu(s)}\right)^{p'}s\nu(s)\,ds\right)
		^\frac1{p'}\lesssim M_p(\om,\nu)<\infty.
    \end{equation}
By \cite[Lemma~2.1(ii)]{PelSum14} we may choose $\b=\b(p,\om)>0$ such that $(1-s)^\b\widehat{\om}(s)^{-p}$ is essentially decreasing. Therefore
\begin{equation*}
    \begin{split}
    \int_0^r\frac{s\nu(s)}{\widehat{\om}(s)^p}\,ds
    \gtrsim\frac{(1-r)^\b}{\widehat{\om}(r)^p}\int_0^r\frac{s\nu(s)}{(1-s)^\b}\,ds,\quad 0\le r<1.
    \end{split}
    \end{equation*}
Let $M>1$ and $C>1$ such that $\widehat{\nu}(r)\ge C\widehat{\nu}\left(1-\frac{1-r}{M}\right)$, and define
   $r_n=1-M^{-n}$ for all $n\in\N\cup\{0\}$. For $r_2\le r<1$, let $m=m(r)\in\N$ such that $r_m\le r<r_{m+1}$. Then
    \begin{equation*}
    \begin{split}
    (1-r)^\b\int_0^r\frac{s\nu(s)}{(1-s)^\b}\,ds
    &\ge\int_0^{r_{m}}s\left(\frac{1-r}{1-s}\right)^\b\nu(s)\,ds
    =\sum_{n=0}^{m-1}\int_{r_n}^{r_{n+1}}s\left(\frac{1-r}{1-s}\right)^\b\nu(s)\,ds\\
    &\ge\sum_{n=0}^{m-1}r_n\left(\frac{1-r_{m+1}}{1-r_{n}}\right)^\b\left(\widehat{\nu}(r_n)-\widehat{\nu}(r_{n+1})\right)\\
    &\ge\sum_{n=0}^{m-1}r_n\frac{(C-1)}{M^{\b(m-n+1)}}\widehat{\nu}(r_{n+1})
    \ge\cdots\\
    &\ge\widehat{\nu}(r_{m})(C-1)C^{-2}\sum_{n=0}^{m-1}r_n\left(\frac{C}{M^{\b}}\right)^{m-n+1}\\
    &\ge\widehat{\nu}(r)(C-1)C^{-2}\sum_{j=2}^{m+1} r_{m-j+1}\left(\frac{C}{M^{\b}}\right)^{j}\\
	& \ge	 r_{m-1}\widehat{\nu}(r)\frac{C-1}{M^{2\b}}
\ge r_{1}\widehat{\nu}(r)\frac{C-1}{M^{2\b}},\quad r_2\le r<1.
    \end{split}
    \end{equation*}
Hence
    \begin{equation*}
    \begin{split}
    \int_0^r\frac{s\nu(s)}{\widehat{\om}(s)^p}\,ds
    \gtrsim\frac{\widehat{\nu}(r)}{\widehat{\om}(r)^p},
    \quad r\to1^-,
    \end{split}
    \end{equation*}
and \eqref{eq:mu0n} follows from the assumption $M_p(\om,\nu)<\infty$.

To show that $P^+_\om:L^p_\nu\to L^p_\nu$ is bounded, it suffices to consider non-negative functions $f\in L^p_\nu$.
For such functions and $1<p<\infty$, $(P^+_\om(f))^p$ is subharmonic, and hence their $L^p$-means are increasing. It follows that
	\begin{equation}\label{eq:initialstep}
	\|P^+_\om(f)\|^p_{L^p_\nu}\le C\int_{\D\setminus D(0,\frac12)}|P^+_\om(f)(z)|^p\nu(z)\,dA(z),\quad f\ge0,\quad f\in L^p_\nu,
	\end{equation}
for some constant $C=C(\nu)>0$. By using this we deduce
    \begin{equation*}
    \begin{split}
    \|P^+_\om(f)\|^p_{L^p_\nu}
    &\lesssim\int_{\D\setminus D(0,\frac12)}\left(\int_{\D\setminus D(0,\frac12)}f(\z)|B^\om_z(\z)|\om(\z)\,dA(\z)\right)^p\nu(z)\,dA(z)\\
		&\quad+\left(\int_{D(0,\frac12)}f(\z)\om(\z)\,dA(\z)\right)^p=I(f)+II(f).
    \end{split}
    \end{equation*}
Let now $h(z)=\frac{\nu(z)^{\frac1p}}{|z|^{\frac1{p'}}}\left(\int_{|z|}^1\left(\frac{\om(s)}{\nu(s)}\right)^{p'}s\nu(s)\,ds\right)^{\frac{1}{pp'}}$ for all $z\in\D\setminus\{0\}$. Then an integration gives
    \begin{equation}
    \begin{split}\label{eq:mu1n}
    \int_{t}^1 \left(\frac{\om(s)}{h(s)}\right)^{p'}ds
    =p'\left(\int_t^1\left(\frac{\om(s)}{\nu(s)}\right)^{p'}s\nu(s)\,ds\right)^{\frac{1}{p'}}.
    \end{split}
    \end{equation}
H\"older's inequality yields
		\begin{equation}
    \begin{split}\label{eq:mun2}
    I(f)
		&\le\int_{\D\setminus D(0,\frac12)}\left(\int_{\D\setminus D(0,\frac12)}f(\z)^ph(\z)^p|B^\om_z(\z)|\,dA(\z)\right)\\
    &\quad\cdot\left(\int_{\D}|B^\om_z(\z)|\left(\frac{\om(\z)}{h(\z)}\right)^{p'}dA(\z)\right)^{\frac{p}{p'}}\nu(z)\,dA(z),
    \end{split}
    \end{equation}
where, by \cite[Theorem~1(i)]{PelRatproj}, \eqref{eq:mu1n}, \eqref{eq:mu0n} and
\eqref{A},
    \begin{equation}
    \begin{split}\label{eq:mun3}
    \int_{\D}|B^\om_z(\z)|\left(\frac{\om(\z)}{h(\z)}\right)^{p'}dA(\z)
    &\lesssim \int_0^{1}\left(\frac{\om(s)}{h(s)}\right)^{p'}\left(\int_0^{s|z|}\frac{dt}{\widehat{\om}(t)(1-t)}+1\right)\,ds\\
    &=\int_0^{|z|}\left(\int_{t/|z|}^1\left(\frac{\om(s)}{h(s)}\right)^{p'}ds\right)\frac{dt}{\widehat{\om}(t)(1-t)}+1\\
    &\le\int_0^{|z|}\left(\int_{t}^1\left(\frac{\om(s)}{h(s)}\right)^{p'}ds\right)\frac{dt}{\widehat{\om}(t)(1-t)}+1\\
    &=p'\int_0^{|z|}\left(\int_t^1\left(\frac{\om(s)}{\nu(s)}\right)^{p'}s\nu(s)\,ds\right)^{\frac{1}{p'}}\frac{dt}{\widehat{\om}(t)(1-t)}+1\\
    &\lesssim M_p(\om,\nu) \left(\int_0^{|z|}\frac{dt}{\widehat{\nu}(t)^{\frac1p}(1-t)}+1\right)
    \lesssim\frac{M_p(\om,\nu)}{\widehat{\nu}(z)^{\frac1p}}
    \end{split}
    \end{equation}
for all $z\in\D$. This together with \eqref{eq:mun2}, Fubini's theorem and another application of \cite[Theorem~1(i)]{PelRatproj} gives
    \begin{equation}
    \begin{split}\label{eq:mun4}
    I(f)
    &\lesssim M^{p-1}_p(\om,\nu)
    \int_{\D\setminus D(0,\frac12)}\left(\int_{\D\setminus D(0,\frac12)}f(\z)^ph(\z)^p|B^\om_z(\z)|\,dA(\z)\right)
    \frac{\nu(z)}{\widehat{\nu}(z)^{\frac{1}{p'}}}\,dA(z)\\
    &\lesssim M^{p-1}_p(\om,\nu)\int_{\D\setminus D(0,\frac12)} f(\z)^ph(\z)^{p}\\
    &\quad\cdot\left(\int_\frac12^1\left(\int_0^{r|\z|}\frac{dx}{\widehat{\om}(x)(1-x)}\right)
		\frac{\nu(r)r}{\widehat{\nu}(r)^{\frac1{p'}}}\,dr\right)\,dA(\z).
    \end{split}
    \end{equation}
We split the integral over $(\frac12,1)$ into two parts at $|\z|$. Since $\om\in\Dd$, \eqref{A} yields
    \begin{equation}
    \begin{split}\label{eq:mun5}
    \int_{|\z|}^1\left(\int_0^{r|\z|}\frac{dx}{\widehat{\om}(x)(1-x)}\right) \frac{\nu(r)r}{\widehat{\nu}(r)^{\frac1{p'}}}\,dr
    \lesssim\frac{1}{\widehat{\om}(\z)}
    \int_{|\z|}^1\frac{\nu(r)}{\widehat{\nu}(r)^{\frac1{p'}}}\,dr
    =p\frac{\widehat{\nu}(\z)^{\frac{1}{p}}}{\widehat{\om}(\z)},\quad \z\in\D,
    \end{split}
    \end{equation}
which together with \eqref{eq:mu0n} gives
    \begin{equation}
    \begin{split}\label{eq:mun6}
    &h^p(\z)\int_{|\z|}^1\left(\int_0^{r|\z|}\frac{dx}{\widehat{\om}(x)(1-x)}\right) \frac{\nu(r)r}{\widehat{\nu}(r)^{\frac1{p'}}}\,dr\\
    &\lesssim\frac{\nu(\z)}{|\z|^{p-1}}
    \left(\int_{|\z|}^1\left(\frac{\om(s)}{\nu(s)}\right)^{p'}s\nu(s)\,ds\right)^{\frac{1}{p'}}
    \frac{\widehat{\nu}(\z)^{\frac{1}{p}}}{\widehat{\om}(\z)}\lesssim M_p(\om,\nu)\nu(\z),\quad \z\in\D\setminus D\left(0,\frac12\right).
    \end{split}
    \end{equation}
Moreover, H\"older's inequality shows that
    $$
    \sup_{\frac12\le r<1}\frac{\widehat{\om}(r)^p}{\widehat{\nu}(r)}\int_0^r\frac{t\nu(t)}{\widehat{\om}(t)^p}\,dt\lesssim M^p_p(\om,\nu)<\infty,
    $$
and hence \eqref{A} yields
    \begin{equation*}
    \begin{split}
    &\int_\frac12^{|\z|}\left(\int_0^{r|\z|}\frac{dx}{\widehat{\om}(x)(1-x)}\right) \frac{\nu(r)r}{\widehat{\nu}(r)^{\frac1{p'}}}\,dr
    \lesssim\int_\frac12^{|\z|}\frac{r\nu(r)}{\widehat{\om}(r)\widehat{\nu}(r)^{\frac{1}{p'}}}\,dr\\
    &\lesssim M^{p-1}_p(\om,\nu)\int_0^{|\z|}\frac{r\nu(r)}{\widehat{\om}(r)^{p}\left(\int_0^r
    \frac{s\nu(s)}{\widehat{\om}(s)^p}\,ds\right)^{\frac{1}{p'}}}dr
		\asymp M^{p-1}_p(\om,\nu)\left(\int_0^{|\z|}\frac{s\nu(s)}{\widehat{\om}(s)^p}\,ds\right)^{\frac{1}{p}}
    \end{split}
    \end{equation*}
for all $\z\in\D\setminus D\left(0,\frac12\right)$. This together with the hypothesis $M_p(\om,\nu)<\infty$ gives
    \begin{equation}
    \begin{split}\label{eq:mun7}
    &h^p(\z)\int_\frac12^{|\z|}\left(\int_0^{r|\z|}\frac{dx}{\widehat{\om}(x)(1-x)}\right)
		\frac{\nu(r)r}{\widehat{\nu}(r)^{\frac{1}{p'}}}\,dr\\
    &\lesssim M^{p-1}_p(\om,\nu)\frac{\nu(\z)}{|\z|^{p-1}}
    \left(\int_{|\z|}^1\left(\frac{\om(s)}{\nu(s)}\right)^{p'}s\nu(s)\,ds\right)^{\frac{1}{p'}}
    \left(\int_0^{|\z|}
    \frac{s\nu(s)}{\widehat{\om}(s)^p}\,ds\right)^{\frac{1}{p}}
    \lesssim M^{p}_p(\om,\nu)\nu(\z)
    \end{split}
    \end{equation}
for all $\z\in\D\setminus D\left(0,\frac12\right)$. Consequently, by combining \eqref{eq:mun4}, \eqref{eq:mun6} and
\eqref{eq:mun7} we obtain $I(f)\lesssim M^{2p-1}_p(\om,\nu)\|f\|_{L^p_\nu}$. Moreover, since H\"older's inequality shows that
		\begin{equation*}
		\begin{split}
		II(f)\lesssim\|f\|_{L^p_\nu}^p\left(\int_{D(0,\frac12)}\left(\frac{\om(\z)}{\nu(\z)^\frac1p}\right)^{p'}\,dA(\z)\right)^{p-1}
		\lesssim M_p^p(\om,\nu)\|f\|_{L^p_\nu}^p,
		\end{split}
		\end{equation*}
we finally deduce	$\|P^+_\om(f)\|_{L^p_\nu}\lesssim M^{2-\frac{1}{p}}_p(\om,\nu)\|f\|_{L^p_\nu}$.
\end{proof}

Finally, we put together  all the previous results of this section to get a proof of
Theorem~\ref{Theorem:P_w-L^p_v}.

\par\medskip
\begin{Prf}{\em{Theorem~\ref{Theorem:P_w-L^p_v}}.}
Theorem~\ref{theorem:P+wLpnu} shows that (iv) implies (i), which in turn implies (ii) by Proposition~\ref{pr:P+otromundo}. Assume now that $P_\om:L^p_\nu\to L^p_\nu$ is bounded and $\nu\in\M$. Since $\om\in\DD$ by the hypothesis, and $\DDD=\DD\cap\mathcal{M}$ by Theorem~\ref{th:projectionboundedonto}, Proposition~\ref{proposition:P-necessary} implies $\om,\nu\in\DDD$.
Now, since $P_\om:L^p_\nu\to L^p_\nu$ is bounded, it is easy to see that $\int_{0}^1 \left(\frac{\om(s)}{\nu(s)^{\frac1p}}\right)^{p'}\,ds<\infty$. Next, if $m_n(z)=z^n$ for all $n\in\N\cup\{0\}$, then $P^\star_\om(m_n)(\z)=\frac{\om(\z)\nu_{2n+1}}{\nu(\z)\om_{2n+1}}\z^n$
for almost every $\z\in\D$. Since the adjoint operator $P^\star_\om:L^{p'}_\nu\to L^{p'}_\nu$ is bounded by the hypothesis,
arguing as in \cite[p.~121]{PelRatproj} we deduce
    $
		\|P_\om\|_{L^p_\nu\to L^p_\nu}\gtrsim    A_p(\om,\nu)
    $
and thus (iii) is satisfied.

Assume next $A_p(\om,\nu)<\infty$. An integration by parts and H\"{o}lder's inequality give
    \begin{equation}
    \begin{split}\label{eq:c1}
    \int_{0}^{r}s\frac{\nu(s)}{\widehat{\om}(s)^p}\,ds
    & \le  \int_{0}^{\frac{1}{2}}\frac{\nu(s)}{\widehat{\om}(s)^p}\,ds+\int_{\frac{1}{2}}^r\frac{\nu(s)}{\widehat{\om}(s)^p}\,ds\\
		&\le\frac{\widehat{\nu}(0)}{\widehat{\om}(\frac12)^p}
    +p\int_{\frac12}^{r}\frac{\widehat{\nu}(s)}{\widehat{\om}(s)^{p}}\,
		\frac{\om(s)}{\nu(s)^{\frac{1}{p}}}\,\frac{\nu(s)^{\frac{1}{p}}}{\widehat{\om}(s)}\,ds\\
    &\le\frac{\widehat{\nu}(0)}{\widehat{\om}(\frac12)^p}
    +2p\int_{\frac12}^{r}s\frac{\widehat{\nu}(s)}{\widehat{\om}(s)^{p}}\,
		\frac{\om(s)}{\nu(s)^{\frac{1}{p}}}\,\frac{\nu(s)^{\frac{1}{p}}}{\widehat{\om}(s)}\,ds\\
    &\le \frac{\widehat{\nu}(0)}{\widehat{\om}(\frac12)^p}
    +2p\left(\int_{\frac12}^{r}s\left(\frac{\widehat{\nu}(s)}{\widehat{\om}(s)^{p}}\right)^{p'}
    \left(\frac{\om(s)}{\nu(s)^{\frac{1}{p}}}\right)^{p'}\,ds\right)^{\frac{1}{p'}}
    \left(\int_{\frac12}^{r} s\frac{\nu(s)}{\widehat{\om}(s)^p}\,ds\right)^{\frac{1}{p}}\\
    &=\frac{\widehat{\nu}(0)}{\widehat{\om}(\frac12)^p}+2pI(r)^{\frac{1}{p'}}
		\left(\int_{\frac12}^{r} s\frac{\nu(s)}{\widehat{\om}(s)^p}\,ds\right)^{\frac{1}{p}},\quad\frac12< r<1.
    \end{split}
    \end{equation}
Since $A_p(\om,\nu)<\infty$, we have
    \begin{equation}
    \begin{split}\label{eq:c2}
    I(r)^{\frac{1}{p'}}
		\le A^p_p(\om,\nu)\left(\int_{\frac12}^{r}\frac{\sigma(s)}{\widehat{\sigma}(s)^{p}}\,ds\right)^{\frac{1}{p'}}
    \le\frac{A^p_p(\om,\nu)}{(p-1)^{\frac{1}{p'}}}\frac{1}{\widehat{\sigma}(r)^{\frac{p-1}{p'}}},\quad \frac12< r<1,
    \end{split}
    \end{equation}
and therefore
    \begin{equation*}
    \begin{split}
    \widehat{\sigma}(r)^{\frac{1}{p'}}\left(\int_{0}^{r}s\frac{\nu(s)}{\widehat{\om}(s)^p}\,ds\right)^{\frac{1}{p}}
    &\lesssim\widehat{\sigma}(r)^{\frac{1}{p'}}
    +A_p(\om,\nu)\widehat{\sigma}(r)^{\frac{1}{p'}}
    \left(\frac{1}{\widehat{\sigma}(r)^{\frac{p-1}{p'}}}\right)^{\frac{1}{p}}
    \left(\int_{0}^{r}s\frac{\nu(s)}{\widehat{\om}(s)^p}\,ds\right)^{\frac{1}{p^2}}\\
    &=\widehat{\sigma}(r)^{\frac{1}{p'}}+A_p(\om,\nu)\widehat{\sigma}(r)^{\frac{1}{pp'}}
    \left(\int_{0}^{r}s\frac{\nu(s)}{\widehat{\om}(s)^p}\,ds\right)^{\frac{1}{p^2}},\quad \frac12<r<1.
    \end{split}
    \end{equation*}
Fix $r_0=r_0(\nu)\in(\frac12,1)$ such that $\int_0^{r_0}\nu(s)\,ds>0$. Then
    \begin{equation*}
    \begin{split}
    \widehat{\sigma}(r)^{\frac{1}{(p')^2}}\left(\int_{0}^{r}s\frac{\nu(s)}{\widehat{\om}(s)^p}\,ds\right)^{\frac{1}{pp'}}
    &\lesssim\frac{\widehat{\sigma}(r)^{\frac{1}{(p')^2}}}{\left(\int_{0}^{r}s\frac{\nu(s)}{\widehat{\om}(s)^p}\,ds\right)^{\frac{1}{p^2}}}
    +A_p(\om,\nu)\\
    &\le\frac{\widehat{\sigma}(0)^{\frac{1}{(p')^2}}}{\left(\int_{0}^{r_0}s\frac{\nu(s)}{\widehat{\om}(s)^p}\,ds\right)^{\frac{1}{p^2}}}
    +A_p(\om,\nu),\quad r_0\le r<1,
    \end{split}
    \end{equation*}
and it follows that $M_p(\om,\nu)\lesssim A^{p'}_p(\om,\nu)$. Thus (iv) is satisfied. The inequalities in \eqref{eq:quantativePdospesos} follow by Theorem~\ref{theorem:P+wLpnu} and the proof above.
\end{Prf}

\section{Dual of $A^1_\om$ when $\om\in\DD$}\label{Sec:duality-A^p}

The dual of $A^1_\om$ can be idenfied with the Bloch space $\B$ via the $A^2_\om$-pairing if and only if $\om\in\DDD$ by Theorem~\ref{th:projectionboundedonto}. The main aim of this section is to describe $(A^1_\om)^\star$ when $\om\in\DD$, and thus cover the case $\om\in\DD\setminus\DDD$ by proving Theorem~\ref{th:A1dual}.
Since $\om\in\DD$, in the statement of Theorem~\ref{th:A1dual} one may replace $\langle\cdot,\cdot\rangle_{\om\circ\om}$ by the $A^2_{\om\widehat{\om}}$-pairing.

 A vector space $X\subset\H(\D)$ endowed with a quasi-norm $\|\cdot\|_X$ is admissible if
    \begin{enumerate}
    \item $\|\cdot\|_X$-convergence implies the uniform convergence on compact subsets of $\D$;
    \item for each $r>1$, the uniform convergence on compact subsets of $D(0,r)$ implies $\|\cdot\|_X$-convergence;
    \item the function $f_\zeta$ defined by $f_\zeta(z)=f(\zeta z)$ for all $z\in\D$ satisfies $\|f_\z\|_X\le\|f\|_X$ for all $f\in X$ and $\zeta\in\overline{\D}$.
    \end{enumerate}
 For $0<q\le\infty$, a radial weight $\om$ and $X$, being either the Hardy space $H^p$ for some $0<p\le\infty$ or an admissible Banach space, denote
    $$
    X(q,\om)=\left\{f\in\H(\D):\|f\|^{q}_{X(q,\om)}=\int_{0}^{1}\|f_r\|^q_X\om(r)\,dr<\infty\right\},\quad 0<q<\infty,
    $$
and
    $$
    X(\infty,\om)=\left\{f\in\H(\D):\|f\|_{X(\infty,\om)}=\sup_{0<r<1}\|f_r\|_X\widehat{\om}(r)<\infty\right\}.
    $$

To prove Theorem~\ref{th:A1dual} some more definitions and known results are needed. Let $X^0$ denote the closure of polynomials in an admissible Banach space $X$. An increasing sequence $\{\lambda_n\}_{n=0}^\infty$ of natural numbers is lacunary if there exists $C>1$ such that $\lambda_{n+1}\ge C\lambda_{n}$ for all $n\in\N$. The next result follows from \cite[Theorem~2.1]{PavMixnormI} and its proof.

\begin{lettertheorem}\label{th:Xpolynomials}
Let $\{\lambda_n\}_{n=0}^\infty$ be a lacunary sequence and $N\in\N$. Then there exists a sequence of polynomials $\{P_n\}_{n=0}^\infty$ and constants $C_1=C_1(\{\lambda_n\},N)>0$ and $C_2=C_2(\{\lambda_n\},N)>0$ such that
   \begin{equation}\label{eq:P1}
   f=\sum_{n=0}^\infty P_n\ast f,\quad f\in \H(\D);
   \end{equation}

   \begin{equation}\label{eq:P2}
   \widehat{P_n}(j)=0,\quad j\notin \left[\lambda_{n-1},\lambda_{n+N}\right),\quad \lambda_{-1}=0,\quad n\in\N\cup\{0\};
   \end{equation}

   \begin{equation}\label{eq:P3}
   \left\|\sum_{n=0}^k P_n*f\right\|_X\le C_1 \|f\|_X, \quad f \in \H(\D),\quad k\in\N\cup\{0\};
   \end{equation}

   \begin{equation}\label{eq:P4}
   \left\|P_n*f\right\|_X\le C_2 \|f\|_X, \quad f \in \H(\D),\quad n\in\N\cup\{0\};
   \end{equation}

   \begin{equation}\label{eq:P5}
   \lim_{k\to\infty}\left\|f-\sum_{n=0}^k P_n\ast f\right\|_X=0, \quad f \in X^0,
   \end{equation}
for all admissible Banach space $X$ and for all $X=H^p$ with $p>\frac{1}{N+1}$.
\end{lettertheorem}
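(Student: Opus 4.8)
As the remark preceding the statement indicates, this is essentially \cite[Theorem~2.1]{PavMixnormI}; the plan is to recall that construction, to deduce \eqref{eq:P1}--\eqref{eq:P5} in the $H^p$ range from Theorem~\ref{th:cesaro}, and to treat the admissible Banach space case by a soft convolution argument with the same polynomials. First I would fix a lacunarity constant $C>1$ with $\lambda_{n+1}\ge C\lambda_n$ and build a $C^\infty$ partition of unity subordinate to $\{\lambda_n\}$: functions $\Phi_n\colon\RR\to[0,1]$ with $\supp\Phi_n\subset[\lambda_{n-1},\lambda_{n+N})$, $\sum_{n\ge0}\Phi_n\equiv1$ on $[0,\infty)$, and $\|\Phi_n^{(j)}\|_\infty\lesssim\lambda_n^{-j}$ for $0\le j\le N+1$, the constants depending only on $C$ and $N$; this is possible since each window $[\lambda_{n-1},\lambda_{n+N})$ has length comparable to $\lambda_n$. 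Put $P_n(z)=\sum_{k\ge0}\Phi_n(k)z^k$. Compact support makes each $P_n$ a polynomial, the support condition is exactly \eqref{eq:P2}, and, since for each fixed $k$ only finitely many $\Phi_n(k)$ are nonzero with $\sum_n\Phi_n(k)=1$, the partial sums $\sum_{n=0}^mP_n*f$ agree with $f$ in all Taylor coefficients of order below $\lambda_m$ and hence converge to $f$ uniformly on compact subsets of $\D$, which is \eqref{eq:P1}.

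For the $H^p$-estimates, note that after the change of variable $x\mapsto\lambda_nx$ the polynomial $P_n$ equals $W^{\widetilde\Phi_n}_{\lambda_n}$ in the notation \eqref{eq:polynomials}, where $\widetilde\Phi_n(x)=\Phi_n(\lambda_nx)$ has $A_{\widetilde\Phi_n,N+1}\lesssim1$ uniformly in $n$ by the derivative bounds above. Theorem~\ref{th:cesaro}(iii) with $m=N+1$, whose proof produces constants controlled by $\sup_nA_{\widetilde\Phi_n,N+1}$, then gives \eqref{eq:P4} for every $p$ with $(N+1)p>1$, that is $p>\tfrac1{N+1}$. The partial sums $Q_m=\sum_{n=0}^mP_n$ have coefficient sequence equal to $1$ below $\lambda_m$, equal to $0$ above $\lambda_{m+N}$, with a $C^\infty$ transition of the correct scale in between; they are therefore smoothly truncated de la Vall\'ee Poussin kernels, and $\sup_m\|Q_m*f\|_{H^p}\lesssim\|f\|_{H^p}$ for $p>\tfrac1{N+1}$ by the classical uniform estimate for such kernels. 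This is \eqref{eq:P3}, and then \eqref{eq:P5} is immediate for polynomials and extends to $X^0$ via \eqref{eq:P3}.

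For an admissible Banach space $X$ I would instead use the elementary identity
\[
(f*g)(w)=\frac{1}{2\pi}\int_{-\pi}^{\pi}f(e^{i\theta}w)\,g(e^{-i\theta})\,d\theta,\qquad w\in\D,
\]
valid for every polynomial $g$ and every $f\in\H(\D)$ (expand both sides in Taylor series; compare \eqref{eq:hadprod}). Taking $g=Q_m$, the contraction property $\|f_\zeta\|_X\le\|f\|_X$ ($\zeta\in\overline{\D}$) from the definition of admissibility together with Minkowski's integral inequality in $X$ gives $\|Q_m*f\|_X\le\|f\|_X\,\|Q_m\|_{L^1(\T)}$, and $\sup_m\|Q_m\|_{L^1(\T)}=\sup_m\|Q_m\|_{H^1}<\infty$ by the previous paragraph; this is \eqref{eq:P3}. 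Then \eqref{eq:P4} follows from $P_n*f=Q_n*f-Q_{n-1}*f$, and \eqref{eq:P5} from a density argument (for a polynomial $q$ one has $Q_m*q=q$ for all large $m$, and $\|f-q\|_X$ is absorbed via \eqref{eq:P3}). Since \eqref{eq:P1} and \eqref{eq:P2} were already checked, all of \eqref{eq:P1}--\eqref{eq:P5} hold with the same sequence $\{P_n\}$ for every admissible Banach space.

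The one step with real content is the construction in the first paragraph together with the uniform bound on the kernels $Q_m$: one must produce a partition of unity subordinate to $\{\lambda_n\}$ that is simultaneously localized to the prescribed windows and carries the scale-correct derivative estimates up to order $N+1$, and then invoke the classical uniform $L^1(\T)$ (hence $H^p$, $p>\tfrac1{N+1}$) bound for smoothly truncated de la Vall\'ee Poussin kernels. This is where the lacunarity of $\{\lambda_n\}$ and the integer $N$ enter the constants, and it is also what pins down the range $p>\tfrac1{N+1}$, through the use of Theorem~\ref{th:cesaro}(iii) with $m=N+1$. Everything after that is routine bookkeeping.
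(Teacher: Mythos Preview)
The paper does not give its own proof of Theorem~\ref{th:Xpolynomials}; it simply records it as a consequence of \cite[Theorem~2.1]{PavMixnormI} and its proof. Your reconstruction is faithful to that source: the smooth partition of unity $\{\Phi_n\}$ subordinate to the lacunary windows, the identification $P_n=W^{\widetilde\Phi_n}_{\lambda_n}$ with uniformly bounded $A_{\widetilde\Phi_n,N+1}$, and the appeal to Theorem~\ref{th:cesaro}(iii) for the $H^p$-range $p>\tfrac1{N+1}$ are exactly the ingredients used by Pavlovi\'c. The same applies to the de~la~Vall\'ee~Poussin--type partial sums $Q_m$, which are again of the form $W^{\widetilde\Psi_m}_{\lambda_m}$ with $A_{\widetilde\Psi_m,N+1}\lesssim1$ uniformly in $m$.

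One small point deserves care in the admissible Banach space paragraph. Your Minkowski estimate $\|Q_m*f\|_X\le\|f\|_X\|Q_m\|_{L^1(\T)}$ tacitly uses that $\theta\mapsto f_{e^{i\theta}}$ is Bochner integrable in $X$, which is not among the admissibility axioms for an arbitrary $f\in X$. The clean fix is to apply your integral formula to $f_r$ with $0<r<1$: then $\theta\mapsto (f_r)_{e^{i\theta}}$ is continuous into $X$ by axiom~(2), the bound $\|Q_m*f_r\|_X\le\|f\|_X\|Q_m\|_{L^1(\T)}$ follows, and since $Q_m*f_r=(Q_m*f)_r$ with $Q_m*f$ a polynomial, axiom~(2) again gives $(Q_m*f)_r\to Q_m*f$ in $X$ as $r\to1^-$. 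Alternatively, as the paper does elsewhere (see the proof of Lemma~\ref{le:XP}), one may reduce the $X$-estimate to the $H^\infty$-case via \cite[Proposition~1.3]{PavMixnormI}. Either route closes the gap, and apart from this your argument is correct and matches the cited construction.
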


Now, we will prove three technical lemmas in order to obtain a universal (valid for any $X$) decomposition norm for $X(q,\om)$,
which is a key step in the proof of Theorem~\ref{th:A1dual},
 see Theorem~\ref{th:Xdecomposition} below.
\begin{lemma}\label{le:XP}
Let $X$ be an admissible Banach space or $H^p$ with $0<p\le\infty$. Then
  \begin{equation}\label{eq:XP}
   r^n\|p\|_X\le\|p_r\|_X\le r^{m}\|p\|_X,\quad r\in (0,1],
   \end{equation}
for each polynomial $p=p_{m,n}$ with Maclaurin series $p(z)=\sum_{k=m}^{n}\widehat{p}(k)z^k$.
\end{lemma}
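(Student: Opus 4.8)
The plan is to exhibit the truncated dilation $p\mapsto p_r$ (acting on polynomials with spectrum in $[m,n]$) as a Poisson average of rotations of $p$, and symmetrically to recover $p$ from $p_r$ in the same way, so that \eqref{eq:XP} follows from the contractivity of rotations. I will assume $0<r<1$, since $r=1$ makes \eqref{eq:XP} trivial. Writing $P_r(\t)=\frac{1-r^{2}}{1-2r\cos\t+r^{2}}$ for the Poisson kernel, recall that $P_r\ge0$, $\frac1{2\pi}\int_{-\pi}^{\pi}P_r(\t)\,d\t=1$, and $\frac1{2\pi}\int_{-\pi}^{\pi}P_r(\t)e^{-ij\t}\,d\t=r^{|j|}$ for all $j\in\mathbb Z$. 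Comparing Taylor coefficients and using $\widehat p(k)=0$ for $k\notin[m,n]$, I would first record the two polynomial identities
\[
r^{-m}p_r(z)=\frac1{2\pi}\int_{-\pi}^{\pi}P_r(\t)\,e^{im\t}\,p(e^{-i\t}z)\,d\t,
\qquad
r^{\,n}p(z)=\frac1{2\pi}\int_{-\pi}^{\pi}P_r(\t)\,e^{-in\t}\,p_r(e^{i\t}z)\,d\t .
\]

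Next, for $X$ an admissible Banach space, I would take $X$-norms in these identities. The integrand $\t\mapsto p(e^{-i\t}\,\cdot\,)$ is continuous into $X$: since $p$ is a polynomial, $p(e^{-i\t}z)\to p(e^{-i\t_{0}}z)$ uniformly on $\overline{\D}$ as $\t\to\t_{0}$, hence in $X$ by admissibility property~(2); the same holds with $p$ replaced by $p_r$. Property~(3), applied to the rotations $e^{\pm i\t}$ (and their inverses), gives $\|p(e^{-i\t}\,\cdot\,)\|_{X}=\|p\|_{X}$ and $\|p_r(e^{i\t}\,\cdot\,)\|_{X}=\|p_r\|_{X}$. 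Then the triangle inequality for Bochner integrals, together with $P_r\ge0$ and $\frac1{2\pi}\int_{-\pi}^{\pi}P_r=1$, yields $\|r^{-m}p_r\|_{X}\le\|p\|_{X}$ and $\|r^{\,n}p\|_{X}\le\|p_r\|_{X}$, which is \eqref{eq:XP}. This already covers $X=H^{p}$ for $1\le p\le\infty$.

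For $X=H^{p}$ with $0<p\le\infty$ I would instead argue through integral means, which handles all such $p$ uniformly. One may assume $\widehat p(m)\neq0$ and $\widehat p(n)\neq0$, the general case following since $r^{m}\ge r^{m'}$ for $m\le m'$ and $r^{n}\le r^{n'}$ for $n'\le n$. Since $\rho\mapsto M_{p}(\rho,p)$ is non-decreasing and continuous on $[0,\infty)$ (a standard consequence of the subharmonicity of $|f|^{p}$ for analytic $f$, and of the maximum principle when $p=\infty$), we get $\|p_r\|_{H^{p}}=\sup_{0<\rho<1}M_{p}(r\rho,p)=M_{p}(r,p)$ for $0<r\le1$. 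Factoring $p(z)=z^{m}g(z)$ with $g$ a polynomial gives $M_{p}(\rho,p)=\rho^{m}M_{p}(\rho,g)$, so $M_{p}(\rho,p)/\rho^{m}$ is non-decreasing; writing $p(w)=w^{n}\widetilde p(1/w)$ with $\widetilde p(z)=z^{n}p(1/z)$ (a polynomial, as $\deg p=n$) gives $M_{p}(\rho,p)=\rho^{n}M_{p}(1/\rho,\widetilde p)$, so $M_{p}(\rho,p)/\rho^{n}$ is non-increasing. Evaluating these two monotone quotients at $\rho=r$ and $\rho=1$ yields $r^{n}\|p\|_{H^{p}}\le\|p_r\|_{H^{p}}\le r^{m}\|p\|_{H^{p}}$.

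The only substantive step is spotting the two Poisson-averaging identities; granting them, the admissible-Banach case is a mechanical application of property~(3) and the integral triangle inequality, while the quasi-Banach range $0<p<1$ is dispatched by classical subharmonicity together with the reversal $p\mapsto z^{n}p(1/z)$. The one technical care is the $X$-valued (Bochner) integrability of the integrands, which is guaranteed by their continuity — precisely what admissibility property~(2) provides for polynomials.
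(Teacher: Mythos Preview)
Your proof is correct. Both Poisson identities check out termwise, property~(3) of admissibility gives the rotation contractivity you need (equality is not required, only $\le$), property~(2) ensures polynomials lie in $X$ and that $\t\mapsto p(e^{-i\t}\,\cdot)$ is $X$-continuous so the Bochner integral makes sense and the integral triangle inequality applies, and property~(1) guarantees point evaluations are continuous so the Bochner integral coincides with the pointwise one. Your direct $H^p$ argument via the monotonicity of $M_p(\rho,p)/\rho^m$ and $M_p(\rho,p)/\rho^n$ is also sound and handles the quasi-Banach range $0<p<1$ cleanly.

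The paper takes a different route. For $H^p$ it simply cites \cite[Lemma~3.1]{MatPavStu84}. For a general admissible Banach space $X$ it does not argue directly but instead reduces to the $H^\infty$ case via the coefficient-multiplier duality from \cite[Proposition~1.3]{PavMixnormI}: since $p\in X^0$, one has $\|p_r\|_X=\sup\{\|(p*h)_r\|_{H^\infty}:\|h\|_{[X^0,H^\infty]}\le1\}$, and applying the $H^\infty$ inequality to each $p*h$ (which again has spectrum in $[m,n]$) and taking the supremum yields \eqref{eq:XP}. Your Poisson-averaging argument is more self-contained and avoids the multiplier machinery entirely, at the mild cost of invoking Bochner integration; the paper's argument is shorter once that machinery is in place and fits the framework used elsewhere in Section~\ref{Sec:duality-A^p}.
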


\begin{proof}
The case $X=H^p$ is \cite[Lemma~3.1]{MatPavStu84}. The assertion for an admissible Banach space~$X$ follows by \eqref{eq:XP} with $p=\infty$ and \cite[Proposition~1.3]{PavMixnormI}: since $p$ is a polynomial, it belongs to~$X^0$, and hence the proposition implies
    $$
    \|p_r\|_X=\|p_r\|_{X^0}=\sup\left\{\|p_r*h\|_{H^\infty}:h\in[X^0,H^\infty],\,\|h\|_{[X^0,H^\infty]}\le1\right\},
    $$
where $(p_r*h)(z)=\sum_{k=m}^n\widehat{p}(k)\overline{\widehat{h}(k)}(rz)^k$.
\end{proof}

The following lemma should be compared with \cite[Lemma~4.2]{PavMixnormI}.

\begin{lemma}\label{le:XP2}
Let $\{\lambda_n\}_{n=0}^\infty$ be a lacunary sequence with $\lambda_{-1}=0$, and $N\in\N$, and let $\{P_n\}_{n=0}^\infty$ be the associated sequence of polynomials of Theorem~\ref{th:Xpolynomials}. Then there exists a constant $C=C(\{\lambda_n\},N)>0$ such that
    \begin{equation}\label{eq:fr}
    C\sup_{n\in \N\cup\{0\}} \| P_n\ast f\|_X r^{\lambda_{n+N}}\le \| f_r\|_X\le \left(\sum_{n=0}^{\infty} \| P_n\ast f\|^\beta_X r^{\lambda_{n-1}\beta}  \right)^{\frac1\beta},
    \end{equation}
for all admissible Banach space $X$ (with $\beta=1$) and for all $X=H^p$ with $\frac{1}{N+1}<p<1$ (with $\beta=p$).
\end{lemma}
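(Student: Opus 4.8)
The two inequalities in \eqref{eq:fr} should be handled separately, and both rest on Lemma~\ref{le:XP} together with the boundedness properties \eqref{eq:P3}--\eqref{eq:P4} of the decomposition operators from Theorem~\ref{th:Xpolynomials}. The plan is first to dispose of the \emph{lower} bound (the left-hand inequality), which is the easy direction. By \eqref{eq:P4}, each $P_n\ast f$ satisfies $\|P_n\ast f_r\|_X = \|(P_n\ast f)_r\|_X \le C_2\|f_r\|_X$, and since by \eqref{eq:P2} the polynomial $P_n\ast f$ has Maclaurin series supported in $[\lambda_{n-1},\lambda_{n+N})$, Lemma~\ref{le:XP} applied to $p=P_n\ast f$ gives $\|P_n\ast f\|_X r^{\lambda_{n+N}}\le \|(P_n\ast f)_r\|_X$. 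Combining these two estimates yields $\|P_n\ast f\|_X r^{\lambda_{n+N}}\le C_2\|f_r\|_X$ for every $n$, and taking the supremum over $n$ produces the left inequality with $C=C_2^{-1}$.

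For the \emph{upper} bound one starts from the reconstruction formula \eqref{eq:P1}, $f=\sum_{n=0}^\infty P_n\ast f$, hence $f_r=\sum_{n=0}^\infty (P_n\ast f)_r$. When $X$ is an admissible Banach space ($\beta=1$) one simply applies the triangle inequality, $\|f_r\|_X\le\sum_{n=0}^\infty\|(P_n\ast f)_r\|_X$, and then uses the right-hand inequality in Lemma~\ref{le:XP}, namely $\|(P_n\ast f)_r\|_X=\|(P_n\ast f)_r\|_X\le r^{\lambda_{n-1}}\|P_n\ast f\|_X$, using that $P_n\ast f$ is a polynomial with lowest-degree term at least $\lambda_{n-1}$ by \eqref{eq:P2}. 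This gives exactly $\|f_r\|_X\le\sum_{n=0}^\infty\|P_n\ast f\|_X r^{\lambda_{n-1}}$, which is the claimed bound with $\beta=1$. When $X=H^p$ with $\tfrac1{N+1}<p<1$, the triangle inequality is replaced by the $p$-triangle inequality $\|g+h\|_{H^p}^p\le\|g\|_{H^p}^p+\|h\|_{H^p}^p$, so that $\|f_r\|_{H^p}^p\le\sum_{n=0}^\infty\|(P_n\ast f)_r\|_{H^p}^p\le\sum_{n=0}^\infty r^{\lambda_{n-1}p}\|P_n\ast f\|_{H^p}^p$ again by Lemma~\ref{le:XP}; taking $p$-th roots gives the bound with $\beta=p$. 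Here the hypothesis $p>\tfrac1{N+1}$ is exactly what makes Theorem~\ref{th:Xpolynomials} applicable to $H^p$, so the series decomposition \eqref{eq:P1} is legitimate.

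A small technical point to address is convergence: one must check that the series $\sum_n (P_n\ast f)_r$ converges in $X$ (not merely in $\H(\D)$) so that the (quasi-)triangle inequality can be passed to the limit. For $0<r<1$ this follows from the geometric decay factor $r^{\lambda_{n-1}}$ (or $r^{\lambda_{n-1}p}$) in Lemma~\ref{le:XP} combined with the polynomial growth bound $\|P_n\ast f\|_X\le C_2\|f_r\|_X\le C_2\|f\|_X$ coming from \eqref{eq:P4} and admissibility property (3); since $\{\lambda_n\}$ is lacunary, $\lambda_{n-1}\to\infty$ geometrically and the majorant series converges, so the partial sums form a Cauchy sequence in $X$. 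The main obstacle is essentially bookkeeping: one has to keep straight which endpoint of the spectral block $[\lambda_{n-1},\lambda_{n+N})$ controls which direction of the dilation estimate in Lemma~\ref{le:XP} (the lowest degree $\lambda_{n-1}$ for the upper bound, the highest degree $\lambda_{n+N}$ for the lower bound), and to invoke the $H^p$-case of Lemma~\ref{le:XP} (which is \cite[Lemma~3.1]{MatPavStu84}) uniformly in $n$. No deep estimate is needed beyond these cited results.
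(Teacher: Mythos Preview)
Your proof is correct and follows essentially the same route as the paper: \eqref{eq:P4} together with Lemma~\ref{le:XP} for the lower bound, and \eqref{eq:P1} with the (quasi-)triangle inequality and Lemma~\ref{le:XP} for the upper bound. One small slip in your convergence remark: the inequality $\|P_n\ast f\|_X\le C_2\|f_r\|_X$ is not what \eqref{eq:P4} gives (it yields $\|(P_n\ast f)_r\|_X\le C_2\|f_r\|_X$, not a bound on $\|P_n\ast f\|_X$ itself), but convergence of $\sum_n(P_n\ast f)_r$ in $X$ follows more cleanly by noting that $f_r\in X^0$ via admissibility property~(2) and then invoking \eqref{eq:P5}.
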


\begin{proof}
On one hand, by \eqref{eq:P4}, \eqref{eq:P2} and Lemma~\ref{le:XP},
    \begin{equation*}
    \begin{split}
    \| f_r\|_X
    \ge C_2^{-1}\|P_n*f_r\|_X
    =C_2^{-1}\|(P_n*f)_r\|_X
    \ge C_2^{-1}\|P_n*f\|_X r^{\lambda_{n+N}},\quad  n\in\N\cup\{0\},
    \end{split}
    \end{equation*}
and the first inequality in \eqref{eq:fr} with $C=C_2^{-1}$ follows. On the other hand, \eqref{eq:P1}, the triangle inequality, the subadditivity of $x^\b$ for $0<\b<1$ and Lemma~\ref{le:XP} together with \eqref{eq:P2} give
    \begin{equation*}
    \begin{split}
    \|f_r\|^\beta_X
    &=\left\|\sum_{n=0}^\infty P_n*f_r\right\|^\beta_X
    \le\sum_{n=0}^\infty\| P_n*f_r\|^\beta_X
    =\sum_{n=0}^\infty\|(P_n*f)_r\|^\beta_X
    \le\sum_{n=0}^\infty\|P_n*f\|^\beta_X r^{\lambda_{n-1}\beta},
    \end{split}
    \end{equation*}
which is the second inequality in \eqref{eq:fr}.
\end{proof}

The next lemma is an easy consequence of \cite[Lemma~2.1 (ii)]{PelSum14}.

\begin{lemma}\label{le:Mnstar}
Let $\om\in\DD$. Then there exist $K=K(\om)>1$, $C=C(\om)>1$ and $n_0=n_0(\om)\in\N\cup\{0\}$ such that the sequence $\{M_n\}_{n=0}^\infty$ defined in \eqref{eq:Mn}
satisfies that $M_{n+1}\ge CM_n$ for all $n\ge n_0$.
\end{lemma}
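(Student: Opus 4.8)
The plan is to translate the defining relation $\widehat{\om}(\r_n)=\widehat{\om}(0)K^{-n}$ into a geometric-type growth of the quantities $1/(1-\r_n)$, and then pass to their integer parts $M_n$. The only external input is the standard power-type lower bound for tail integrals of weights in $\DD$: by \cite[Lemma~2.1(ii)]{PelSum14}, if $\om\in\DD$ there exist $C_0=C_0(\om)\ge1$ and $\alpha=\alpha(\om)>0$ such that
\[
\widehat{\om}(t)\ge\frac{1}{C_0}\left(\frac{1-t}{1-r}\right)^{\alpha}\widehat{\om}(r),\qquad 0\le r\le t<1,
\]
i.e. the tail integral of a $\DD$-weight cannot decay faster than a fixed power of $1-r$.

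First I would apply this estimate with $r=\r_n$ and $t=\r_{n+1}$; note $\r_n<\r_{n+1}$ because the values $\widehat{\om}(\r_n)=\widehat{\om}(0)K^{-n}$ are strictly decreasing in $n$. Using $\widehat{\om}(\r_n)=K\,\widehat{\om}(\r_{n+1})$ and cancelling the positive factor $\widehat{\om}(\r_{n+1})$ yields
\[
1\ge\frac{K}{C_0}\left(\frac{1-\r_{n+1}}{1-\r_n}\right)^{\alpha},\qquad\text{hence}\qquad
\frac{1-\r_n}{1-\r_{n+1}}\ge\left(\frac{K}{C_0}\right)^{1/\alpha},\quad n\in\N\cup\{0\}.
\]
Now I would fix $K=K(\om)>1$ so large that $(K/C_0)^{1/\alpha}>4$; with this choice $1-\r_n>4(1-\r_{n+1})$ for all $n$.

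It then remains to replace $1/(1-\r_n)$ by $M_n=E\!\left(1/(1-\r_n)\right)$. From $M_n\le 1/(1-\r_n)<M_n+1$ one gets
\[
M_{n+1}>\frac{1}{1-\r_{n+1}}-1>\frac{4}{1-\r_n}-1\ge 4M_n-1\ge 3M_n,
\]
the last inequality because $M_n\ge1$ for every $n$ (indeed $\r_0=0$ forces $M_0=1$, and $\r_n>0$ for $n\ge1$). Thus the lemma holds with this $K$, with $C=3$, and $n_0=0$.

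There is no serious obstacle here: the two points requiring a little care are invoking the $\DD$-estimate in the correct direction (lower bound on the tail integral), and the harmless bookkeeping in passing between $1/(1-\r_n)$ and its integer part $M_n$, for which one absorbs the additive constant into the multiplicative one once $M_n\ge1$.
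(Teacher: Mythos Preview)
Your proof is correct and follows essentially the same approach as the paper, which simply records the lemma as ``an easy consequence of \cite[Lemma~2.1(ii)]{PelSum14}'' without writing out the details. You have supplied precisely those details: the power-type lower bound on $\widehat{\om}$ turns the relation $\widehat{\om}(\r_n)=K\widehat{\om}(\r_{n+1})$ into geometric growth of $1/(1-\r_n)$, and the passage to integer parts is the routine bookkeeping you carried out.
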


For $\om\in\DD$, let $K=K(\om)>1$ and $n_0=n_0(\om)\in\N\cup\{0\}$ be those of Lemma~\ref{le:Mnstar}. Define $M^\star_n=M_{n+{n_0}}$ for all $n\in\N\cup\{0\}$, and set $M^\star_{-1}=0$. Then Lemma~\ref{le:Mnstar} shows that $\{M_n^\star\}_{n=0}^\infty$ is lacunary. We will need the following generalization of \eqref{18}.

\begin{theorem}\label{th:Xdecomposition}
Let $\om\in\DD$, $0<q\le\infty$ and $N\in\N\cup\{0\}$. Let $K=K(\om)>1$ such that $\{M^\star_n\}_{n=0}^\infty$ is a lacunary sequence, and let $\{P_n\}_{n=0}^\infty$ be the sequence of polynomials associated to $N$ and $\{M^\star_n\}_{n=0}^\infty$ via Theorem~\ref{th:Xpolynomials}. Then there exist constants $C_1=C_1(\om,q,N)>0$ and $C_2=C_2(\om,q,N)>0$ such that
    $$
    C_1\sum_{n=0}^\infty K^{-n}\|P_n*f\|^q_{X}
    \le\|f\|^{q}_{X(q,\om)}
    \le C_2\sum_{n=0}^\infty K^{-n}\| P_n*f\|^q_{X},\quad 0<q<\infty,\quad f\in\H(\D),
    $$
and
    $$
    C_1 \sup_{n\in\N\cup\{0\}}K^{-n} \|P_n*f\|_{X}
    \le\|f\|_{X(\infty,\om)}
    \le C_2\sup_{n\in\N\cup\{0\}}K^{-n}\|P_n*f\|_{X},\quad f\in\H(\D),
    $$
for all admissible Banach spaces $X$ and for all $X=H^p$ with $p>\frac{1}{N+1}$.
\end{theorem}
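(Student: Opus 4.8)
The plan is to prove the two norm equivalences by combining the pointwise estimates of Lemma~\ref{le:XP2} with the integral estimates for $\om\in\DD$, essentially following the scheme of \cite[Theorem~5.4]{PavMixnormI} but keeping track of the weight through the sequence $\{M_n^\star\}$. The starting point is that, because $\om\in\DD$, Lemma~\ref{le:Mnstar} guarantees that $\{M_n^\star\}_{n=0}^\infty$ is lacunary, so Theorem~\ref{th:Xpolynomials} applies and we have a decomposition $f=\sum_{n=0}^\infty P_n\ast f$ with the frequency localization \eqref{eq:P2}: $\widehat{P_n}(j)=0$ unless $M_{n-1}^\star\le j<M_{n+N}^\star$. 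The key analytic input linking the weight to these blocks is that for $\om\in\DD$ one has $\int_0^1 r^{M_n^\star}\om(r)\,dr\asymp\widehat{\om}\bigl(1-\tfrac1{M_n^\star}\bigr)\asymp\widehat{\om}(\r_n)\asymp K^{-n}$, the last step by the very definition of $\r_n$ and $M_n$ in \eqref{eq:Mn}; more generally, by \cite[Lemma~2.1]{PelSum14}, $\int_0^1 r^{\lambda}\om(r)\,dr\asymp K^{-n}$ whenever $\lambda\asymp M_n^\star$, with comparison constants depending only on $\om$ and the lacunarity constant.

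For the case $0<q<\infty$, first I would prove the lower bound $\|f\|_{X(q,\om)}^q\gtrsim\sum_n K^{-n}\|P_n\ast f\|_X^q$. Using the first inequality in \eqref{eq:fr}, for each fixed $n$ we have $\|f_r\|_X\gtrsim\|P_n\ast f\|_X\,r^{M_{n+N}^\star}$, so
\[
\|f\|_{X(q,\om)}^q=\int_0^1\|f_r\|_X^q\om(r)\,dr\gtrsim\|P_n\ast f\|_X^q\int_0^1 r^{qM_{n+N}^\star}\om(r)\,dr\asymp K^{-n}\|P_n\ast f\|_X^q,
\]
and summing a geometric-type series over $n$ (absorbing the shift $n\mapsto n+N$ into the constant, which only costs a factor $K^N$) gives the bound. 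For the upper bound I would use the second inequality in \eqref{eq:fr}, raise to the power $q$, and split according to whether $q\le\beta$ or $q>\beta$; when $q\ge 1$ (so $\beta=1$) Minkowski's inequality in $\ell^q$ of the sequence $\{\|P_n\ast f\|_X r^{M_{n-1}^\star}\}_n$ against the measure $\om(r)\,dr$ is the clean route, and when $q<1$ one uses subadditivity of $x\mapsto x^q$ directly. Either way one is left to estimate $\int_0^1 r^{qM_{n-1}^\star}\om(r)\,dr\asymp K^{-n}$ (again shifting the index costs only a constant), yielding $\|f\|_{X(q,\om)}^q\lesssim\sum_n K^{-n}\|P_n\ast f\|_X^q$. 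The case $q=\infty$ is analogous but simpler: the lower bound follows from $\|f_r\|_X\widehat{\om}(r)\gtrsim\|P_n\ast f\|_X r^{M_{n+N}^\star}\widehat{\om}(r)$ evaluated at $r=1-1/M_{n+N}^\star$, using $r^{M_{n+N}^\star}\asymp 1$ and $\widehat{\om}(r)\asymp K^{-n}$; the upper bound follows from the second inequality in \eqref{eq:fr} with $\beta=1$ by taking the supremum over $r$ and bounding $\sum_n K^{-n}\cdots$ crudely, or more carefully by splitting the sum at the index where $r^{M_{n-1}^\star}$ transitions from $\asymp 1$ to rapidly decaying, exactly as in the classical argument.

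The main obstacle I expect is not any single estimate but the bookkeeping that makes the comparison constants depend only on $\om,q,N$ and not on $f$: one must verify that the index shifts built into \eqref{eq:P2} (the window $[M_{n-1}^\star,M_{n+N}^\star)$ has width comparable to $M_n^\star$ by lacunarity) interact correctly with the comparison $\int_0^1 r^\lambda\om(r)\,dr\asymp K^{-n}$ uniformly in $\lambda\in[M_{n-1}^\star,M_{n+N}^\star)$, and that in the upper-bound step for small $q$ the interchange of summation and integration together with the $\ell^q$-summation of a lacunary geometric sequence converges with a constant depending only on $K$ and $q$. I would isolate the statement ``$\int_0^1 r^\lambda\om(r)\,dr\asymp\widehat{\om}(\r_n)\asymp K^{-n}$ for all $\lambda$ comparable to $M_n^\star$'' as a preliminary observation (citing \cite[Lemma~2.1]{PelSum14} and the definition of $M_n$) so that the two cases $0<q<\infty$ and $q=\infty$ then both reduce to routine summation of geometric series against this comparison. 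Once that observation is in place, the proof is a direct transcription of the argument of \cite[Theorem~5.4]{PavMixnormI}, and I would remark that the case $X=H^p$ with $p\ge 1$ and $X$ an admissible Banach space uses $\beta=1$, while $\tfrac1{N+1}<p<1$ uses $\beta=p$, exactly matching the hypotheses under which Theorem~\ref{th:Xpolynomials} and Lemma~\ref{le:XP2} were stated.
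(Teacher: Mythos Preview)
Your overall strategy matches the paper's, but there are two genuine gaps in the execution for the case $0<q<\infty$.

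\textbf{Lower bound.} From the first inequality in \eqref{eq:fr} you correctly obtain, for each fixed $n$, that $\|f\|_{X(q,\om)}^q\gtrsim K^{-n}\|P_n\ast f\|_X^q$. But this gives only $\|f\|_{X(q,\om)}^q\gtrsim\sup_n K^{-n}\|P_n\ast f\|_X^q$; summing over $n$ produces an infinite multiple of $\|f\|_{X(q,\om)}^q$ on the left, which is useless. The paper instead partitions the integration domain into the intervals $[\r_{n+N+n_0},\r_{n+N+n_0+1}]$ determined by the weight, applies the first inequality of \eqref{eq:fr} on each interval with the corresponding index $k=n$, and then sums the disjoint integrals. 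This is the missing idea: the contribution $K^{-n}\|P_n\ast f\|_X^q$ must come from a specific subinterval of $[0,1)$, not from the full integral.

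\textbf{Upper bound.} For $q>\beta$ your Minkowski route does not give the right answer. Minkowski yields
\[
\|f\|_{X(q,\om)}\lesssim\sum_n\|P_n\ast f\|_X\Bigl(\int_0^1 r^{q M_{n-1}^\star}\om(r)\,dr\Bigr)^{1/q}\asymp\sum_n K^{-n/q}\|P_n\ast f\|_X,
\]
and $\bigl(\sum_n K^{-n/q}a_n\bigr)^q$ is in general much larger than $\sum_n K^{-n}a_n^q$ (take $a_n=K^{n/q}$ on a finite range to see this). Subadditivity of $x\mapsto x^{q/\beta}$ works only when $q\le\beta$, so your dichotomy misses the range $q>\beta$ entirely (and also $\beta=p<q<1$ in the $H^p$ case). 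The paper handles this by invoking \cite[Proposition~3.2]{PRAntequera} (see also \cite[Proposition~9]{PelRathg}), which is precisely the lacunary-series lemma asserting
\[
\int_0^1\Bigl(\sum_n c_n r^{M_n}\Bigr)^{s}\om(r)\,dr\asymp\sum_n c_n^s K^{-n}
\]
for lacunary $\{M_n\}$ and $\om\in\DD$. This is the non-trivial step; once it is cited, the upper bound is immediate. For $q=\infty$ the analogous summation $\sum_n K^n r^{M_{n-1}^\star}\lesssim\widehat{\om}(r)^{-1}$ is handled in the paper by \cite[Lemma~3.2]{PRAntequera}; your ``crudely'' option does not work since the series has growing coefficients, though your ``careful splitting'' is essentially what that lemma formalizes.
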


\begin{proof}
Assume without loss of generality that $N\in\N$. Let first $0<q<\infty$. By Lemma~\ref{le:Mnstar},
there exists $K=K(\om)>1$ such that $\{M^\star_n\}_{n=0}^\infty$ is lacunary, and hence there exists a sequence
$\{P_n\}_{n=0}^\infty$ of polynomials associated to $N$ and $\{M^\star_n\}_{n=0}^\infty$ via Theorem~\ref{th:Xpolynomials}. Further, by Lemma~\ref{le:XP2},
    \begin{equation*}
    \begin{split}
    \|f\|^{q}_{X(q,\om)}
    &\ge\sum_{n=0}^{\infty}\int_{\r_{n+N+n_0}}^{\r_{n+N+n_0+1}}\|f_r\|^q_{X}\om(r)\,dr\\
    &\gtrsim\sum_{n=0}^{\infty}\int_{\r_{n+N+n_0}}^{\r_{n+N+n_0+1}}
		\left(\sup_{k\in\N\cup\{0\}}\|P_k*f\|^q_{X}r^{q M^\star_{k+N}}\right)\om(r)\,dr\\
    &\ge\sum_{n=0}^{\infty}\|P_n*f\|^q_{X}\int_{\r_{n+N+n_0}}^{\r_{n+N+n_0+1}}r^{q M^\star_{n+N}}\om(r)\,dr
    \asymp\sum_{n=0}^{\infty}\|P_n*f\|^q_{X}\int_{\r_{n+N+n_0}}^{\r_{n+N+n_0+1}}\om(r)\,dr\\
    &\asymp\sum_{n=0}^{\infty}K^{-n}\|P_n*f\|^q_{X},
    \end{split}
    \end{equation*}
which is the desired lower estimate. Lemma~\ref{le:XP2}, \cite[Proposition~3.2]{PRAntequera} with $K=2^\alpha$ (see also \cite[Proposition~9]{PelRathg}) and Lemma~\ref{le:Mnstar} give
    \begin{equation*}
    \begin{split}
    \|f\|^{q}_{X(q,\om)}
    &\le\int_0^1\left(\sum_{n=0}^{\infty}\|P_n*f\|^\beta_X r^{M^\star_{n-1}\beta}\right)^{\frac{q}{\beta}}\om(r)\,dr\\
    &\le\int_0^1\left(\sum_{n=1}^{\infty}\|P_n*f\|^\beta_X r^{M_{n-1}\beta}+\|P_0*f\|_X^\beta\right)^{\frac{q}{\beta}}\om(r)\,dr\\
    &\lesssim\int_0^1\left(\sum_{n=1}^{\infty}\|P_n*f\|^\beta_X r^{E(M_{n-1}\b)}\right)^{\frac{q}{\beta}}\om(r)\,dr+\|P_0*f\|^q_X
    \asymp\sum_{n=0}^{\infty}K^{-n}\|P_n*f\|^q_{X},
    \end{split}
    \end{equation*}
and thus the upper estimate is proved.

If $f\in X(\infty,\om)$, then the choice $r=\r_{n+N+n_0}$ in the first inequality in \eqref{eq:fr} of Lemma~\ref{le:XP2}, which may be applied because of Lemma~\ref{le:Mnstar}, gives
    \begin{equation*}
    \begin{split}
    \|f\|_{X(\infty,\om)}\ge\|f_{\r_{n+N+n_0}}\|_X\widehat{\om}(\r_{n+N+n_0})
    \gtrsim\sup_{j\in\N\cup\{0\}}\|P_j*f\|_X\r_{n+N+n_0}^{M^\star_{j+N}}K^{-n}
    \gtrsim K^{-n}\| P_n*f\|_{X}
    \end{split}
    \end{equation*}
for all $n\in\N\cup\{0\}$, and the lower estimate for $\|f\|_{X(\infty,\om)}$ follows. To see the upper estimate, write $M=\sup_{n\in\N\cup\{0\}}K^{-n}\| P_n*f\|_{X}<\infty$ for short. Then the second inequality in \eqref{eq:fr}, \cite[Lemma~3.2]{PRAntequera} with $\gamma=\b\frac{\log K}{\log2}$ and $\alpha=\frac{\log K}{\log 2}$ (see also \cite[Lemma~8]{PelRathg}) and \cite[Lemma~2.1(ii)]{PelSum14} give
    \begin{equation*}
    \begin{split}
    \|f_r\|^\beta_X
    &\le M^\beta\sum_{n=0}^\infty r^{M^\star_{n-1}\beta}K^{n\beta}
    =M^\beta\sum_{n=0}^\infty (r^\b)^{M^\star_{n-1}}2^{n\gamma}
    \lesssim M^\beta\widehat{\om}(r^\b)^{-\frac{\gamma}{\alpha}}\\
    &=M^\beta\widehat{\om}(r^\b)^{-\b}
    \lesssim M^\beta\widehat{\om}(r)^{-\b}, \quad 0<r<1,
    \end{split}
    \end{equation*}
and the upper estimate follows.
\end{proof}

For $n\in\N$, let $K_n$ denote the Ces\'aro kernel defined by $K_n(z)=\sum_{j=0}^n\left( 1-\frac{j}{n+1}\right)z^j$ for all $z\in\D$, and write $\sigma_n (f)=K_n*f$ for the Ces\'aro means of $f$. Recall that $I^\om:\H(\D)\to\H(\D)$ is defined by
$I^\om(g)(z)=\sum_{k=0}^\infty \widehat{g}(k)\om_{2k+1}z^{k}$ for all $z\in\D$. The next result shows, in particular, that $I^\om\circ I^\om$ is a bijection from $l^\infty_{\log_2K}(X,\{P_n\})$ to $l^\infty_{-\log_2K}(X,\{P_n\})$,
where $l^\infty_{s}(X,\{P_n\})$ is defined in \eqref{eq:linftyXPn}. We shall use this observation in the proof of Theorem~\ref{th:A1dual} for $X=\BMOA$.

\begin{theorem}\label{th:IomPn}
Let $\om\in\DD$ and $N=1$. Let $K=K(\om)>1$ such that $\{M^\star_n\}_{n=0}^\infty$ is a lacunary sequence, and let $\{P_n\}_{n=0}^\infty$ be the sequence of polynomials associated to $N$ and $\{M^\star_n\}_{n=0}^\infty$ via Theorem~\ref{th:Xpolynomials}. Then there exist constants $C_1=C_1(\om,N)>0$ and $C_2=C_2(\om,N)>0$ such that
    \begin{equation}
    \begin{split}\label{eq:Io}
    C_1\| I^\om(P_n\ast g)\|_{X}\le K^{-n}\|P_n*g\|_{X}\le C_2\| I^\om(P_n*g)\|_{X},\quad n\in\N\cup\{0\},\quad g\in\H(\D),
    \end{split}
    \end{equation}
for all admissible Banach space $X$.
\end{theorem}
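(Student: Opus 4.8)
The plan is to exploit that, for each fixed $n$, the polynomial $P_n\ast g$ has spectrum inside the block $[M^\star_{n-1},M^\star_{n+1})$ (by \eqref{eq:P2} with $N=1$), that on this block the moment sequence $\{\om_{2k+1}\}$ is comparable to the single value $K^{-n}$, and that $I^\om$ acts on such a spectrally localised polynomial by multiplying its $k$-th Taylor coefficient by $\om_{2k+1}$. Thus $I^\om$, restricted to functions with spectrum in this block, is — up to the scalar $K^{-n}$ — a Fourier multiplier given by a bounded sequence which moreover inherits the smoothness of $x\mapsto\om_{2x+1}$; the key point is that such multipliers, and their reciprocals, are uniformly bounded on every admissible Banach space. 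This is exactly the kind of estimate that the smooth universal Ces\'aro basis machinery (Theorem~\ref{th:Xpolynomials}, used as in the proof of Theorem~\ref{th:otromundo}) is designed to supply.

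The first step is the moment comparison: there is a constant $c=c(\om)\ge1$ such that
    \begin{equation*}
    c^{-1}K^{-n}\le\om_{2x+1}\le cK^{-n},\qquad x\in\big[\tfrac12 M^\star_{n-1},\,2M^\star_{n+1}\big],\quad n\in\N\cup\{0\}.
    \end{equation*}
Since $x\mapsto\om_{2x+1}$ is non-increasing, it suffices to bound $\om_{M^\star_{n-1}+1}$ from above and $\om_{4M^\star_{n+1}+1}$ from below by constant multiples of $K^{-n}$, and by the iterated doubling $\om_x\le C\om_{2x}$ this reduces to proving $\om_{M^\star_m}\asymp K^{-m}$. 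The latter follows from $\om_y\asymp\widehat{\om}(1-1/y)$, $y\ge1$ (\cite[Lemma~2.1(vi)]{PelSum14}), together with the definitions $\widehat{\om}(\r_m)=\widehat{\om}(0)K^{-m}$, $M_m=E(1/(1-\r_m))$ and $M^\star_n=M_{n+n_0}$: one checks $1-1/M^\star_m\in(2\r_{m+n_0}-1,\,\r_{m+n_0}]$ once $\r_{m+n_0}\ge\tfrac12$, whence $\widehat{\om}(1-1/M^\star_m)$ lies between $\widehat{\om}(\r_{m+n_0})$ and $\widehat{\om}(2\r_{m+n_0}-1)\le C\widehat{\om}(\r_{m+n_0})$ by the $\DD$-doubling, so $\widehat{\om}(1-1/M^\star_m)\asymp K^{-m}$. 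The finitely many small $n$ (for which $\r_{n+n_0}$ need not exceed $\tfrac12$, or for which the interval dips below $\tfrac12$) are routine, since the block is then a fixed finite set of frequencies.

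For the multiplier estimate I would first recall, from the proof of Theorem~\ref{th:otromundo}, that $x\mapsto\om_{2x+1}$ has first and second logarithmic derivatives bounded by a constant $c(\om)$ on $[\tfrac12,\infty)$ (this is where $\int_0^1 s^y(\log\tfrac1s)^i\om(s)\,ds\lesssim\om_y$, $i\in\{1,2\}$, enters), and hence so does its reciprocal. For each $n$ pick a $C^\infty$ cut-off $\chi_n$ equal to $1$ on $[M^\star_{n-1},M^\star_{n+1}]$, supported in $[\tfrac12 M^\star_{n-1},2M^\star_{n+1}]$ and with $|\chi_n'|+|\chi_n''|\lesssim1$ (possible because the transition lengths are $\gtrsim M^\star_{n-1}\ge1$), and set $\Phi_n=(K^n\om_{2x+1})\chi_n$ and $\widetilde{\Phi}_n=(K^{-n}\om_{2x+1}^{-1})\chi_n$. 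By the moment comparison and the logarithmic-derivative bounds, $A_{\Phi_n,2}=\max|\Phi_n|+\max|\Phi_n''|\lesssim1$ and likewise $A_{\widetilde{\Phi}_n,2}\lesssim1$, uniformly in $n$. With $W_1^\Phi(z)=\sum_k\Phi(k)z^k$ as in \eqref{eq:polynomials}, and since $\Phi_n(k)=K^n\om_{2k+1}$ on the spectrum of $P_n\ast g$ and $\widetilde{\Phi}_n(k)=K^{-n}\om_{2k+1}^{-1}$ on the spectrum of $I^\om(P_n\ast g)$ — both contained in $[M^\star_{n-1},M^\star_{n+1})$ — one gets the identities
    \begin{equation*}
    W_1^{\Phi_n}\ast(P_n\ast g)=K^n\,I^\om(P_n\ast g),\qquad W_1^{\widetilde{\Phi}_n}\ast\big(I^\om(P_n\ast g)\big)=K^{-n}(P_n\ast g).
    \end{equation*}
Finally, for an admissible Banach space $X$ and a compactly supported $C^\infty$ function $\Phi$, convolution with the polynomial $W_1^\Phi$ is bounded on $X$ with norm $\lesssim\tfrac1{2\pi}\|W_1^\Phi\|_{L^1(\T)}\lesssim A_{\Phi,2}$: the $L^1$-bound follows by the standard splitting of the estimate in Theorem~\ref{th:Xpolynomials}(i) (taking $n=1$ and $m\in\{0,2\}$), and then, by the rotation invariance built into admissibility~(3), \eqref{eq:hadprod} exhibits $(f\ast W_1^\Phi)_\rho=f_\rho\ast W_1^\Phi$ as a vector average of rotates of $f_\rho$, so Minkowski's inequality gives the bound for each $\rho<1$, after which one lets $\rho\to1^-$ using admissibility~(1)--(2) (everything in sight is a polynomial). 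Combining the two displayed identities with this bound yields $\|I^\om(P_n\ast g)\|_X=K^{-n}\|W_1^{\Phi_n}\ast(P_n\ast g)\|_X\lesssim K^{-n}\|P_n\ast g\|_X$ and $K^{-n}\|P_n\ast g\|_X=\|W_1^{\widetilde{\Phi}_n}\ast(I^\om(P_n\ast g))\|_X\lesssim\|I^\om(P_n\ast g)\|_X$, which are precisely the two inequalities in \eqref{eq:Io}, with implicit constants depending only on $\om$ (and $N=1$).

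The main obstacle is the bookkeeping in the moment comparison: one must track $\r_n$, $M_n$, $M^\star_n$ and apply the doubling of $\widehat{\om}$ and of the moments often enough to pin $\om_{2k+1}$ down by $K^{-n}$ uniformly over the \emph{enlarged} block $[\tfrac12 M^\star_{n-1},2M^\star_{n+1}]$, not merely over $[M^\star_{n-1},M^\star_{n+1}]$, since the cut-off needs room. A secondary point requiring care is that Theorem~\ref{th:Xpolynomials}(iii) is stated only for $H^p$, so for a general admissible Banach $X$ one must route through the $L^1$-norm estimate of Theorem~\ref{th:Xpolynomials}(i) and rotation invariance as above; and one must verify that $A_{\Phi_n,2}$ and $A_{\widetilde{\Phi}_n,2}$ really stay bounded, which is exactly the $C^2$-smoothness of $x\mapsto\om_{2x+1}$ already exploited in the proof of Theorem~\ref{th:otromundo}.
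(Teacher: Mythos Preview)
Your approach is correct and genuinely different from the paper's. The paper does \emph{not} build smooth multipliers; instead it uses a double summation by parts to write $I^\om(P_n\ast g)$ as a finite linear combination of Ces\`aro means $\sigma_j(P_n\ast g)$ (plus a tail term), then invokes the uniform boundedness of Ces\`aro means on admissible Banach spaces \cite[Theorem~2.2]{PavMixnormI}, and controls the coefficients via the characterization~\eqref{Intro:D-hat} of $\DD$ (namely $x^\b(\om_{[\b]})_x\lesssim\om_x$). The reverse inequality is obtained by the same trick applied to the sequence $\{\om_{2k+1}^{-1}\}$. Your route avoids both the summation by parts and the external Ces\`aro-means result: you treat $I^\om$ on a single spectral block as a smooth Fourier multiplier with symbol $K^n\om_{2x+1}$ (respectively $K^{-n}\om_{2x+1}^{-1}$), bound the $A_{\Phi,2}$-norm uniformly using the moment comparison $\om_{2k+1}\asymp K^{-n}$ together with the $C^2$ control $|\partial_x^i\om_{2x+1}|\lesssim\om_{2x+1}$, and then convert the resulting $L^1(\T)$-bound on $W_1^\Phi$ into an $X$-bound via rotation invariance and Minkowski. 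The trade-off: the paper's argument is shorter once the external ingredients are in hand, while yours is more self-contained and recycles the machinery already used in the proof of Theorem~\ref{th:otromundo}.

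Two small points. First, the pointwise kernel estimate you invoke to get $\|W_1^\Phi\|_{L^1(\T)}\lesssim A_{\Phi,2}$ is Theorem~\ref{th:cesaro}(i), not Theorem~\ref{th:Xpolynomials}(i); you have the right estimate but the wrong label. Second, your claim that $|\chi_n'|+|\chi_n''|\lesssim1$ relies on the transition lengths $\tfrac12 M^\star_{n-1}$ and $M^\star_{n+1}$ being $\ge1$; this is fine for $n\ge1$ but for $n=0$ (where $M^\star_{-1}=0$) the block is a fixed finite set and the estimate is trivial, as you note.
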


\begin{proof}
The argument we employ follows the proof of \cite[Lemma~5.3]{PavMixnormI}. Theorem~\ref{th:Xpolynomials} yields
    \begin{equation*}
    I^\om(P_n\ast g)(z)= \sum_{j= M^\star_{n-1}}^{M^\star_{n+N}} \om_{2j+1} \widehat{P_n}(j)\widehat{g}(j)z^j = \sum_{j= M^\star_{n-1}}^{M^\star_{n+N}} \om_{2j+1}
    g^j_n(z),\quad z\in\D,\quad n\in\N\cup\{0\},
    \end{equation*}
where $g(z)=\sum_{j=0}^\infty\widehat{g}(j)z^j$ and $g^j_n(z)=\widehat{P_n}(j)\widehat{g}(j)z^j$ for all $n\in\N\cup\{0\}$. Write $G^j_n=\sum_{m=0}^j g^m_n$ for short. Then $G^k_n\equiv0$ for all $k< M^\star_{n-1}$ by Theorem~\ref{th:Xpolynomials}, and hence a summation by parts gives
    \begin{equation}
    \begin{split}\label{eq:Io1}
    I^\om(P_n*g)
		&=\sum_{j=M^\star_{n-1}}^{M^\star_{n+N}-1}(\om_{2j+1}-\om_{2j+3})G_n^j+\om_{2M^\star_{n+N}+1}\left(P_n*g\right),\quad n\in\N\cup\{0\}.
    \end{split}
    \end{equation}
Now $\sigma_k(P_n*g)\equiv0$ for $k< M^\star_{n-1}$, and hence a direct calculation yields
    \begin{equation*}
    \begin{split}
    &\sum_{j=M^\star_{n-1}}^{M^\star_{n+N}-1}(\om_{2j+1}-\om_{2j+3})G_n^j\\
    &=\sum_{j=M^\star_{n-1}}^{M^\star_{n+N}-1}(\om_{2j+1}-\om_{2j+3})
    \left[(j+1)\sigma_j(P_n\ast g)-j\sigma_{j-1}(P_n\ast g)\right]\\
    &=\sum_{j=M^\star_{n-1}}^{M^\star_{n+N}-2}\left[(\om_{2j+1}-\om_{2j+3})-(\om_{2j+3}-\om_{2j+5})\right]
    (j+1)\sigma_j (P_n*g)\\
    &\quad+(\om_{2M^\star_{n+N}-1}-\om_{2M^\star_{n+N}+1})M^\star_{n+N}\sigma_{M^\star_{n+N}-1}(P_n*g),\quad n\in\N\cup\{0\},
    \end{split}
    \end{equation*}
which together with \eqref{eq:Io1} gives
    \begin{equation}
    \begin{split}\label{eq:Io2}
    I^\om(P_n\ast g)
    &=\sum_{j=M^\star_{n-1}}^{M^\star_{n+N}-2}\left[(\om_{2j+1}-\om_{2j+3})-(\om_{2j+3}-\om_{2j+5})\right](j+1)\sigma_j (P_n\ast g)\\
    &\quad+(\om_{2M^\star_{n+N}-1}-\om_{2M^\star_{n+N}+1})M^\star_{n+N}\sigma_{M^\star_{n+N}-1}(P_n*g)\\
    &\quad+\om_{2M^\star_{n-1}+1}(P_n*g),\quad n\in\N\cup\{0\}.
    \end{split}
    \end{equation}
Next, the $X$-norm of the three terms appearing on the right hand side of \eqref{eq:Io2} will be estimated separately. First, write $\om_{\{\b\}}(z)=(1-|z|^2)^\b\om(z)$ for all $\b\in\mathbb{R}$ and $z\in\D$, and use \cite[Theorem~2.2]{PavMixnormI} and \cite[Lemma~2.1(vi)]{PelSum14} to deduce
    \begin{equation}
    \begin{split}\label{eq:Io3}
    &\left\|\sum_{j=M^\star_{n-1}}^{M^\star_{n+N}-2}\left[(\om_{2j+1}-\om_{2j+3})-(\om_{2j+3}-\om_{2j+5})\right]
    (j+1)\sigma_j (P_n*g)\right\|_X\\
    &\le\sum_{j=M^\star_{n-1}}^{M^\star_{n+N}-2}
    \left[(\om_{2j+1}-\om_{2j+3})-(\om_{2j+3}-\om_{2j+5})\right]
    (j+1)\left\|\sigma_j(P_n*g)\right\|_X\\
    &\le\left\|P_n*g\right\|_X\sum_{j=M^\star_{n-1}}^{M^\star_{n+N}-2}\left[(\om_{2j+1}-\om_{2j+3})-(\om_{2j+3}-\om_{2j+5})\right] (j+1)\\
    &=\left\|P_n*g\right\|_X\sum_{j=M^\star_{n-1}}^{M^\star_{n+N}-2}(\om_{\{2\}})_{2j+1}(j+1)\\
    &=\left\|P_n*g\right\|_X\int_{0}^1 \left(\sum_{j=M^\star_{n-1}}^{M^\star_{n+N}-2}(j+1)r^{2j+1}\right) (1-r^2)^2\om(r)\,dr\\
    &\le\left\|P_n*g\right\|_X\int_{0}^1 r^{M^\star_{n-1}+1} \left(\sum_{j=0}^\infty(j+1)r^j\right)  (1-r^2)^2\om(r)\,dr\\
    &\le4\left\|P_n*g\right\|_X\int_{0}^1 r^{M^\star_{n-1}+1} \om(r)\,dr
    \asymp K^{-n}\left\|P_n*g\right\|_X,\quad n\in\N\cup\{0\}.
    \end{split}
    \end{equation}
Second, by \cite[Theorem~2.2]{PavMixnormI} and \eqref{Corollary:D-hat},
    \begin{equation}
    \begin{split}\label{eq:Io4}
    &\left\|(\om_{2M^\star_{n+N}-1}-\om_{2M^\star_{n+N}+1})M^\star_{n+N}\sigma_{M^\star_{n+N}-1}(P_n*g)\right\|_X
    \le2(\om_{[1]})_{M^\star_{n+N}}M^\star_{n+N}\left\|P_n*g\right\|_X\\
    &\quad\lesssim\om_{M^\star_{n+N}}\left\|P_n*g\right\|_X
    \asymp K^{-n} \left\|P_n*g\right\|_X,\quad n\in\N\cup\{0\}.
    \end{split}
    \end{equation}
Therefore, by combining \eqref{eq:Io2}, \eqref{eq:Io3} and \eqref{eq:Io4}, the first inequality in \eqref{eq:Io} follows.

An argument similar to that in \eqref{eq:Io2} gives
    \begin{equation}
    \begin{split}\label{eq:Io5}
    P_n*g
    &=\sum_{j=M^\star_{n-1}}^{M^\star_{n+N}-2}\left[(\om_{2j+1}^{-1}-\om_{2j+3}^{-1})-(\om_{2j+3}^{-1}-\om_{2j+5}^{-1})\right]
    (j+1)\sigma_j I^\om(P_n\ast g)\\
    &\quad+(\om_{2M^\star_{n+N}-1}^{-1}-\om_{2M^\star_{n+N}+1}^{-1})M^\star_{n+N}\sigma_{M^\star_{n+N}-1}I^\om(P_n*g)\\
    &\quad+\om_{2M^\star_{n-1}+1}^{-1} I^\om(P_n*g),\quad n\in\N\cup\{0\}.
    \end{split}
    \end{equation}
By \cite[Theorem~2.2]{PavMixnormI}, \cite[Lemma~2.1(vi)]{PelSum14}, Lemma~\ref{le:Mnstar} and \eqref{Corollary:D-hat},
    \begin{equation*}
    \begin{split}
    &\left\|\sum_{j=M^\star_{n-1}}^{M^\star_{n+N}-2}\left[(\om_{2j+1}^{-1}-\om_{2j+3}^{-1})-(\om_{2j+3}^{-1}-\om_{2j+5}^{-1})\right]
    (j+1)\sigma_j\left(I^\om(P_n*g)\right)\right\|_X\\
    &\le\left\|I^\om(P_n\ast g)\right\|_X \sum_{j=M^\star_{n-1}}^{M^\star_{n+N}-2}\left|(\om_{2j+1}^{-1}-\om_{2j+3}^{-1})-(\om_{2j+3}^{-1}-\om_{2j+5}^{-1})\right|(j+1)\\
    &=\left\|I^\om(P_n\ast g)\right\|_X \sum_{j=M^\star_{n-1}}^{M^\star_{n+N}-2}\left|\frac{\left(\om_{\{2\}}\right)_{2j+1}\om_{2j+5}-\left(\om_{\{1\}}\right)_{2j+3}\left(\left(\om_{\{1\}}\right)_{2j+1}+\left(\om_{\{1\}}\right)_{2j+3}\right)}
    {\om_{2j+1}\om_{2j+3}\om_{2j+5}}\right|(j+1)\\
    &\le\left\|  I^\om(P_n\ast g)\right\|_X \sum_{j=M^\star_{n-1}}^{M^\star_{n+N}-2}(j+1)
    \left(\frac{(\om_{\{2\}})_{2j+1}}{\om_{2j+1}\om_{2j+3}}+ 2\frac{(\om_{\{1\}})^2_{2j+1}}{\om_{2j+1}\om_{2j+3}\om_{2j+5}} \right)\\
    &\lesssim\left\|  I^\om(P_n\ast g)\right\|_XK^{2n}\sum_{j=M^\star_{n-1}}^{M^\star_{n+N}-2}(j+1)(\om_{\{2\}})_{2j+1}
    +\left\|  I^\om(P_n\ast g)\right\|_X \sum_{j=M^\star_{n-1}}^{M^\star_{n+N}-2}\frac{(\om_{\{1\}})_{2j+1}}{\om_{2j+3}\om_{2j+5}}\\
    &\lesssim\left\|I^\om(P_n\ast g)\right\|_X \left(K^{n} + K^{2n}\sum_{j=M^\star_{n-1}}^{M^\star_{n+N}-2} (\om_{\{1\}})_{2j+3}\right)
    \lesssim K^n\left\|I^\om(P_n*g)\right\|_X,\quad n\in\N\cup\{0\},
    \end{split}
    \end{equation*}
and
    \begin{equation*}
    \begin{split}
    &\left\|(\om_{2M^\star_{n+N}-1}^{-1}-\om_{2M^\star_{n+N}+1}^{-1})M^\star_{n+N}\sigma_{M^\star_{n+N}-1}I^\om(P_n*g)\right\|_X\\
    &\le\left\|I^\om(P_n*g)\right\|_X  \frac{(\om_{\{1\}})_{2M^\star_{n+N}-1}}{\om_{2M^\star_{n+N}-1}\om_{2M^\star_{n+N}+1}}M^\star_{n+N}\\
    &\lesssim\left\|I^\om(P_n*g)\right\|_X \frac{1}{\om_{2M^\star_{n+N}+1}}
    \asymp K^n\left\|  I^\om(P_n*g)\right\|_X,\quad n\in\N\cup\{0\},
    \end{split}
    \end{equation*}
which together with \eqref{eq:Io5} gives the right hand side of \eqref{eq:Io}.
\end{proof}

Now we are ready for the proof of Theorem~\ref{th:A1dual}.

\medskip\par
\begin{Prf}{\em{Theorem~\ref{th:A1dual}}.}
First observe that $\left(A^1_\om\right)^\star$ can be identified with $[A^1_\om, H^\infty]=[A^1_\om, \mathcal{A}]$ via the
$H^2$-pairing with equality of norms by \cite[Proposition~1.3]{PavMixnormI}. For $N=1$, let $\{P_n\}_{n=0}^\infty$ be the polynomials constructed in Theorem~\ref{th:Xdecomposition} such that
    \begin{equation}\label{eq:A11}
    A^1_\om=\ell^1_{\log_2K}(H^1,\{P_n\}) \quad\text{and}\quad \BMOA(\infty,\om)=\ell^\infty_{\log_2K}\left(\BMOA,\{P_n\}\right)
    \end{equation}
with equivalent norms, where $l^q_{s}(X,\{P_n\})$ and $l^\infty_{s}(X,\{P_n\})$ are defined in \eqref{eq:lqXPn} and \eqref{eq:linftyXPn}. It follows from \cite[Theorem~5.4]{PavMixnormI} and the fact $(H^1)^\star\simeq[H^1,H^\infty]=\BMOA$ via the $H^2$-pairing that
    \begin{equation}\label{eq:duala111}
    [\ell^1_{\log_2K}(H^1,\{P_n\}),H^\infty]
    =\ell^{\infty}_{-\log_2K}([H^1,H^\infty],\{P_n\})
    =\ell^{\infty}_{-\log_2K}(\BMOA,\{P_n\})
    \end{equation}
with equivalent norms. Therefore, by combining \eqref{eq:A11} and \eqref{eq:duala111} we deduce
    \begin{equation}\label{eq:a11}
    \left(A^1_\om\right)^\star\simeq  \ell^{\infty}_{-\log_2K}(\BMOA,\{P_n\})
    \end{equation}
via the $H^2$-pairing with equivalence of norms. Now Theorem~\ref{th:IomPn} implies
    $$
    \|(I^\om\circ I^\om)(P_n*g)\|_{\BMOA}\asymp K^{-2n}\|P_n*g\|_{\BMOA},\quad n\in\N\cup\{0\},\quad g\in\H(\D),
    $$
and hence
    \begin{equation*}
    \begin{split}
    \|(I^\om\circ I^\om)(g)\|_{\ell^{\infty}_{-\log_2K}(\BMOA,\{P_n\})}
    &=\sup_{n}K^{n}\|(I^\om\circ I^\om)(P_n*g)\|_{\BMOA}\\
    &\asymp\sup_{n}K^{-n} \| P_n*g\|_{\BMOA}
    \asymp \|g\|_{\BMOA(\infty,\om)},\quad g\in\H(\D),
    \end{split}
    \end{equation*}
by Theorem~\ref{th:Xdecomposition}. Therefore $I^\om\circ I^\om$ is an isomorphism from  $\BMOA(\infty,\om)$ onto the space $\ell^{\infty}_{-\log_2K}(\BMOA,\{P_n\})$. This together with \eqref{eq:a11} finishes the proof.
\end{Prf}

\section{Further comments and open problems}\label{sec:conjetures}

In this section we discuss two open problems closely related to the results presented in this paper, and pose conjectures on them. The first problem concerns Littlewood-Paley estimates and the second the boundedness of the Bergman projection $P_\om$ on $L^p_\om$.

\subsection{Littlewood-Paley estimates}

Theorem~\ref{th:L-P-D} shows that the Littlewood-Paley formula
    $$
    \|f\|_{A^p_\om}^p\asymp\int_\D|f^{(k)}(z)|^p(1-|z|)^{kp}\om(z)\,dA(z)+\sum_{j=0}^{k-1}|f^{(j)}(0)|^p,\quad f\in\H(\D),
    $$
is valid for some (equivalently for each) $0<p<\infty$ and $k\in\N$ if and only if $\om\in\DDD$. Moreover, if $\asymp$ is replaced by $\gtrsim$, then the characterizing condition is $\om\in\DD$ by Theorem~\ref{theorem:L-P-D-hat}. We next discuss the question of when
\begin{equation}\label{eq:LPextra}
    \|f\|_{A^p_\om}^p\lesssim \int_\D|f^{(k)}(z)|^p(1-|z|)^{kp}\om(z)\,dA(z)+\sum_{j=0}^{k-1}|f^{(j)}(0)|^p,\quad f\in\H(\D).
    \end{equation}
It follows from the proof of Theorem~\ref{th:L-P-D} that \eqref{eq:LPextra} holds for each $0<p<\infty$ and $k\in\N$ if $\om\in\Dd$, and conversely, $\om\in\M$ is a necessary condition for \eqref{eq:LPextra} to hold. However, $\Dd$ is a proper subclass of $\M$ by
Proposition~\ref{pr:counterxample}. Even if we cannot confirm that $\om\in\M$ is the characterizing condition in general, we can say that this is indeed the case if $p$ is even.

\begin{proposition}\label{LPeven}
Let $\om$ be a radial weight and $p=2n$ for some $n\in\N$. Then there exists a constant $C=C(\om,p)>0$ such that
    \begin{equation}\label{12}
    \|f\|_{A^p_\om}^p\le C\left(\int_\D|f'(z)|^p(1-|z|)^{p}\om(z)\,dA(z)+|f(0)|^p\right),\quad f\in\H(\D).
    \end{equation}
if and only if $\om\in\M$.
\end{proposition}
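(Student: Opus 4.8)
The necessity of $\om\in\M$ is already established in general (it is the conclusion that $\om\in\M$ is necessary for \eqref{eq:LPextra}, coming from the proof of Theorem~\ref{th:L-P-D}), so the content of the proposition is the sufficiency: assuming $\om\in\M$ and $p=2n$, we must prove \eqref{12}. The plan is to exploit the evenness of $p$ to turn the left-hand side into an honest integral of a polynomial expression in the Taylor coefficients of $f$, reducing the estimate to a purely combinatorial/weighted inequality on the moments $\om_x$, and then to feed in the moment characterization \eqref{eq:characterization-M} of the class $\M$.

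\textbf{Step 1: Polarize using $p=2n$.} Write $g=f^n\in\H(\D)$. Then $|f(z)|^p=|f(z)|^{2n}=|g(z)|^2$, and by Parseval
    $$
    \|f\|_{A^p_\om}^p=\int_\D|g(z)|^2\om(z)\,dA(z)=\sum_{k=0}^\infty|\widehat{g}(k)|^2\,2\om_{2k+1}.
    $$
Similarly, since $g'=nf^{n-1}f'$, the quantity $|f'(z)|^p(1-|z|)^p$ is comparable to $|f(z)|^{p-p}\cdots$ — more precisely one should not pass through $g'$ directly but instead note that $(1-|z|)^p|f'(z)|^p = (1-|z|)^{2n}|f'(z)|^{2n}$ and that, writing $h=f^{n-1}f'$, this equals $(1-|z|)^{2n}|h(z)|^2/1$ up to the constant $n^{-2n}|f(z)|^0$... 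The cleaner route: apply the already-proven Littlewood-Paley \emph{upper} bound. Indeed by Theorem~\ref{theorem:L-P-D-hat} (valid since $\om\in\M\subset\DD$? — no, $\M\not\subset\DD$), so instead use directly the Parseval identity for $\|f^n\|_{A^2_\om}^2$ together with the classical fact that for the analytic function $g=f^n$ one has the one-sided estimate $\|g\|_{A^2_\om}^2\lesssim \int_\D|g'(z)|^2(1-|z|)^2\om(z)\,dA(z)+|g(0)|^2$ \emph{provided} the scalar moment inequality $\om_{2k+1}\lesssim (k+1)^2(\om_{[2]})_{2k+1}$ holds for all $k$, which by \eqref{eq:characterization-M} (with $\b=2$) is precisely equivalent to $\om\in\M$. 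Thus
    $$
    \|f\|_{A^p_\om}^p=\|f^n\|_{A^2_\om}^2\lesssim\int_\D|(f^n)'(z)|^2(1-|z|)^2\om(z)\,dA(z)+|f(0)|^{2n}.
    $$

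\textbf{Step 2: Control $(f^n)'$ by $f'$.} Since $(f^n)'=nf^{n-1}f'$, we have $|(f^n)'(z)|^2(1-|z|)^2=n^2|f(z)|^{2(n-1)}|f'(z)|^2(1-|z|)^2$. Integrating against $\om$ and applying H\"older's inequality with exponents $\frac{n}{n-1}$ and $n$ to the two factors $|f|^{2(n-1)}$ and $|f'|^2(1-|z|)^2$ against the probability-type measure $\om\,dA$, one gets
    $$
    \int_\D|f|^{2(n-1)}|f'|^2(1-|z|)^2\om\,dA
    \le\left(\int_\D|f|^{2n}\om\,dA\right)^{\frac{n-1}{n}}\left(\int_\D|f'|^{2n}(1-|z|)^{2n}\om\,dA\right)^{\frac1n}.
    $$
Combining with Step 1 gives $\|f\|_{A^p_\om}^p\lesssim \|f\|_{A^p_\om}^{p(n-1)/n}\big(\int_\D|f'|^p(1-|z|)^p\om\,dA\big)^{1/n}+|f(0)|^p$; absorbing the first factor (after the standard reduction to $f$ with, say, $\|f\|_{A^p_\om}<\infty$, or working with dilations $f_r$ and letting $r\to1^-$) yields \eqref{12}.

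\textbf{Main obstacle.} The delicate point is Step 1: justifying the one-sided $A^2_\om$ Littlewood--Paley inequality $\|g\|_{A^2_\om}^2\lesssim \|g'\cdot(1-|\cdot|)\|_{A^2_\om}^2+|g(0)|^2$ under the \emph{precise} hypothesis $\om\in\M$, i.e. under \eqref{eq:characterization-M} with $\b=2$ rather than the stronger $\om\in\DD$. This is where evenness of $p$ is essential, because it converts the $A^p$-norm into an exact sum of squares of Taylor coefficients, and the inequality then reduces, via the identities $\|g\|_{A^2_\om}^2=\sum_k|\widehat g(k)|^2 2\om_{2k+1}$ and $\int_\D|g'(z)|^2(1-|z|)^2\om\,dA=\sum_k k^2|\widehat g(k)|^2 2(\om_{[2]})_{2k+1}$, to the termwise scalar estimate $\om_{2k+1}\lesssim (k+1)^2(\om_{[2]})_{2k+1}$; by the characterization of $\M$ recorded right after the proof of Theorem~\ref{th:otromundo} (namely \eqref{eq:characterization-M}), this holds if and only if $\om\in\M$. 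Once this termwise reduction is in place the rest is routine. I would also take care at the boundary step (absorption in Step 2) to first prove the inequality for $f$ a polynomial, or for the dilates $f_r$, and then pass to the limit, so that no a priori finiteness of $\|f\|_{A^p_\om}$ is assumed.
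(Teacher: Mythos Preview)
Your proof is correct and follows essentially the same route as the paper: reduce to the case $p=2$ via $g=f^n$, establish the $A^2_\om$ one-sided Littlewood--Paley inequality termwise using Parseval together with the moment characterization \eqref{eq:characterization-M} of $\M$ (with $\b=2$), and then recover the general case $p=2n$ by H\"older's inequality with exponents $\frac{n}{n-1}$ and $n$ applied to $|f|^{2(n-1)}$ and $|f'|^2(1-|z|)^2$. The paper handles the absorption step by assuming without loss of generality that $f\in H^\infty$ and $f(0)=0$, which is equivalent to your suggestion of working with dilates or polynomials.
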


\begin{proof}
If \eqref{12} is satisfied then $\om\in\M$ by the proof of Theorem~\ref{th:L-P-D}. Conversely, let $\om\in\M$. Then $\om_{x}\lesssim x^p(\om_{[p]})_{x}$
for any $0<p<\infty$ and for all $1\le x<\infty$ by \eqref{eq:characterization-M}, and this for $p=2$ together with Parseval's relation gives \eqref{12} for $p=2$. Let now $p=2n$ for $n\in\N\setminus\{1\}$. We may assume, without loss of generality, that $f\in H^\infty$ and $f(0)=0$. Let $g=f^n$. Then H\"older's inequality gives
    \begin{equation*}\begin{split}
    \|f\|^{2n}_{A^{2n}_\omega}
    &=\|g\|^{2}_{A^{2}_\omega}
    \lesssim    \int_{\D}|g'(z)|^{2}(1-|z|)^{2}\om(z)\,dA(z)\\
    &\asymp\int_{\D}|f(z)|^{2(n-1)}|f'(z)|^{2}(1-|z|)^{2}\om(z)\,dA(z)\\
    &\le\left(\int_{\D}|f(z)|^{2n}\om(z)\,dA(z)\right)^{\frac{n-1}{n}}
    \left(\int_{\D}|f'(z)|^{2n}(1-|z|)^{2n}\om(z)\,dA(z)\right)^{\frac{1}{n}},
    \end{split}
    \end{equation*}
from which \eqref{12} follows.
\end{proof}

It is obviously Parseval's relation that makes things click in the proof, and it seems non-trivial to circumvent the difficulties caused by the absence of such identities in the case when~$p$ is not even.

In view of our findings on Littlewood-Paley estimates, and also by Theorems~\ref{ELTEOREMA} and~\ref{th:otromundo}, it is natural to pose the following conjecture.

\medskip
\noindent{\bf{Conjecture~1.}} Let $\om$ be a radial weight, $0<p<\infty$ and $k\in\N$.
Then there exists a constant $C=C(\om,p)>0$ such that
    $$
    \|f\|_{A^p_\om}^p\le C\int_\D|f^{(k)}(z)|^p(1-|z|)^{kp}\om(z)\,dA(z)+\sum_{j=0}^{k-1}|f^{(j)}(0)|^p,\quad f\in\H(\D),
    $$
if and only if $\om\in\M$.

\medskip
It is enough to consider the case $k=1$ because $\om\in\M$ implies $\om_{[\b]}\in\M$ for each $\b>0$ as is seen by using the characterization \eqref{10} of $\M$.

\subsection{Boundedness of the Bergman projection}

Let $\om$ be a radial weight and $1<p<\infty$. Theorem~\ref{th:dualityAp} shows that $\om\in\DD$ is a sufficient condition for $P_\om:L^p_\om\to L^p_\om$ to be bounded, and the reverse H\"older's inequality
    \begin{equation}\label{dp}
    \om_{np+1}^\frac1p\om_{np'+1}^\frac1{p'}\lesssim \om_{2n+1},\quad n\in\N,
    \end{equation}
is necessary by a result due to Dostani\'c~\cite{D09}. Let $D_p$ denote the class of radial weights satisfying~\eqref{dp}.
Then obviously $D_p=D_{p'}$, and $D_p\subset D_q$ for $1<p<q<2$ as it is natural to expect. An integration by parts shows that $\om_x=x\widehat{\om}_{x-1}$, and it follows that $\om\in D_p$ if and only if $\widehat\om\in D_p$. An analogous observation is valid for the class $\DD$ as well because $\om\in\DD$ if and only if $\om_x\lesssim\om_{2x}$. Therefore $\om\in D_p$ if and only if the condition (iii) in Proposition~\ref{proposition:dostanic} is satisfied for $\widehat\om$ in place of $\om$. The case $p=1$ of this condition reads as
	\begin{equation}\label{eq:Dgorrodesc}
	\frac{1}{\widehat\om_{2n+1}}\int_0^1 f(r)r^{2n+1}\widehat\om(r)\,dr
	\lesssim\frac{1}{\widehat\om_{n+1}}\int_0^1 f(r)r^{n+1}\widehat\om(r)\,dr,\quad f\in L^1_{\widehat\om}([0,1)),\quad f\ge 0.
	\end{equation}
Each weight $\widehat\om\in\DD$ satisfies this condition by \cite[Lemma~2.1(ix)]{PelSum14}. Conversely, by testing \eqref{eq:Dgorrodesc} with $f(r)=\widehat\om(r)^{-1}$, it follows that \eqref{eq:Dgorrodesc} is actually a characterization of $\DD$. This observation makes $\DD$ a natural candidate for the class $D_1$. However, throughout the paper we have solved several problems whose solutions are independent on $p$. Moreover, Theorem~\ref{th:PomegaDp} ensures that $P_\om:L^p_\om\to D^p_{\om,k}$ is bounded if and only if $\om\in\DD$. Therefore we pose the following conjecture.

\medskip\noindent{\bf{Conjecture~2.}} Let $\om$ be a radial weight and $1<p<\infty$. Then $P_\om :L^p_\om\to L^p_\om$ is bounded if and only if $\om\in\DD$.
\medskip

In the introduction we claimed that each $\om\in D_p$ admitting sufficient regularity must belong to $\DD$. Now we will make this statement rigorous. For a radial weight $\om$ with $\widehat{\om}(r)>0$ for all $0\le r<1$, define $L_\om(x)=-\log\om_x$. Further, denote $\om_{(\b)}(r)=\om(r)\left(\log\frac1{r}\right)^\b$ for all $r\in(0,1)$ and $0<\b<\infty$. Then $L_\om'(x)=\left(\om_{(1)}\right)_x/\om_x$ and
    \begin{equation}\label{eq:second derivative of L_w}
    L_\om''(x)=-\frac{\left(\om_{(2)}\right)_x}{\om_x}+\left(\frac{\left(\om_{(1)}\right)_x}{\om_x}\right)^2
    =L_\om'(x)\left(L_\om'(x)-L'_{\om_{(1)}}(x)\right)\le0,\quad 0<x<\infty,
    \end{equation}
because $\left(\om_{(1)}\right)_x^2\le\left(\om_{(2)}\right)_x\om_x$ by H\"older's inequality. Therefore $L_\om$ is an increasing unbounded concave function. Moreover, $L_\om'$ is decreasing and $\lim_{x\to\infty}L_\om'(x)=0$. This applied to $\om$ and $\om_{(1)}$ together with \eqref{eq:second derivative of L_w} shows that also $\lim_{x\to\infty}L_\om''(x)=0$. Further, note that if $\vp$ is decreasing such that $R^x/\vp(x)\to\infty$, as $x\to\infty$, for $R\in(0,1)$ sufficiently large, then
    $$
    \frac{\om_x}{\vp(x)}\ge\frac{R^x\widehat{\om}\left(R\right)}{\vp(x)}\to\infty,\quad x\to\infty.
    $$
The function $\vp(x)=\vp_{r,\b}(x)=r^x\left(\log x\right)^\b$ satisfies $R^x/\vp(x)\to\infty$ for each $0<r<R<1$ and $0<\b<\infty$. This shows that $\om_x$ is much larger than $\vp(x)$ for big $x$, and thus the decay of $\om_x$ is severely restricted for every radial weight.

Obviously, $\om\in\DD$ if and only if $\limsup_{x\to\infty}\left(L_\om(Kx)-L_\om(x)\right)<\infty$ for some (equivalently for each) $K>1$. The next lemma characterizes $\DD$ in terms of higher order derivatives of $L_\om$.

\begin{lemma}\label{lemma:L-function-characterization-D-hat}
Let $\om$ be a radial weight such that $\widehat{\om}(r)>0$ for all $0<r<1$. 
Then the following statements are equivalent:
\begin{itemize}
\item[\rm(i)] $\om\in\DD$;
\item[\rm(ii)] $\displaystyle\limsup_{x\to\infty} xL_\om'(x)<\infty$;
\item[\rm(iii)] $\displaystyle-\limsup_{x\to\infty} x^2L_\om''(x)<\infty$;
\item[\rm(iv)] $\displaystyle\limsup_{x\to\infty} x^2\left(L_\om'(x)^2-L_\om''(x)\right)<\infty$;
\item[\rm(v)] $\displaystyle\limsup_{x\to\infty} x^3\left(L_\om'''(x)+L_\om'(x)^3-3L_\om'(x)L_\om''(x)\right)<\infty$;
\item[\rm(vi)] $\displaystyle\limsup_{x\to\infty} x^3L_\om'''(x)<\infty$.
\end{itemize}
\end{lemma}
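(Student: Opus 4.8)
The backbone of the argument is the elementary dictionary between $\DD$ and growth of the function $L_\om$, together with the moment characterization \eqref{Corollary:D-hat}. Recall that by \eqref{eq:second derivative of L_w} and its higher-order analogues, the derivatives of $L_\om$ can be rewritten in terms of the moment ratios $(\om_{(j)})_x/\om_x$, where $\om_{(j)}(r)=\om(r)(\log\tfrac1r)^j$. Since for $x\ge2$ one has $(\log\tfrac1r)^j\asymp(1-r)^j$ on the part of $[0,1)$ that carries the mass of $r^x\om(r)\,dr$, it follows that $(\om_{(j)})_x\asymp(\om_{[j]})_x$ for all $x\ge 2$ (this comparison is already used implicitly in the excerpt, e.g. in the estimate of $F_n''$ in the proof of Theorem~\ref{th:otromundo}). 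Consequently \eqref{Corollary:D-hat} with $\b=j$ says precisely that $\om\in\DD$ if and only if for each $j\in\N$ there is $C_j>0$ with $x^j(\om_{(j)})_x\le C_j\om_x$ for all $x\ge0$; equivalently $\limsup_{x\to\infty}x^j(\om_{(j)})_x/\om_x<\infty$. The whole lemma is then a matter of translating the six displayed $\limsup$ conditions into statements about the quantities $x^j(\om_{(j)})_x/\om_x$.

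Concretely, I would proceed as follows. \emph{Step 1: (i) $\Leftrightarrow$ (ii).} From \eqref{eq:second derivative of L_w}, $L_\om'(x)=(\om_{(1)})_x/\om_x$, so $xL_\om'(x)=x(\om_{(1)})_x/\om_x$, and by the reformulation of \eqref{Corollary:D-hat} above with $j=1$ this is bounded in $\limsup$ exactly when $\om\in\DD$. \emph{Step 2: (ii) $\Leftrightarrow$ (iv).} Compute $x^2\big(L_\om'(x)^2-L_\om''(x)\big)=x^2(\om_{(2)})_x/\om_x$ directly from \eqref{eq:second derivative of L_w}, and invoke \eqref{Corollary:D-hat} with $j=2$; to get equivalence with (ii) note that $x^2(\om_{(2)})_x/\om_x$ bounded implies, via Cauchy--Schwarz $(\om_{(1)})_x^2\le(\om_{(2)})_x\om_x$, that $xL_\om'(x)=x(\om_{(1)})_x/\om_x$ is bounded, and conversely the monotonicity/concavity of $L_\om$ (so that $L_\om'$ is decreasing, $L_\om''\le0$) combined with a doubling argument $L_\om(2x)-L_\om(x)=\int_x^{2x}L_\om'\le xL_\om'(x)$ recovers the membership in $\DD$, whence all $j$-conditions hold. \emph{Step 3: (iii) $\Leftrightarrow$ (iv) and (iv) $\Rightarrow$ (iii) $\Rightarrow$ (ii).} Since $-L_\om''(x)=(\om_{(2)})_x/\om_x-\big((\om_{(1)})_x/\om_x\big)^2\le(\om_{(2)})_x/\om_x$, boundedness of $x^2\big(L_\om'(x)^2-L_\om''(x)\big)$ trivially forces boundedness of $-x^2L_\om''(x)$; for the reverse, use that $L_\om'(x)\to0$ with $L_\om'(x)=-\int_x^\infty L_\om''(t)\,dt$, so $xL_\om'(x)=-x\int_x^\infty L_\om''(t)\,dt$, and a dyadic splitting of the integral together with $-t^2L_\om''(t)\le M$ gives $xL_\om'(x)\lesssim M$; thus (iii) $\Rightarrow$ (ii) $\Rightarrow$ (i) $\Rightarrow$ (iv) closes the loop. \emph{Step 4: the third-order conditions (v), (vi).} Differentiate \eqref{eq:second derivative of L_w} once more to obtain $L_\om'''(x)+L_\om'(x)^3-3L_\om'(x)L_\om''(x)=-(\om_{(3)})_x/\om_x$ (the signed Fa\`a di Bruno expansion of $(\log\om_x)'''$ in terms of the cumulant-type moments), so (v) is \eqref{Corollary:D-hat} with $j=3$ and hence equivalent to $\om\in\DD$. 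Finally (vi) follows from (v) once one controls the lower-order ballast $L_\om'(x)^3-3L_\om'(x)L_\om''(x)$: both $x^3L_\om'(x)^3=(xL_\om'(x))^3$ and $x^3L_\om'(x)\cdot(-L_\om''(x))\le (xL_\om'(x))\cdot(-x^2L_\om''(x))$ are bounded in $\limsup$ once (ii) and (iii) are known, which they are as soon as any one of the equivalent conditions holds; conversely $x^3L_\om'''(x)$ bounded feeds back, via $L_\om''(x)=-\int_x^\infty L_\om'''(t)\,dt$ and the same dyadic trick, to $-x^2L_\om''(x)$ bounded, i.e.\ (iii).

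\textbf{Main obstacle.} The routine calculus — producing the exact identities $x^jL_\om^{(j)}\text{-combinations}=\pm(\om_{(j)})_x/\om_x$ — is not the hard part; Fa\`a di Bruno / cumulant bookkeeping handles it. The genuinely delicate point is the passage \emph{from a bound on a single derivative order back to $\om\in\DD$}, i.e.\ the implications (ii)$\Rightarrow$(i), (iii)$\Rightarrow$(ii), (vi)$\Rightarrow$(iii). These rely on the fact that $L_\om'$ (resp. $-L_\om''$) is not merely bounded after multiplication by $x$ (resp. $x^2$) but, being a tail integral of the next derivative, inherits summable dyadic decay; one must check that $L_\om^{(j)}(x)\to0$ as $x\to\infty$ for $j=1,2,3$ so that the fundamental-theorem-of-calculus representation $L_\om^{(j)}(x)=-\int_x^\infty L_\om^{(j+1)}$ is legitimate. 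The vanishing of $L_\om'$ and $L_\om''$ is already recorded in the text (right after \eqref{eq:second derivative of L_w}); for $L_\om'''$ one argues identically, applying that observation to $\om$ and to $\om_{(1)},\om_{(2)}$. Once these vanishing-at-infinity facts are in place, the dyadic-summation estimates are straightforward and the equivalence cycle closes.
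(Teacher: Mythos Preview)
Your approach is essentially the paper's: identify the combinations $xL_\om'$, $x^2(L_\om'^2-L_\om'')$, $x^3(L_\om'''+L_\om'^3-3L_\om'L_\om'')$ with $x^j(\om_{(j)})_x/\om_x\asymp x^j(\om_{[j]})_x/\om_x$ and invoke \eqref{Corollary:D-hat}, then close the remaining implications (iii)$\Rightarrow$(ii) and (vi)$\Rightarrow$(iii) by integrating the derivative bounds and using $L_\om',L_\om''\to0$. One slip: the third-order identity has the opposite sign, namely $L_\om'''+L_\om'^3-3L_\om'L_\om''=+(\om_{(3)})_x/\om_x$ (otherwise (v) would be vacuous), and your concavity shortcut $L_\om(2x)-L_\om(x)\le xL_\om'(x)$ for (ii)$\Rightarrow$(i) is a pleasant alternative to the paper's appeal to \eqref{Corollary:D-hat}.
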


\begin{proof}
Obviously (iv) implies (iii). Further, (iii) yields
    $$
    L_\om'(x)-L_\om'(y)=\int_x^y\left(-L_\om''(s)\right)\,ds\lesssim\frac1x-\frac1y,\quad 1\le x\le y,
    $$
and hence
    $$
    xL_\om'(x)\lesssim xL_\om'(y)+1-\frac{x}{y}\to1,\quad y\to\infty,
    $$
because $L_\om'(y)\to0$ as $y\to\infty$. Thus (ii) is satisfied. Furthermore, $\left(\om_{(1)}\right)_x\asymp\left(\om_{[1]}\right)_x$ for all $x$ uniformly bounded away from zero, and hence \eqref{Corollary:D-hat} with $\b=1$ shows that (i) and (ii) are equivalent. In a similar manner,
    $$
    x^2\left(L_\om'(x)^2-L_\om''(x)\right)=x^2\frac{\left(\om_{(2)}\right)_x}{\om_x}\asymp x^2\frac{\left(\om_{[2]}\right)_x}{\om_x},\quad x\to\infty,
    $$
and hence \eqref{Corollary:D-hat} with $\b=2$ shows that (i) and (iv) are equivalent. Since
		$$
		\frac{\left(\om_{(3)}\right)_x}{\om_x}=L_\om'''(x)+L_\om'(x)^3-3L_\om'(x)L_\om''(x),
    $$
in a similar manner one shows that (i) and (v) are equivalent by \eqref{Corollary:D-hat} with $\b=3$. Thus (i)-(v) are equivalent.
Therefore, if $\om\in\DD$, then $x^3L_\om'''(x)\lesssim1$. This in turn implies
		$$
		L_\om''(y)-L_\om''(x)=\int_x^yL_\om'''(s)\,ds\lesssim\frac1{x^2}-\frac1{y^2},\quad 1\le x\le y,
    $$
and therefore
    $$
    x^2L_\om''(y)-x^2L_\om''(x)\le C\left(1-\left(\frac{x}{y}\right)^2\right).
		$$
By letting $y\to\infty$, we deduce $-x^2L''_\om(x)\le C$, which gives (iii).
\end{proof}

Let us look at the condition $\om\in D_p$ with $1<p<2$ in terms of $L_\om$:
    \begin{equation*}
    \begin{split}
    \left(\om_{px}\right)^\frac1p\left(\om_{p'x}\right)^\frac1{p'}\le e^C\om_{2x}
    &\quad\Longleftrightarrow\quad\frac1p\log\om_{px}+\frac1{p'}\log\om_{p'x}\le C+\log\om_{2x}\\
    &\quad\Longleftrightarrow\quad h_p(x)=\log\frac1{\om_{2x}}-\frac1p\log\frac1{\om_{px}}-\frac1{p'}\log\frac1{\om_{p'x}}\\
    &\qquad\qquad\qquad\quad=\frac1p\left(L_\om(2x)-L_\om(px)\right)-\frac1{p'}\left(L_\om(p'x)-L_\om(2x)\right)\\
    &\qquad\qquad\qquad\quad=\frac1p\int_{px}^{2x}L_\om'(t)\,dt-\frac1{p'}\int_{2x}^{p'x}L_\om'(t)\,dt\le C.
    \end{split}
    \end{equation*}
The change of variable $s=\frac{p}{p'}t+x\left(1-\frac{p}{p'}\right)$ gives
    \begin{equation*}
    \begin{split}
    h_p(x)&=\frac1p\int_{px}^{2x}L_\om'(t)\,dt-\frac1{p}\int_{px}^{2x}L_\om'\left(\frac{p'}{p}s+x\left(1-\frac{p'}{p}\right)\right)\,ds\\
    &=\frac1p\int_{px}^{2x}\left(\int_t^{\frac{p'}{p}t+x\left(1-\frac{p'}{p}\right)}\left(-L_\om''(s)\right)\,ds\right)\,dt,
    \end{split}
    \end{equation*}
where
    $$
    2x\le\frac{p'}{p}t+x\left(1-\frac{p'}{p}\right)\le p'x.
    $$
Thus $\om\in D_p$ if and only if
    \begin{equation}\label{eq:Doatanic-L-description}
    \limsup_{x\to\infty}\frac1p\int_{px}^{2x}\left(\int_t^{\frac{p'}{p}t+x\left(1-\frac{p'}{p}\right)}\left(-L_\om''(s)\right)\,ds\right)\,dt<\infty.
   \end{equation}
With the aid of this observation we can prove Corollary~\ref{Cor-BergmanProjection}.

\noindent{\emph{Proof of }Corollary~\ref{Cor-BergmanProjection}}
If $\om\in\DD$, then $P_\om:L^p_\om\to L^p_\om$ is bounded by Theorem~\ref{th:dualityAp}. Conversely, if $P_\om:L^p_\om\to L^p_\om$ is bounded, then $\om\in D_p$. If in addition \eqref{Eq:extraconditionEpsilon} is satisfied, then it follows that $\om_{xp}\lesssim\om_{(p+\e)x}$, and thus $\om\in\DD$. Moreover, by estimating the double integral on the left-hand side of \eqref{eq:Doatanic-L-description} downwards to
	\begin{equation}
	\int_{px}^{\frac{2+p}{2}x}\left(\int_{\frac{2+p}{2}x}^{2x}\left(-L_\om''(s)\right)\,ds\right)\,dt
	=\left(1-\frac{p}{2}\right)x\int_{\frac{2+p}{2}x}^{2x}\left(-L_\om''(s)\right)\,ds,
	\end{equation}
and using the hypothesis \eqref{oiuy}, we deduce $y^2L_\om''(y)\lesssim1$, and thus $\om\in\DD$ by Lemma~\ref{lemma:L-function-characterization-D-hat}.\hfill$\Box$

\medskip

Recall that $L'_\om\ge0$ and $L''_\om\le0$. If $L'''_\om\ge0$ or $L''_\om$ is simply essentially increasing, then \eqref{oiuy} is trivially satisfied for the choice $x=x(y)=\frac{2}{2+p}y$, and thus $\om\in D_p$ if and only if $\om\in\DD$ in this case.

\end{document}